%Updated May 12, 2016: inventiones=Villani: http://www.springer.com/mathematics/journal/222?detailsPage=editorialBoard 
%Annals of Math, Klainerman or fefferman: http://annals.math.princeton.edu/board. TODO: THIS ONE.
%JAMS, Rodniansky, Bertozzi? : http://www.ams.org/publications/journals/journalsframework/jamsedit
%%%%%%%%%%%%%%%%%%%%%%%%%%%%
\documentclass[11pt,leqno]{amsart}
\usepackage{amsthm,amsfonts,amssymb,amsmath,oldgerm}
\numberwithin{equation}{section}
\usepackage{fullpage}
\usepackage{color}
\usepackage{calrsfs}
\DeclareMathAlphabet{\pazocal}{OMS}{zplm}{m}{n}

%%%%%%%%% Ben's

%%%%%%%%%%%

%%%%%%%%%%%%%%%%%%

\def\eps{\varepsilon }

%%%%%%%%%%%%%%%%%%%%%%%%%%%%%%%%%%%%%%%%%%%%%%%%

%\newcommand\tb{\tilde\beta}

\newcommand\R{\mathbb R}

\def\eps{\varepsilon}

%%%%%%%%%%%%%%%%%%%%%%%%%%%%%%5

\newcommand\br{\begin{remark}}
\newcommand\er{\end{remark}}
\newcommand\bp{\begin{pmatrix}}
\newcommand\ep{\end{pmatrix}}
\newcommand{\be}{\begin{equation}}
\newcommand{\ee}{\end{equation}}
\newcommand\ba{\begin{equation}\begin{aligned}}
\newcommand\ea{\end{aligned}\end{equation}}

%%%%%%%%%%%%%%%%%%%%%%%%%%%%%%

\newcommand{\bap}{\begin{app}}
\newcommand{\eap}{\end{app}}
\newcommand{\begs}{\begin{exams}}
\newcommand{\eegs}{\end{exams}}
\newcommand{\beg}{\begin{example}}
\newcommand{\eeg}{\end{exaplem}}
\newcommand{\bpr}{\begin{proposition}}
\newcommand{\epr}{\end{proposition}}
\newcommand{\bt}{\begin{theorem}}
\newcommand{\et}{\end{theorem}}
\newcommand{\bc}{\begin{corollary}}
\newcommand{\ec}{\end{corollary}}
\newcommand{\bl}{\begin{lemma}}
\newcommand{\el}{\end{lemma}}
\newcommand{\bd}{\begin{definition}}
\newcommand{\ed}{\end{definition}}
\newcommand{\brs}{\begin{remarks}}
\newcommand{\ers}{\end{remarks}}

%%%%%%%%%%%%%%%%%%%%%%%%%%%%%%%%%%%%%%%%%%%%%%%%%%
%\newtheorem{theo}{Theorem}[section]
%\newtheorem{prop}[theo]{Proposition}
%\newtheorem{cor}[theo]{Corollary}
%\newtheorem{lem}[theo]{Lemma}
%\newtheorem{defi}[theo]{Definition}
%\newtheorem{ass}[theo]{Assumption}
%\newtheorem{cond}[theo]{Condition}
%\newtheorem{exam}[theo]{Example}
%\newtheorem{rem}[theo]{Remark}
%\newtheorem{rems}[theo]{Remarks}
%\newtheorem{exams}[theo]{Examples}
%\newtheorem{result}[theo]{Result}
%\newtheorem{fact}[theo]{Fact}
%\numberwithin{equation}{section}
%%%%%%%%%%%%%%%%%%%%%%

%%%%%%%%%%%%%%%%%%%%%%%%%%%%%%%%%%%%%%%%%%%%%%%%%

\newcommand{\RR}{{\mathbb R}}

\newcommand{\NN}{{\mathbb N}}

\newcommand{\CC}{{\mathbb C}}

\newcommand{\pa}{{\partial}}
\newcommand{\rmi}{{\mathrm{i}}}
\newcommand{\rmd}{{\mathrm{d}}}
\newcommand{\rms}{{\mathrm{s}}}
\newcommand{\rmu}{{\mathrm{u}}}
\newcommand{\rmc}{{\mathrm{c}}}
\newcommand{\rmr}{{\mathrm{r}}}

\newcommand{\im}{{\rm im }}

\newcommand{\sgn}{\text{\rm sgn}}
%%%%%%%%%%%%%%%%%%%%%%%%%%%%%%%%%%%%%%%%%%%%%%%%%%
\newtheorem{theorem}{Theorem}[section]
\newtheorem{proposition}[theorem]{Proposition}
\newtheorem{corollary}[theorem]{Corollary}
\newtheorem{lemma}[theorem]{Lemma}

\theoremstyle{remark}
\newtheorem{remark}[theorem]{Remark}
\theoremstyle{definition}
\newtheorem{definition}[theorem]{Definition}

\newtheorem{example}[theorem]{Example}

%\numberwithin{equation}{section}
%%%%%%%%%%%%%%%%%%%%%%

\newcommand\cB{{\mathcal B}}
\newcommand\cD{{\mathcal D}}

\newcommand\cH{{\mathcal H}}
\newcommand\cJ{{\mathcal J}}

\newcommand\cW{{\mathcal W}}

\newcommand\cG{{\mathcal G}}
\newcommand\cK{{\mathcal K}}
\newcommand\cKm{{\mathcal K^{\mathrm{mod}}_{\Gamma,E}}}
\newcommand\cL{{\mathcal L}}
\newcommand\tcL{{\tilde{\mathcal L}}}

\newcommand\cF{{\mathcal F}}

\newcommand\cY{{\mathcal Y}}
\newcommand\cM{{\mathcal M}}

\newcommand\cS{{\mathcal S}}

\newcommand\bH{{\mathbb H}}

\newcommand\bV{{\mathbb V}}

\newcommand\bG{{\mathbb G}}

\newcommand\bX{{\mathbb X}}
\newcommand\bY{{\mathbb Y}}

%%%%%%%%%%%%%%%% Mat's macros

%\def\baselinestretch{1.0}
%\def\baselinestretch{2.0}
%\pagestyle{headings}
%\newcommand{\li}{\frac{\sin(\pi H) \Gamma(2H + 1)}{(\pi i)^{2H+1}}}
%\newtheorem{thm}{Theorem}
%\newtheorem{corr}{Corollary}
%\newtheorem{df}{Definition}
%\newtheorem{assume}{Assumption}
%\newtheorem{examples}{Example}
%\newcommand{\C}{\nu_H}

%\newcommand{\MM}{\,\mbox{\bf M}}

\newcommand{\tE}{{\widetilde{E}}}
\newcommand{\tR}{{\widetilde{R}}}
\newcommand{\tT}{{\widetilde{T}}}
\newcommand{\tcK}{{\widetilde{\mathcal{K}}}}

\newcommand{\tf}{{\widetilde{f}}}
\newcommand{\tg}{{\widetilde{g}}}
\newcommand{\tth}{{\widetilde{h}}}
\newcommand{\tnu}{{\widetilde{\nu}}}
\newcommand{\ovB}{{\overline{B}}}
%\newcommand{\sgn}{\operatorname{sgn}}

%%%%%%%%%%%%% fonts/sets %%%%%%%%%%%%%%%%

%%%%%%%%%%%%  abbreviations %%%%%%%%%%%%%%%

\newcommand{\bu}{\mathbf{u}}
\newcommand{\bv}{\mathbf{v}}
\newcommand{\obu}{\overline{\mathbf{u}}}

\newcommand{\bx}{\mathbf{x}}

\newcommand{\dom}{\text{\rm{dom}}}

\newcommand{\beq}{\begin{equation}}
\newcommand{\eeq}{\end{equation}}
%\newcommand{\ba}{\begin{align}}
%\newcommand{\ea}{\end{align}}

%\newcommand{\tr}{\text{\rm{tr}}}

% Absolute value notation

%%%%%%%%%%%%%%%%%%%%%%%%%%%%%%%%%%%%%%%%%%%%%%%%%%%%%%
\title{Stable manifolds for a class of degenerate evolution equations and exponential decay of kinetic shocks
}

%%%%%%%%%%%%%%%%%%%%%%%%%%%%%%%%%%%%%%%%%%%%%%%%%%%%%
\author{ Alin Pogan}
\address{Miami University, Oxford, OH 45056}
\email{pogana@miamioh.edu}
\thanks{ A. P. research was partially supported under the
Summer Research Grant program, Miami University}
\author{Kevin Zumbrun}
\address{Indiana University, Bloomington, IN 47405}
\email{kzumbrun@indiana.edu}
\thanks{Research of K.Z. was partially supported
under NSF grant no. DMS-0300487}

\begin{document}

\begin{abstract}
We construct stable manifolds for a class of degenerate evolution equations including the
steady Boltzmann equation, establishing in the process exponential decay of associated kinetic shock and boundary layers
to their limiting equilibrium states.
Our analysis is from a classical dynamical systems point of view, but with a number of interesting modifications
to accomodate ill-posedness with respect to the Cauchy problem of the underlying evolution equation.
\end{abstract}

\maketitle

\vspace{0.3cm}
\begin{minipage}[h]{0.48\textwidth}
\begin{center}
Miami University\\
Department of Mathematics\\
301 S. Patterson Ave.\\
Oxford, OH 45056, USA
\end{center}
\end{minipage}
\begin{minipage}[h]{0.48\textwidth}
\begin{center}
Indiana University \\
Department of Mathematics\\
831 E. Third St.\\
Bloomington, IN 47405, USA
\end{center}
\end{minipage}

\vspace{0.3cm}

\tableofcontents

\section{Introduction }\label{s1}

In this paper we study decay rates at infinity of (possibly) large-amplitude relaxation shocks
% profiles
\be\label{prof}
u(x,t)=u^*(x-st),\quad \lim_{\tau\to\pm\infty}u(\tau)=u^\pm,
\ee
of kinetic-type relaxation systems
\begin{equation}\label{Relax}
A_0 u_t + Au_x = Q(u),
\end{equation}
on a general Hilbert space $\bH$, where $A_0$, $A$ are given (constant) bounded linear operator and $Q$ is a bounded bilinear map.
More generally, we study existence and properties of stable/unstable manifolds for a class of
degenerate evolution equations arising through the study of such profiles.

Making the change of variables $\tau=x-st$ we obtain that the profiles $u^*$ satisfy
the equation
$(A-sA_0)u_\tau = Q(u).$
By frame-indifference, we may without loss of generality take $s=0$, yielding
\begin{equation}\label{nonlinear}
A u_\tau = Q(u).
\end{equation}
We are interested in the singular case, as arises for example for Boltzmann's equation \cite{CN,LiuYu1,LiuYu,MZ2},
that the linear operator $A$ is self-adjoint, one-to-one, but \textit{not invertible}.

This is a crucial point of the current paper, since in the case when the linear operator $A$ has a bounded inverse, one would reduce equation \eqref{nonlinear} to an evolution equation
with bounded linear part, that can be treated similarly as in the case of nonlinear equations on finite dimensional spaces.
The case at hand, when the linear operator $A$ does not have a bounded inverse, requires a different approach, since \eqref{nonlinear} or its linearization along equilibria might not be well-posed; therefore it is not clear if one can use the usual variation of constants formula to look for mild solutions. Rather, we use the frequency domain reformulation of these equations following the approach in \cite{LP2} and \cite{LP3}.

Other cases of non-well-posed equations in the sense that they do not generate an
evolution family either in forward or backward time on the entire space, arise in the study of modulated waves on cylindrical domains (see \cite{PSS,SS1,SS2}), Morse theory (see \cite{AM1,AM2,RobSal}, the theory of PDE Hamiltonian systems
(see \cite{BjornSand}), and the theory of functional-differential equations of mixed type (see \cite{Mallet-Paret}).
The particular form \eqref{nonlinear}, however, in which the singularity arises through the coefficient
of the $\tau$-derivative with other terms bounded, does not seem to have been treated before.
This is the class of  ``degenerate evolution equation'' to which we refer in the title of the paper.

The examples we are interested in arise in certain kinetic and discrete kinetic relaxation approximation models,
{\it in particular, the Boltzman equation}
\be\label{Boltz}
f_t + \xi_1 f_x=Q(f),\quad x\in \R^1, \, \xi\in \R^3,
\ee
$f=f(x,\xi)$ denoting density at spatial point $x$ of particles with velocity $\xi$,
which, after rescaling by $\langle \xi\rangle:=\sqrt{1+|\xi|^2}$, can be put in form \eqref{Relax}, with $A$ is equal to the operator of multiplication by the function $\xi_1/\langle \xi\rangle$ and
$\bH$ an appropriate weighted $L^2$ space in the variable $\xi$.
In the series of papers \cite{MTZ,MZ,MZ2,TZ5},  M\'etivier, Texier and Zumbrun
obtained existence results for a somewhat larger class of models in the case of shocks with small amplitude $\eps:=\|u^+-u^-\|$,
in particular yielding exponential decay rates as $\tau=(x-st) \to \pm \infty$; see also the earlier \cite{CN,LiuYu1}
in the specific case of Boltzmann's equation.
These results were obtained by fixed-point iteration on the whole line, using in an essential way the small-amplitude
assumption to construct initial approximations based on the formal fluid-dynamical approximation.

Here, our interest is in treating {\it large-amplitude} profiles, without a priori information on the shape of the profile,
by {\it dynamical systems techniques} that would apply also in the case of boundary layers where the solution is
not necessarily defined on the whole line.
Our larger goal is to develop dynamical systems tools analogous to those of \cite{GZ,MaZ2,MaZ3,MaZ4,Z2,Z3,Z4,Z5,ZH,ZS},
sufficient to treat 1- and multi-D stability by the techniques of those papers.
See in particular the discussion of \cite[Remark 4.2.1(4), p. 55]{Z4}, proposing a path toward stability of Boltzmann
shock profiles.
For this program, the proof of exponential decay rates and the establishment of a stable manifold theorem
are essential first steps. For a corresponding center manifold theorem, see \cite{PZ}.

\subsection{Assumptions}\label{s:assumptions}
In \cite[Section 4]{MZ2} it is shown that the Boltzman equation with hard-sphere potential can be recast as an equation of form \eqref{Relax} where the linear operator $A$ and the nonlinearity $Q$ satisfy the hypotheses (H1)--(H3) below;
following \cite{MZ2}, we assume these throughout.

\medskip

\noindent{\bf Hypothesis (H1)} The linear operator $A$ is
bounded and self-adjoint on the Hilbert space $\bH$. There exists $\bV$ a \textit{proper}, closed subspace of $\bH$ with $\dim\bV^\perp<\infty$ and $B:\bH\times\bH\to\bV$ is a bilinear, symmetric, continuous map such that $Q(u)=B(u,u)$.

\medskip

To understand the behavior of solutions of \eqref{nonlinear} near the equilibria $u^\pm$ it is important to study the properties of the linearization of \eqref{nonlinear} along $u^\pm$. As pointed out in \cite{MZ2}, in many special cases of \eqref{Relax}, the linear operator $Q'(u^\pm)$ satisfies the Kawashima condition.

\medskip

\noindent{\bf Hypothesis (H2)} We say that a bounded linear operator $T\in\cB(\bH)$ satisfies the Kawashima condition if
\begin{enumerate}
\item[(i)] $T$ is self-adjoint and $\ker T=\bV^\perp$;
\item[(ii)] There exists $\delta>0$ such that $T_{|\bV}\leq -\delta I_{\bV}$;
\item[(iii)] There exists $K\in\cB(\bV)$ skew-symmetric and $\gamma>0$ such that $\mathrm{Re}\,(KA-T)\geq \gamma I$.
\end{enumerate}

In the hypothesis below we summarize the conditions satisfied by the equilibria $u^\pm$.

\medskip

\noindent{\bf Hypothesis (H3)} We assume that the equilibria $u^\pm\in\ker Q$ are such that $Q'(u^\pm)$ satisfies hypothesis (H2) for some $K_\pm\in\cB(\bH)$
skew-symmetric, $\delta_\pm>0$ and $\gamma_\pm>0$.

\medskip

In many interesting examples the linear operator $A$ is an operator of multiplication by a non-zero function on a certain Hilbert space of functions.
One of the goals of this paper is to apply our results to our leitmotif example, the Boltzman equation in the case of nonzero Euler characteristics.
Therefore, to prove the main results in
this paper we
%also assume the hypotheses:
make the additional two hypotheses:

\medskip

\noindent{\bf Hypothesis (H4)} The linear operator $A$ is one-to-one.

\medskip

\noindent{\bf Hypothesis (H5)} The linear operator $P_{\bV^\perp}A_{|\bV^\perp}$ is invertible on the finite dimensional space $\bV^\perp$. Here $P_{\bV^\perp}$ denotes the orthogonal projection onto $\bV^\perp$.

\subsection{Results}\label{s:results}
Our first task is to study the qualitative properties of the linearization of equation \eqref{nonlinear}
%at
about
the equilibria $u^\pm$. Under Hypothesis (H1) and (H3) we prove that the linear operator $\cL^\pm=A\pa_\tau-Q'(u^\pm)$ is not invertible when considered as a linear operator on $L^2(\RR,\bH)$, see Remark~\ref{r2.1} below. In many situations when one is interested in tracing the point spectrum of the linearization along some traveling wave profile, in the case when the linearization is not Fredholm on $L^2(\RR,\bH)$, it is of great interest to study the invertibility properties of the linearization $\cL^\pm$ at the equilibria on weighted spaces $L^2_\eta(\RR,\bH)$ with $\eta\ne0$ small, see e.g. \cite{PS1,PS4}. Making a change of variables, one can readily check that $\cL^\pm$ is invertible on $L^2_\eta(\RR,\bH)$ if and only if the linear operator $\cL^\pm_\eta=A\pa_\tau-Q'(u^\pm)\mp\eta A$ is invertible on $L^2(\RR,\bH)$. Our invertibility result reads as follows:
\begin{theorem}\label{t2.9}
Assume Hypotheses (H1) and (H3). Then, there exists $\eta^*>0$ such that
$\cL^\pm_\eta=A\pa_\tau-Q'(u^\pm)\mp\eta A$ is invertible with bounded inverse on $L^2(\RR,\bH)$ for all $\eta\in(0,\eta^*)$.
\end{theorem}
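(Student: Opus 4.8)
The plan is to pass to the Fourier transform in $\tau$ and reduce the invertibility of $\cL^\pm_\eta$ on $L^2(\RR,\bH)$ to a uniform invertibility estimate for the family of symbol operators
\[
M^\pm_\eta(\lambda) := (\rmi\lambda \mp \eta) A - Q'(u^\pm), \qquad \lambda \in \RR,
\]
acting on $\bH$. By Plancherel, $\cL^\pm_\eta$ is invertible on $L^2(\RR,\bH)$ if and only if $M^\pm_\eta(\lambda)$ is boundedly invertible on $\bH$ for each $\lambda \in \RR$ with $\sup_{\lambda}\|M^\pm_\eta(\lambda)^{-1}\|<\infty$. Write $T:=Q'(u^\pm)$, which by (H3) satisfies the Kawashima condition (H2) with constants $\delta:=\delta_\pm$, $\gamma:=\gamma_\pm$, and skew-symmetric $K:=K_\pm$. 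The task is then to produce, for all sufficiently small $\eta>0$, a bound $\|M^\pm_\eta(\lambda)u\| \ge c\,\|u\|$ uniform in $\lambda\in\RR$ and in $u\in\bH$, together with the same bound for the adjoint (which has the same structure with $\eta\mapsto-\eta$ and $\lambda\mapsto-\lambda$), giving bounded invertibility.

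The key computation is the standard Kawashima energy estimate, adapted to include the $\mp\eta A$ term. Fix $u\in\bH$ and set $v:=M^\pm_\eta(\lambda)u = (\rmi\lambda\mp\eta)Au - Tu$. First take the real part of $\langle v,u\rangle$: since $A$ and $T$ are self-adjoint, $\mathrm{Re}\,\langle (\rmi\lambda) Au,u\rangle = 0$, so
\[
\mathrm{Re}\,\langle v,u\rangle = \mp\eta\,\langle Au,u\rangle - \langle Tu,u\rangle .
\]
By (H2)(i)--(ii), $-\langle Tu,u\rangle \ge \delta\|P_{\bV}u\|^2$, so $\mp\eta\langle Au,u\rangle + \delta\|P_\bV u\|^2 \le \|v\|\,\|u\|$; with $|\langle Au,u\rangle|\le\|A\|\|u\|^2$ this controls $\|P_\bV u\|^2$ by $\|v\|\|u\| + \eta\|A\|\|u\|^2$, but not yet the $\bV^\perp$-component. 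To recover that, bring in the compensating term: compute $\mathrm{Re}\,\langle v, Ku\rangle$ using skew-symmetry of $K$ and (H2)(iii), $\mathrm{Re}\,\langle (KA - T)u,u\rangle \ge \gamma\|u\|^2$. Here one must be careful that $K\in\cB(\bV)$, so $Ku$ should be read with the appropriate projections; writing $\mathrm{Re}\,\langle v,Ku\rangle$ produces the term $\mathrm{Re}\,\langle (\rmi\lambda\mp\eta)KAu,u\rangle = -\lambda\,\mathrm{Im}\,\langle KAu,u\rangle \mp \eta\,\mathrm{Re}\,\langle KAu,u\rangle$, where $\mathrm{Im}\langle KAu,u\rangle$ vanishes because $KA$ — no, it does not vanish in general, so one instead scales: take $\mathrm{Re}\,\langle v, (Ku)/(1+|\lambda|)\rangle$ or, more simply, use the classical trick of testing against $u - \beta\, K u /(1+\lambda^2)^{1/2}$ type combinations. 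The cleanest route is: form $\mathrm{Re}\,\langle v,u\rangle - \theta\,\mathrm{Re}\,\langle v, \tfrac{1}{\langle\lambda\rangle} Ku\rangle$ for a small fixed $\theta>0$, where $\langle\lambda\rangle = (1+\lambda^2)^{1/2}$; the troublesome cross terms carry factors $\lambda/\langle\lambda\rangle$ which are bounded, and one extracts
\[
\mathrm{Re}\,\langle v,u\rangle - \tfrac{\theta}{\langle\lambda\rangle}\mathrm{Re}\,\langle v,Ku\rangle
\;\ge\; \big(\delta - C\theta\big)\|P_\bV u\|^2 + \tfrac{\theta}{\langle\lambda\rangle}\gamma\|u\|^2 \mp \eta(\cdots)
\]
after absorbing lower-order terms; choosing $\theta$ small fixes the first coefficient positive, and then the $\mp\eta(\cdots)$ contributions, all bounded by $C\eta\|u\|^2$, are absorbed for $\eta$ small. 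This yields $\|v\|\,\|u\|\,(1 + \tfrac{\theta\|K\|}{\langle\lambda\rangle}) \ge c\,\tfrac{1}{\langle\lambda\rangle}\|u\|^2$, i.e. $\|v\| \ge c'\langle\lambda\rangle^{-1}\|u\|$ — which is not uniform in $\lambda$.

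To upgrade to a $\lambda$-uniform bound one handles large $|\lambda|$ separately: for $|\lambda|$ large, $M^\pm_\eta(\lambda) = \rmi\lambda A\big(I + (\rmi\lambda)^{-1}(\mp\eta A - T)A^{-1}\big)$ — but $A^{-1}$ is unbounded, so this naive Neumann argument fails, and this is exactly where (H4) and (H5) enter and where the main difficulty lies. The remedy is to split $\bH = \bV\oplus\bV^\perp$ and write $M^\pm_\eta(\lambda)$ in block form against this decomposition. On the finite-dimensional block $\bV^\perp$, where $T$ vanishes, the operator acts essentially as $(\rmi\lambda\mp\eta)P_{\bV^\perp}A_{|\bV^\perp}$ plus coupling, and invertibility of $P_{\bV^\perp}A_{|\bV^\perp}$ (H5) gives a bound growing like $|\lambda|$ there; on $\bV$, the coercivity $-T_{|\bV}\ge\delta I$ together with the Kawashima term controls the rest. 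Combining the two blocks via a Schur-complement/bootstrapping argument — controlling the off-diagonal coupling terms, which involve $P_\bV A_{|\bV^\perp}$ and its adjoint and are bounded — yields the uniform lower bound $\|M^\pm_\eta(\lambda)u\|\ge c\|u\|$ for $|\lambda|\ge \Lambda_0$, with $c$ independent of $\eta\in(0,\eta^*)$. The intermediate range $|\lambda|\le\Lambda_0$ is compact: there $\langle\lambda\rangle^{-1}$ is bounded below, so the Kawashima estimate already gives a uniform bound, provided $\eta^*$ is chosen small enough that the $O(\eta)$ error terms never overwhelm the fixed positive constants on this compact set. Assembling the three regimes gives $\sup_{\lambda\in\RR}\|M^\pm_\eta(\lambda)^{-1}\| < \infty$, and the identical argument for the adjoint family completes the proof via Plancherel.

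The main obstacle is precisely the large-frequency analysis: because $A$ is one-to-one but not invertible, the $\rmi\lambda A$ term does not dominate in operator norm, and one cannot treat the zeroth-order part perturbatively. The resolution — exploiting (H4), (H5), and the Kawashima structure through the $\bV\oplus\bV^\perp$ block decomposition with uniform-in-$\eta$ constants — is the technical heart of the argument, and the bookkeeping to make every constant independent of $\eta\in(0,\eta^*)$ (so that a single $\eta^*$ works) is the part requiring the most care.
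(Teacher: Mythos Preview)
Your reduction via Plancherel to uniform invertibility of the symbol $M^\pm_\eta(\lambda)$ is correct and matches the paper. But there is a genuine gap in your large-frequency analysis.

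First, the theorem assumes only (H1) and (H3); you invoke (H4) and (H5) for the large-$|\lambda|$ block argument. These hypotheses are \emph{not} available here --- they are introduced later, for the dichotomy results of Theorem~1.2. So either you are proving a strictly weaker statement, or the block/Schur argument you sketch must in fact go through without (H4)--(H5), which you have not shown (and which is doubtful: your own remark that ``$A^{-1}$ is unbounded so the naive Neumann argument fails'' applies equally well to the Schur complement).

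Second, you miss the trick that makes the large-$|\lambda|$ estimate work under (H1)--(H3) alone. From the equation $v = (\rmi\lambda\mp\eta)Au - Tu$ one can \emph{solve for} $Au$:
\[
Au = \frac{1}{\rmi\lambda\mp\eta}\bigl(Tu + v\bigr),
\]
and substitute this into the Kawashima term $\mathrm{Re}\langle KAu,u\rangle$. This makes the $KA$ contribution of order $|\lambda|^{-1}$, so the Kawashima inequality $\gamma\|u\|^2 \le \mathrm{Re}\langle(KA-T)u,u\rangle$ becomes
\[
\gamma\|u\|^2 \le C\|v\|\,\|u\| + \tfrac{C}{|\lambda|}\|u\|^2 + \tfrac{C}{|\lambda|}\|v\|\,\|u\|,
\]
using also $\langle Au,u\rangle = \tfrac{1}{\lambda}\mathrm{Im}\langle v,u\rangle$ and $-\langle Tu,u\rangle = \mathrm{Re}\langle v,u\rangle \pm \eta\langle Au,u\rangle$. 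For $|\lambda|$ large this yields $\|v\|\ge c\|u\|$ directly, with no appeal to (H4)--(H5). Your weighted test-function approach ($u - \theta\langle\lambda\rangle^{-1}Ku$) gives only $\|v\|\gtrsim\langle\lambda\rangle^{-1}\|u\|$ precisely because it does not exploit the equation to trade $Au$ for lower-order terms.

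For the bounded-frequency range the paper proceeds differently from you as well: rather than relying on the (non-uniform) Kawashima bound on a compact set, it shows directly that each $M^\pm_\eta(\lambda)$ is Fredholm of index zero (finite-rank perturbation of an operator coercive on $\bV$) with trivial kernel, hence invertible; continuity of the inverse on the compact interval then gives the uniform bound there. Your compact-range argument is salvageable, but only once you have invertibility for each fixed $\lambda$, which requires either the Fredholm route or a lower bound for the adjoint as well --- something you mention but do not carry out.
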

To prove this theorem we use that the differential operator with constant coefficients $\cL^\pm_\eta$ is similar to an operator of multiplication on $L^2(\RR,\bH)$  by an operator-valued function, $\widehat{\cL^\pm_\eta}(\omega)=2\pi\rmi\omega A-Q'(u^\pm)\mp\eta A$. First, we prove that $\widehat{\cL^\pm_\eta}(\omega)$ is a Fredholm linear operator with index $0$ and empty kernel, which allows us to infer that $\widehat{\cL^\pm_\eta}(\omega)$ is invertible for any $\omega\in\RR$. These steps require two key ingredients: 1) the eigenspaces of the self-adjoint, bounded, linear operator $A$ have a trivial intersection with the orthogonal complement of the image of the bilinear map $Q$; 2) small perturbations of $Q'(u^\pm)$ by terms of the form $sA$ with $s$ small enough, are invertible with bounded inverse on the entire space $\bH$. Finally, we prove that the function $\omega\to\big(\widehat{\cL^\pm_\eta}(\omega)\big)^{-1}$ is bounded on $\RR$, using the Kawashima condition.

Next, we show that equations $Au'=Q'(u^\pm)u$ and $Au'=\big(Q'(u^\pm)\pm\eta A\big)u$ are equivalent to an equation of the form $u'=Su$. In
%any
each
of these cases the linear operator $S$ does not generate a $C^0$-semigroup,
%CHANGEDkz, added this for clarity:
but rather a {\it bi-semigroup} \cite{BGK,LP2};
% More precisely,
that is, the linear operator $S$ has the decomposition $S=S_1\oplus(-S_2)$ on a direct sum decomposition of the entire space $\bH=\bH_1\oplus\bH_2$, where $S_j$, $j=1,2$, generates a stable $C^0$-semigroup on $\bH_j$, $j=1,2$. We recall that the first order linear differential operators with constant coefficients $\partial_\tau-S$ is invertible on function spaces such as $L^2(\RR,\bH)$ if and only if the equation $u'=Su$ has an exponential dichotomy on $\RR$. We note that for any $u_0\in\bV^\perp$ the function $u(\tau)=u_0$ is a solution of equation $Au'=Q'(u^\pm)u$. Therefore, equation $Au'=Q'(u^\pm)u$ does not have an exponential dichotomy on the entire space $\bH$;
instead it exhibits an exponential dichotomy on a direct complement of the finite dimensional space $\bV^\perp$. To prove this result, we reduce the equation by using the decomposition
\begin{equation*}
A=\begin{bmatrix}
A_{11}&A_{12}\\
A_{21}&A_{22}\end{bmatrix}:\bV^\perp\oplus\bV\to\bV^\perp\oplus\bV,\quad Q'(u^\pm)=\begin{bmatrix}
0&0\\
0&Q_{22}'(u^\pm)\end{bmatrix}:\bV^\perp\oplus\bV\to\bV^\perp\oplus\bV.
\end{equation*}
Indeed, if $u$ is a solution of equation $Au'=Q'(u^\pm)u$, then the pair $(h,v)$ defined by $v=P_{\bV}u$ and $h=P_{\bV^\perp}u$, satisfies the system
\begin{equation*}
(A_{22}-A_{21}A_{11}^{-1}A_{12})v'=Q'_{22}(u^\pm)v,\quad h=-A_{11}^{-1}A_{12}v.
\end{equation*}
We introduce the linear operators $S^\eta_\pm:=A^{-1}\big(Q'(u^\pm)\pm\eta A\big)$,
$S^{\mathrm{r}}_\pm=\tilde{A}^{-1}Q_{22}'(u^\pm)$, where
$\tilde A:=(A_{22}-A_{21}A_{11}^{-1}A_{12})$.
These are densely defined, as we will show later (see Remark \ref{r3.1}), and indeed generate bi-semigroups.
Our dichotomy results are summarized in the following theorem.
\begin{theorem}\label{c3.8}
Assume Hypotheses (H1), (H3), (H4) and (H5). Then, the following assertions hold true:
\begin{enumerate}
\item[(i)] The bi-semigroup generated by $S^\eta_\pm$ is exponentially stable on $\bH$;
\item[(ii)] The bi-semigroup generated by $S^{\mathrm{r}}_\pm$ is exponentially stable on $\bV$;
\item[(iii)] Equation $Au'=\big(Q'(u^\pm)\pm\eta A\big)u$ has an exponential dichotomy on $\bH$;
\item[(iv)] The linear space $\bH$ can be decomposed into linear stable, center and
unstable subspaces, $\bH=\bV^\perp\oplus \bH^{\mathrm{s}}_\pm\oplus \bH^{\mathrm{u}}_\pm$ such that
\begin{align*}
&\mbox{for any}\;u_0\in\bV^\perp\;\mbox{the function}\;u(\tau)=u_0\;\mbox{is a solution of \eqref{ED-Lpm}}\nonumber\\
&\mbox{for any}\;u_0\in\bH^{\mathrm{s}}_\pm\;\mbox{the solution of \eqref{ED-Lpm} on}\;\RR_+\;\mbox{with}\;u(0)=u_0\;\mbox{decays exponentially at}\;+\infty\nonumber\\
&\mbox{for any}\;u_0\in\bH^{\mathrm{u}}_\pm\;\mbox{the solution of \eqref{ED-Lpm} on}\;\RR_-\;\mbox{with}\;u(0)=u_0\;\mbox{decays exponentially at}\;-\infty.
\end{align*}
\end{enumerate}
\end{theorem}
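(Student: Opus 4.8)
The plan is to read all four assertions off Theorem~\ref{t2.9} through the frequency-domain/resolvent dictionary together with the bi-semigroup formalism of \cite{LP2,LP3,BGK}; essentially all of the analysis already sits in Theorem~\ref{t2.9}, and what remains is bookkeeping. First I would record two algebraic facts. The operators $M^\pm_\eta:=Q'(u^\pm)\pm\eta A$ are boundedly invertible on $\bH$ for $\eta$ small (this is ingredient ``2)'' in the proof of Theorem~\ref{t2.9}); and the Schur complement $\tilde A=A_{22}-A_{21}A_{11}^{-1}A_{12}$ is self-adjoint and one-to-one on $\bV$ --- self-adjointness because $A=A^*$, and injectivity because $A$ is injective and $A_{11}=P_{\bV^\perp}A_{|\bV^\perp}$ is invertible (Hypothesis~(H5)), so that $\tilde Av=0$ forces $A(-A_{11}^{-1}A_{12}v,v)=(0,\tilde Av)=0$ --- while $Q'_{22}(u^\pm)\le-\delta_\pm I_\bV$ by (H2)(ii), hence is invertible on $\bV$. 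A routine computation then gives, on the respective resolvent sets,
\[
(\lambda-S^\eta_\pm)^{-1}=\big(\lambda A-M^\pm_\eta\big)^{-1}A,
\qquad
(\lambda-S^{\mathrm{r}}_\pm)^{-1}=\big(\lambda\tilde A-Q'_{22}(u^\pm)\big)^{-1}\tilde A,
\]
so that spectral questions about $S^\eta_\pm$ and $S^{\mathrm{r}}_\pm$ reduce to invertibility of the pencils $\lambda A-M^\pm_\eta$ and $\lambda\tilde A-Q'_{22}(u^\pm)$.

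For (i): $S^\eta_\pm$ generates a bi-semigroup (dense definedness is Remark~\ref{r3.1}, and generation follows as in \cite{LP2,BGK}), say $S^\eta_\pm=S_1\oplus(-S_2)$ on $\bH=\bH_1\oplus\bH_2$. Since $\widehat{\cL^\pm_\eta}(\omega)=2\pi\rmi\omega A-M^\pm_\eta$, Theorem~\ref{t2.9} says $2\pi\rmi\omega A-M^\pm_\eta$ is boundedly invertible for all $\omega\in\RR$ with $\sup_\omega\|(2\pi\rmi\omega A-M^\pm_\eta)^{-1}\|<\infty$; by the first resolvent identity above, $\rmi\RR\subset\rho(S^\eta_\pm)$ and $\sup_{\beta\in\RR}\|(\rmi\beta-S^\eta_\pm)^{-1}\|\le\|A\|\sup_\omega\|\widehat{\cL^\pm_\eta}(\omega)^{-1}\|<\infty$. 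Decomposing the resolvent along $\bH_1\oplus\bH_2$, both $\sup_\beta\|(\rmi\beta-S_1)^{-1}\|$ and $\sup_\beta\|(\rmi\beta-S_2)^{-1}\|$ ($=\sup_\beta\|(\rmi\beta+S_2)^{-1}\|$) are finite, so by the Gearhart--Pr\"uss theorem the two component $C^0$-semigroups are exponentially stable; this is (i). Assertion (iii) then follows: the (constant) separating projection of this bi-semigroup is an exponential-dichotomy projection for $u'=S^\eta_\pm u$, equivalently for $Au'=(Q'(u^\pm)\pm\eta A)u$ --- alternatively one invokes directly the equivalence, recalled in the text, between invertibility of $\cL^\pm_\eta$ on $L^2(\RR,\bH)$ (Theorem~\ref{t2.9}) and the existence of an exponential dichotomy.

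For (ii): again $S^{\mathrm{r}}_\pm$ generates a bi-semigroup, and here the reduced pencil needs only a one-line coercivity estimate rather than the Kawashima argument, because the reduced collision operator $Q'_{22}(u^\pm)$ is negative definite on all of $\bV$. As $\tilde A$ and $Q'_{22}(u^\pm)$ are self-adjoint and $\langle\tilde Av,v\rangle\in\RR$, for $\beta\in\RR$ and $v\in\bV$
\[
\operatorname{Re}\big\langle\big(\rmi\beta\tilde A-Q'_{22}(u^\pm)\big)v,v\big\rangle=-\langle Q'_{22}(u^\pm)v,v\rangle\ge\delta_\pm\|v\|^2,
\]
so $\rmi\beta\tilde A-Q'_{22}(u^\pm)$ and its adjoint $-\rmi\beta\tilde A-Q'_{22}(u^\pm)$ are bounded below by $\delta_\pm$, hence $\rmi\beta\tilde A-Q'_{22}(u^\pm)$ is boundedly invertible with norm of the inverse at most $\delta_\pm^{-1}$, uniformly in $\beta$; the second resolvent identity gives $\rmi\RR\subset\rho(S^{\mathrm{r}}_\pm)$ with $\sup_\beta\|(\rmi\beta-S^{\mathrm{r}}_\pm)^{-1}\|\le\|\tilde A\|\,\delta_\pm^{-1}$, and Gearhart--Pr\"uss on the two component semigroups yields (ii). For (iv) I would run the reduction set up before the theorem, tracking the constant it produces: if $u=(h,v)$ with $h=P_{\bV^\perp}u$, $v=P_\bV u$ solves \eqref{ED-Lpm}, the $\bV^\perp$-row reads $A_{11}h'+A_{12}v'=0$, so $h+A_{11}^{-1}A_{12}v\equiv c$ for some $c\in\bV^\perp$, while the $\bV$-row reduces to $\tilde Av'=Q'_{22}(u^\pm)v$, i.e.\ $v'=S^{\mathrm{r}}_\pm v$. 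Hence the flow of \eqref{ED-Lpm} is conjugate to the direct sum of the trivial flow on $\bV^\perp$ (the constant solutions $u\equiv c$, $c\in\bV^\perp$) and the flow of $v'=S^{\mathrm{r}}_\pm v$ carried, via $v\mapsto(-A_{11}^{-1}A_{12}v,v)$, on the closed complement $\bH_{\mathrm{gr}}:=\{(-A_{11}^{-1}A_{12}v,v):v\in\bV\}$ of $\bV^\perp$ in $\bH$. Taking $\bV=\bV^{\mathrm{s}}_\pm\oplus\bV^{\mathrm{u}}_\pm$ from the bi-semigroup of (ii) and letting $\bH^{\mathrm{s}}_\pm,\bH^{\mathrm{u}}_\pm\subset\bH_{\mathrm{gr}}$ be their images under $v\mapsto(-A_{11}^{-1}A_{12}v,v)$, we obtain $\bH=\bV^\perp\oplus\bH^{\mathrm{s}}_\pm\oplus\bH^{\mathrm{u}}_\pm$ with the three listed properties, directly from (ii) and the conjugacy.

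The main obstacle is not in this statement but in Theorem~\ref{t2.9} on which it relies --- the Fredholm-index-zero and empty-kernel analysis of $\widehat{\cL^\pm_\eta}(\omega)$ and the Kawashima-based uniform resolvent bound on $\rmi\RR$; granting that, Theorem~\ref{c3.8} is a translation into dynamical-systems language. The only point internal to the present proof that needs care is the constant $c\in\bV^\perp$ in the reduction for (iv): the $\bV^\perp$-row only forces $h+A_{11}^{-1}A_{12}v$ to be constant, not zero, so the center subspace is genuinely $\bV^\perp$ and the hyperbolic dynamics lives on the graph complement $\bH_{\mathrm{gr}}$, conjugate to $\bV$ through $S^{\mathrm{r}}_\pm$; one also needs, before quoting Gearhart--Pr\"uss and the bi-semigroup criteria of \cite{LP2,LP3,BGK}, the dense definedness of $S^\eta_\pm$ and $S^{\mathrm{r}}_\pm$ from Remark~\ref{r3.1}.
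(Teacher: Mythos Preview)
Your proposal is correct and follows the same overall architecture as the paper: verify that the pencils $\lambda A-(Q'(u^\pm)\pm\eta A)$ and $\lambda\tilde A-Q'_{22}(u^\pm)$ are uniformly invertible on $\rmi\RR$ (the paper packages this as checking an abstract Hypothesis (S) for the pairs $(\Gamma,E)=(A,Q'(u^\pm)\pm\eta A)$ and $(\tilde A,Q'_{22}(u^\pm))$, using exactly the lemmas behind Theorem~\ref{t2.9} for the first and your coercivity computation for the second), then convert this into exponential stability of the bi-semigroup, and finally read off (iii) and (iv) from (i) and (ii) via the reduction to \eqref{ED-Lpm-decomposed2} and the graph map $v\mapsto(-A_{11}^{-1}A_{12}v,v)$, arriving at the same subspaces $\bH^{\rms/\rmu}_\pm$ as in \eqref{subspaces-ED-Lpm}.

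The one genuine difference is in the conversion step. The paper does not invoke Gearhart--Pr\"uss on the component semigroups; instead it shows (Lemma~\ref{l3.5}, Theorem~\ref{t3.6}) that the first-order operator $\cS_{\Gamma,E}=\partial_\tau-S_{\Gamma,E}$ is invertible on $L^2(\RR,\bX)$, with inverse the Fourier multiplier $\cF^{-1}M_{R(2\pi\rmi\cdot,S_{\Gamma,E})}\cF$, and then quotes the dichotomy theorem of \cite{LP2} (invertibility of $\partial_\tau-S$ on $L^2$ $\Leftrightarrow$ exponential stability of the bi-semigroup). Your route via Gearhart--Pr\"uss is slightly more elementary and avoids the $L^2$-operator machinery, but it requires you to already have the bi-semigroup decomposition $\bH=\bH_1\oplus\bH_2$ with bounded projections in hand (you gesture to \cite{LP2,BGK} for this; the paper proves it explicitly in Lemma~\ref{l3.2} from the spectral decomposition of $\Gamma$). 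The paper's route, by contrast, builds the $L^2$ framework that is reused heavily in Sections~4--5 to define and analyze the Fourier multiplier $\cK_{\Gamma,E}$, so it is doing double duty. Either argument is complete for the present theorem.
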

%CHANGEDkz:
%Now, we
%We now
%At this point we
From this point, we
turn our attention towards our main goal, the existence of stable/unstable manifolds of solutions of equation \eqref{nonlinear} near the equilibria $u^\pm$. The first step is to show that this equation can be reduced to an equation of the form
\begin{equation}\label{general-GammaE}
\Gamma\bu'=E\bu+D(\bu,\bu),
\end{equation}
where $D(\cdot,\cdot)$ is a bounded, bilinear map, $E$ is negative definite and the linear operator $\Gamma^{-1}E$ generates a stable bi-semigroup $\{T^{\Gamma,E}_{\rms/\rmu}(\tau)\}_{\tau\geq0}$. To construct the manifolds, we need to analyze mild solutions of equation \eqref{general-GammaE} on $\RR_\pm$, using the results from Theorem~\ref{t2.9} and Theorem~\ref{c3.8}. Next, we apply, formally, the Fourier transform in \eqref{general-GammaE} and then we solve for $\cF\bu$.
In this way we obtain that mild solutions of equation \eqref{general-GammaE} on say $\RR_+$ satisfy equation
\begin{equation}\label{fix-cK}
\bu(\tau)=T^{\Gamma,E}_\rms(\tau)\bu(0)+\big(\cK_{\Gamma,E} D(\bu,\bu)\big)(\tau),\;\tau\geq 0.
\end{equation}
Here $\cK_{\Gamma,E}$ is the Fourier multiplier defined by the operator-valued function $R_{\Gamma,E}(\omega)=(2\pi\rmi\omega\Gamma-E)^{-1}$. The linear operator $\cK_{\Gamma,E}$ is well-defined and bounded on general Hilbert space valued $L^2(\RR,\bH)$ spaces. To construct center-stable manifolds of evolution equations on finite-dimensional spaces, one uses a fixed point argument to solve equation \eqref{fix-cK} on the space $C_{\mathrm{b}}(\RR,\bH)$, of continuous and bounded functions, or on $L^\infty(\RR,\bH)$. However, in our infinite-dimensional case such an argument does not seem to be possible, since the Fourier multiplier $\cK_{\Gamma,E}$ cannot be extended to a bounded linear operator on $L^\infty(\RR,\bH)$, see Example~\ref{e4.7}. Therefore, a crucial point of our construction is to find a proper subspace of $L^\infty(\RR,\bH)$ that is invariant under $\cK_{\Gamma,E}$. Since the operator-valued function $R_{\Gamma,E}$ is bounded, one can readily check that $H^1(\RR,\bH)$ is invariant under $\cK_{\Gamma,E}$. However, equation \eqref{fix-cK} is a functional equation on the half-line, not the full line. Moreover, since {\em not} every trajectory of the map $R_{\Gamma,E}$ belongs to $L^2$, it turns out that  the space $H^1(\RR_+,\bH)$ is {\em not} invariant under $\cK_{\Gamma,E}$. To deal with this setback,
we parameterize equation \eqref{fix-cK}. In Section~\ref{s4} we find solutions $\bu$ such that $\bu(0)=\bv_0-E^{-1}D(\bu(0),\bu(0))$ where $\bv_0$ is a parameter in a dense subspace. Substituting in  equation \eqref{fix-cK}, we obtain that to construct our center-stable/unstable manifold it is enough to prove existence of solutions of equation
\begin{equation}\label{mod-var-const1}
\bu=T_\rms^{\Gamma,E}(\cdot)P_\rms^{\Gamma,E}\bv_0+(\cK_{\Gamma,E}D(\bu,\bu))_{|\RR_+}-T_\rms^{\Gamma,E}(\cdot)P_\rms^{\Gamma,E}E^{-1}D(\bu(0),\bu(0)).
\end{equation}
An important step of our construction is to find an appropriate subspace of parameters $\bv_0$ such that the trajectory $T_\rms^{\Gamma,E}(\cdot)P_\rms^{\Gamma,E}\bv_0$ belongs to $H^1$.    To achieve this goal, we use that the linear operators $\Gamma$ and $E$ are self-adjoint and bounded, hence the bi-semigroup generator $\Gamma^{-1}E$ is similar to a multiplication operator
by a real valued function bounded from below on some $L^2$ space. See Section~\ref{s4} for the details of this construction.
\begin{theorem}\label{t1.3}
Assume Hypotheses (H1), (H3), (H4) and (H5). Then, for any integer $r\geq 1$ there exists a $C^r$ local stable manifold $\cM^+_\rms$ near $u^+$ and a $C^r$ local unstable manifold $\cM^-_\rmu$ near $u^-$, expressible in $w^\pm=u-u^\pm$ as graphs of $C^r$ functions $\cJ^+_\rms:\bH^+_\rms\cap\dom\big(|\tilde{A}^{-1}Q_{22}'(u^+)|^{\frac{1}{2}}\big)\to\bH^+_\rmu\oplus\bV^\perp$ and $\cJ^-_\rmu:\bH^-_\rmu\cap\dom\big(|\tilde{A}^{-1}Q_{22}'(u^-)|^{\frac{1}{2}}\big)\to\bH^-_\rms\oplus\bV^\perp$ from $\bH_{\rms/\rmu}^\pm\cap\dom\big(|\tilde{A}^{-1}Q_{22}'(u^\pm)|^{\frac{1}{2}}\big)$ with norm
$\|h\|_{\dom\big(|\tilde{A}^{-1}Q_{22}'(u^\pm)|^{\frac{1}{2}}\big)}=\big(\|h\|_{\bH}^2+\big\||\tilde{A}^{-1}Q_{22}'(u^\pm)|^{\frac{1}{2}}h\big\|_{\bH}^2\big)^{\frac{1}{2}}$
to $\bH_{\rmu/\rms}^\mp$ with norm $\|\cdot\|_{\bH}$, respectively, that are locally invariant under the flow of equation $Au'=Q(u)$ and uniquely  determined by the property that $w^\pm\in H^1(\RR_\pm,\bH)$. (Recall that $\tilde{A}=A_{22}-A_{21}A_{11}^{-1}A_{12}$.)
\end{theorem}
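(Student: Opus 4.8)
The plan is to reduce equation \eqref{nonlinear}, written near $u^+$ in the shifted variable $w=u-u^+$ as $Aw' = Q'(u^+)w + Q(w)$, to the normal form \eqref{general-GammaE} via the $\bV^\perp \oplus \bV$ block decomposition already used for Theorem~\ref{c3.8}. Writing $w=(h,v)$ with $h=P_{\bV^\perp}w$, $v=P_{\bV}w$, the $h$-component is algebraically slaved, $h = -A_{11}^{-1}A_{12}v + (\text{quadratic in } w)$, and substituting back yields a closed equation $\tilde A v' = Q_{22}'(u^+)v + D(v,v)$ for $v$ alone, with $D$ bounded bilinear (bounded because $Q$ is bounded bilinear and $A_{11}^{-1}$ exists by (H5)). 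Thus $\Gamma = \tilde A$, $E = Q_{22}'(u^+)$, which is negative definite on $\bV$ by (H2)(ii) applied to $Q'(u^+)$ restricted appropriately, and $\Gamma^{-1}E = S^{\mathrm r}_+$ generates an exponentially stable bi-semigroup by Theorem~\ref{c3.8}(ii). I will carry out this reduction carefully first, including verifying that a solution of the reduced system lifts back to a solution of $Au'=Q(u)$ with the same regularity, and that $w\in H^1(\RR_+,\bH)$ is equivalent to $v\in H^1(\RR_+,\bV)$.

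Next I would solve the modified variation-of-constants equation \eqref{mod-var-const1} by a contraction mapping argument on a suitable ball in $H^1(\RR_+,\bV)$. The key analytic inputs are: (a) $\cK_{\Gamma,E}$ is bounded on $H^1(\RR,\bH)$ because $R_{\Gamma,E}(\omega)=(2\pi\rmi\omega\Gamma-E)^{-1}$ is uniformly bounded (from the dichotomy/Kawashima estimates underlying Theorems~\ref{t2.9}, \ref{c3.8}), so the restriction $(\cK_{\Gamma,E}D(\bu,\bu))_{|\RR_+}$ gains no derivative loss; (b) the boundary term $T_\rms^{\Gamma,E}(\cdot)P_\rms^{\Gamma,E}\bv_0$ lies in $H^1$ precisely when $\bv_0\in\dom(|E|^{1/2})$ with the graph norm — this uses that $\Gamma^{-1}E$ is similar, via the spectral theorem for the pair of bounded self-adjoint operators $\Gamma,E$, to multiplication by a real function bounded below by $\delta>0$ on an $L^2$ space, so that $\int_0^\infty \|S^{\mathrm r}_+ e^{-\tau S^{\mathrm r}_+}\bv_0\|^2\,d\tau \sim \||E|^{1/2}\bv_0\|^2$; and (c) the quadratic terms $D(\bu,\bu)$ and $E^{-1}D(\bu(0),\bu(0))$ are controlled using the algebra/trace estimate $\|D(\bu,\bu)\|_{H^1\cap L^1}\lesssim \|\bu\|_{H^1}^2$ together with the continuity of the trace $\bu\mapsto\bu(0)$ from $H^1(\RR_+,\bV)$ into $\dom(|E|^{1/2})$. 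Combining these, the right-hand side of \eqref{mod-var-const1} maps a small ball of $H^1(\RR_+,\bV)$ into itself and is a contraction for $\|\bv_0\|$ small, giving a unique fixed point $\bu=\bu(\cdot\,;\bv_0)$ depending on the parameter $\bv_0$ in the dense subspace $\bH^+_\rms\cap\dom(|\tilde A^{-1}Q_{22}'(u^+)|^{1/2})$.

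With the fixed point in hand, the manifold $\cM^+_\rms$ is defined as the set of initial data $w(0)=\bv_0 - E^{-1}D(\bu(0),\bu(0))$ as $\bv_0$ ranges over a neighborhood of $0$ in the parameter space; I would then show $\cM^+_\rms$ is the graph of $\cJ^+_\rms$ by verifying that the map $\bv_0\mapsto P_\rms^{\Gamma,E}w(0)$ is a local $C^r$ diffeomorphism (it is the identity plus a quadratically small correction) and setting $\cJ^+_\rms := (P_\rmu^{\Gamma,E}\oplus P_{\bV^\perp})w(0)\circ(P_\rms^{\Gamma,E}w(0))^{-1}$. The $C^r$ regularity in $\bv_0$ follows from the $C^\infty$ (indeed polynomial) dependence of the fixed-point equation on $\bu$ and the uniform contraction, via the standard fiber-contraction / implicit function theorem argument for parameter-dependent fixed points, applied on the scale of spaces $H^1(\RR_+,\bV)$; one checks the derivatives up to order $r$ solve the formally differentiated equations, which are again contractions. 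Local invariance under the flow and the uniqueness characterization "$w^+\in H^1(\RR_+,\bH)$" come directly from the construction: any $H^1$ solution near $u^+$ must satisfy \eqref{fix-cK}, hence after the parameterization \eqref{mod-var-const1}, hence equals the fixed point. The statement for $\cM^-_\rmu$ near $u^-$ is obtained by the reflection $\tau\mapsto -\tau$, which swaps stable and unstable and replaces $S^{\mathrm r}_-$ by $-S^{\mathrm r}_-$.

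The main obstacle I anticipate is point (b) above together with the compatibility of the two norms appearing in the statement: one must show that the natural parameter space on which $T_\rms^{\Gamma,E}(\cdot)P_\rms^{\Gamma,E}\bv_0\in H^1$ is \emph{exactly} $\bH^+_\rms\cap\dom(|\tilde A^{-1}Q_{22}'(u^+)|^{1/2})$ with the graph norm, and that the quadratic correction $E^{-1}D(\bu(0),\bu(0))$ respects this domain — i.e. that $D(\bu(0),\bu(0))\in\Range(E)$ with $E^{-1}D(\bu(0),\bu(0))\in\dom(|E|^{1/2})$ and depends smoothly on $\bu$ in that topology. This forces the contraction to be run on the graph-norm space rather than on plain $L^\infty$ or $C_{\mathrm b}$, and all the nonlinear estimates must be redone there; verifying that $D$ and the trace map interact correctly with the fractional domain $\dom(|E|^{1/2})$ is the delicate technical heart of the argument, and is presumably where the bulk of Section~\ref{s4} is spent.
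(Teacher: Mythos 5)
Your overall architecture does match the paper's: reduce to $\tilde{A}v'=Q'_{22}(u^\pm)v+D(v,v)$ via the block decomposition (note the slaving is exact, $h=-A_{11}^{-1}A_{12}v$ with no quadratic correction, since $Q$ takes values in $\bV$), run a contraction for the parameterized equation \eqref{mod-var-const1} in $H^1(\RR_+,\bV)$ with $\bv_0$ in a fractional domain, obtain the graph representation by inverting a near-identity map, and get $\cM^-_\rmu$ by $\tau\mapsto-\tau$. However, your justification of the analytic core has a genuine gap. In (a) you argue that boundedness of $R_{\Gamma,E}$ makes $\cK_{\Gamma,E}$ bounded on $H^1(\RR,\bH)$, so that $(\cK_{\Gamma,E}D(\bu,\bu))_{|\RR_+}$ loses no derivative. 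This fails on the half line: the zero extension of $D(\bu,\bu)\in H^1(\RR_+,\bX)$ jumps at $\tau=0$ and is not in $H^1(\RR,\bX)$, and the paper stresses precisely that $H^1(\RR_+,\bX)$ is \emph{not} invariant under $\cK_{\Gamma,E}$, because $R_{\Gamma,E}(\cdot)\bx\notin L^2(\RR,\bX)$ for $\bx\notin\dom(|S_{\Gamma,E}|^{1/2})$ (cf.\ Example~\ref{e4.7} and the discussion before \eqref{mod-var-const}). What saves the fixed-point scheme is not separate control of the two terms of \eqref{mod-var-const1} but an exact cancellation between the restricted multiplier and the correction term: for $\cKm f=(\cK_{\Gamma,E}f)_{|\RR_+}-T_\rms^{\Gamma,E}(\cdot)P_\rms^{\Gamma,E}E^{-1}f(0)$ one has $(\cKm f)'=(\cK_{\Gamma,E}f')_{|\RR_+}$, proved by an explicit Fourier computation on the dense subspace $\Gamma H^1(\RR_+,\bX)$ plus density (Lemma~\ref{l4.8}, weighted version Lemma~\ref{l4.11}). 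Your proposal contains no substitute for this step.

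Relatedly, in (b), (c) and your closing paragraph you conflate $\dom(|E|^{1/2})$ with the relevant space $\dom(|S_{\Gamma,E}|^{1/2})=\dom\big(|\tilde{A}^{-1}Q'_{22}(u^\pm)|^{1/2}\big)$ (since $E$ is bounded with bounded inverse, $\dom(|E|^{1/2})$ is all of $\bX$), and, more seriously, you propose to prove that the trace $\bu(0)$ of an $H^1(\RR_+)$ solution and the correction $E^{-1}D(\bu(0),\bu(0))$ each land in that fractional domain. Neither is true in general, and neither is needed: the parameterization $\bu(0)=\bv_0-E^{-1}D(\bu(0),\bu(0))$ is designed exactly so that only the combination $\bv_0$ is required to lie in $\bX^{\Gamma,E}_\rms\cap\dom(|S_{\Gamma,E}|^{1/2})$, while the individual pieces need not (see the derivative formula \eqref{deriv-spectral} and the surrounding discussion). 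So the step you single out as the technical heart would be a dead end; the actual heart is the cancellation identity above, together with Lemma~\ref{l5.1} (trajectories $T_\rms^{\Gamma,E}(\cdot)\bv_0\in H^1_\alpha$ for such $\bv_0$) and, for local invariance, the translation formula for $\cKm$ and the verification that the time-shifted parameter $\bv_1=T_\rms^{\Gamma,E}(\tau_0)\bv_0+\int_0^{\tau_0}T_\rms^{\Gamma,E}(\tau_0-s)P_\rms^{\Gamma,E}E^{-1}\big(D(\bu_0,\bu_0)\big)'(s)\,\rmd s$ remains in the fractional-domain ball (Lemma~\ref{l5.4}); invariance does not follow ``directly from the construction'' as you assert.
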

Finally, we use this result to prove that a large class of profiles converging to equilibria $u^\pm$ decay exponentially.
\begin{corollary}\label{c5.9}
Assume Hypotheses (H1), (H3), (H4) and (H5). Let $u^*\in H^1(\RR,\bH)$ be a solution 
of equation $Au_\tau=Q(u)$, $H^1$-convergent to $u^\pm$ in the sense that $u^*-u^\pm\in H^1(\RR_\pm,\bH)$, 
and let $-\nu_\pm:=-\nu\big(\tilde{A},Q'_{22}(u^\pm)\big)<0$ be the decay rate of the bi-semigroup generated by the pair $\big(\tilde{A},Q'_{22}(u^\pm)\big)$. 
Then, there exist $\alpha\in (0,\min\{\nu_+,\nu_-\})$ such that $u^*-u^\pm\in H^1_\alpha(\RR_\pm,\bH)$. In particular, we have that there exists $\alpha>0$ such that $\|u^*(\tau)-u^\pm\|\leq c(\alpha)e^{-\alpha|\tau|}$ for any $\tau\in\RR_\pm$.
\end{corollary}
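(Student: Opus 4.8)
The plan is to bootstrap from the qualitative statement of Theorem~\ref{t1.3} (that the manifolds are characterized by membership of $w^\pm=u^*-u^\pm$ in the plain space $H^1(\RR_\pm,\bH)$) to the quantitative weighted statement, by running the stable-manifold construction a second time in the $\eta$-shifted frame. Fix the $+$ end; the $-$ end is symmetric. First I would recall that $u^*-u^+\in H^1(\RR_+,\bH)$ by hypothesis, so $u^*$ lies on the local stable manifold $\cM^+_\rms$ of equation $Au_\tau=Q(u)$ at $u^+$. The key observation is that exponential decay with rate $\alpha$ is equivalent to membership in the weighted space $H^1_\alpha(\RR_+,\bH)$, and that conjugating by the weight $e^{\alpha\tau}$ turns the profile equation near $u^+$ into one whose linearization is governed by the shifted pair $(\tilde A, Q_{22}'(u^+)+\alpha\tilde A)$ rather than $(\tilde A, Q_{22}'(u^+))$. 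By Theorem~\ref{t2.9} and Theorem~\ref{c3.8}, for $\alpha<\eta^*$ small enough the operator $S^{\mathrm r}_+$ perturbed by $\alpha$ still generates an exponentially stable bi-semigroup on $\bV$, and the relevant dichotomy on $\bH$ persists; in fact the stable subspace of the $\alpha$-shifted problem agrees with $\bH^+_\rms$ for $\alpha$ in an interval $(0,\nu_+)$, because $-\nu_+$ is precisely the spectral bound (decay rate) of the unshifted bi-semigroup and the spectrum of $\tilde A^{-1}Q_{22}'(u^+)$ on the stable side stays in $\mathrm{Re}\le -\nu_+<-\alpha$.

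Concretely, I would set $w(\tau)=u^*(\tau)-u^+$, which solves a quasilinear equation of the schematic form $\tilde A v'=Q_{22}'(u^+)v+D_+(v,v)$, $h=-A_{11}^{-1}A_{12}v$, after the $\bV^\perp\oplus\bV$ reduction used already in the paper; equivalently, in the reduced form \eqref{general-GammaE}, $\Gamma\bu'=E\bu+D(\bu,\bu)$ with $\Gamma,E$ self-adjoint, $E$ negative definite. Then I would apply the uniqueness clause of Theorem~\ref{t1.3} — but now for the $\alpha$-shifted pair: choosing any $\alpha\in(0,\min\{\nu_+,\nu_-\})$ with $\alpha<\eta^*$, the construction of Section~\ref{s4} produces a $C^r$ local stable manifold $\cM^{+,\alpha}_\rms$ at $u^+$ for the weighted problem, characterized by $w\in H^1_\alpha(\RR_+,\bH)$, and whose tangent space is the same $\bH^+_\rms\cap\dom(|\tilde A^{-1}Q_{22}'(u^+)|^{1/2})$ because the stable subspace is unchanged for $\alpha<\nu_+$. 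Since $\cM^{+,\alpha}_\rms\subset H^1_\alpha\subset H^1$ and both manifolds are graphs over the same domain with the same tangent space, locally near $u^+$ they coincide by the uniqueness in Theorem~\ref{t1.3}; hence the actual profile $w=u^*-u^+$, which lies on $\cM^+_\rms$, also lies on $\cM^{+,\alpha}_\rms$, i.e. $u^*-u^+\in H^1_\alpha(\RR_+,\bH)$. The pointwise bound then follows from the continuous embedding $H^1_\alpha(\RR_+,\bH)\hookrightarrow C_\alpha(\RR_+,\bH)$: $\|u^*(\tau)-u^+\|\le c(\alpha)e^{-\alpha\tau}\|u^*-u^+\|_{H^1_\alpha}$ for $\tau\ge0$. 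Repeating the argument at $u^-$ with the unstable manifold and $\RR_-$ completes the proof, possibly after shrinking $\alpha$ so that a single value works at both ends; note the profile need only be $H^1$-convergent on each half-line, so the two ends are handled independently and then the smaller rate is taken.

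The main obstacle I anticipate is not the weighting itself but verifying that the $\alpha$-shifted problem genuinely falls under the hypotheses of Theorem~\ref{t1.3} \emph{uniformly for $\alpha$ up to $\nu_+$}, with the stable subspace literally unchanged. One must check: (a) that $Q_{22}'(u^+)+\alpha\tilde A$ — equivalently the relevant shifted generator — still yields the negative-definite $E$ and the stable bi-semigroup structure required for \eqref{general-GammaE}, which needs $\alpha$ smaller than both $\eta^*$ from Theorem~\ref{t2.9} and the Kawashima-type constant controlling $\mathrm{Re}(KA-T)\ge\gamma_+ I$; (b) that the spectral/dichotomy data depends continuously on $\alpha$ and that no eigenvalue of the stable part crosses the line $\mathrm{Re}=-\alpha$ before $\alpha=\nu_+$, so that $\bH^+_\rms$ and $\dom(|\tilde A^{-1}Q_{22}'(u^+)|^{1/2})$ are preserved and the uniqueness clause applies verbatim; and (c) that the $H^1_\alpha\hookrightarrow C_\alpha$ embedding constant $c(\alpha)$ is finite for each admissible $\alpha$. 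Part (b) is where the definition $-\nu_+:=-\nu(\tilde A,Q_{22}'(u^+))$ as \emph{the} decay rate of the bi-semigroup is used essentially: it guarantees the half-open interval of admissible $\alpha$ is exactly $(0,\nu_+)$ intersected with $(0,\eta^*)$, and choosing $\alpha$ in the intersection of the two such intervals (for $+$ and $-$) gives the claimed $\alpha\in(0,\min\{\nu_+,\nu_-\})$.
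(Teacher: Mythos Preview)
Your approach is workable but takes an unnecessary detour compared with the paper. The paper does \emph{not} shift the operator and build a second manifold: instead, the fixed-point construction of Section~5 (Lemma~5.2) is carried out \emph{directly} in the weighted space $H^1_\alpha(\RR_+,\bX)$ for every $\alpha\in[0,\tnu]$, $\tnu<\nu(\Gamma,E)$, with $\eps_1,\eps_2$ chosen uniformly in $\alpha$. The engine for this is Lemma~4.11, which shows the modified multiplier $\cKm$ is bounded on $H^1_\alpha$ (no conjugation needed), together with Lemma~5.1 for the trajectory term. Thus the solution $\obu(\cdot;\bv_0)$ on $\cM^{\Gamma,E}_\rms$ lies in $H^1_\alpha$ automatically, for every such $\alpha$. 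The corollary is then one line: reduce $u^*-u^\pm$ to $v^*-v^\pm$ in $\bV$; by the uniqueness clause of Theorem~5.7 (small $H^1$ solutions coincide with the constructed ones), $v^*-v^\pm\in H^1_\alpha(\RR_\pm,\bV)$, and $h^*-h^\pm=-A_{11}^{-1}A_{12}(v^*-v^\pm)$ gives the $\bH$ statement.

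Your conjugation route can be made to work, but note a genuine wrinkle you did not flag: after substituting $\tilde u=e^{\alpha\tau}u$, the equation becomes $\Gamma\tilde u'=(E+\alpha\Gamma)\tilde u+e^{-\alpha\tau}D(\tilde u,\tilde u)$, with a \emph{nonautonomous} nonlinearity. So Theorem~1.3/5.7 cannot be quoted verbatim for the shifted problem; you would have to rerun the Section~4--5 estimates allowing a bounded $\tau$-dependent coefficient on $D$ (harmless for the contraction, but extra work). Your concern (a) is also real: $E+\alpha\Gamma$ stays negative definite only for $\alpha<\delta/\|\Gamma\|$, an additional constraint on top of $\alpha<\nu_+$. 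The paper's direct weighted-space argument sidesteps both issues.
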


\subsection{Applications to Boltzmann's equation}\label{s:boltzapp}
As mentioned above, the assumptions (H1)-(H5) of Section \ref{s:assumptions} are abstracted from, and satisfied by, the steady Boltzmann
equation with hard sphere collision potential \cite{MZ}, after the change of coordinates
$f\to \langle \xi \rangle^{1/2}f$, $Q \to \langle \xi \rangle^{1/2}Q$, and $\langle \xi\rangle:=\sqrt{1+|\xi|^2}$,
with $A=\xi_1/\langle \xi\rangle$.
Thus, Theorem \ref{t1.3} and Corollary \ref{c5.9} apply in particular to this fundamental case.
More generally, they apply to Boltzmann's equation with any collision potential, or ``cross-section,'' for which (H1)--(H5) are satisfied, with the
crucial aspects being boundedness of the nonlinear collision operator as a bilinear map and spectral gap of the linearized collision operator. 
This includes in addition the hard cutoff potentials of Grad \cite{CN,MZ}.

For the class of admissible cross-sections defined implicitly by (H1)-(H5), 
Corollary \ref{c5.9} implies exponential decay of $H^1_{loc}$ Boltzmann shock or boundary layer profiles {\it of arbitrary amplitude}, 
so long as such profiles (i) exist, and (ii) are uniformly bounded, and (iii) converge to their endstates in the weak sense 
that $u^*-u_\pm$ lies in $H^1(\R_\pm,\bH)$.
This fundamental property, a cornerstone of the dynamical systems approach to stability developed for viscous shock and relaxation waves,
had previously been established for kinetic shocks only in the small-amplitude limit \cite{LiuYu1,MZ}.

However, we do not here establish existence of large-amplitude profiles;
indeed, the ``structure problem,'' as discussed by Truesdell, Ruggeri, Boillat, and others \cite{BR}, 
of existence and structure of large-amplitude Boltzmann shocks, is one of the fundamental open problems in the theory.

\subsection{Discussion and open problems}\label{s:discuss}
In our analysis, the Hilbert structure of $\bH$ and symmetry of $A$ and $Q'(u_\pm)$ play an important role.
This structure is related to existence of a convex entropy for system \eqref{Relax} \cite{CLL}.
In the case of the Boltzmann equation, it is related to increase of thermodynamical entropy
and the Boltzmann $H$-Theorem; see \cite[Notes on the proof of Proposition 3.5, point 2]{MZ2}.

Further insight may be gained using the invertible
coordinate transformation $(-E)^{1/2}$ and spectral decomposition of $(-E)^{-1/2}\Gamma (-E)^{-1/2}$
to write the reduced system $\Gamma\bu'=E\bu+D(\bu,\bu)$ of \eqref{general-GammaE} formally as a
family of scalar equations
\be\label{scal}
(\alpha_\lambda \partial_\tau -1)\bu_\lambda=D_\lambda(\bu,\bu),
\ee
indexed by $\lambda$, where $\bu_\lambda$ is the coordinate of $\bu$ associated with spectrum $ \alpha_\lambda$, real,
in the eigendecomposition of $(-E)^{-1/2}\Gamma (-E)^{-1/2}$, with
$\|\bu\|_{\bH}^2=\int |\bu_\lambda|^2d\mu_\lambda$, where $d\mu$ denotes spectral measure associated with
$\lambda$ and $\alpha_\lambda$ are bounded with an accumulation point at $0$.
In the first place, we see directly that $(\Gamma\partial_\tau -E)$
is boundedly invertible on $L^2(\R,\bH)$, with resolvent kernel given in $\bu_\lambda$ coordinates by
the scalar resolvent
kernel
\be \label{R}
\hbox{\rm $R_\lambda (\tau,\theta)= \alpha_\lambda^{-1}e^{(\tau-\theta)/\alpha_\lambda^{-1}}$ whenever $(\tau-\theta)\alpha_\lambda<0$,}
\ee
which is readily seen to be {\it integrable with respect to $\tau$}, hence bounded coordinate-by-coordinate.

On the other hand, we see at the same time that the operator norm of the full kernel $R$
with respect to $L^2(\mu)$ is
\be\label{opnorm}
\|R(\tau,\theta)\|_{L^2(\mu)\to L^2(\mu)}= \sup_{\alpha_\lambda(\tau-\theta)<0}
|\alpha_\lambda^{-1}e^{(\tau-\theta)/\alpha_\lambda^{-1}}| \leq C/|\tau-\theta|\quad\mbox{for any}\quad \tau\ne\theta,
\ee
with a reverse inequality $\geq  C_1|\tau-\theta|$ holding for all $(\tau -\theta)_\lambda=c\alpha_\lambda$, in particular for a sequence of values approaching zero, hence {\it is unbounded}. If it happens that $\alpha_\lambda$ are continuous and cover all of $\R$, then this holds everywhere.
Likewise, a construction as in Example~\ref{e4.7} shows that
$(A\partial_\tau -Q'(u_\pm))$ is {\it not} boundedly invertible on $L^\infty(\R,\bH)$, motivating
our choice of spaces $H^1(\R,\bH)$, $H^1(\R_+,\bH)$ in the analysis,
rather than the usual $L^\infty(\R,\bH)$. This implies, by contradiction, that the operator norm of the resolvent kernel is 
%not only unbounded but non-integrable (cf. \cite{LiuYu,Z6}).
not only unbounded but non-integrable (cf. \cite{LiuYu}).

Using the finite-dimensional variation of constants formula scalar mode-by-scalar mode,
we may, further, express \eqref{scal} as the fixed point equation
\begin{align}\label{spectral}
\bu_\lambda(\tau)&=
e^{\alpha_\lambda^{-1}\tau} \Pi_S \bu_\lambda(0)
+ \int_0^\tau \Pi_S \alpha_\lambda^{-1} e^{\alpha_\lambda^{-1}(\tau-\theta)}D_\lambda (\bu(\theta) ,\bu(\theta) )\, \rmd\theta\nonumber\\
&\quad-\int_\tau^{+\infty}\Pi_U \alpha_\lambda^{-1}e^{\alpha_\lambda^{-1}(\tau-\theta)}D_\lambda(\bu(\theta),\bu(\theta))\, \rmd\theta,
\end{align}
where $\Pi_U$ and $\Pi_S$ denote projections onto the stable and unstable subspaces determined by
$\sgn \alpha_\lambda$. In \eqref{spectral} and \eqref{deriv-spectral} we denote the spectral components of $\Pi_{S/U}g$ by $\Pi_{S/U}g_\lambda$ for any $g\in L^2(\mu)$, slightly abusing the notation.
From this we find after a brief calculation/integration by parts the derivative formula
\begin{align}\label{deriv-spectral}
	\bu_\lambda'(\tau)&=
	\alpha_\lambda^{-1} e^{\alpha_\lambda^{-1}\tau}\big( \Pi_S \bu_\lambda(0) + D_\lambda(\bu(0),\bu(0)) \big)
+ \int_0^\tau \Pi_S \alpha_\lambda^{-1} e^{\alpha_\lambda^{-1}(\tau-\theta)}D_\lambda' (\bu(\theta) ,\bu(\theta) ) \, \rmd\theta \nonumber\\
& \quad
-\int_\tau^{+\infty}\Pi_U \alpha_\lambda^{-1}e^{\alpha_\lambda^{-1}(\tau-\theta)}D_\lambda'(\bu(\theta) ,\bu(\theta))\, \rmd\theta,
\end{align}
which shows that $\bu\in H^1(\R_+,L^2(\mu))$ only if
$\alpha_\lambda^{-1} e^{\alpha_\lambda^{-1}\tau}\Pi_S \big(\bu_\lambda(0) + D_\lambda(\bu(0),\bu(0)) \big) \in L^2(\R_+,L^2(\mu))$,
or
$$
\Pi_S \big(  \bu_\lambda(0) + D_\lambda(\bu(0),\bu(0)) \big) \in \dom ((-E)^{-1/2} \Gamma (-E)^{-1/2})^{1/2}.
$$
This is quite different from the usual finite-dimensional ODE or dynamical systems scenario,
and explains why we need to take some care in setting up the $H^1(\R_+,\bH)$ contraction formulation.
In particular, we find it necessary to parametrize not by $\Pi_S \bu(0)$ as is customary in the
finite-dimensional ODE case, but rather by $\Pi_S \bv_0:=
\Pi_S \big(  \bu_\lambda(0) + D_\lambda(\bu(0),\bu(0)) \big)$ where $\bu_\lambda'(0)= \alpha_\lambda^{-1}\bv_0$.

\subsubsection{Relation to previous work}\label{s:previous}
The issue of noninvertibility of $A$ for relaxation systems \eqref{Relax} originating from
kinetic models and approximations was pointed out in \cite{MasZ0,MasZ1,Z4}.
This issue has been treated for finite-dimensional systems by Dressler and Yong \cite{DY}
using singular perturbation techniques; see also \cite{H,LMNPZ,NPZ}.
These analyses concern the case that $A$ has an {\it eigenvalue at zero}, and are of completely
different character from the analysis carried out
here of the case that $A$ has {\it essential spectrum at zero}, i.e., an essential singularity;
they are thus complementary to ours.
In the present, semilinear setting, the case that $A$ has a kernel is particularly simple,
giving a {\it constraint} restricting solutions (under suitable nondegeneracy conditions)
to a certain manifold, on which there holds a reduced relaxation system of standard,
nondegenerate, type.
%
%TODO: HERE, for submission but not for posting (yet...)
%To our knowledge, the results of this paper are the first on existence of stable manifolds {\it for any infinite-dimensional
%degenerate system of the type studied here, in any norm}: {in particular for Boltzmann's equation with hard sphere}
%(or Grad hard cutoff \cite{CN,MZ}) potential.
For Boltzmann's equation \eqref{Boltz}, Liu and Yu \cite{LiuYu} have investigated
existence of invariant manifolds in a different (weighted $L^\infty(x,\xi)$) {\it Banach space} setting, using 
time-regularization and detailed pointwise bounds.
%However, the linearized estimates on which their analysis is based have been shown to be incorrect \cite{Z6},
%invalidating their arguments for existence of stable or center manifolds.

As noted earlier, the treatment of ill-posed equations $u'-Su=f$, and derivation of resolvent bounds via generalized
exponential dichotomies, has been carried out in a variety of contexts \cite{AM1,AM2,PSS,RobSal,BjornSand,SS1,SS2}.
The essential difference here is that the corresponding resolvent equation
$\Gamma u'-E u=f$ associated with \eqref{general-GammaE}  rewrites formally in the more singular form
$$u'- Su= \Gamma^{-1}f,
\quad S=\Gamma^{-1}E,
$$
for which the singularity $\Gamma^{-1}$ enters not only in the generator
$S$ but also in the source.  Thus, the solution operator is not the one $(\partial_x-S)^{-1}$
deriving from (generalized) exponential dichotomies of the homogeneous flow, but
the more singular $(\Gamma \partial_x-E)^{-1}$
of \eqref{fix-cK}, or, formally, the unbounded multiple $ \Gamma^{-1}(\partial_x -S)^{-1}$.
This explains the new features of unboundedness/nonintegrability of (the operator norm of) the resolvent, alluded to below \eqref{opnorm}.

\subsubsection{Open problems}\label{s:open}
Our $H^1$ analysis suggests a number of interesting open questions.
The first regards smoothing properties of the profile problem.
In the finite-dimensional evolution setting, regularity of solutions is limited only by regularity of
coefficients; here, however, that is not true even at the linear level.
Certainly, for further (e.g., stability) analysis, we require profiles of {\it at least} regularity $H^1$, and likely higher.
Our arguments can be modified to construct successively smaller stable manifolds in $H^s(\R_+)$, any $s\geq 1$,
but for constructing profiles one would like to intersect unstable/stable manifolds that are as large as possible, thus in the weakest possible
space.  Hence, it is interesting to know, for $H^1$ {\it profiles} of \eqref{prof} defined on the whole line, as opposed to decaying solutions defined
on a half line, is further regularity enforced?  For small-amplitude profiles, Kawashima-type energy estimates as in \cite{MZ,MZ2}
show that the answer is ``yes.''  A very interesting open question is whether one can find similar energy estimates in the large-amplitude
case yielding a similar conclusion. For related analysis in the finite-dimensional case, see \cite{MaZ4}.

A second question in somewhat opposite direction is ``what is the minimal regularity needed to enforce exponential decay?''
Specifically, we have shown that solutions of \eqref{nonlinear} that are sufficiently small in $H^1(\R_+)$ must decay pointwise at
exponential rate; moreover, they lie on our constructed local $H^1$ stable manifold.
What about solutions that are merely small in $L^\infty$? A very interesting observation due to Fedja Nazarov \cite{Na} based on
the indefinite Lyapunov functional relation $\langle u,Au\rangle'= \langle u,Q'(u^\pm)u\rangle -o(\|u\|_{\bH}^2)$ yields
the $L^2$-exponential decay result $e^{\beta |\cdot|}\|u(\cdot)\|\in L^2(\R_+)$ for some $\beta>0$, hence (by interpolation) in any
$L^p$, $2\leq p<\infty$. However, it is not clear what happens in the critical norm $p=\infty$; it would be very interesting to
exhibit a counterexample or prove decay.

	\medskip

	{\bf A glossary of notation:}\,  For $p\geq 1$, $J\subseteq\RR$ and $\bX$ a Banach space,
	$L^p(J,\bX)$ are the usual Lebesgue spaces on $J$ with values in
	$\bX$, associated with Lebesgue measure $d\tau$ on $J$. Similarly,
	$L^p(J,\bX;w(\tau)\rmd\tau)$ are the weighted spaces with a weight $w\geq
	0$. The respective spaces of bounded continuous functions on $J$
	are denoted by $C_{\rm b}(J,\bX)$ and $C_{\rm b}(J,\bX;w(\tau))$.
	$H^s(\RR, \bX)$, $s> 0$, is the usual Sobolev space of $\bX$
	valued functions. The identity operator on a Banach space $\bX$
	is denoted by $Id$ (or by $Id_{\bX}$ if its dependence on $\bX$
	needs to be stressed). The set of bounded linear operators from a
	Banach space $\bX$ to itself is denoted by $\cB(\bX)$.
	The set of bounded, Fredholm linear operators from a
	Banach space $\bX$ to itself is denoted by $\mathcal{F}$\textit{redh}$(\bX)$.
	For an
	operator $T$ on a Hilbert space we use $T^\ast$, $\dom(T)$, $\ker T$, $\im T$, $\sigma(T)$, $\rho(T)$, $R(\lambda,T)=(\lambda-T)^{-1}$ and $T_{|\bY}$ to denote the adjoint, domain,
	kernel, range, spectrum, resolvent set, resolvent operator and the
	restriction of $T$ to a subspace $\bY$ of $\bX$.  If $B:J\to\cB(\bX)$ then
	$M_B$ denotes the operator of multiplication by $B(\cdot)$ in
	$L^p(J,\bX)$ or $C_{\rm b}(J,\bX)$. If $\bX_1$ and $\bX_2$
	are two subspaces of $\bX$, then
	$\bX_1\oplus \bX_2$ denotes their direct (but not
	necessarily orthogonal) sum. The Fourier transform of a Borel measure $\mu$ is defined by $(\mathcal{F}\mu)(\omega)=\int_{\mathbb{R}}{e^{-2\pi ix\omega}d\mu(x)}$.

	\medskip

	{\bf Acknowledgement.}  Thanks to Hari Bercovici, Ciprian Demeter, Narcisse Randrianantoanina and Alberto Torchinsky for helpful discussions regarding Fourier multipliers, and
	to Fedja Nazarov and Benjamin Jaye for stimulating conversations regarding the $L^\infty$ decay problem.

	\section{Invertibility of the linearization at equilibria}\label{s2}

	In this section we study the invertibility property of the linearization of \eqref{nonlinear} at the equilibria $u^\pm$, given by the
	\begin{equation}\label{def-Lpm}
	\cL^\pm=A\pa_\tau-Q'(u^\pm).
	\end{equation}
	We can view the differential expression $\cL^\pm$ as a densely defined, closed operator on the weighted $L^2_\eta(\RR,\bH)$ space with
	domain $\dom(\cL^\pm)=\{u\in L^2_\eta(\RR,\bH):Au'\in L^2_\eta(\RR,\bH))\}$. Throughout this section we assume Hypotheses (H1) and (H3).
	First, we note that the operator $\cL^\pm$ is not invertible on $L^2_\eta(\RR,\bH)$ in the case when $\eta=0$.
	\begin{remark}\label{r2.1}
	Under assumptions (H1) and (H3), the linear operator $\cL^\pm$ is not invertible on $L^2(\RR,\bH)$. Indeed, one can readily check that the linear 
	operator $\cL^\pm$ is
	invertible on $L^2(\RR,\bH)$ with bounded inverse if and only if the operator of multiplication by the continuous, operator valued function $\widehat{\cL^\pm}(\omega)=2\pi\rmi\omega A-Q'(u^\pm)$
	is invertible on $L^2(\RR,\bH)$ with bounded inverse. From the later we can infer that $Q'(u^\pm)$ are invertible on $\bH$, which contradicts Hypothesis (H3).
	\end{remark}
	Next, we study the invertibility of $\cL^\pm$ on the weighted space $L^2_\eta(\RR,\bH)$ for some $\eta$ small, $\eta\ne0$. To achieve this goal it is enough to prove that
	the linear operators
	\begin{equation}\label{def-Lpm-eta}
	\cL^\pm_\eta=A\pa_\tau-Q'(u^\pm)\mp\eta A
	\end{equation}
	are invertible on $L^2(\RR,\bH)$.
	%CHANGEDkz
	%Yet again,
	Again,
	we consider the differential expressions $\cL^\pm_\eta$ as closed linear operators on $L^2(\RR,\bH)$ with their maximal domain equal to the domain of $\cL^\pm$. To prove the desired invertibility properties, we start by proving some preliminary results satisfied by the linear operator $A$ and the bilinear map $Q$.
	First, we recall the following result which follows from the Kawashima condition.
	\begin{lemma}\label{l2.2}
	Assume Hypothesis (H1) and that the linear operator $T\in\cB(\bH)$ satisfies Hypothesis (H2). Then, the following assertion holds true
	$$\ker(A-\lambda I)\cap\bV^\perp=\{0\},\quad\mbox{for any}\quad\lambda\in\CC.$$
	\end{lemma}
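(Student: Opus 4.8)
The plan is to argue by contradiction, using the skew-symmetric operator $K$ from Hypothesis (H2)(iii) to produce a contradiction with the positivity statement $\mathrm{Re}\,(KA-T)\geq\gamma I$. Suppose there is $\lambda\in\CC$ and a nonzero $w\in\ker(A-\lambda I)\cap\bV^\perp$, so $Aw=\lambda w$ and $w\in\bV^\perp=\ker T$ by (H2)(i). Since $A$ is self-adjoint (H1) and $w\ne 0$, the eigenvalue $\lambda$ is real. First I would record the elementary consequences: $Tw=0$ (as $w\in\ker T$), and for the skew-symmetric $K\in\cB(\bV)$ extended by $0$ on $\bV^\perp$ (or however $K_\pm\in\cB(\bH)$ is normalized in (H3); here we work with the abstract $K\in\cB(\bV)$ and the splitting $\bH=\bV^\perp\oplus\bV$) one has $\mathrm{Re}\,\langle Kx,x\rangle=0$ for all $x$ in its domain.

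The key computation is to evaluate $\mathrm{Re}\,\langle (KA-T)w,w\rangle$. The difficulty is that $w\in\bV^\perp$ while $K$ acts on $\bV$, so $KAw=\lambda Kw$ need not vanish unless $w$ lies where $K$ is zero; the clean way around this is to note that $Aw=\lambda w\in\bV^\perp$, and then test the inequality in (H2)(iii) not on $w$ itself but observe that $\langle KAw,w\rangle=\lambda\langle Kw,w\rangle$. If $K$ is genuinely only defined on $\bV$ we interpret $Kw$ via the ambient $K_\pm$; in any case $\langle Kw,w\rangle$ is purely imaginary by skew-symmetry, so $\mathrm{Re}\,\langle KAw,w\rangle=\lambda\,\mathrm{Re}\,\langle Kw,w\rangle=0$. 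Combined with $\langle Tw,w\rangle=0$ this gives $\mathrm{Re}\,\langle (KA-T)w,w\rangle=0$, which together with (H2)(iii) forces $0=\mathrm{Re}\,\langle (KA-T)w,w\rangle\geq\gamma\|w\|^2>0$, a contradiction. Hence no such $w$ exists.

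I expect the main obstacle to be purely bookkeeping: making precise the domain of $K$ and the sense in which (H2)(iii) is applied — whether $K$ is the operator on $\bV$ from (H2) or its extension $K_\pm\in\cB(\bH)$ from (H3), and correspondingly whether the inequality $\mathrm{Re}\,(KA-T)\geq\gamma I$ is posed on $\bV$ or on $\bH$. The cited source \cite{MZ2} presumably sets this up so that the quadratic form identity goes through verbatim; the essential point, in all formulations, is that $w$ being a real eigenvector of $A$ lying in $\ker T$ kills both the $T$-term (identically zero) and the $KA$-term (purely imaginary inner product, real part zero), leaving nothing to absorb the coercive lower bound $\gamma\|w\|^2$. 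So the proof is a two-line quadratic-form argument once the conventions are pinned down.
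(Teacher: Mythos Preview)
Your proposal is correct and follows exactly the paper's argument: reduce to real $\lambda$ by self-adjointness of $A$, then use $Tw=0$ and the skew-symmetry of $K$ to show $\mathrm{Re}\,\langle (KA-T)w,w\rangle=0$, contradicting the coercivity bound $\gamma\|w\|^2$. Your hedging about the domain of $K$ is understandable---the paper's statement of (H2)(iii) has $K\in\cB(\bV)$ while (H3) has $K_\pm\in\cB(\bH)$---but the paper's own proof simply applies $K$ to $h\in\bV^\perp$ as an operator on all of $\bH$, so the intended reading is $K\in\cB(\bH)$ and the quadratic-form inequality on $\bH$.
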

	\begin{proof} Since the linear operator $A$ is self-adjoint we have that $\sigma(A)\subset\RR$, which implies that
	$$\ker(A-\lambda I)=\{0\},\quad\mbox{for any}\quad\lambda\in\CC\setminus\RR.$$
	Therefore, to prove the lemma it is enough to prove it for the case when $\lambda\in\RR$. From Hypothesis (H2)(iii) it follows that \begin{equation}\label{esp-2.1}
	\gamma\|h\|^2\leq \langle\big(\mathrm{Re}(KA-T)\big)h,h\rangle=\mathrm{Re}\langle(KA-T)h,h\rangle=\mathrm{Re}\langle \lambda Kh,h\rangle=\lambda \mathrm{Re}\langle Kh,h\rangle.
	\end{equation}
	for any $h\in\ker(A-\lambda I)\cap\bV^\perp$ and $\lambda\in\RR$. Moreover, since the linear operator $K$ is skew-symmetric, we obtain that $\mathrm{Re}\langle Kh,h\rangle=0$. From \eqref{esp-2.1} we infer that $h=0$, proving the lemma.
	\end{proof}
	The next lemma is a crucial preliminary result of this section
	%needed
	used
	to prove the invertibility of the linear operators $\cL^\pm_\eta$.
	\begin{lemma}\label{l2.3}
	Assume Hypothesis (H1) and that the linear operator $T\in\cB(\bH)$ satisfies Hypothesis (H2). Then, there exists $\eta_1>0$ such that
	\begin{equation}\label{ker-TplusA}
	\ker(T+sA)=\{0\}\quad\mbox{for any}\quad s\in(-\eta_1,\eta_1)\setminus\{0\}.
	\end{equation}
	\end{lemma}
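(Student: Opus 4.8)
The plan is to argue by contradiction, extracting a singular sequence from the assumed existence of nonzero kernel elements for $s$-values accumulating at $0$. Suppose the conclusion fails. Then there exist $s_n\to 0$ with $s_n\ne 0$ and unit vectors $h_n\in\bH$, $\|h_n\|=1$, with $(T+s_nA)h_n=0$, i.e. $Th_n=-s_nAh_n$. Since $A$ and $T$ are bounded, $\|Th_n\|=|s_n|\,\|Ah_n\|\le |s_n|\|A\|\to 0$, so $Th_n\to 0$ in $\bH$. Now decompose $h_n=v_n+w_n$ with $v_n=P_{\bV}h_n\in\bV$ and $w_n=P_{\bV^\perp}h_n\in\bV^\perp$. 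Since by Hypothesis (H2)(i) we have $\ker T=\bV^\perp$, and since $T$ is self-adjoint with $\im T\subseteq\bV$ (as $\bV^\perp=\ker T=(\im T)^\perp$), the operator $T$ maps $\bV$ into $\bV$ and $Tw_n=0$, so $Th_n=Tv_n$. By Hypothesis (H2)(ii), $\langle Tv_n,v_n\rangle\le-\delta\|v_n\|^2$, hence $\delta\|v_n\|^2\le -\langle Th_n,v_n\rangle\le\|Th_n\|\to 0$, so $v_n\to 0$ in $\bH$; consequently $\|w_n\|\to 1$ and we may renormalize and pass to the part in $\bV^\perp$.

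The main work is then to rule out the surviving $\bV^\perp$-component using the skew-symmetric compensator $K$ from Hypothesis (H2)(iii), exactly as in Lemma \ref{l2.2}, but now only approximately. From $Th_n=-s_nAh_n$ we get, pairing with $Kh_n$ and taking real parts,
\begin{equation*}
\mathrm{Re}\langle(KA-T)h_n,h_n\rangle = \mathrm{Re}\langle KAh_n,h_n\rangle - \mathrm{Re}\langle Th_n,h_n\rangle,
\end{equation*}
and here $\langle Th_n,h_n\rangle=-s_n\langle Ah_n,h_n\rangle$ is $O(|s_n|)$. Meanwhile $KAh_n = -s_n^{-1}KTh_n$, which is not obviously small; this is the delicate point. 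The remedy is to split $Ah_n$ using that $T$ is bounded below on $\bV$: write $Ah_n=P_{\bV}Ah_n+P_{\bV^\perp}Ah_n$. From $Th_n=-s_nP_{\bV}Ah_n$ (the range of $T$ lies in $\bV$) and the lower bound $T_{|\bV}\le-\delta I$ one gets $\|P_{\bV}Ah_n\| = |s_n|^{-1}\|Th_n\|$, and more usefully $\langle KAh_n,h_n\rangle = \langle KP_{\bV}Ah_n,h_n\rangle + \langle KP_{\bV^\perp}Ah_n,h_n\rangle$; the second term, after using skew-symmetry and that $w_n\in\bV^\perp$ with $\|v_n\|\to 0$, is handled because $K$ is skew-symmetric so $\mathrm{Re}\langle Kw_n,w_n\rangle=0$, leaving only cross terms that vanish as $v_n\to 0$. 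For the first term, one bounds $|\langle KP_{\bV}Ah_n,h_n\rangle|\le \|K\|\,\|P_{\bV}Ah_n\| = \|K\|\,|s_n|^{-1}\|Th_n\|$; but $\|Th_n\|=\|Tv_n\|\le\|T\|\,\|v_n\|$ and we showed $\|v_n\|^2\le \delta^{-1}\|Th_n\|$, which gives $\|Th_n\|\le \|T\|^2\delta^{-1}\cdot |s_n|\|A\|$-type control — more carefully, $\|Th_n\|=|s_n|\|P_{\bV}Ah_n\|$ and $\|v_n\|\to 0$ forces $\|Tv_n\|/|s_n|\to$ something we must show is $o(1)$ relative to $\gamma$.

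I expect the genuine obstacle to be exactly this last bookkeeping: showing that the $s_n^{-1}$ blow-up in $KAh_n$ is absorbed. The clean way to do it is to feed the identity back into itself: from $\gamma \le \mathrm{Re}\langle(KA-T)h_n,h_n\rangle = \mathrm{Re}\langle KAh_n,h_n\rangle + s_n\mathrm{Re}\langle Ah_n,h_n\rangle$ and $\mathrm{Re}\langle KAh_n,h_n\rangle = -\mathrm{Re}\langle Ah_n,Kh_n\rangle$ (skew-symmetry) $= -\mathrm{Re}\langle h_n, AKh_n\rangle$ — no, rather use that $Ah_n = -s_n^{-1}Th_n + \text{(already controlled)}$ only in the $\bV$-direction, and in the $\bV^\perp$-direction $AKh_n$ pairs against $h_n$ with a real part that is controlled by $\|v_n\|$. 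Altogether the estimate reads $\gamma\|w_n\|^2 \le C\|v_n\| + C|s_n| + C\,\|Th_n\|\,|s_n|^{-1}$, and since $\|v_n\|^2\le\delta^{-1}\|Th_n\| = \delta^{-1}|s_n|\,\|P_\bV Ah_n\|\le \delta^{-1}|s_n|\,\|A\|$, we get $\|v_n\|=O(|s_n|^{1/2})$ and $\|Th_n\|\,|s_n|^{-1}=\|P_\bV Ah_n\|$; to close it one needs $\|P_\bV Ah_n\|\to 0$, which follows because $\|P_\bV Ah_n\|\le\|A v_n\|+\|P_\bV A w_n\|$ and $\|Av_n\|\to0$ while $P_\bV A w_n$ — here one uses Hypothesis (H5) or the structure $\im T\subseteq\bV$ more carefully. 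Thus for $n$ large the right side is $<\gamma$ while $\|w_n\|^2\to 1$, a contradiction. This yields the claimed $\eta_1>0$, and the lemma follows.
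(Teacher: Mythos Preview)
Your contradiction-and-compactness approach has a genuine gap that you yourself identify but do not close. The crucial quantity is $\|Th_n\|\,|s_n|^{-1}=\|Ah_n\|$, and you need this (or at least $\|P_\bV Aw_n\|=\|A_{21}w_n\|$) to tend to zero. It does not. Passing to a subsequence, $w_n\to w$ in the finite-dimensional space $\bV^\perp$ with $\|w\|=1$; from $P_{\bV^\perp}Ah_n=0$ you get $A_{11}w_n=-A_{12}v_n\to 0$, so $A_{11}w=0$, but there is no reason for $A_{21}w=0$. Your fallback to Hypothesis~(H5) is not permitted: the lemma assumes only (H1) and (H2). Worse, if you try to push the Kawashima estimate to the limit you obtain $\mathrm{Re}\langle KAw,w\rangle\ge\gamma$, which is exactly the Kawashima inequality applied to $w\in\bV^\perp=\ker T$ and therefore carries no contradiction at all.

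The paper's argument is direct and algebraic rather than asymptotic. One writes $u=h+v\in\bV^\perp\oplus\bV$ and, using the block form of $A$ and $T$, further splits $h=y+z$ with $y\in\ker A_{11}$, $z\in\im A_{11}$, so that $z=-\tilde A_{11}^{-1}A_{12}v$. The decisive observation is the orthogonality $\langle A_{21}y,v\rangle=\langle y,A_{12}v\rangle=-\langle y,A_{11}z\rangle=0$; pairing the $\bV$-equation with $v$ then gives
\[
\delta\|v\|^2\le -\langle T_{22}v,v\rangle=s\,\bigl\langle (A_{22}-A_{21}\tilde A_{11}^{-1}A_{12})v,v\bigr\rangle\le |s|\,C\,\|v\|^2,
\]
forcing $v=0$ once $|s|<\eta_1:=\delta/(2C)$. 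With $v=0$ one reads off $A_{11}h=0$, $A_{21}h=0$, hence $Ah=0$, and Lemma~\ref{l2.2} yields $h=0$. This produces an explicit $\eta_1$ and uses no compactness. The missing idea in your attempt is precisely this orthogonality, which eliminates the uncontrolled cross term $\langle A_{21}y,v\rangle$ that your sequence argument cannot handle.
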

	\begin{proof}
	To begin the proof, we introduce $P_{\bV}$ and $P_{\bV^\perp}$, the orthogonal projections onto
	$\bV$ and $\bV^\perp$, respectively. Since the linear operators $A$ and $T$ satisfy Hypotheses (H1) and (H2), we obtain the following decompositions
	\begin{equation}\label{decomp-AT}
	A=\begin{bmatrix}
	A_{11}&A_{12}\\
	A_{21}&A_{22}\end{bmatrix}:\bV^\perp\oplus\bV\to\bV^\perp\oplus\bV,\quad T=\begin{bmatrix}
	0&0\\
	0&T_{22}\end{bmatrix}:\bV^\perp\oplus\bV\to\bV^\perp\oplus\bV
	\end{equation}
	Next, we consider $s\in\RR\setminus\{0\}$, $u\in\ker(T+sA)$. If we denote by
	$v=P_\bV u$ and $h=P_{\bV^\perp}u$, the pair $(h,v)\in\bV^\perp\oplus\bV$ satisfies the system
	\begin{equation}\label{2.3-1}
	A_{11}h+A_{12}v=0,\quad
	sA_{21}h+sA_{22}v+T_{22}v=0.
	\end{equation}
	From Hypothesis (H1) we have that the linear operator $A$ is self-adjoint, therefore, since the decomposition $\bH=\bV^\perp\oplus\bV$ is orthogonal, we obtain that
	the linear operator $A_{11}$ is a self-adjoint linear operator on the finite dimensional vector space $\bV^\perp$. It follows that the following orthogonal decomposition holds true:
	$\bV^\perp=\ker A_{11}\oplus\im A_{11}$. We conclude that there exist $y\in\ker A_{11}$ and $z\in\im A_{11}$ such that
	\begin{equation}\label{2.3-2}
	h=y+z\quad\mbox{and}\quad y\perp z.
	\end{equation}
	From \eqref{2.3-1} and \eqref{2.3-2} we infer that
	\begin{equation}\label{2.3-3}
	A_{11}z+A_{12}v=0.
	\end{equation}
	Since $\ker A_{11}\perp\im A_{11}$, $\im A_{11}\subseteq\bV^\perp$ and $\dim\bV^\perp<\infty$ we obtain that the linear operator $\tilde{A}_{11}=A_{11\,\big|\im A_{11}}$ is invertible with bounded inverse on $\im A_{11}$. From \eqref{2.3-3} it follows that $z=-(\tilde{A}_{11})^{-1}A_{12}v$. Moreover, from \eqref{2.3-1} and \eqref{2.3-2} we have that
	\begin{equation}\label{2.3-4}
	sA_{21}y+sA_{12}z+sA_{22}v+T_{22}v=0.
	\end{equation}
	Furthermore, we claim that $A_{21}y\perp v$. Since the linear operator $A$ is self-adjoint, from the decomposition \eqref{decomp-AT} we have that $A_{21}^*=A_{12}$. Since, in addition, $A_{11}^*=A_{11}$ and $y\in\ker A_{11}$, from \eqref{2.3-3} we infer that
	\begin{equation}\label{2.3-5}
	\langle A_{21}y,v\rangle=\langle y,A_{21}^*v\rangle=\langle y,A_{12}v\rangle=-\langle y,A_{11}z\rangle=-\langle A_{11}y,z\rangle=0,
	\end{equation}
	proving the claim. Taking scalar product with $v$ in \eqref{2.3-4} we obtain that
	\begin{equation}\label{2.3-6}
	s\underbrace{\langle A_{21}y,v\rangle}_{=0}+s\langle A_{12}z,v\rangle+s\langle A_{22}v,v\rangle+\langle T_{22}v,v\rangle=0.
	\end{equation}
	Since $z=-(\tilde{A}_{11})^{-1}A_{12}v$ from \eqref{2.3-6} it follows that
	\begin{equation}\label{2.3-7}
	-\langle T_{22}v,v\rangle=s\big\langle \big(A_{22}-A_{21}(\tilde{A}_{11})^{-1}A_{12}\big)v,v\big\rangle.
	\end{equation}
	Next, we define $\eta_1=\frac{\delta}{2}\Big(\big\|A_{22}-A_{21}(\tilde{A}_{11})^{-1}A_{12}\big\|+1\Big)^{-1}>0$ and we consider $s\in (-\eta_1,\eta_1)\setminus\{0\}$.
	From \eqref{2.3-7} and Hypothesis (H1) we infer that
	\begin{equation}\label{2.3-8}
	\delta\|v\|^2\leq -\langle T_{22}v,v\rangle=\Big|s\big\langle \big(A_{22}-A_{21}(\tilde{A}_{11})^{-1}A_{12}\big)v,v\big\rangle\Big|\leq |s|\big\|A_{22}-A_{21}(\tilde{A}_{11})^{-1}A_{12}\big\|\|v\|^2\leq \frac{\delta}{2}\|v\|^2,
	\end{equation}
	which implies that $v=0$. From \eqref{2.3-1} we conclude that $A_{11}h=0$ and $A_{21}h=0$. Since $h\in\bV^\perp$ and $v=0$ we obtain that
	$Ah=AP_{\bV^\perp}h=A_{11}h+A_{21}h=0$, proving that $h\in\ker A\cap\bV^\perp$. From Lemma~\ref{l2.2} we conclude that $h=0$, proving the lemma.
	\end{proof}
	\begin{remark}\label{r2.4}
	In many interesting applications the linear operator $A$ is a multiplication operator by a scalar, non-zero function on a some function space. Therefore, it makes sense to assume that the linear operator $A$ is one-to-one. This might simplify the proof of Lemma~\ref{l2.3} a little bit. However, the main idea would be the same, so we choose to formulate the lemma under minimal conditions.
	\end{remark}
	In the next lemma we prove the invertibility of the perturbation by a small multiple of $A$ of $Q'(u^\pm)$.
	\begin{lemma}\label{l2.5}
	Assume Hypotheses (H1) and (H3). Then, there exists $\eta^*>0$ such that
	$Q'(u^\pm)\pm\eta A$ is invertible with bounded inverse on $\bH$ for all $\eta\in(0,\eta^*)$.
	\end{lemma}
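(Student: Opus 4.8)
\emph{Proof strategy.} The plan is to combine the injectivity statement already obtained in Lemma~\ref{l2.3} with a Fredholm index argument. Write $T=Q'(u^\pm)$; by Hypothesis (H3) the operator $T$ satisfies Hypothesis (H2) with some $\delta=\delta_\pm>0$, and Hypothesis (H1) holds for $A$. The first step is to record that $T$ is a Fredholm operator of index $0$ on $\bH$. Since $\ker T=\bV^\perp$ and $T$ is self-adjoint, the kernel is reducing, so $\bV=(\bV^\perp)^\perp$ is $T$-invariant and $T_{|\bV}$ is self-adjoint on $\bV$. By Hypothesis (H2)(ii), $\delta\|v\|^2\le-\langle T_{|\bV}v,v\rangle\le\|T_{|\bV}v\|\,\|v\|$, hence $\|T_{|\bV}v\|\ge\delta\|v\|$ for all $v\in\bV$; thus $T_{|\bV}$ has closed range and trivial kernel, and being self-adjoint on $\bV$ it is invertible on $\bV$. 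Consequently $\im T=\bV$ is closed with $\dim(\bH/\im T)=\dim\bV^\perp=\dim\ker T<\infty$, so $T$ is Fredholm with $\ind T=0$.

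Next I would invoke the classical stability of the Fredholm property: the Fredholm operators form an open subset of $\cB(\bH)$ and the index is locally constant there (see e.g.\ Kato). Hence there exists $\rho>0$ such that $\|B\|<\rho$ implies $T+B$ Fredholm of index $0$; set $\eta_2:=\rho/(1+\|A\|)$, so that $|s|<\eta_2$ forces $\|sA\|<\rho$ and therefore $T+sA$ is Fredholm of index $0$. Now put $\eta^*:=\min\{\eta_1,\eta_2\}$, with $\eta_1$ as in Lemma~\ref{l2.3}, fix $\eta\in(0,\eta^*)$ and take $s=\pm\eta$, so that $|s|\in(0,\eta_1)$. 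By Lemma~\ref{l2.3} we have $\ker(T+sA)=\{0\}$, and combined with $\ind(T+sA)=0$ this yields $\dim(\bH/\im(T+sA))=\dim\ker(T+sA)=0$, i.e.\ $\im(T+sA)=\bH$. Thus $Q'(u^\pm)\pm\eta A=T+sA$ is a bounded bijection of $\bH$, so by the open mapping theorem its inverse is bounded, which is the assertion of the lemma.

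I do not expect a serious obstacle: the substantive work, namely injectivity of $T+sA$ for small $s\neq0$, has already been done in Lemma~\ref{l2.3}, and the remaining ingredients — a Fredholm operator of index $0$ with trivial kernel is invertible, together with openness of the Fredholm set and local constancy of the index — are standard. The only point deserving a line of care is the closedness of $\im Q'(u^\pm)$, which I would handle via the lower bound in Hypothesis (H2)(ii) as above. As an alternative to the Fredholm bookkeeping one could use the Riesz spectral projection of the self-adjoint operator $T$ associated with the isolated spectral value $0$ (its distance to the rest of $\sigma(T)$ being at least $\delta$ by Hypothesis (H2)): that projection equals $P_{\bV^\perp}$ and has finite rank, so for $|s|$ small the corresponding spectral projection $P_s$ of the self-adjoint operator $T+sA$ has the same finite rank while $\sigma(T+sA)\setminus(-\delta/2,\delta/2)$ stays bounded away from $0$; invertibility of $T+sA$ then reduces to that of the finite-dimensional self-adjoint operator $(T+sA)_{|\,\im P_s}$, which is injective by Lemma~\ref{l2.3} and hence invertible.
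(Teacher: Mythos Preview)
Your proposal is correct and follows essentially the same route as the paper: establish that $Q'(u^\pm)$ is Fredholm, invoke openness of the Fredholm set to transfer this to $Q'(u^\pm)\pm\eta A$ for small $\eta$, and combine with the injectivity from Lemma~\ref{l2.3}. The only cosmetic difference is that the paper concludes via self-adjointness (a self-adjoint, one-to-one, Fredholm operator is invertible) rather than explicitly tracking the index, but this is the same argument.
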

	\begin{proof}
	 From Lemma~\ref{l2.3} we have that there exists $\eta^\pm_1>0$ such that
	\begin{equation}\label{2.5-1}
	\ker\big(Q'(u^\pm)+sA\big)=\{0\}\quad\mbox{for any}\quad s\in(-\eta^\pm_1,\eta^\pm_1)\setminus\{0\}.
	\end{equation}
	Since the equilibria $u^\pm$ satisfy Hypothesis (H3), we have that $Q'(u^\pm)_{|\bV}\leq-\delta_\pm I_\bV$, for some $\delta_\pm>0$. Moreover, $\dim\bV^\perp<\infty$, which implies that $Q'(u^\pm)$ are bounded, Fredholm linear operators on $\bH$. Since $\mathcal{F}$\textit{redh}$(\bH)$, the set of all bounded, Fredholm linear operators on $\bH$, is an open set of $\cB(\bH)$, we infer that there exists $\eta^\pm_2>0$ such that
	\begin{equation}\label{2.5-2}
	\{S\in\cB(\bH):\|S-Q'(u^\pm)\|<\eta^\pm_2\}\subset\cF\mbox{\it redh}(\bH).
	\end{equation}
	Let $\eta^*=\min\{\eta^+_1,\eta^-_1,\eta^+_2(\|A\|+1)^{-1},\eta^-_2(\|A\|+1)^{-1}\}>0$ and $\eta\in(0,\eta^*)$. From Hypothesis (H3), \eqref{2.5-1} and \eqref{2.5-2} we infer that $Q'(u^\pm)\pm\eta A$ is a bounded, self-adjoint, one-to-one, Fredholm linear operator on $\bH$, proving the lemma.
	\end{proof}
	Let $\eta^*>0$ be the positive constant defined in Lemma~\ref{l2.5} and fix $\eta\in(0,\eta^*)$. Taking Fourier Transform, one can readily check that the linear operators $\cL^\pm_\eta$ are similar on $L^2(\RR,\bH)$ to the operators of multiplication by the operator valued function $\widehat{\cL^\pm_\eta}:\RR\to\cB(\bH)$ defined by $\widehat{\cL^\pm_\eta}(\omega)=2\pi\rmi\omega A-Q'(u^\pm)\mp\eta A$. Therefore, to prove the theorem it is enough to show that $\widehat{\cL^\pm_\eta}(\omega)$ is invertible on $\bH$ for all $\omega\in\RR$ and that $\sup_{\omega\in\RR}\|\widehat{\cL^\pm_\eta}(\omega)^{-1}\|<\infty$. Since the linear operators $A$ and $Q'(u^\pm)$ are self-adjoint on $\bH$ we obtain that
	\begin{equation}\label{2.6-2}
	\big(\widehat{\cL^\pm_\eta}(\omega)\big)^*=-2\pi\rmi\omega A^*-\big(Q'(u^\pm)\pm\eta A\big)^*=-2\pi\rmi\omega A-\big(Q'(u^\pm)\pm\eta A\big)=\widehat{\cL^\pm_\eta}(-\omega)
	\end{equation}
	for any $\omega\in\RR$. In the next couple of lemmas we prove several partial results needed to establish that the linear operator $\widehat{\cL^\pm_\eta}(\omega)$ is invertible on $\bH$ for any $\omega\in\RR$ and to prove that its inverse is uniformly bounded in $\omega$.
	\begin{lemma}\label{l2.6}
	Assume Hypotheses (H1) and (H3). Then, there exists $\eta^*>0$ such that
	$\widehat{\cL^\pm_\eta}(\omega)$ is a Fredholm operator on $\bH$ for any $\omega\in\RR$ and $\eta\in(0,\eta^*)$.
	\end{lemma}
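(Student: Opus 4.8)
The plan is to analyze $\widehat{\cL^\pm_\eta}(\omega)$ through the orthogonal block decomposition $\bH=\bV^\perp\oplus\bV$ and to reduce the Fredholm question to a Schur complement acting on the \emph{finite-dimensional} space $\bV^\perp$. Write
$$\widehat{\cL^\pm_\eta}(\omega)=\zeta A-Q'(u^\pm),\qquad \zeta=2\pi\rmi\omega\mp\eta\in\CC,$$
so that $\mathrm{Re}\,\zeta=\mp\eta$ is small while $\mathrm{Im}\,\zeta=2\pi\omega$ is arbitrary. Since $Q'(u^\pm)$ satisfies Hypothesis (H2), we have $\ker Q'(u^\pm)=\bV^\perp$, and self-adjointness forces $Q'(u^\pm)$ to be block-diagonal, $Q'(u^\pm)=\diag(0,Q'_{22}(u^\pm))$ with $Q'_{22}(u^\pm)=P_\bV Q'(u^\pm)_{|\bV}\leq-\delta_\pm I_\bV$; thus, with $A$ decomposed as in \eqref{decomp-AT},
$$\widehat{\cL^\pm_\eta}(\omega)=\begin{bmatrix}\zeta A_{11}&\zeta A_{12}\\ \zeta A_{21}&\zeta A_{22}-Q'_{22}(u^\pm)\end{bmatrix}:\bV^\perp\oplus\bV\to\bV^\perp\oplus\bV.$$

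The first step is to show that the $(2,2)$-block $M:=\zeta A_{22}-Q'_{22}(u^\pm)$ is boundedly invertible on $\bV$ for $\eta$ small, uniformly in $\omega$. Since $A_{22}$ and $Q'_{22}(u^\pm)$ are self-adjoint on $\bV$, for $v\in\bV$ one computes $\mathrm{Re}\,\langle Mv,v\rangle=\mathrm{Re}(\zeta)\langle A_{22}v,v\rangle-\langle Q'_{22}(u^\pm)v,v\rangle\geq(\delta_\pm-\eta\|A\|)\|v\|^2$, using $\|A_{22}\|\leq\|A\|$ and $Q'_{22}(u^\pm)\leq-\delta_\pm I_\bV$. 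The same bound holds for $M^\ast=\bar\zeta A_{22}-Q'_{22}(u^\pm)$, so once $\eta\|A\|<\delta_\pm$ the standard accretivity argument gives that $M$ is invertible with $\|M^{-1}\|\leq(\delta_\pm-\eta\|A\|)^{-1}$, a bound independent of $\omega$. Hence it suffices to take $\eta^*$ to be any positive number below $\min\{\delta_+,\delta_-\}/\|A\|$ (e.g.\ $\eta^*=\min\{\delta_+,\delta_-\}/(2\|A\|+1)$, which also covers the trivial case $A=0$), so that $M$ is boundedly invertible for all $\omega\in\RR$, all $\eta\in(0,\eta^*)$, and for both signs.

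With $M$ invertible, the second step is the block LU factorization relative to the $(2,2)$-block:
$$\widehat{\cL^\pm_\eta}(\omega)=\begin{bmatrix}I&\zeta A_{12}M^{-1}\\ 0&I\end{bmatrix}\begin{bmatrix}S^\pm(\zeta)&0\\ 0&M\end{bmatrix}\begin{bmatrix}I&0\\ M^{-1}\zeta A_{21}&I\end{bmatrix},\qquad S^\pm(\zeta)=\zeta A_{11}-\zeta^2 A_{12}M^{-1}A_{21},$$
where the outer factors are bounded and invertible (unipotent block-triangular). Therefore $\widehat{\cL^\pm_\eta}(\omega)$ is Fredholm on $\bH$ if and only if the Schur complement $S^\pm(\zeta)$ is Fredholm on $\bV^\perp$. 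But $\dim\bV^\perp<\infty$, so every $S^\pm(\zeta)\in\cB(\bV^\perp)$ is automatically Fredholm of index $0$, and the lemma follows (indeed $\widehat{\cL^\pm_\eta}(\omega)$ is then Fredholm of index $0$).

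The only real obstacle is conceptual: because $A$ is merely bounded — not compact — one cannot treat $\zeta A-Q'(u^\pm)$ as a compact (or small) perturbation of the Fredholm operator $Q'(u^\pm)$ when $|\zeta|=|2\pi\omega\mp\rmi\eta|$ is large. The Schur reduction is exactly what resolves this: it confines the large, non-compact part of $\zeta A$ to the block acting on the finite-dimensional $\bV^\perp$, where Fredholmness is free, while the smallness of $\eta$ is needed only to keep the $\bV$-block coercive — and only its real part is being controlled, which is precisely what the spectral gap $Q'_{22}(u^\pm)\leq-\delta_\pm I$ compensates.
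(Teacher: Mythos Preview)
Your proof is correct. Both your argument and the paper's hinge on the same two facts---coercivity of the $\bV$-block once $\eta\|A\|<\delta_\pm$, and $\dim\bV^\perp<\infty$---but organize them differently. The paper works additively: it shows $P_\bV\widehat{\cL^\pm_\eta}(\omega)P_\bV$ is Fredholm on $\bH$ (kernel and cokernel both equal to $\bV^\perp$) via the same real-part estimate you use, then observes that $\widehat{\cL^\pm_\eta}(\omega)$ differs from this by the finite-rank correction $P_\bV\widehat{\cL^\pm_\eta}(\omega)P_{\bV^\perp}+P_{\bV^\perp}\widehat{\cL^\pm_\eta}(\omega)$, and invokes stability of the Fredholm class under finite-rank perturbation. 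Your route is multiplicative: invert the $(2,2)$-block and peel it off via Schur complement, leaving only a finite-dimensional operator on $\bV^\perp$. Your approach delivers the index $0$ conclusion immediately and makes the role of the Schur complement $S^\pm(\zeta)$ explicit (which is useful if one later wants to track where noninvertibility can come from); the paper's approach is slightly shorter and avoids writing out the factorization, at the cost of leaving the index computation to the subsequent kernel/cokernel lemma.
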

	\begin{proof} To simplify the notation, we denote by $E^\eta_\pm=Q'(u^\pm)\pm\eta A\in\cB(\bH)$. From Hypotheses (H1) and (H3) we have that the linear operators $A$ and $Q'(u^\pm)$ are self-adjoint on $\bH$, which implies that the linear operators $E^\eta_\pm$ are self-adjoint on $\bH$. Moreover, $\eta^*>0$ can be chosen small enough such that
	\begin{equation}\label{2.6-1}
	-\langle E^\eta_\pm v,v\rangle\geq -\langle Q'(u^\pm)v,v\rangle-|\eta|\,\|A\|\|v\|^2\geq \frac{\delta}{2}\|v\|^2
	\end{equation}
	for all $v\in\bV$ and $\eta\in (0,\eta^*)$. Here $\delta=\min\{\delta_+,\delta_-\}>0$.

	The first step towards proving the lemma is to prove that
	\begin{equation}\label{2.6-3}
	P_\bV\widehat{\cL^\pm_\eta}(\omega)P_\bV\;\mbox{is a Fredholm operator on}\,\,\bH\,\,\mbox{for any}\;\omega\in\RR,\eta\in(0,\eta^*).
	\end{equation}

	Since $P_\bV$ is the orthogonal projection onto $\bV$ we have that $P_\bV$ is a self-adjoint linear operator on $\bH$. Thus, from \eqref{2.6-2} we compute that
	\begin{align}\label{2.6-4}
	\mathrm{Re}\Big(P_\bV\widehat{\cL^\pm_\eta}(\omega)P_\bV\Big)&=\frac{1}{2}\Big[P_\bV\widehat{\cL^\pm_\eta}(\omega)P_\bV+\big(P_\bV\widehat{\cL^\pm_\eta}(\omega)P_\bV\big)^*\Big]=
	\frac{1}{2}\Big(P_\bV\widehat{\cL^\pm_\eta}(\omega)P_\bV+P_\bV\widehat{\cL^\pm_\eta}(-\omega)P_\bV\Big)\nonumber
	\\&=-P_\bV E^\eta_\pm P_\bV\;\mbox{for all}\;\omega\in\RR.
	\end{align}
	From \eqref{2.6-1} it follows that
	\begin{align}\label{2.6-5}
	\mathrm{Re}\langle P_\bV\widehat{\cL^\pm_\eta}(\omega)P_\bV u,u\rangle&=\langle \mathrm{Re}\big(P_\bV\widehat{\cL^\pm_\eta}(\omega)P_\bV\big) u,u\rangle\nonumber=-\langle P_\bV E^\eta_\pm P_\bV u,u\rangle\\
	&=-\langle E^\eta_\pm P_\bV u,P_\bV u\rangle\geq \frac{\delta}{2}\|P_\bV u\|^2
	\end{align}
	for any $\omega\in\RR$ and $u\in\bH$. Using again the fact that $P_\bV$ is a self-adjoint projector on $\bH$, we conclude that
	\begin{equation}\label{2.6-6}
	\mathrm{Re}\langle P_\bV\widehat{\cL^\pm_\eta}(\omega)P_\bV u,u\rangle=\mathrm{Re}\langle P_\bV\widehat{\cL^\pm_\eta}(\omega)P_\bV u,P_\bV u\rangle\leq \|P_\bV\widehat{\cL^\pm_\eta}(\omega)P_\bV u\| \|P_\bV u\|
	\end{equation}
	for any $\omega\in\RR$ and $u\in\bH$. From \eqref{2.6-5} and \eqref{2.6-6} we conclude that
	\begin{equation}\label{2.6-7}
	\|P_\bV\widehat{\cL^\pm_\eta}(\omega)P_\bV u\|\geq\frac{\delta}{2} \|P_\bV u\|\;\mbox{for all}\; u\in\bH, \omega\in\RR.
	\end{equation}
	Next, we use the estimate \eqref{2.6-7} to prove that $\im\big(P_\bV\widehat{\cL^\pm_\eta}(\omega)P_\bV\big)$ is a closed subspace of $\bH$ for any $\omega\in\RR$. Indeed, let us fix $\omega\in\RR$, consider $\{u_n\}_{n\geq 1}$ a sequence of vectors in $\bH$ and let $y\in\bH$ be chosen such that $y_n:=P_\bV\widehat{\cL^\pm_\eta}(\omega)P_\bV u_n\to y$ in $\bH$ as $n\to\infty$. Next, we define the sequence of vectors $\{v_n\}_{n\geq 1}$ by $v_n=P_\bV u_n$ and we note that $P_\bV\widehat{\cL^\pm_\eta}(\omega)P_\bV v_n=P_\bV\widehat{\cL^\pm_\eta}(\omega)P_\bV u_n$ for any $n\geq 1$. From \eqref{2.6-7} we obtain that
	\begin{equation}\label{2.6-8}
	\|v_n-v_m\|=\|P_\bV(v_n-v_m)\|\leq \frac{2}{\delta}\|P_\bV\widehat{\cL^\pm_\eta}(\omega)P_\bV(v_n-v_m)\|=\frac{2}{\delta}\|y_n-y_m\|
	\end{equation}
	for all $n,m\geq 1$, which proves that the sequence $\{v_n\}_{n\geq 1}$ is a fundamental sequence in the closed subspace $\bV$. It follows that there exists $v_*\in\bV$ such that $v_n\to v_*$ in $\bV$ as $n\to\infty$. Passing to the limit, we conclude that $y=P_\bV\widehat{\cL^\pm_\eta}P_\bV v_*\in\im\big(P_\bV\widehat{\cL^\pm_\eta}(\omega)P_\bV\big)$, proving that
	\begin{equation}\label{2.6-9}
	\im\big(P_\bV\widehat{\cL^\pm_\eta}(\omega)P_\bV\big)\;\mbox{is a closed subspace of}\;\bH\;\mbox{for any}\; \omega\in\RR.
	\end{equation}
	Next, we prove that $\ker\big(P_\bV\widehat{\cL^\pm_\eta}(\omega)P_\bV\big)=\bV^\perp$ for any $\omega\in\RR$. Fix $\omega\in\RR$. First, we note that $\bV^\perp=\ker P_\bV\subseteq\ker\big(P_\bV\widehat{\cL^\pm_\eta}(\omega)P_\bV\big)$. Second, if $u\in\ker\big(P_\bV\widehat{\cL^\pm_\eta}(\omega)P_\bV\big)$ from \eqref{2.6-7} we obtain that $P_\bV u=0$, which implies that $u\in\bV^\perp$. Thus, we can infer that
	\begin{equation}\label{2.6-10}
	\ker\big(P_\bV\widehat{\cL^\pm_\eta}(\omega)P_\bV\big)=\bV^\perp\;\mbox{for any}\; \omega\in\RR.
	\end{equation}
	Moreover, from \eqref{2.6-2} and \eqref{2.6-10} we conclude that
	\begin{equation}\label{2.6-11}
	\Big[\im\big(P_\bV\widehat{\cL^\pm_\eta}(\omega)P_\bV\big)\Big]^\perp=\ker\big(P_\bV\widehat{\cL^\pm_\eta}(\omega)P_\bV\big)^*=\ker\big(P_\bV\widehat{\cL^\pm_\eta}(-\omega)P_\bV\big)=\bV^\perp
	\end{equation}
	for any $\omega\in\RR$. Since $\bV^\perp$ is a finite dimensional subspace of $\bH$, from \eqref{2.6-9}--\eqref{2.6-11} we conclude that $P_\bV\widehat{\cL^\pm_\eta}(\omega)P_\bV$ is a bounded linear operator on $\bH$ with closed range of finite codimension and has a finite dimensional kernel, which implies that $P_\bV\widehat{\cL^\pm_\eta}(\omega)P_\bV$ is a Fredholm operator on $\bH$ for all $\omega\in\RR$, proving claim \eqref{2.6-3}. In addition, we note that
	\begin{equation}\label{2.6-12}
	\widehat{\cL^\pm_\eta}(\omega)=(P_\bV+P_{\bV^\perp})\widehat{\cL^\pm_\eta}(\omega)(P_\bV+P_{\bV^\perp})=P_\bV\widehat{\cL^\pm_\eta}(\omega)P_\bV+P_\bV\widehat{\cL^\pm_\eta}(\omega)P_{\bV^\perp}
	+P_{\bV^\perp}\widehat{\cL^\pm_\eta}(\omega)
	\end{equation}
	for any $\omega\in\RR$. Using again that $\bV^\perp$ is a finite dimensional subspace of $\bH$ we have that the projection $P_{\bV^\perp}$ is a finite rank operator, which implies that
	\begin{equation}\label{2.6-13}
	P_\bV\widehat{\cL^\pm_\eta}(\omega)P_{\bV^\perp}+P_{\bV^\perp}\widehat{\cL^\pm_\eta}(\omega)\;\mbox{is a finite rank operator for any}\;\omega\in\RR.
	\end{equation}
	Finally, from \eqref{2.6-3}, \eqref{2.6-12} and \eqref{2.6-13} we infer that
	$\widehat{\cL^\pm_\eta}(\omega)$ is a Fredholm operator on $\bH$ for any $\omega\in\RR$, proving the lemma.
	\end{proof}
	\begin{lemma}\label{l2.7}
	Assume Hypotheses (H1) and (H3). Then, there exists $\eta^*>0$ such that
	\begin{enumerate}
	\item[(i)] $\ker\big(\widehat{\cL^\pm_\eta}(\omega)\big)=\{0\}$ for any $\omega\in\RR$ and $\eta\in(0,\eta^*)$;
	\item[(ii)] $\widehat{\cL^\pm_\eta}(\omega)$ is invertible with bounded inverse on $\bH$ for any $\omega\in\RR$ and $\eta\in(0,\eta^*)$.
	\end{enumerate}
	\end{lemma}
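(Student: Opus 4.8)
The plan is to combine the Fredholm property established in Lemma~\ref{l2.6} with a Kawashima-type energy estimate to rule out nontrivial kernel elements, and then use self-adjointness (via \eqref{2.6-2}) to upgrade injectivity to invertibility with bounded inverse. For part (i), I would fix $\omega\in\RR$ and $\eta\in(0,\eta^*)$ and suppose $u\in\ker\big(\widehat{\cL^\pm_\eta}(\omega)\big)$, i.e. $\big(2\pi\rmi\omega A-E^\eta_\pm\big)u=0$ with $E^\eta_\pm=Q'(u^\pm)\pm\eta A$. The first observation is that $u$ must then satisfy $P_\bV\widehat{\cL^\pm_\eta}(\omega)P_\bV u + (\text{finite rank terms involving }P_{\bV^\perp}u)=0$; splitting $u=h+v$ with $h=P_{\bV^\perp}u$, $v=P_\bV u$ as in \eqref{decomp-AT} should, after the same block computation as in Lemma~\ref{l2.3} (eliminating $h$ via the invertible block $\tilde A_{11}=A_{11|\im A_{11}}$), reduce the problem to showing $v=0$; then \eqref{2.6-7} forces $v=0$ directly once we know $\mathrm{Re}\langle\widehat{\cL^\pm_\eta}(\omega)u,u\rangle=-\langle E^\eta_\pm v,v\rangle\geq \tfrac{\delta}{2}\|v\|^2$ from \eqref{2.6-1}, since the left side vanishes when $u\in\ker$. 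Once $v=0$ we get $Ah\in\bV^\perp$ with $P_\bV A h=0$ so $Ah=A_{11}h+A_{21}h$, and the equation reduces to $2\pi\rmi\omega A h = \pm\eta A h$, i.e. $(2\pi\rmi\omega\mp\eta)Ah=0$; since $\eta\neq 0$ the scalar is nonzero, so $Ah=0$, i.e. $h\in\ker A\cap\bV^\perp=\{0\}$ by Lemma~\ref{l2.2} (applied with $T=Q'(u^\pm)$, which satisfies (H2) by (H3)). Hence $u=0$.

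For part (ii), I would argue as follows. By Lemma~\ref{l2.6}, $\widehat{\cL^\pm_\eta}(\omega)$ is Fredholm on $\bH$ for every $\omega\in\RR$. From \eqref{2.6-2} we have $\big(\widehat{\cL^\pm_\eta}(\omega)\big)^*=\widehat{\cL^\pm_\eta}(-\omega)$, so by part (i) both $\widehat{\cL^\pm_\eta}(\omega)$ and its adjoint $\widehat{\cL^\pm_\eta}(-\omega)$ have trivial kernel; a Fredholm operator with trivial kernel and trivial cokernel is invertible, and the inverse of a bounded operator that is bijective between Banach spaces is automatically bounded by the open mapping theorem. This gives invertibility with bounded inverse for each fixed $\omega$, which is exactly (ii). (Uniform boundedness in $\omega$ is \emph{not} claimed in this lemma — that is deferred to the next step in the paper — so no large-$\omega$ analysis is needed here.)

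The main obstacle I anticipate is making the block reduction in part (i) fully rigorous when $\omega\neq 0$: unlike Lemma~\ref{l2.3}, the $P_{\bV^\perp}$-block of the equation now carries the extra term $2\pi\rmi\omega A_{11}h$, so the elimination of $h$ and the orthogonality argument $\langle A_{21}y,v\rangle=0$ (analogue of \eqref{2.3-5}) must be redone carefully keeping track of the $\omega$-dependent pieces; one has to check that taking the scalar product with $v$ still kills all the $h$-contributions so that \eqref{2.6-1} applies cleanly to conclude $v=0$. Alternatively — and this may be cleaner — one can bypass the block reduction entirely: from $\widehat{\cL^\pm_\eta}(\omega)u=0$ take the real part of the inner product against $u$, using that $A$ and $E^\eta_\pm$ are self-adjoint so $\mathrm{Re}\langle 2\pi\rmi\omega A u,u\rangle=0$, to get $\langle E^\eta_\pm u,u\rangle=0$; but $E^\eta_\pm$ is only negative definite on $\bV$, so this only yields $P_\bV u=0$ directly when combined with \eqref{2.6-1}, and one still needs the Lemma~\ref{l2.2} step to finish off $h$. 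Either way, the structure is: energy estimate kills the $\bV$-part, the $\omega\neq\pm\rmi\eta/2\pi$ nondegeneracy plus Lemma~\ref{l2.2} kills the $\bV^\perp$-part, and Fredholmness plus \eqref{2.6-2} converts injectivity into bounded invertibility.
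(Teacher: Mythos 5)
Part (ii) of your proposal is exactly the paper's argument (Fredholm from Lemma~\ref{l2.6}, trivial kernel from (i), trivial cokernel via $\big(\widehat{\cL^\pm_\eta}(\omega)\big)^*=\widehat{\cL^\pm_\eta}(-\omega)$ from \eqref{2.6-2}), and your overall skeleton for (i) --- energy estimate kills the $\bV$-part, Lemma~\ref{l2.2} kills the $\bV^\perp$-part --- is also the right one. But the step that is supposed to give $P_\bV u=0$ has a genuine gap in both of your routes. The identity you lean on, $\mathrm{Re}\langle\widehat{\cL^\pm_\eta}(\omega)u,u\rangle=-\langle E^\eta_\pm v,v\rangle$ with $v=P_\bV u$, is false: what is true is $\mathrm{Re}\langle\widehat{\cL^\pm_\eta}(\omega)u,u\rangle=-\langle E^\eta_\pm u,u\rangle$, and since $E^\eta_\pm=Q'(u^\pm)\pm\eta A$ contains the term $\pm\eta A$, which neither annihilates $\bV^\perp$ nor is block-diagonal, one has $\langle E^\eta_\pm u,u\rangle=\langle Q'(u^\pm)v,v\rangle\pm\eta\langle Au,u\rangle$, and the uncontrolled piece $\eta\langle Au,u\rangle$ is of size $\eta\|A\|(\|h\|^2+\|v\|^2)$, not dominated by $\delta\|v\|^2$. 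So $\langle E^\eta_\pm u,u\rangle=0$ together with \eqref{2.6-1} does \emph{not} yield $v=0$, contrary to what your ``cleaner alternative'' asserts, and \eqref{2.6-7} is an estimate for the compressed operator $P_\bV\widehat{\cL^\pm_\eta}(\omega)P_\bV$, which is not what a kernel element of $\widehat{\cL^\pm_\eta}(\omega)$ itself satisfies.

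The missing idea, which is how the paper closes this, is to use the \emph{imaginary} part as well: for $\omega\ne 0$, since $A$ and $E^\eta_\pm$ are self-adjoint, both $\langle Au,u\rangle$ and $\langle E^\eta_\pm u,u\rangle$ are real, so $0=\langle\widehat{\cL^\pm_\eta}(\omega)u,u\rangle=2\pi\rmi\omega\langle Au,u\rangle-\langle E^\eta_\pm u,u\rangle$ forces both quantities to vanish separately; the vanishing of $\langle Au,u\rangle$ strips out the $\pm\eta A$ contribution, leaving $\langle Q'(u^\pm)u,u\rangle=\langle Q'(u^\pm)v,v\rangle=0$ (as $Q'(u^\pm)=P_\bV Q'(u^\pm)P_\bV$), whence $v=0$ by (H3) with the full constant $\delta$; then $0=(2\pi\rmi\omega\mp\eta)Ah$ and Lemma~\ref{l2.2} finish, as you say. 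Moreover the case $\omega=0$ is not covered by any energy argument of this type (there is no imaginary part to exploit) and must be treated separately: there the kernel equation is simply $E^\eta_\pm u=0$, and one invokes Lemma~\ref{l2.5} (invertibility of $Q'(u^\pm)\pm\eta A$), which your proposal never uses. Your first route, redoing the block elimination of Lemma~\ref{l2.3} with the spectral parameter $2\pi\rmi\omega\mp\eta$, could in principle also be made to work (the imaginary part of the analogue of \eqref{2.3-7} again forces $\langle \tilde A v,v\rangle=0$ when $\omega\neq0$), but as written you did not carry it out, and the mechanism you propose for concluding $v=0$ rests on the incorrect identity above.
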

	\begin{proof}
	%Like
	As
	in the previous lemma, we use the notation $E^\eta_\pm=Q'(u^\pm)\pm\eta A$. To start the proof of (i), we fix $\eta\in (0,\eta^*)$ with $\eta^*>0$ defined such that
	\eqref{2.6-1} holds true. Let $\omega\in\RR$ and $u\in\ker\big(\widehat{\cL^\pm_\eta}(\omega)\big)$. We note that if $\omega=0$ we have that $E^\eta_\pm u=0$. By Lemma~\ref{l2.5} we know that
	$E^\eta_\pm=Q'(u^\pm)\pm\eta A$ is invertible on $\bH$, which implies that $u=0$.

	In the case when $\omega\ne0$, we denote by $v=P_\bV u$ and $h=P_{\bV^\perp}u$. Moreover, we have that
	\begin{equation}\label{2.7-1}
	0=\langle \widehat{\cL^\pm_\eta}(\omega) u,u\rangle=2\pi\rmi\omega\langle Au,u\rangle-\langle E^\eta_\pm u,u\rangle.
	\end{equation}
	From Hypotheses (H1) and (H3) we have that the linear operators $A$ and $E^\eta_\pm$ are self-adjoint on $\bH$, which implies that $\langle Au,u\rangle,\langle E^\eta_\pm u,u\rangle\in\RR$. Since $\omega\ne 0$, from \eqref{2.7-1} it follows that $\langle Au,u\rangle=\langle E^\eta_\pm u,u\rangle=0$, which is equivalent to
	\begin{equation}\label{2.7-2}
	\langle Au,u\rangle=\langle Q'(u^\pm) u,u\rangle=0.
	\end{equation}
	From Hypothesis (H3) we have that $\ker\big(Q'(u^\pm)\big)=\bV^\perp$ and $\im\big(Q'(u^\pm)\big)=\bV$, thus, we conclude that $Q'(u^\pm)=P_\bV Q'(u^\pm)P_\bV$. Since the projection $P_\bV$ is self-adjoint, it follows that
	\begin{equation}\label{2.7-3}
	0=-\langle Q'(u^\pm) u,u\rangle=\langle P_\bV Q'(u^\pm)P_\bV u,u\rangle=\langle Q'(u^\pm)P_\bV u,P_\bV u\rangle\geq \delta\|P_\bV u\|^2=\delta\|v\|^2.
	\end{equation}
	Hence, $v=0$. From Hypothesis (H3) we obtain that
	\begin{equation}\label{2.7-4}
	0=\widehat{\cL^\pm_\eta}(\omega)u=\widehat{\cL^\pm_\eta}(\omega)h=(2\pi\rmi\omega\mp\eta)Ah,
	\end{equation}
	which implies that $h\in\ker A\cap\bV^\perp$. From Lemma~\ref{l2.2} we conclude that $h=0$, which implies that $u=v+h=0$, proving (i).

	\noindent\textit{Proof of (ii)} First, we note that from \eqref{2.6-2} and (i) we have that
	\begin{equation}\label{2.7-5}
	\ker\big(\widehat{\cL^\pm_\eta}(\omega)\big)^*=\ker\big(\widehat{\cL^\pm_\eta}(-\omega)\big)=\{0\}\;\mbox{for any}\;\omega\in\RR,\eta\in(0,\eta^*).
	\end{equation}
	From Lemma~\ref{l2.6}, (i) and \eqref{2.7-5} we have that the linear operator $\widehat{\cL^\pm_\eta}(\omega)$ is Fredholm on $\bH$, with trivial kernel and its adjoint has trivial kernel, for any $\omega\in\RR$ and $\eta\in(0,\eta^*)$. We infer that $\widehat{\cL^\pm_\eta}(\omega)$ is invertible with bounded inverse on $\bH$ for any $\omega\in\RR$ and $\eta\in(0,\eta^*)$, proving the lemma.
	\end{proof}
	\begin{lemma}\label{l2.8}
	Assume Hypotheses (H1) and (H3). Then, there exists $\eta^*>0$ such that
	\begin{equation}\label{inv-est}
	\sup_{\omega\in\RR}\big\|\big(\widehat{\cL^\pm_\eta}(\omega)\big)^{-1}\big\|<\infty\;\mbox{for any}\; \eta\in(0,\eta^*).
	\end{equation}
	\end{lemma}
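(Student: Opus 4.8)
The plan is to reduce the statement to a uniform lower bound $\|\widehat{\cL^\pm_\eta}(\omega)u\|\ge c\|u\|$ for all $u\in\bH$, with $c>0$ independent of $\omega$ (and of $\eta$ in a suitable range); since each $\widehat{\cL^\pm_\eta}(\omega)$ is already known to be invertible by Lemma~\ref{l2.7}(ii), this gives $\|\widehat{\cL^\pm_\eta}(\omega)^{-1}\|\le c^{-1}$. Write $\mu=\mu(\omega)=2\pi\rmi\omega\mp\eta$, so that $\widehat{\cL^\pm_\eta}(\omega)=\mu A-Q'(u^\pm)$ with $|\mu|\ge\eta>0$ and $|\mu|\to\infty$ as $|\omega|\to\infty$. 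I would split $\RR$ into $\{|\omega|\le M\}$ and $\{|\omega|>M\}$, with $M>0$ chosen below. On the compact set $\{|\omega|\le M\}$ the map $\omega\mapsto\widehat{\cL^\pm_\eta}(\omega)$ is norm-continuous (indeed affine in $\omega$) and takes values in the invertible operators by Lemma~\ref{l2.7}(ii); since inversion is continuous on the open set of invertible elements of $\cB(\bH)$, the map $\omega\mapsto\widehat{\cL^\pm_\eta}(\omega)^{-1}$ is norm-continuous, hence bounded on $\{|\omega|\le M\}$. Thus the whole point is the high-frequency regime $|\omega|>M$.

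For $|\omega|>M$ I would run a Kawashima-type energy estimate, writing $T=Q'(u^\pm)$, $K=K_\pm$, $f=\widehat{\cL^\pm_\eta}(\omega)u$ and $v=P_\bV u$. Pairing with $u$ and taking real parts, using that $A$ and $T$ are self-adjoint (so $\mathrm{Re}\,\mu\langle Au,u\rangle=\mp\eta\langle Au,u\rangle$), gives $-\langle Tu,u\rangle=\mathrm{Re}\langle f,u\rangle\pm\eta\langle Au,u\rangle\le\|f\|\,\|u\|+\eta\|A\|\,\|u\|^2$; together with Hypothesis (H2)(i)--(ii) for $T$ (which give $T=P_\bV T P_\bV$ and $-\langle Tv,v\rangle\ge\delta_\pm\|v\|^2$) this yields the basic, $\omega$-free bound
\begin{equation*}
\delta_\pm\|v\|^2\le\|f\|\,\|u\|+\eta\|A\|\,\|u\|^2 .
\end{equation*}
The Kawashima condition enters through a second pairing, against the compensated multiplier $w=u-\tfrac1\omega\rmi K u$; the weight $1/\omega$ is designed precisely to cancel the $\omega$-growth created by $2\pi\rmi\omega A$. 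A short computation using $K^\ast=-K$ gives
\begin{equation*}
\mathrm{Re}\langle\widehat{\cL^\pm_\eta}(\omega)u,w\rangle=\mathrm{Re}\langle f,u\rangle+2\pi\,\mathrm{Re}\langle KA u,u\rangle+r(\omega,u),\qquad |r(\omega,u)|\le\frac{C_0}{|\omega|}\|u\|^2 ,
\end{equation*}
where $r$ collects the terms $\mp\tfrac\eta\omega\mathrm{Im}\langle KAu,u\rangle$ and $-\tfrac1\omega\mathrm{Im}\langle KTu,u\rangle$, and $C_0$ depends only on $\|K\|,\|A\|,\|T\|$ and $\eta^*$. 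Now Hypothesis (H2)(iii) for $(T,K)$, i.e. $\mathrm{Re}(KA-T)\ge\gamma_\pm I$, gives $\mathrm{Re}\langle KAu,u\rangle\ge\gamma_\pm\|u\|^2+\langle Tu,u\rangle\ge\gamma_\pm\|u\|^2-\|T\|\,\|v\|^2$, while $\mathrm{Re}\langle f,u\rangle=-\langle Tu,u\rangle\mp\eta\langle Au,u\rangle\ge-\eta\|A\|\,\|u\|^2$ and $\mathrm{Re}\langle\widehat{\cL^\pm_\eta}(\omega)u,w\rangle\le\|f\|\,(1+\tfrac{\|K\|}{|\omega|})\|u\|$.

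Combining these, substituting the basic bound for $\|v\|^2$ into the inequality from the compensated pairing produces
\begin{equation*}
2\pi\gamma_\pm\|u\|^2\le\Big(C_1\eta+\frac{C_2}{|\omega|}\Big)\|u\|^2+C_3\,\|f\|\,\|u\| ,
\end{equation*}
with $C_1,C_2,C_3>0$ depending only on $\|A\|,\|T\|,\|K\|,\delta_\pm,\gamma_\pm$. I would then fix $\eta^*>0$ small enough that $C_1\eta^*<\tfrac12\pi\gamma_\pm$ (and no larger than the constants supplied by Lemmas~\ref{l2.5}--\ref{l2.7}), and then $M>0$ large enough that $C_2/M<\tfrac12\pi\gamma_\pm$; the $\|u\|^2$ terms on the right are then absorbed, leaving $\pi\gamma_\pm\|u\|^2\le C_3\|f\|\,\|u\|$, hence $\|u\|\le(C_3/\pi\gamma_\pm)\|f\|$ for every $|\omega|>M$ and every $\eta\in(0,\eta^*)$. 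Combined with the compact-frequency bound from the first paragraph, this gives $\sup_{\omega\in\RR}\|\widehat{\cL^\pm_\eta}(\omega)^{-1}\|<\infty$. The step I expect to be the main obstacle is exactly the $\omega$-bookkeeping: a naive pairing against $\rmi Ku$ produces the useful Kawashima term with a prefactor of order $|\omega|$, matched by error terms (from $\langle Tu,u\rangle$) also of order $|\omega|$, and these do not visibly close; the two devices that make it work are the $1/\omega$-weighting of the compensating multiplier and the fact that $v=P_\bV u$ is controlled with an $\omega$-independent constant by the basic energy identity, and one must also check that the smallness demanded of $\eta$ here is compatible with all earlier lemmas. (A variant avoiding the compensated multiplier: split $u$ by the spectral projections of $A$ onto $[-\rho,\rho]$ and its complement, $\rho$ fixed small, bound the low-spectral part by $\|v\|$ via (H2)(iii) using $\|KA u_{\mathrm{low}}\|\le\rho\|K\|\,\|u_{\mathrm{low}}\|$, and the complementary part by $\|u_{\mathrm{high}}\|\le\rho^{-1}\|Au\|\le(\rho|\mu|)^{-1}(\|f\|+\|T\|\,\|v\|)$, small for $|\omega|$ large; the same absorption finishes the argument.)
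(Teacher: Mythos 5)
Your proof is correct and follows essentially the same route as the paper: a frequency-domain Kawashima-type energy estimate for $|\omega|$ large — your pairing against the compensated multiplier $u-\tfrac{\rmi}{\omega}Ku$ is algebraically the same device as the paper's substitution of $Au=\tfrac{1}{2\pi\rmi\omega}\big(E^\eta_\pm u+w\big)$ into $\mathrm{Re}\langle K_\pm Au,u\rangle$ — combined, exactly as in the paper, with norm-continuity of $\omega\mapsto\big(\widehat{\cL^\pm_\eta}(\omega)\big)^{-1}$ (via Lemma~\ref{l2.7}(ii)) to handle the remaining compact interval of frequencies. The only difference is bookkeeping: the paper reads off the exact identities $\langle Au,u\rangle=\tfrac{1}{2\pi\omega}\mathrm{Im}\langle w,u\rangle$ and $\langle E^\eta_\pm u,u\rangle=-\mathrm{Re}\langle w,u\rangle$ from the equation, so every error term carries a factor $1/|\omega|$ and no additional smallness of $\eta$ is required in this lemma, whereas your cruder bounds on $\mp\eta\langle Au,u\rangle$ and $\langle Tu,u\rangle$ produce an $O(\eta)\|u\|^2$ term (hence your extra condition $C_1\eta^*<\tfrac12\pi\gamma_\pm$) and need the auxiliary $\delta_\pm$-coercivity estimate for $\|P_\bV u\|$ — harmless, since the statement only asserts existence of some $\eta^*>0$ and one may take the minimum over the earlier lemmas.
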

	\begin{proof}
	First, we recall the notation $E^\eta_\pm=Q'(u^\pm)\pm\eta A$. Moreover, we fix $\eta\in (0,\eta^*)$ with $\eta^*>0$ defined in Lemma~\ref{l2.5} chosen small enough such that
	\eqref{2.6-1} holds true. Let $\omega\in\RR\setminus\{0\}$, $w\in\bH$ and $u=\big(\widehat{\cL^\pm_\eta}(\omega)\big)^{-1}w$. It follows that
	\begin{equation}\label{2.8-1}
	2\pi\rmi\omega\langle Au,u\rangle-\langle E^\eta_\pm u,u\rangle=\langle w,u\rangle.
	\end{equation}
	We recall that, from Hypotheses (H1) and (H3) we have that the linear operators $A$ and $E^\eta_\pm$ are self-adjoint on $\bH$, which implies that $\langle Au,u\rangle,\langle E^\eta_\pm u,u\rangle\in\RR$. Since $\omega\ne 0$, from \eqref{2.8-1} it follows that
	\begin{equation}\label{2.8-2}
	\langle Au,u\rangle=\frac{1}{2\pi\omega}\,\mathrm{Im}\langle w,u\rangle\;\mbox{and}\;\langle E^\eta_\pm u,u\rangle=-\mathrm{Re}\langle w,u\rangle.
	\end{equation}
	Let $\gamma=\min\{\gamma_+,\gamma_-\}>0$, where $\gamma_\pm$ are the constants from Hypothesis (H3). From \eqref{2.8-2} and Hypothesis (H3) it follows that
	\begin{align}\label{2.8-3}
	\gamma\|u\|^2&\leq \big\langle\mathrm{Re}\big(K_\pm A-Q'(u^\pm)\big)u,u\big\rangle=\mathrm{Re}\big\langle\big(K_\pm A-Q'(u^\pm)\big)u,u\big\rangle\nonumber\\
	&=\mathrm{Re}\langle K_\pm A u,u\rangle-\langle E^\eta_\pm u,u\rangle\mp\eta\langle Au,u\rangle=-\mathrm{Re}\langle A u,K_\pm  u\rangle+\mathrm{Re}\langle w,u\rangle\mp \frac{\eta}{2\pi\omega}\,\mathrm{Im}\langle w,u\rangle\nonumber\\
	&\leq |\mathrm{Re}\langle w,u\rangle|+\frac{\eta^*}{2\pi|\omega|} |\mathrm{Im}\langle w,u\rangle|+|\langle A u,K_\pm  u\rangle|\nonumber\\&\leq \Big(1+\frac{\eta^*}{2\pi|\omega|}\Big) |\langle w,u\rangle|+\frac{1}{2\pi|\omega|}|\langle E^\eta_\pm u+w,K_\pm  u\rangle|\nonumber\\
	&\leq \Big(1+\frac{\eta^*}{2\pi|\omega|}\Big) \|w\|\,\|u\|+\frac{1}{2\pi|\omega|}\Big(\|E^\eta_\pm\|\,\|K_\pm\|\,\|u\|^2+ \|K_\pm\|\,\|w\|\,\|u\|\Big)\nonumber\\
	&\leq \Big(1+\frac{c}{|\omega|}\Big) \|w\|\,\|u\|+\frac{c}{|\omega|}\|u\|^2.
	\end{align}
	for some $c>0$, that depends only on $\eta>0$. We infer that
	\begin{equation}\label{2.8-4}
	 \gamma\|u\|^2\leq \big(1+\frac{\gamma}{2}\big)\|w\|\,\|u\|+\frac{\gamma}{2}\|u\|^2\quad\mbox{whenever}\quad |\omega|\geq\frac{2c}{\gamma},
	\end{equation}
	which implies that
	\begin{equation}\label{2.8-5}
	\big\|\big(\widehat{\cL^\pm_\eta}(\omega)\big)^{-1}w\big\|\leq \big(1+\frac{\gamma}{2}\big)\|w\|\quad\mbox{for any}\quad w\in\bH,\; \omega\in\RR\quad\mbox{with}\quad|\omega|\geq\frac{2c}{\gamma}.
	\end{equation}
	One can readily check that the operator valued function $\widehat{\cL^\pm_\eta}:\RR\to\cB(\bH)$ is continuous on $\RR$. In addition, from Lemma~\ref{l2.7}(ii) we have that $\widehat{\cL^\pm_\eta}(\omega)$ is invertible for any $\omega\in\RR$. It follows that the operator valued function $\big(\widehat{\cL^\pm_\eta}(\cdot)\big)^{-1}$ is continuous on $\RR$ which implies that
	\begin{equation}\label{2.8-6}
	\sup\{\big\|\big(\widehat{\cL^\pm_\eta}(\omega)\big)^{-1}\big\|:\omega\in\RR,|\omega|\leq\frac{2c}{\gamma}\}<\infty.
	\end{equation}
	The lemma follows
	%CHANGED std usage:
	%shortly
	readily
	from \eqref{2.8-5} and \eqref{2.8-6}.
	\end{proof}
	\noindent{\bf Proof of Theorem 1.1.}
	The theorem is a direct consequence of the last three lemmas. Indeed, since the linear operators $\cL^\pm_\eta$ are similar on $L^2(\RR,\bH)$ to the operators of multiplication by the operator valued function $\widehat{\cL^\pm_\eta}$, from Lemma~\ref{l2.7}(ii) and Lemma~\ref{l2.8} we have that the operator of multiplication by the operator valued function $\widehat{\cL^\pm_\eta}$ is invertible with bounded inverse.

	\section{Stable bi-semigroups and exponential dichotomies of the linearization}\label{s3}
	In this section we continue to study the properties of the linearization of equation \eqref{nonlinear} (with $s=0$) at the equilibria $u^\pm$. Throughout this section we assume Hypotheses (H1), (H3), (H4) and (H5) and we recall the notation used in several lemmas from the previous section, $E^\eta_\pm=Q'(u^\pm)\pm\eta A$, with $\eta\in(0,\eta^*)$.
	In particular, we prove that equation
	\begin{equation}\label{ED-etaLpm}
	Au'=(Q'(u^\pm)\pm\eta A)u
	\end{equation}
	is equivalent to an equation of the form $u'=Su$, where the linear operator $S$ generates a stable bi-semigroup. Here, $\eta\in(0,\eta^*)$ and $\eta^*>0$ is defined in Section~\ref{s2}. We recall that a linear operator generates a bi-semigroup on a Banach or Hilbert space $\bX$,
	if there exist two closed subspaces $\bX_{j}$, $j=1,2$, of $\bX$, \textit{invariant} under $S$, such that $\bX=\bX_1\oplus\bX_2$  and $S_{|\bX_1}$ and $-S_{|\bX_2}$ generate $C^0$-semigroups on $\bX_j$, $j=1,2$. We say that the bi-semigroup is stable if the two semigroups are stable.

	It is well-known, see e.g. \cite{LPS,LP2}, that the invertibility of $\cL^\pm$ on $L^2(\RR,\bH)$ is equivalent to the exponential dichotomy on $\bH$ of equations
	\begin{equation}\label{ED-Lpm}
	Au'=Q'(u^\pm)u.
	\end{equation}
	From Remark~\ref{r2.1} we have that the linear operators $\cL^\pm$ defined in \eqref{def-Lpm} are \textit{not invertible} on $L^2(\RR,\bH)$. Moreover, we note that for any $u_0\in\bV^\perp$ the constant function $u(\tau)=u_0$ is a solution of equation \eqref{ED-Lpm}. In this section we also prove that equations \eqref{ED-Lpm} exhibit an exponential dichotomy on a direct complement of the finite dimensional space $\bV^\perp$. Using the decomposition
	\begin{equation}\label{decomp-AQ}
	A=\begin{bmatrix}
	A_{11}&A_{12}\\
	A_{21}&A_{22}\end{bmatrix}:\bV^\perp\oplus\bV\to\bV^\perp\oplus\bV,\quad Q'(u^\pm)=\begin{bmatrix}
	0&0\\
	0&Q_{22}'(u^\pm)\end{bmatrix}:\bV^\perp\oplus\bV\to\bV^\perp\oplus\bV
	\end{equation}
	and denoting by $v=P_{\bV}u$ and $h=P_{\bV^\perp}$, one can readily check that equation \eqref{ED-Lpm} is equivalent to the system
	\begin{equation}\label{ED-Lpm-decomposed}
	 \left\{\begin{array}{ll} A_{11}h'+A_{12}v'=0,\\
	A_{21}h'+A_{22}v'=Q'_{22}(u^\pm)v. \end{array}\right.
	\end{equation}
	We note that Hypothesis (H5) holds if and only if the linear operator $A_{11}$ is invertible on $\bV^\perp$.
	Integrating the first equation, we obtain that solutions $u=(h,v)$ that decay to $0$ at $\pm\infty$, satisfy the conditions
	\begin{equation}\label{ED-Lpm-decomposed2}
	 \left\{\begin{array}{ll} h=-A_{11}^{-1}A_{12}v,\\
	A_{21}h'+A_{22}v'=Q'_{22}(u^\pm)v. \end{array}\right.
	\end{equation}
	To prove that equations \eqref{ED-Lpm} have an exponential dichotomy on a complement of $\bV^\perp$ it is enough to show that equation
	\begin{equation}\label{ED-reduced}
	(A_{22}-A_{21}A_{11}^{-1}A_{12})v'=Q'_{22}(u^\pm)v
	\end{equation}
	is equivalent to an equation of the form $u'=Su$, where the linear operator $S$ generates a stable bi-semigroup on $\bV$. In the following lemma we show that the reduced equation
	\eqref{ED-reduced} can be treated
	%
	%similarly as
	similarly to
	%similar to
	equation \eqref{ED-etaLpm} since their operator valued, constant coefficients share many properties. This lemma will allow us
	to treat these two equations in
	%an unitary way.
	a unified way.
	\begin{lemma}\label{reduced-similar} Assume Hypotheses (H1), (H3), (H4) and (H5). Then, the following assertions hold true:
	\begin{enumerate}
	\item[(i)] The linear operator $\tilde{A}:=A_{22}-A_{21}A_{11}^{-1}A_{12}$ is self-adjoint and one-to-one;
	\item[(ii)] The linear operator $Q'_{22}(u^\pm)$ is invertible with bounded inverse on $\bV$;
	\item[(iii)] The linear operator $2\pi\rmi\omega\tilde{A}-Q'_{22}(u^\pm)$ is invertible on $\bV$ for any $\omega\in\RR$;
	\item[(iv)] $\sup_{\omega\in\RR}\|(2\pi\rmi\omega\tilde{A}-Q'_{22}(u^\pm))^{-1}\|<\infty$.
	\end{enumerate}
	\end{lemma}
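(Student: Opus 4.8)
All four assertions reduce to two inputs: the coercivity of $-Q'(u^\pm)$ on $\bV$ furnished by Hypothesis (H3) (via (H2)(ii)), and the self-adjointness of the blocks of $A$ coming from (H1). The plan is to handle (i) and (ii) by inspection of the block decomposition \eqref{decomp-AQ}, and then to obtain (iii) and (iv) simultaneously from a single uniform coercivity estimate. For (i), I would first take adjoints: since $A$ is self-adjoint and $\bV^\perp\oplus\bV$ is orthogonal, the blocks in \eqref{decomp-AQ} satisfy $A_{11}^*=A_{11}$, $A_{22}^*=A_{22}$, $A_{21}^*=A_{12}$, and $(A_{11}^{-1})^*=A_{11}^{-1}$ by (H5); substituting into $\tilde A=A_{22}-A_{21}A_{11}^{-1}A_{12}$ gives $\tilde A^*=\tilde A$ at once. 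For injectivity I would reuse the lift already appearing in \eqref{ED-Lpm-decomposed2}: given $v\in\bV$ with $\tilde A v=0$, put $h:=-A_{11}^{-1}A_{12}v\in\bV^\perp$; a one-line block computation shows that $u:=h+v$ has $P_{\bV^\perp}Au=A_{11}h+A_{12}v=0$ and $P_{\bV}Au=A_{21}h+A_{22}v=\tilde A v=0$, so $Au=0$, and Hypothesis (H4) forces $u=0$, hence $v=0$.

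For (ii), I would observe that by \eqref{decomp-AQ} the operator $Q_{22}'(u^\pm)$ is simply $Q'(u^\pm)$ restricted to the invariant subspace $\bV$, so Hypothesis (H3) (through (H2)(ii)) gives $Q_{22}'(u^\pm)\le-\delta_\pm I_{\bV}$. Thus $-Q_{22}'(u^\pm)$ is a bounded self-adjoint operator on $\bV$ bounded below by $\delta_\pm>0$, hence boundedly invertible with $\|Q_{22}'(u^\pm)^{-1}\|\le\delta_\pm^{-1}$.

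For (iii) and (iv) I would set $L_\pm(\omega):=2\pi\rmi\omega\tilde A-Q_{22}'(u^\pm)$. Since $\tilde A$ (by (i)) and $Q_{22}'(u^\pm)$ are self-adjoint, $\langle\tilde A v,v\rangle\in\RR$ and therefore
\[
\mathrm{Re}\,\langle L_\pm(\omega)v,v\rangle=-\langle Q_{22}'(u^\pm)v,v\rangle\ge\delta_\pm\|v\|^2\qquad(v\in\bV,\ \omega\in\RR),
\]
whence $\|L_\pm(\omega)v\|\ge\delta_\pm\|v\|$ for all $v$ and all $\omega$. This lower bound makes $L_\pm(\omega)$ injective with closed range; moreover $L_\pm(\omega)^*=-2\pi\rmi\omega\tilde A-Q_{22}'(u^\pm)=L_\pm(-\omega)$ satisfies the identical estimate, so $\ker L_\pm(\omega)^*=\{0\}$, which forces $\ran L_\pm(\omega)$ to be dense, hence (being closed) all of $\bV$. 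Therefore $L_\pm(\omega)$ is boundedly invertible for every $\omega\in\RR$, which is (iii), and the estimate upgrades to $\|L_\pm(\omega)^{-1}\|\le\delta_\pm^{-1}$ uniformly in $\omega$, which is (iv).

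\emph{Expected difficulty.} None of the steps is deep; the computation is purely the arithmetic of $2\times2$ operator blocks together with the positivity coming from (H3). The two points meriting care are the injectivity of $\tilde A$ in (i), which genuinely needs (H4) via the explicit lift $h=-A_{11}^{-1}A_{12}v$, and the passage from ``bounded below with dense range'' to ``surjective'' for $L_\pm(\omega)$ on the possibly infinite-dimensional space $\bV$, which is dispatched by the closed-range/adjoint argument above. In contrast to the operators $\widehat{\cL^\pm_\eta}(\omega)$ of Section~\ref{s2}, no Kawashima-type input is needed for the uniform-in-$\omega$ bound here, because the skew term $2\pi\rmi\omega\tilde A$ drops out of $\mathrm{Re}\,\langle L_\pm(\omega)v,v\rangle$.
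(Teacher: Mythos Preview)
Your proposal is correct and follows essentially the same approach as the paper: the paper proves (i) by the block-adjoint identities and the lift $h=-A_{11}^{-1}A_{12}v$ together with (H4), proves (ii) directly from $Q'_{22}(u^\pm)\le -\delta_\pm I_\bV$, and derives (iii)--(iv) from the coercivity estimate $\mathrm{Re}\,\langle\tcL(\omega)v,v\rangle=-\langle Q'_{22}(u^\pm)v,v\rangle\ge\delta_\pm\|v\|^2$ via the closed-range/adjoint-kernel argument you outline. Your added remark that no Kawashima input is needed for the uniform-in-$\omega$ bound is accurate and worth keeping.
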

	\begin{proof}
	(i) Since the linear operator $A$ is self-adjoint, from decomposition \eqref{decomp-AQ}, we obtain that $A_{11}^*=A_{11}$, $A_{22}^*=A_{22}$ and $A_{12}^*=A_{21}$, which implies that
	$\tilde{A}$ is self-adjoint. To show that $\tilde{A}$ is one-to-one, we consider $v\in\ker\tilde{A}$ and denote by $h=-A_{11}^{-1}A_{12}v\in\bV^\perp$. Using again the decomposition \eqref{decomp-AQ}, one can readily check that $A(h+v)=0$. From Hypothesis (H4) we infer that $h=-v$. Since $v\in\bV$ and $h\in\bV^\perp$ we conclude that $v=0$, proving (i).

	Assertion (ii) follows immediately from Hypothesis (H3) since $Q'_{22}(u^\pm)\leq -\delta_\pm I_{\bV}$. To prove (iii) and (iv), we note that since $\tilde{A}$ and $Q'_{22}(u^\pm)$ are self-adjoint we have that
	$\mathrm{Re}\tcL(\omega)=-Q'_{22}(u^\pm)$, where $\tcL(\omega):=2\pi\rmi\omega\tilde{A}-Q'_{22}(u^\pm)$. We obtain that
	\begin{equation}\label{r-s-1}
	\mathrm{Re}\langle\tcL(\omega)v,v\rangle=-\langle Q'_{22}(u^\pm)v,v\rangle\geq\delta_\pm\|v\|^2\quad\mbox{for any}\quad\omega\in\RR, v\in\bV,
	\end{equation}
	which implies that
	\begin{equation}\label{r-s-2}
	\|\tcL(\omega)v\|\geq\delta_\pm\|v\|\quad\mbox{for any}\quad\omega\in\RR, v\in\bV.
	\end{equation}
	It follows that $\tcL(\omega)$ is one-to-one and $\im\tcL(\omega)$ is closed in $\bV$ for any $\omega\in\RR$. Moreover, from \eqref{r-s-1} we infer that $\ker\tcL(\omega)^*=\{0\}$ for any $\omega\in\RR$, proving (iii). Assertion (iv) is a consequence of \eqref{r-s-2}.
	\end{proof}
	In what follows we focus our attention on equations of the form
	\begin{equation}\label{general-2}
	\Gamma u'=Eu,
	\end{equation}
	on some Hilbert space $\bX$, where the linear operators $\Gamma$ and $E$ satisfy the following Hypothesis (S) below. Our goal is to prove that equation \eqref{general-2} is equivalent to an equation of the form $u'=S_{\Gamma,E}u$, where the linear operator $S_{\Gamma,E}$ generates a \textit{stable bi-semigroup}.

	\vspace{0.2cm}
	{\bf Hypothesis (S)} We assume that bounded linear operators $\Gamma,E\in\cB(\bX)$ satisfy the following conditions:
	\begin{enumerate}
	\item[(i)] $\Gamma$ is self-adjoint and one-to-one;
	\item[(ii)] The linear operator $E$ is self-adjoint and invertible with bounded inverse on $\bX$;
	\item[(iii)] The linear operator $2\pi\rmi\omega\Gamma-E$ is invertible on $\bX$ for any $\omega\in\RR$;
	\item[(iv)] $\sup_{\omega\in\RR}\|(2\pi\rmi\omega\Gamma-E)^{-1}\|<\infty$.
	\end{enumerate}
	\vspace{0.2cm}
	\begin{remark}\label{r3.1} Assume Hypotheses (S). Then, the linear operator
	\begin{equation}\label{def-Spm}
	S_{\Gamma,E}=\Gamma^{-1}E:\dom(S_{\Gamma,E})=\{u\in\bX:Eu\in\im \Gamma\}\to\bX,
	\end{equation}
	is closed and densely defined. Indeed, since the linear operators $\Gamma$ and $E$ are bounded, one can readily check that the graph of $S_{\Gamma,E}$ is a closed subspace of $\bX\times\bX$. Next, we show that the domain of $S_{\Gamma,E}$ is dense. We note that from Hypothesis (S)(i) we have that the linear operator $\Gamma$ is self-adjoint and $\ker \Gamma=\{0\}$, which proves that $\im \Gamma$ is a dense subspace of $\bX$.
	Fix $u\in\bH$. Since $\im \Gamma$ is dense in $\bX$ it follows that there exists $\{w_n\}_{n\geq 1}$ a sequence of elements of $\bX$ such that $\Gamma w_n\to Eu$ as $n\to\infty$. We define the sequence $\{u_n\}_{n\geq 1}$ by $u_n=E^{-1}\Gamma w_n$. We note that $u_n\in\dom(S_{\Gamma,E})$ for all $n\geq 1$. Moreover, from Hypothesis (S) (ii) we have that the linear operator $E$ is invertible with bounded inverse on $\bX$, therefore, we infer that $u_n\to u$ as $n\to\infty$, proving that the domain of $S_{\Gamma,E}$ is dense in $\bX$.
	\end{remark}
	\begin{lemma}\label{l3.2}
	Assume Hypothesis (S). Then, the linear operator $S_{\Gamma,E}$ defined in \eqref{def-Spm} generates a bi-semigroup on $\bX$.
	\end{lemma}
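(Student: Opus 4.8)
The plan is to verify that $S_{\Gamma,E}=\Gamma^{-1}E$ satisfies the resolvent conditions on the imaginary axis that characterize generators of (exponentially dichotomous) bi-semigroups on a Hilbert space, and then to quote that characterization from \cite{BGK} (see also \cite{LP2,LPS}). By Remark~\ref{r3.1} the operator $S_{\Gamma,E}$ is already known to be closed and densely defined, so it remains only to show that $\rmi\RR\subseteq\rho(S_{\Gamma,E})$ and that the resolvent is uniformly bounded along $\rmi\RR$.

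First I would produce the resolvent explicitly. Fix $\omega\in\RR$ and put $\mu:=2\pi\rmi\omega$. I claim $\mu\in\rho(S_{\Gamma,E})$ and
\[
(\mu-S_{\Gamma,E})^{-1}=(2\pi\rmi\omega\Gamma-E)^{-1}\Gamma=:T,
\]
which lies in $\cB(\bX)$ by Hypothesis (S)(iii). Indeed, for $f\in\bX$ the element $u:=Tf$ satisfies $(2\pi\rmi\omega\Gamma-E)u=\Gamma f$, hence $Eu=\Gamma(2\pi\rmi\omega u-f)\in\im\Gamma$, so $u\in\dom(S_{\Gamma,E})$, and — since $\Gamma$ is one-to-one — $S_{\Gamma,E}u=\Gamma^{-1}Eu=2\pi\rmi\omega u-f$, whence $(\mu-S_{\Gamma,E})Tf=f$. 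Conversely, if $u\in\dom(S_{\Gamma,E})$ and $(\mu-S_{\Gamma,E})u=g$, then applying $\Gamma$ (legitimate because $Eu\in\im\Gamma$) gives $(2\pi\rmi\omega\Gamma-E)u=\Gamma g$, i.e. $u=Tg=T(\mu-S_{\Gamma,E})u$. Thus $\mu-S_{\Gamma,E}$ is boundedly invertible with inverse $T$. Finally $\|(\mu-S_{\Gamma,E})^{-1}\|\le\|\Gamma\|\,\|(2\pi\rmi\omega\Gamma-E)^{-1}\|$, so Hypothesis (S)(iv) yields $\sup_{\omega\in\RR}\|(2\pi\rmi\omega-S_{\Gamma,E})^{-1}\|<\infty$.

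With these two facts in hand I would invoke the Hilbert-space characterization of bi-semigroup generators: a closed, densely defined operator $S$ on a Hilbert space $\bX$ generates an exponentially dichotomous bi-semigroup if and only if $\partial_\tau-S$ is boundedly invertible on $L^2(\RR,\bX)$, and — by Plancherel on $\bX$ — the latter holds precisely when $\rmi\RR\subseteq\rho(S)$ and $\sup_{\omega\in\RR}\|(2\pi\rmi\omega-S)^{-1}\|<\infty$, the Fourier multiplier with symbol $\omega\mapsto(2\pi\rmi\omega-S)^{-1}$ being then bounded on $L^2(\RR,\bX)$. Applying this to $S=S_{\Gamma,E}$, with closedness and density of the domain supplied by Remark~\ref{r3.1}, proves the lemma — and in fact produces the stable bi-semigroup structure used later, since the uniform resolvent bound forces a whole strip $\{|\mathrm{Re}\,\mu|<\eps\}$ into $\rho(S_{\Gamma,E})$, giving exponential decay of both constituent semigroups.

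I expect the only real subtlety to be the domain bookkeeping in the resolvent identity — one must keep track of $\dom(S_{\Gamma,E})=\{u:Eu\in\im\Gamma\}$ and use the one-to-oneness of $\Gamma$ each time $\Gamma^{-1}$ is applied, since $\Gamma^{-1}$ is unbounded — together with checking that the verified resolvent conditions are exactly those demanded by the abstract bi-semigroup criterion being cited; neither is deep once Remark~\ref{r3.1} is available.
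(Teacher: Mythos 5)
Your opening computation is fine, but it is not a proof of this lemma: it reproduces exactly the paper's Lemma~\ref{l3.3} (the identity $(2\pi\rmi\omega-S_{\Gamma,E})^{-1}=(2\pi\rmi\omega\Gamma-E)^{-1}\Gamma$ and the uniform bound, indeed even the stronger decay $c/(1+|\omega|)$), which in the paper is an ingredient for Theorem~\ref{t3.6}, not for Lemma~\ref{l3.2}. The genuine gap is the step where you ``quote'' the characterization that a closed, densely defined operator $S$ on a Hilbert space generates an (exponentially dichotomous) bi-semigroup if and only if $\partial_\tau-S$ is boundedly invertible on $L^2(\RR,\bX)$, equivalently $\rmi\RR\subseteq\rho(S)$ with uniformly bounded resolvent. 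No such theorem is available in \cite{BGK} or \cite{LP2} at that level of generality, and it is false: uniform resolvent bounds on $\rmi\RR$ carry no Hille--Yosida information and do not force any invariant splitting with bounded projections. Concretely, on $\bX=\ell^2$ take the block-diagonal operator with $2\times2$ blocks $S_n=\begin{pmatrix} n & n^2\\ 0 & -n\end{pmatrix}$; then $\|(\rmi\omega-S_n)^{-1}\|$ is bounded uniformly in $n$ and $\omega$, so $\partial_\tau-S$ is invertible on $L^2(\RR,\ell^2)$, yet the stable eigenvector $(-n/2,1)^T$ and unstable eigenvector $(1,0)^T$ become asymptotically parallel, so any decomposition $\bX=\bX_1\oplus\bX_2$ into closed $S$-invariant subspaces with the required generation/spectral properties would have unbounded associated projection: $S$ generates no bi-semigroup. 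The dichotomy theorem of \cite{LP2} that the paper does use (in Theorem~\ref{t3.6}(iii)) presupposes the bi-semigroup/bi-family structure and only then converts $L^2$-invertibility into exponential stability; invoking it to obtain generation is circular, which is precisely why the paper proves Lemma~\ref{l3.2} separately and beforehand.

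The paper's actual proof uses the structure you never touch, namely Hypothesis (S)(i)--(ii): since $\Gamma$ is bounded, self-adjoint and injective, the spectral theorem yields closed $\Gamma$-invariant subspaces $\bX_n$ with $\bX=\bigoplus_{n\geq1}\bX_n$ and $\Gamma_{|\bX_n}$ boundedly invertible; setting $\bG_n=E^{-1}\bX_n$ (closed because $E$ is boundedly invertible) one gets $\bX=\bigoplus_{n\geq1}\bG_n$ with $(S_{\Gamma,E})_{|\bG_n}=\Gamma_{|\bX_n}^{-1}E_{|\bG_n}$ bounded, and the bi-semigroup is assembled from these bounded pieces. (Equivalently, as exploited later in Lemma~\ref{l4.5}, $(-E)^{1/2}S_{\Gamma,E}(-E)^{-1/2}$ is self-adjoint, so generation comes from the spectral theorem.) If you want to salvage a resolvent-only route you would need a bisectoriality-type argument that actually constructs the splitting projections from the $O(1/(1+|\omega|))$ decay, and boundedness of those projections is exactly the nontrivial point; the self-adjoint similarity is what makes it automatic here. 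The fact that your argument uses only (S)(iii)--(iv) is the telltale sign that the key hypothesis, and hence the key idea of the paper's proof, is missing.
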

	\begin{proof}
	Since $\Gamma\in\cB(\bX)$ is self-adjoint and one-to-one, we have that the linear operator $\Gamma$ is similar to a multiplication operator on an $L^2$ space. Therefore, there exists $\{\bX_n\}_{n\geq 1}$ a sequence of closed subspaces of $\bX$, invariant under $\Gamma$, such that
	\begin{equation}\label{3.2-1}
	\bX=\bigoplus_{n=1}^{\infty}\bX_n,\; \Gamma_{|\bX_n}\;\mbox{is invertible with bounded inverse on}\; \bX_n
	\end{equation}
	for any $n\geq 1$. Since $E$ is invertible with bounded inverse, we conclude that for any $n\geq1$ the subspace $\bG_n=E^{-1}\bX_n$ is closed in $\bX$. Moreover, from \eqref{3.2-1} we have that $\bX=\bigoplus_{n=1}^{\infty}\bG_n$. Since $E\bG_n=\bX_n$ and $\Gamma_{|\bX_n}^{-1}$ is bounded for any $n\geq 1$ we have that
	\begin{equation}\label{3.2-2}
	(S_{\Gamma,E})_{|\bG_n}=\Gamma^{-1}E_{|\bG_n}=\Gamma_{|\bX_n}^{-1}E_{|\bG_n}
	\end{equation}
	is bounded. Since $\bX=\bigoplus_{n=1}^{\infty}\bG_n$ we infer that $S_{\Gamma,E}$ is generating a bi-semigroup on $\bX$, proving the lemma.
	\end{proof}
	To prove that the bi-semigroup generated by $S_{\Gamma,E}$ is stable, we use the methods from \cite{LP2}.
	In the next lemma we prove that $S_{\Gamma,E}$ is hyperbolic and the basic estimates satisfied by the norm of the resolvent operators. To formulate the lemma, we introduce the operator valued function $\cL_{\Gamma,E}:\RR\to\cB(\bX)$ defined by $\cL_{\Gamma,E}(\omega)=2\pi\rmi\omega\Gamma-E$.
	\begin{lemma}\label{l3.3}
	Assume Hypothesis (S). Then, the following assertions hold true:
	\begin{enumerate}
	\item[(i)] $\rmi\RR\subseteq\rho(S_{\Gamma,E})$ and $R(2\pi\rmi\omega,S_{\Gamma,E})=(2\pi\rmi\omega-S_{\Gamma,E})^{-1}=\big(\widehat{\cL_{\Gamma,E}}(\omega)\big)^{-1}\Gamma$ for all $\omega\in\RR$;
	\item[(ii)] There exists $c>0$ such that $\|R(2\pi\rmi\omega,S_{\Gamma,E})\|\leq \frac{c}{1+|\omega|}$ for all $\omega\in\RR$.
	\end{enumerate}
	\end{lemma}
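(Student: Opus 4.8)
\emph{Proof proposal.} The plan is to verify directly that $\cL_{\Gamma,E}(\omega)^{-1}\Gamma$ is the resolvent of $S_{\Gamma,E}$ at $2\pi\rmi\omega$, and then to extract the $|\omega|^{-1}$ decay from the algebraic decomposition of $\cL_{\Gamma,E}(\omega)$ into its leading term $2\pi\rmi\omega\Gamma$ and the bounded remainder $-E$.

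For (i), fix $\omega\in\RR$. By Hypothesis (S)(iii) the bounded operator $\cL_{\Gamma,E}(\omega)=2\pi\rmi\omega\Gamma-E$ is boundedly invertible, so $T_\omega:=\cL_{\Gamma,E}(\omega)^{-1}\Gamma\in\cB(\bX)$. I would first show $\ran T_\omega\subseteq\dom(S_{\Gamma,E})$: given $u\in\bX$ and $w:=T_\omega u$, one has $(2\pi\rmi\omega\Gamma-E)w=\Gamma u$, hence $Ew=\Gamma(2\pi\rmi\omega w-u)\in\im\Gamma$, so $w\in\dom(S_{\Gamma,E})$ and $S_{\Gamma,E}w=\Gamma^{-1}Ew=2\pi\rmi\omega w-u$; therefore $(2\pi\rmi\omega-S_{\Gamma,E})T_\omega u=u$. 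Conversely, for $w\in\dom(S_{\Gamma,E})$, set $g:=\Gamma^{-1}Ew$ (well defined since $Ew\in\im\Gamma$ and $\Gamma$ is one-to-one by (S)(i)); applying $\Gamma$ to $(2\pi\rmi\omega-S_{\Gamma,E})w=2\pi\rmi\omega w-g$ yields $\Gamma(2\pi\rmi\omega w-g)=2\pi\rmi\omega\Gamma w-Ew=\cL_{\Gamma,E}(\omega)w$, so $T_\omega(2\pi\rmi\omega-S_{\Gamma,E})w=w$. Thus $2\pi\rmi\omega-S_{\Gamma,E}\colon\dom(S_{\Gamma,E})\to\bX$ is a bijection with bounded inverse $T_\omega$, i.e. $2\pi\rmi\omega\in\rho(S_{\Gamma,E})$ and $R(2\pi\rmi\omega,S_{\Gamma,E})=\cL_{\Gamma,E}(\omega)^{-1}\Gamma$; letting $\omega$ run over $\RR$ gives $\rmi\RR\subseteq\rho(S_{\Gamma,E})$.

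For (ii), put $M:=\sup_{\omega\in\RR}\|\cL_{\Gamma,E}(\omega)^{-1}\|$, which is finite by Hypothesis (S)(iv). The crude bound $\|R(2\pi\rmi\omega,S_{\Gamma,E})\|\le M\|\Gamma\|$ already handles $|\omega|\le1$. For $|\omega|\ge1$ I would use $2\pi\rmi\omega\Gamma=\cL_{\Gamma,E}(\omega)+E$ to write
\[
2\pi\rmi\omega\,R(2\pi\rmi\omega,S_{\Gamma,E})=\cL_{\Gamma,E}(\omega)^{-1}\bigl(2\pi\rmi\omega\Gamma\bigr)=I+\cL_{\Gamma,E}(\omega)^{-1}E,
\]
so that $\|R(2\pi\rmi\omega,S_{\Gamma,E})\|\le(1+M\|E\|)/(2\pi|\omega|)$. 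Combining the two regimes with $c:=\max\{2M\|\Gamma\|,(1+M\|E\|)/\pi\}$ gives $\|R(2\pi\rmi\omega,S_{\Gamma,E})\|\le c/(1+|\omega|)$ for all $\omega\in\RR$.

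The only subtle point is the domain bookkeeping in (i): one must confirm that the formal identity $2\pi\rmi\omega-S_{\Gamma,E}=\Gamma^{-1}\cL_{\Gamma,E}(\omega)$ genuinely upgrades to the densely defined, unbounded operator $S_{\Gamma,E}$ — in particular that $\cL_{\Gamma,E}(\omega)^{-1}\Gamma$ maps $\bX$ into $\dom(S_{\Gamma,E})$ and is a two-sided (not merely one-sided) inverse. Everything else is routine operator-norm estimation, and part (ii) amounts to the observation that the leading term $2\pi\rmi\omega\Gamma$ of $\cL_{\Gamma,E}(\omega)$ supplies the decay once paired against the uniform bound of Hypothesis (S)(iv).
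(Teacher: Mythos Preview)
Your proof is correct and matches the paper's argument almost exactly. For (i) you carry out the same two-sided inverse verification the paper does, checking both $\ran T_\omega\subseteq\dom(S_{\Gamma,E})$ and the left/right inverse identities. For (ii) you and the paper arrive at the identical key identity $2\pi\rmi\omega\,R(2\pi\rmi\omega,S_{\Gamma,E})=I+\cL_{\Gamma,E}(\omega)^{-1}E$; the only cosmetic difference is that the paper first writes down a generalized resolvent identity $(2\pi\rmi\omega_1\Gamma-E)^{-1}-(2\pi\rmi\omega_2\Gamma-E)^{-1}=2\pi\rmi(\omega_2-\omega_1)(2\pi\rmi\omega_1\Gamma-E)^{-1}\Gamma(2\pi\rmi\omega_2\Gamma-E)^{-1}$ and then specializes to $\omega_2=0$, whereas you obtain the identity directly from $2\pi\rmi\omega\Gamma=\cL_{\Gamma,E}(\omega)+E$.
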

	\begin{proof} Assertion (i) follows from Hypothesis (S)(iii) and the definition of $S_{\Gamma,E}$ in \eqref{def-Spm}. Indeed, since $\Gamma S_{\Gamma,E} u=E u$ for any $u\in\dom(S_{\Gamma,E})$ one readily checks that
	$\big(\cL_{\Gamma,E}(\omega)\big)^{-1}\Gamma(2\pi\rmi\omega-S_{\Gamma,E})u=u$ for any $u\in\dom(S_{\Gamma,E})$. Moreover, since the linear operators $\Gamma$ and $E$ are bounded, we have that
	$E(2\pi\rmi\omega \Gamma-E)^{-1}=2\pi\rmi\omega \Gamma(2\pi\rmi\omega \Gamma-E)^{-1}-I$, which implies that
	\begin{equation}\label{3.3-1}
	E\big(\cL_{\Gamma,E}(\omega)\big)^{-1}\Gamma u=E(2\pi\rmi\omega \Gamma-E)^{-1}\Gamma u=2\pi\rmi\omega \Gamma(2\pi\rmi\omega \Gamma-E)^{-1}\Gamma u-\Gamma u\in\im \Gamma
	\end{equation}
	for any $u\in\bX$. It follows that $\big(\cL_{\Gamma,E}(\omega)\big)^{-1}\Gamma u\in\dom(S_{\Gamma,E})$ and $$S_{\Gamma,E}(\cL_{\Gamma,E}(\omega)\big)^{-1}\Gamma u=2\pi\rmi\omega (2\pi\rmi\omega \Gamma-E)^{-1}\Gamma u-u$$ for any $u\in\bX$, proving (i).

	\noindent\textit{Proof of (ii).} Using the same argument used to prove the resolvent equation, one can show that
	\begin{equation}\label{3.3-2}
	(2\pi\rmi\omega_1\Gamma-E)^{-1}-(2\pi\rmi\omega_2\Gamma-E)^{-1}=2\pi\rmi(\omega_2-\omega_1)(2\pi\rmi\omega_1\Gamma-E)^{-1}\Gamma(2\pi\rmi\omega_2\Gamma-E)^{-1}
	\end{equation}
	for any $\omega_1,\omega_2\in\RR$. Setting $\omega_2=0$ and multiplying the equation by $E$ from the right we obtain that
	\begin{equation}\label{3.3-3}
	(2\pi\rmi\omega \Gamma-E)^{-1}E+I=2\pi\rmi\omega(2\pi\rmi\omega\Gamma-E)^{-1}\Gamma=2\pi\rmi\omega R(2\pi\rmi\omega,S_{\Gamma,E})
	\end{equation}
	for any $\omega\in\RR$. Assertion (ii) follows
	%CHANGED
	%shortly
	readily
	from Hypothesis(S)(iv) and \eqref{3.3-3}.
	\end{proof}
	Next, we define $\cS_{\Gamma,E}:\dom(\cS_{\Gamma,E})\subset L^2(\RR,\bX)\to L^2(\RR,\bX)$ by $(\cS_{\Gamma,E} u)(\tau)=u'(\tau)-S_{\Gamma,E} u(\tau)$ with its natural, maximal domain given by
	\begin{equation}\label{def-cSpm}
	\dom(\cS_{\Gamma,E})=\big\{u\in H^1(\RR,\bX):u(\tau)\in\dom(S_{\Gamma,E})\;\mbox{for a.e.}\; \tau\in\RR, u'-S_{\Gamma,E} u\in L^2(\RR,\bX)\big\}.
	\end{equation}
	To prove that the linear operator $S_{\Gamma,E}$ generates a stable bi-semigroup, we need to prove that the linear operator $\cS_{\Gamma,E}$ defined in \eqref{def-cSpm} is invertible on $L^2(\RR,\bX)$. To be able to study the properties of the operator $\cS_{\Gamma,E}$, we start by describing its domain using a frequency domain formulation.
	To prove the next lemma, we recall a classical result on the commutative properties of the Fourier transform with closed linear operators.
	\begin{remark}\label{r3.4}
	Assume $T:\dom(T)\subset\bX\to\bX$ is a closed linear operator. Then, the following assertions hold true:
	\begin{enumerate}
	\item[(i)] If $J\subseteq\RR$ is a measurable set, $f\in L^1(J,\bX)$ such that $f(\tau)\in\dom(T)$ for almost all $\tau\in J$ and $Tf(\cdot)\in L^1(J,\bX)$, then
	$\int_J f(\tau)\rmd\tau\in\dom(T)$ and $T\int_J f(\tau)\rmd\tau=\int_J Tf(\tau)\rmd\tau$;
	\item[(ii)] If $f\in L^2(\RR,\bX)$ such that $f(\tau)\in\dom(T)$ for almost all $\tau\in \RR$ and $Tf(\cdot)\in L^2(\RR,\bX)$, then
	$\widehat{v}(\omega)\in\dom(T)$ and $T\widehat{v}(\omega)=\widehat{Tv}(\omega)$ for almost all $\tau\in J$.
	\end{enumerate}
	\end{remark}
	\begin{lemma}\label{l3.5}
	Assume Hypothesis (S). Then, the domain of $\cS_{\Gamma,E}$ is equal to the set of all $u\in L^2(\RR,\bX)$ for which there exists $f\in L^2(\RR,\bX)$ such that
	\begin{equation}\label{dom-cSpm}
	\widehat{u}(\omega)=R(2\pi\rmi\omega,S_{\Gamma,E})\widehat{f}(\omega)\quad\mbox{for almost all}\quad\omega\in\RR.
	\end{equation}
	In the case the above equation holds then $\cS_{\Gamma,E}u=f$.
	\end{lemma}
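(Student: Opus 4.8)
The plan is to prove the two inclusions defining this set equality separately, in each direction passing to the frequency side via Plancherel and exploiting the resolvent formula from Lemma~\ref{l3.3}. Throughout, $\widehat{\cdot}$ denotes the Fourier transform on $L^2(\RR,\bX)$.

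First I would take $u\in\dom(\cS_{\Gamma,E})$ and set $f:=\cS_{\Gamma,E}u=u'-S_{\Gamma,E}u$, which lies in $L^2(\RR,\bX)$ by definition of the domain. Since $u\in H^1(\RR,\bX)$ we have $\widehat{u'}(\omega)=2\pi\rmi\omega\,\widehat{u}(\omega)$ a.e. Moreover $S_{\Gamma,E}u=u'-f\in L^2(\RR,\bX)$, $u(\tau)\in\dom(S_{\Gamma,E})$ a.e., and $S_{\Gamma,E}$ is closed (Remark~\ref{r3.1}); hence Remark~\ref{r3.4}(ii) yields $\widehat{u}(\omega)\in\dom(S_{\Gamma,E})$ and $\widehat{S_{\Gamma,E}u}(\omega)=S_{\Gamma,E}\widehat{u}(\omega)$ a.e. Combining, $\widehat{f}(\omega)=\big(2\pi\rmi\omega-S_{\Gamma,E}\big)\widehat{u}(\omega)$ a.e.; since $2\pi\rmi\omega\in\rho(S_{\Gamma,E})$ by Lemma~\ref{l3.3}(i), applying $R(2\pi\rmi\omega,S_{\Gamma,E})$ gives \eqref{dom-cSpm}, and $\cS_{\Gamma,E}u=f$.

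For the converse, suppose $u,f\in L^2(\RR,\bX)$ satisfy \eqref{dom-cSpm}. By Lemma~\ref{l3.3}(ii), $\|\widehat{u}(\omega)\|\le\frac{c}{1+|\omega|}\|\widehat{f}(\omega)\|$, so $\omega\mapsto(1+|\omega|)\widehat{u}(\omega)\in L^2$, whence $u\in H^1(\RR,\bX)$ and $\widehat{u'}(\omega)=2\pi\rmi\omega\,\widehat{u}(\omega)$ a.e. Define $g$ by $\widehat{g}(\omega):=S_{\Gamma,E}\widehat{u}(\omega)=S_{\Gamma,E}R(2\pi\rmi\omega,S_{\Gamma,E})\widehat{f}(\omega)$; using the identity $S_{\Gamma,E}R(\lambda,S_{\Gamma,E})=\lambda R(\lambda,S_{\Gamma,E})-I$ together with \eqref{dom-cSpm}, we get $\widehat{g}(\omega)=2\pi\rmi\omega\,\widehat{u}(\omega)-\widehat{f}(\omega)=\widehat{u'-f}(\omega)$ a.e., so $g=u'-f\in L^2(\RR,\bX)$.

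The one genuinely delicate point — and the main obstacle — is transferring the a.e.\ membership $\widehat{u}(\omega)\in\dom(S_{\Gamma,E})$ from the frequency side back to $u(\tau)\in\dom(S_{\Gamma,E})$ on the time side, since Remark~\ref{r3.4}(ii) only runs in the opposite direction. The trick I would use is to exploit the explicit description $S_{\Gamma,E}=\Gamma^{-1}E$ with $\dom(S_{\Gamma,E})=\{w\in\bX:Ew\in\im\Gamma\}$ from Remark~\ref{r3.1}, where $\Gamma,E\in\cB(\bX)$: the relation $\widehat{g}(\omega)=S_{\Gamma,E}\widehat{u}(\omega)$ is equivalent to $E\widehat{u}(\omega)=\Gamma\widehat{g}(\omega)$ a.e., i.e.\ $\widehat{Eu}(\omega)=\widehat{\Gamma g}(\omega)$ a.e.\ because $E$ and $\Gamma$ are bounded and hence commute with $\widehat{\cdot}$. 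By injectivity of the Fourier transform, $Eu(\tau)=\Gamma g(\tau)\in\im\Gamma$ a.e., so $u(\tau)\in\dom(S_{\Gamma,E})$ a.e.\ and $S_{\Gamma,E}u(\tau)=\Gamma^{-1}\Gamma g(\tau)=g(\tau)$ a.e. Therefore $u'-S_{\Gamma,E}u=u'-g=f\in L^2(\RR,\bX)$, which shows $u\in\dom(\cS_{\Gamma,E})$ and $\cS_{\Gamma,E}u=f$, finishing the proof. (A routine remark on strong measurability of $\omega\mapsto R(2\pi\rmi\omega,S_{\Gamma,E})\widehat{f}(\omega)$, which follows from continuity of $\omega\mapsto\widehat{\cL_{\Gamma,E}}(\omega)^{-1}$ into invertibles, is needed to make sense of $u$ from \eqref{dom-cSpm}, but presents no difficulty.)
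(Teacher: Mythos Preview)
Your proof is correct and follows essentially the same Plancherel-plus-resolvent strategy as the paper. The only noteworthy difference is a tactical swap: in the forward inclusion the paper multiplies by the bounded operator $\Gamma$ (writing $\Gamma f=\Gamma u'-Eu$ and Fourier-transforming) where you invoke Remark~\ref{r3.4}(ii) on the closed operator $S_{\Gamma,E}$, while in the converse inclusion the roles are reversed---the paper applies Remark~\ref{r3.4}(ii) directly (with $\cF^{-1}$ in place of $\cF$, which is legitimate since the Fourier transform is unitary on $L^2(\RR,\bX)$), whereas you unwind the domain condition via $E\widehat u=\Gamma\widehat g$ using boundedness of $E,\Gamma$. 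Your concern that Remark~\ref{r3.4}(ii) ``only runs in the opposite direction'' is therefore not a genuine obstacle: the statement for $\cF$ immediately yields the corresponding statement for $\cF^{-1}$, and this is exactly what the paper exploits. That said, your explicit $Eu=\Gamma g$ argument is a perfectly valid and arguably more transparent alternative.
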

	\begin{proof} To start the proof, we denote by $\cD_{\Gamma,E}$ the set of all $u\in L^2(\RR,\bX)$ for which there exists $f\in L^2(\RR,\bX)$ such that \eqref{dom-cSpm} holds. First, we show that
	$\dom(\cS_{\Gamma,E})\subseteq\cD_{\Gamma,E}$. Let $u\in\dom(\cS_{\Gamma,E})$ and $f=\cS_{\Gamma,E} u=u'-S_{\Gamma,E} u\in L^2(\RR,\bX)$. From \eqref{def-cSpm} we have that $u\in H^1(\RR,\bX)$, which implies that $I_{\RR}\widehat{u}\in L^2(\RR,\bX)$. Here, we recall the notation $I_{\RR}$ as the identity function on $\RR$. In addition, we have that $\Gamma f=\Gamma u'-Eu$. It follows that
	\begin{equation}\label{3.5-1}
	\Gamma\widehat{f}(\omega)=(2\pi\rmi\omega \Gamma-E)\widehat{u}(\omega)=\cL_{\Gamma,E}(\omega)\widehat{u}(\omega)\quad\mbox{for almost all}\quad\omega\in\RR.
	\end{equation}
	From Hypothesis(S)(iii) we have that we can solve \eqref{3.5-1} for $\widehat{u}$. Moreover, from Lemma~\ref{l3.3}(i) we obtain that
	\begin{equation}\label{3.5-2}
	\widehat{u}(\omega)=\big(\cL_{\Gamma,E}(\omega)\big)^{-1}\Gamma\widehat{f}(\omega)=R(2\pi\rmi\omega,S_{\Gamma,E})\widehat{f}(\omega)\quad\mbox{for almost all}\quad\omega\in\RR,
	\end{equation}
	proving that $\dom(\cS_{\Gamma,E})\subseteq\cD_{\Gamma,E}$. To prove the converse inclusion, we consider $u\in\dom(\cS_{\Gamma,E})$ and $f\in L^2(\RR,\bX)$ such that \eqref{dom-cSpm} holds true. From Lemma~\ref{l3.3}(ii) we have that
	\begin{equation}\label{3.5-3}
	\|2\pi\rmi\omega\widehat{u}(\omega)\|\leq 2\pi|\omega| \|R(2\pi\rmi\omega,S_{\Gamma,E})\|\,\|\widehat{f}(\omega)\|\leq\frac{c|\omega|}{1+|\omega|}\|\widehat{f}(\omega)\|\leq c\|\widehat{f}(\omega)\|
	\end{equation}
	for almost all $\omega\in\RR$. Since $f,\widehat{f}\in L^2(\RR,\bX)$ we conclude that $I_{\RR}\widehat{u}\in L^2(\RR,\bX)$, which implies that $u\in H^1(\RR,\bX)$. Given that $u$ and $f$ satisfy equation \eqref{dom-cSpm}, we obtain that $\widehat{u}(\omega)\in\dom(S_{\Gamma,E})$ for almost all $\omega\in\RR$ and
	\begin{equation}\label{3.5-4}
	S_{\Gamma,E}\widehat{u}(\omega)=2\pi\rmi\omega\widehat{u}(\omega)-(2\pi\rmi\omega-S_{\Gamma,E})\widehat{u}(\omega)=\widehat{u'-f}(\omega)\quad\mbox{for almost all}\quad\omega\in\RR.
	\end{equation}
	Since $u,\widehat{u'-f}\in L^2(\RR,\bX)$ from Remark~\ref{r3.4}(ii) and \eqref{3.5-4} we infer that $u(\tau)\in\dom(S_{\Gamma,E})$ for almost all $\tau\in\RR$ and $u'-S_{\Gamma,E} u=f\in L^2(\RR,\bX)$. It follows that $u\in\dom(\cS_{\Gamma,E})$ and $\cS_{\Gamma,E} u=f$, proving that $\cD_{\Gamma,E}\subseteq\dom(\cS_{\Gamma,E})$ and the lemma.
	\end{proof}
	Now we can prove the one of the main results of this section, the invertibility of $\cS_{\Gamma,E}$.
	\begin{theorem}\label{t3.6}
	Assume Hypothesis (S). Then, the following assertions hold true:
	\begin{enumerate}
	\item[(i)] The linear operator $\cS_{\Gamma,E}$ is a closed, densely defined linear operator on $L^2(\RR,\bX)$;
	\item[(ii)] The linear operator $\cS_{\Gamma,E}$ is invertible with bounded inverse and $\cS_{\Gamma,E}^{-1}=\cF^{-1}M_{R(2\pi\rmi\cdot,S_{\Gamma,E})}\cF$. Here $\cF$ denotes the Fourier Transform and $M_{R(2\pi\rmi\cdot,S_{\Gamma,E})}$ denotes the operator of multiplication by the operator valued function $R(2\pi\rmi\cdot,S_{\Gamma,E})$ on $L^2(\RR,\bX)$;
	\item[(iii)] The bi-semigroup generated by $S_{\Gamma,E}$ is exponentially stable.
	\end{enumerate}
	\end{theorem}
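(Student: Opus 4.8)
The plan is to prove (ii) first, by producing an explicit bounded inverse built from the frequency-domain description of $\dom(\cS_{\Gamma,E})$ in Lemma~\ref{l3.5}; then (i) comes essentially for free; and (iii) is deduced from (ii) by invoking the dichotomy theorem for bi-semigroups of \cite{LP2}.

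For (ii) I would set $\Phi:=\cF^{-1}M_{R(2\pi\rmi\cdot,S_{\Gamma,E})}\cF$. By Lemma~\ref{l3.3}(ii) one has $\sup_{\omega\in\RR}\|R(2\pi\rmi\omega,S_{\Gamma,E})\|\le c<\infty$, and $\omega\mapsto R(2\pi\rmi\omega,S_{\Gamma,E})$ is norm continuous on $\RR$ (by Lemma~\ref{l3.3}(i) together with continuity of inversion on the invertible operators), so $M_{R(2\pi\rmi\cdot,S_{\Gamma,E})}$ is a bounded Fourier multiplier on $L^2(\RR,\bX)$ and $\Phi\in\cB(L^2(\RR,\bX))$ with $\|\Phi\|\le c$. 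Now for $f\in L^2(\RR,\bX)$ the function $u:=\Phi f$ satisfies $\widehat u(\omega)=R(2\pi\rmi\omega,S_{\Gamma,E})\widehat f(\omega)$ for a.e.\ $\omega$, so Lemma~\ref{l3.5} gives $u\in\dom(\cS_{\Gamma,E})$ and $\cS_{\Gamma,E}u=f$, i.e.\ $\cS_{\Gamma,E}\Phi=\Id$; conversely, if $u\in\dom(\cS_{\Gamma,E})$ and $f:=\cS_{\Gamma,E}u$, Lemma~\ref{l3.5} gives $\widehat u(\omega)=R(2\pi\rmi\omega,S_{\Gamma,E})\widehat f(\omega)$ for a.e.\ $\omega$, which says exactly $u=\Phi f=\Phi\cS_{\Gamma,E}u$, so $\Phi\cS_{\Gamma,E}=\Id$ on $\dom(\cS_{\Gamma,E})$. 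Hence $\cS_{\Gamma,E}$ is a bijection onto $L^2(\RR,\bX)$ with bounded inverse $\Phi$, which is (ii).

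Part (i) then follows: $\cS_{\Gamma,E}$ is closed, being the inverse of the bounded, everywhere defined operator $\Phi$. For density of $\dom(\cS_{\Gamma,E})=\ran\Phi$ I would exhibit a dense family inside it: for finitely many bounded intervals $J_k\subset\RR$ and vectors $x_k\in\dom(S_{\Gamma,E})$, the function $u$ with $\widehat u=\sum_k\chi_{J_k}x_k$ lies in $L^2(\RR,\bX)$, has $\widehat u(\omega)\in\dom(S_{\Gamma,E})$ a.e., and $\omega\mapsto(2\pi\rmi\omega-S_{\Gamma,E})\widehat u(\omega)$ is in $L^2(\RR,\bX)$ (the $x_k$ and $S_{\Gamma,E}x_k$ are fixed and the $J_k$ bounded), so $u=\Phi f$ with $\widehat f(\omega)=(2\pi\rmi\omega-S_{\Gamma,E})\widehat u(\omega)$, i.e.\ $u\in\dom(\cS_{\Gamma,E})$; since $\dom(S_{\Gamma,E})$ is dense in $\bX$ by Remark~\ref{r3.1} and step functions are dense in $L^2(\RR)$, such $u$ are dense in $L^2(\RR,\bX)$. (Alternatively one may use the decomposition $\bX=\bigoplus_n\bG_n$ of Lemma~\ref{l3.2}, on each summand of which $S_{\Gamma,E}$ is bounded, so that $\bigoplus_n H^1(\RR,\bG_n)\subset\dom(\cS_{\Gamma,E})$ is dense.)

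Finally, for (iii), Lemma~\ref{l3.2} already gives a decomposition $\bX=\bX_1\oplus\bX_2$ into closed $S_{\Gamma,E}$-invariant subspaces on which $S_{\Gamma,E}$, resp.\ $-S_{\Gamma,E}$, generates a $C^0$-semigroup $T_1$, resp.\ $T_2$; the point is to upgrade these to \emph{exponentially} stable semigroups. This is exactly the content of the dichotomy theorem for bi-semigroups of \cite{LP2} (see also \cite{LPS}): the whole-line operator $\partial_\tau-S_{\Gamma,E}$ is invertible on $L^2(\RR,\bX)$ if and only if $u'=S_{\Gamma,E}u$ has an exponential dichotomy on $\RR$, equivalently the bi-semigroup $(T_1,T_2)$ is exponentially stable. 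Since (ii) provides the invertibility of $\cS_{\Gamma,E}=\partial_\tau-S_{\Gamma,E}$, assertion (iii) follows. The mechanism is the usual one: the Green's function of $\cS_{\Gamma,E}^{-1}$ is $T_1(\tau-\theta)P_1$ for $\tau>\theta$ and $-T_2(\theta-\tau)P_2$ for $\tau<\theta$ (with $P_j$ the projection onto $\bX_j$), and $L^2$-boundedness of $\cS_{\Gamma,E}^{-1}$ forces, via a Datko-type argument, $\tau\mapsto T_j(\tau)x\in L^2(\RR_+,\bX_j)$ uniformly for $x$ in a dense subspace of $\bX_j$, hence exponential decay of each $T_j$. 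I expect (ii) and the closedness half of (i) to be routine consequences of Lemmas~\ref{l3.3} and~\ref{l3.5}; the real content — and the main obstacle if one wishes to avoid citing \cite{LP2} — is precisely this passage from whole-line invertibility of $\cS_{\Gamma,E}$ to exponential stability of the constituent semigroups, i.e.\ making the Datko-type step rigorous by propagating the $L^2(\RR_+)$-bound on $T_j(\cdot)x$ from a dense subspace to all of $\bX_j$ using the semigroup law and the uniform bound coming from $\|\cS_{\Gamma,E}^{-1}\|$.
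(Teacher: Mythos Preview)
Your proposal is correct and follows essentially the same approach as the paper: both arguments hinge on Lemma~\ref{l3.5} to identify $\cS_{\Gamma,E}^{-1}$ with the bounded Fourier multiplier $\cF^{-1}M_{R(2\pi\rmi\cdot,S_{\Gamma,E})}\cF$, and both obtain (iii) by citing the dichotomy theorem of \cite{LP2}. The only minor difference is in the density argument for (i): the paper takes $h\in(\dom(\cS_{\Gamma,E}))^\perp$, pairs it against $u_f=\Phi f$ for arbitrary $f$, and uses injectivity of $R(2\pi\rmi\omega,S_{\Gamma,E})^*$ pointwise in $\omega$ to conclude $h=0$, whereas you construct an explicit dense family via step functions valued in $\dom(S_{\Gamma,E})$; both are valid and of comparable length.
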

	\begin{proof} (i) First, we note that the $R(2\pi\rmi\omega,S_{\Gamma,E})$ is a bounded linear operator on $\bX$ for any $\omega\in\RR$. Moreover, from Lemma~\ref{l3.3}(ii) we have that
	the operator valued function $R(2\pi\rmi\cdot,S_{\Gamma,E})$ is bounded on $\RR$. From \eqref{dom-cSpm} we infer that $\cS_{\Gamma,E}$ is a closed linear operator. To prove that the domain of the
	linear operator $\cS_{\Gamma,E}$ is dense in $L^2(\RR,\bX)$, we fix $h\in\big(\dom(\cS_{\Gamma,E})\big)^\perp$ and $f\in L^2(\RR,\bX)$.

	Next, we define $u_f=\cF^{-1}M_{R(2\pi\rmi\cdot,S_{\Gamma,E})}f$. One can readily check that $u_f\in\dom(\cS_{\Gamma,E})$. From \eqref{dom-cSpm} we obtain that
	\begin{align}\label{3.6-1}
	\langle M_{R(2\pi\rmi\cdot,S_{\Gamma,E})^*}\cF h,f\rangle_{L^2(\RR,\bX)}&=\langle \cF h,\cF\cF^{-1}M_{R(2\pi\rmi\cdot,S_{\Gamma,E})}f\rangle_{L^2(\RR,\bX)}\nonumber\\
	&=\langle \cF h,\cF u_f\rangle_{L^2(\RR,\bX)}=\langle h,u_f\rangle_{L^2(\RR,\bX)}=0.
	\end{align}
	Given that \eqref{3.6-1} holds true for any $f\in L^2(\RR,\bX)$ we conclude that $M_{R(2\pi\rmi\cdot,S_{\Gamma,E})^*}\cF h=0$. Since the $R(2\pi\rmi\omega,S_{\Gamma,E})^*$ is an one-to-one linear operator on $\bX$ and the Fourier Transform is invertible on $L^2(\RR,\bX)$, we infer that $h=0$, proving (i).

	\noindent\textit{Proof of (ii).} From equation \eqref{dom-cSpm} we have that $\cF u=M_{R(2\pi\rmi\cdot,S_{\Gamma,E})}\cF\cS_{\Gamma,E} u$ for any $u\in\dom(\cS_{\Gamma,E})$. Since the Fourier Transform is invertible on $L^2(\RR,\bX)$, and the operator valued function $R(2\pi\rmi\cdot,S_{\Gamma,E})$ is bounded on $\RR$, one can readily check that $\cS_{\Gamma,E}$ is invertible with bounded inverse and $\cS_{\Gamma,E}^{-1}=\cF^{-1}M_{R(2\pi\rmi\cdot,S_{\Gamma,E})}\cF$.

	Assertion (iii) is a direct consequence of (ii) and the main result in \cite{LP2}.
	\end{proof}
	To conclude this section, we use Theorem~\ref{t3.6} to prove that equations \eqref{ED-etaLpm} and \eqref{ED-reduced} exhibit exponential dichotomy on $\bH$ and $\bV$, respectively. We recall the definition of the linear operators
	\begin{equation}\label{def-generator-eta-reduced}
	S^\eta_\pm:=A^{-1}\big(Q'(u^\pm)\pm\eta A\big),\quad  S^{\mathrm{r}}_\pm=\tilde{A}^{-1}Q_{22}'(u^\pm).
	\end{equation}

	\noindent{\bf Proof of Theorem 1.2.}
	From Hypothesis (H1) and the results proved in Section~\ref{s2} we infer that the linear operators $A$ and $Q'(u^\pm)\pm\eta A$ satisfy Hypothesis (S). Indeed, the conditions listed in Hypothesis (S)(i)-(iv) follow, respectively, from
	Hypothesis (H1), Lemma~\ref{l2.5}, Lemma~\ref{l2.7}(ii) and Lemma~\ref{l2.8}. Similarly, from Lemma~\ref{reduced-similar} we have that the linear operators $\tilde{A}=A_{22}-A_{21}A_{11}^{-1}A_{12}$ and $Q_{22}'(u^\pm)$ satisfy Hypothesis (S). Assertions (i) and (ii) follow directly from Theorem~\ref{t3.6}(iii). Assertion (iii) is a direct consequence of (i). Since equation \eqref{ED-Lpm} is equivalent to the system \eqref{ED-Lpm-decomposed2}, we infer that assertion (iv) follows
	%CHANGED
	%shortly
	readily
	from (ii). Moreover, if we denote the stable/unstable spaces of equation \eqref{ED-reduced} by $\bV^{\mathrm{s/u}}_\pm$, then the stable/unstable subspaces of equation \eqref{ED-Lpm} are given by the formula
	\begin{equation}\label{subspaces-ED-Lpm}
	\bH^{\mathrm{s/u}}_\pm=\big\{(h,v)\in\bV^\perp\oplus\bV:h=-A_{11}^{-1}A_{12}v,\;v\in\bV^{\mathrm{s/u}}_\pm\big\}.
	\end{equation}
	One can readily check that $\bH=\bV^\perp\oplus \bH^{\mathrm{s}}_\pm\oplus \bH^{\mathrm{u}}_\pm$, proving the corollary.

	%CHANGEDkz title to make more sense..:
	%\section{Solutions of general relaxation systems}\label{s4}
	\section{Solutions of general steady relaxation systems}\label{s4}
	In this section we analyze the qualitative properties of solutions of the steady equation
	\begin{equation}\label{Relax-Sys}
	Au_\tau=Q(u)
	\end{equation}
	in $\bH$ satisfying $\lim_{\tau\to\pm\infty}u(\tau)=u^\pm$ and its linearization along $u^\pm$. In particular, we are
	interested
	%CHANGED
	%to describe
	in describing
	the smoothness properties of these solutions. Also, it is interesting to consider all of these equations on $\RR_\pm$, respectively. Making the change of variable
	$w^\pm(\tau)=u(\tau)-u^\pm$ in \eqref{Relax-Sys} we obtain the equations
	\begin{equation}\label{Relax-Sys-pm}
	Aw^\pm_\tau(\tau)=2B(u^\pm,w^\pm(\tau))+Q(w^\pm(\tau)).
	\end{equation}
	Here, we recall that $Q(u)=B(u,u)$ is bilinear, symmetric, continuous on $\bH$. Moreover, since the range of the bilinear map $B$ is contained in $\bV$, denoting by $h^\pm=P_{\bV^\perp}w^\pm$ and $v^\pm=P_{\bV}w^\pm$, we obtain that equation \eqref{Relax-Sys-pm} is equivalent to the system
	\begin{equation}\label{Relax-Sys-pm2}
	 \left\{\begin{array}{ll} A_{11}h^\pm_\tau(\tau)+A_{12}v^\pm_\tau(\tau)=0,\\
	A_{21}h^\pm_\tau(\tau)+A_{22}v^\pm_\tau(\tau)=Q'_{22}(u^\pm)v^\pm(\tau)+Q(h^\pm(\tau)+v^\pm(\tau)). \end{array}\right.
	\end{equation}
	Integrating the first equation and using that $\lim_{\tau\to\pm\infty}w^\pm(\tau)=0$, we obtain that solutions $w^\pm=(h^\pm,v^\pm)$ of \eqref{Relax-Sys-pm2}
	satisfy the condition $h^\pm=-A_{11}^{-1}A_{12}v^\pm$. Plugging in the second of \eqref{Relax-Sys-pm2} we obtain that to prove the existence of a center-stable manifold around the equilibria $u^\pm$, it is enough to prove the existence of a center-stable manifold around equilibria $P_{\bV}u^\pm$ of equation
	\begin{equation}\label{Relax-Reduced}
	(A_{22}-A_{21}A_{11}^{-1}A_{12})v^\pm_\tau(\tau)=Q'_{22}(u^\pm)v^\pm(\tau)+Q\big(v^\pm(\tau)-A_{11}^{-1}A_{12}v^\pm(\tau)\big).
	\end{equation}
	We note that it is especially important to study the solutions of equations \eqref{Relax-Sys-pm2} and \eqref{Relax-Reduced} close to $\pm\infty$, therefore we focus our attention on their solutions on $\RR_\pm$, rather than the entire line. To study these equations we use the properties of stable bi-semigroups. We recall that if a linear operator $S$ generates a stable bi-semigroup, then the linear operator $-S$ generates a stable bi-semigroup as well. Making the change of variables $\tau\to-\tau$ in \eqref{Relax-Reduced}, we obtain an equation that can be handled in the same way as the original equation, as shown in \cite[Section 4]{LP2}. Therefore, to understand the limiting properties of solutions of equations \eqref{Relax-Reduced} at $\pm\infty$, we need to understand the limiting properties in equations of the form
	\begin{equation}\label{h-plus}
	\Gamma \bu_\tau(\tau)=E\bu(\tau)+D(\bu(\tau),\bu(\tau)),\quad\tau\in\RR_+.
	\end{equation}
	Here the pair of bounded linear operators $(\Gamma,E)$ on a Hilbert space $\bX$ satisfies Hypothesis (S) and $D:\bX\times\bX\to\bX$ is a bounded, bi-linear map. In addition, the linear operator $E$ is negative definite.

	In what follows we denote by $R_{\Gamma,E}:\RR\to\cB(\bX)$ the operator valued function $R_{\Gamma,E}(\omega)=(2\pi\rmi\omega \Gamma-E)^{-1}$. The bi-semigroup generated by $S_{\Gamma,E}=\Gamma^{-1}E$ on $\bX$ is denoted by
	$\{T_{\rms/\rmu}^{\Gamma,E}(\tau)\}_{\tau\geq 0}$ on $\bX_{\rms/\rmu}^{\Gamma,E}$, the stable/unstable subspaces of $\bX$ invariant under $\Gamma^{-1}E$. In addition, we denote by $P_{\rms/\rmu}^{\Gamma,E}$ the projections onto $\bX_{\rms/\rmu}^{\Gamma,E}$ parallel to $\bX_{\rmu/\rms}^{\Gamma,E}$, associated to the decomposition $\bX=\bX_\rms^{\Gamma,E}\oplus\bX_\rmu^{\Gamma,E}$. In the sequel we use that the function $R_{\Gamma,E}$ satisfies
	\begin{equation}\label{char-eq}
	R_{\Gamma,E}(\omega_1)-R_{\Gamma,E}(\omega_2)=2\pi\rmi(\omega_2-\omega_1)R_{\Gamma,E}(\omega_1)\Gamma R_{\Gamma,E}(\omega_2)\quad\mbox{for all}\quad\omega_1,\omega_2\in\RR.
	\end{equation}
	A first step towards understanding equation \eqref{h-plus}, is to study the perturbed equation
	\begin{equation}\label{pert-h-plus}
	\Gamma \bu_\tau(\tau)=E\bu(\tau)+f(\tau),\quad\tau\in\RR_+,
	\end{equation}
	for some function $f\in L^1_{\mathrm{loc}}(\RR_+,\bX)$ or $f\in L^2_{\mathrm{loc}}(\RR_+,\bX)$. For a function
	$g$ defined on a proper subset of $\RR$ we keep the same notation $g$ to denote its extension to $\RR$ by $0$.
	\begin{definition}\label{d3.1}
	We  say that
	\begin{enumerate}
	\item[(i)] The function $\bu$ is a \textit{smooth} solution of \eqref{pert-h-plus} on $[\tau_0,\tau_1]$ if $\bu\in H^1([\tau_0,\tau_1],\bX)$ satisfies \eqref{pert-h-plus};
	\item[(ii)] The function $\bu$  is a \textit{mild} solution of \eqref{pert-h-plus} on $[\tau_0,\tau_1]$ if $\bu\in L^2([\tau_0,\tau_1],\bX)$ satisfies
	\begin{equation}\label{mild}
	 \widehat{\bu}(\omega)=R(2\pi\rmi\omega,S_{\Gamma,E})\big[e^{-2\pi\rmi\omega\tau_0}\bu(\tau_0)-e^{-2\pi\rmi\omega\tau_1}\bu(\tau_1)\big]+R_{\Gamma,E}(\omega)\widehat{f_{|[\tau_0,\tau_1]}}(\omega)\;\mbox{for almost all}\;\omega\in\RR;
	\end{equation}
	\item[(iii)]  The function $\bu$  is a \textit{smooth/mild} solution of \eqref{pert-h-plus} on $\RR_+$ of \eqref{pert-h-plus} if $\bu$ is a \textit{smooth/mild} solution of \eqref{pert-h-plus} on $[0,\tau_1]$ for any $\tau_1>0$.
	\end{enumerate}
	\end{definition}
	Our definition of mild solutions follows \cite[Section 2]{LP2}, where it is shown that the frequency domain
	reformulation given in \eqref{mild} is much easier to handle than the classical approach where one defines the mild solution by simply integrating equation \eqref{pert-h-plus}. We note that by taking Fourier transform in \eqref{pert-h-plus} and integrating by parts, it is easy to verify that smooth solutions of equation
	are also mild solutions.
	\begin{remark}\label{r4.2} Denoting by $\cG_{\Gamma,E}:\RR\to\cB(\bX)$ the Green function defined by
	\begin{equation}\label{Green}
	\cG_{\Gamma,E}(\tau)=\left\{\begin{array}{l l}
	T^{\Gamma,E}_\rms(\tau)P^{\Gamma,E}_\rms & \; \mbox{if $\tau\geq0$ }\\
	-T^{\Gamma,E}_\rmu(-\tau)P^{\Gamma,E}_\rmu & \; \mbox{if $\tau<0$}\\
	\end{array} \right.,
	\end{equation}
	the following assertions hold true
	\begin{enumerate}
	\item[(i)] There exist two positive constants $c$ and $\nu$ such that $\|\cG_{\Gamma,E}(\tau)\|\leq ce^{-\nu|\tau|}$ for any $\tau\in\RR$;
	\item[(ii)] $\cF\cG_{\Gamma,E}(\cdot)\bu=R(2\pi\rmi\cdot,S_{\Gamma,E})\bu$ for any $\bu\in\bX$.
	\end{enumerate}
	\end{remark}
	Next, we define the linear operator $\cK_{\Gamma,E}:L^2(\RR,\bX)\to L^2(\RR,\bX)$ by $\cK_{\Gamma,E}f=\cF^{-1}M_{R_{\Gamma,E}}\cF f$. Here we recall that $M_{R_{\Gamma,E}}$ denotes the multiplication operator on $L^2(\RR,\bX)$ by the operator valued function $R_{\Gamma,E}$. From Hypothesis (S)(iv) we have that $\sup_{\omega\in\RR}\|R_{\Gamma,E}(\omega)\|<\infty$, which proves that $\cK_{\Gamma,E}$ is well defined and bounded on $L^2(\RR,\bX)$.

	To prove our results we need to understand the properties of the Fourier multiplier defined by $\cK_{\Gamma,E}$. Our first goal in this section is to show that the definition we use for mild solutions of equation \eqref{pert-h-plus} can be seen as an extension of the classical variation of constants formula. To prove such a result we need to understand some of the smoothing properties of $\cK_{\Gamma,E}$.
	\begin{lemma}\label{l4.3}
	Assume Hypothesis (S). Then, we have that $\Gamma(\cK_{\Gamma,E}f)(\cdot)\in C_0(\RR,\bX)$ for any $f\in L^2(\RR,\bX)$.
	\end{lemma}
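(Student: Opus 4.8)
The plan is to show that $\Gamma\cK_{\Gamma,E}$ maps $L^2(\RR,\bX)$ into $C_0(\RR,\bX)$ by exhibiting $\Gamma(\cK_{\Gamma,E}f)$ as a convolution with an exponentially decaying kernel and then invoking a standard approximation argument. First I would observe that, by Remark~\ref{r4.2}(ii) and Lemma~\ref{l3.3}(i), the Fourier multiplier with symbol $\Gamma R_{\Gamma,E}(\omega)=\Gamma(2\pi\rmi\omega\Gamma-E)^{-1}=R(2\pi\rmi\omega,S_{\Gamma,E})$ is precisely convolution with the Green function $\cG_{\Gamma,E}$; hence formally
\begin{equation}\label{prop-conv}
\Gamma(\cK_{\Gamma,E}f)(\tau)=\int_\RR \cG_{\Gamma,E}(\tau-\theta)f(\theta)\,\rmd\theta,\quad\tau\in\RR.
\end{equation}
By Remark~\ref{r4.2}(i) we have $\|\cG_{\Gamma,E}(\tau)\|\leq c\,e^{-\nu|\tau|}$, so $\cG_{\Gamma,E}\in L^1(\RR,\cB(\bX))\cap L^2(\RR,\cB(\bX))$; in particular the convolution in \eqref{prop-conv} is well-defined for every $f\in L^2(\RR,\bX)$ (by the Cauchy–Schwarz inequality, $\tau\mapsto\int_\RR\|\cG_{\Gamma,E}(\tau-\theta)\|\,\|f(\theta)\|\,\rmd\theta$ is finite and in fact bounded), and one needs only to check that this convolution indeed represents the multiplier operator. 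That identification follows because $\cK_{\Gamma,E}$ is bounded on $L^2$ with symbol $R_{\Gamma,E}$, the convolution operator by $\cG_{\Gamma,E}$ is bounded on $L^2$ with symbol $\cF\cG_{\Gamma,E}=R(2\pi\rmi\cdot,S_{\Gamma,E})=\Gamma R_{\Gamma,E}(\cdot)$, and two $L^2$-bounded Fourier multipliers with the same symbol coincide.

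With \eqref{prop-conv} in hand, boundedness and continuity of $\tau\mapsto\Gamma(\cK_{\Gamma,E}f)(\tau)$ are routine: the uniform bound comes from $\|\Gamma(\cK_{\Gamma,E}f)(\tau)\|\leq\|\cG_{\Gamma,E}\|_{L^2}\|f\|_{L^2}$, and continuity follows from continuity of translation in $L^2$ together with $\cG_{\Gamma,E}\in L^2$. For the decay at $\pm\infty$, I would argue by density: for $f$ compactly supported in $\bX$ (say $\supp f\subseteq[-N,N]$), one has for $|\tau|$ large
$$
\|\Gamma(\cK_{\Gamma,E}f)(\tau)\|\leq\int_{-N}^{N}c\,e^{-\nu|\tau-\theta|}\|f(\theta)\|\,\rmd\theta\leq c\,e^{-\nu(|\tau|-N)}\|f\|_{L^1([-N,N],\bX)}\xrightarrow[|\tau|\to\infty]{}0,
$$
so $\Gamma(\cK_{\Gamma,E}f)\in C_0(\RR,\bX)$ for such $f$. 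Since compactly supported $L^2$ functions are dense in $L^2(\RR,\bX)$, and the map $f\mapsto\Gamma(\cK_{\Gamma,E}f)$ is bounded from $L^2(\RR,\bX)$ into $C_{\rm b}(\RR,\bX)$ (by the uniform bound above), while $C_0(\RR,\bX)$ is a closed subspace of $C_{\rm b}(\RR,\bX)$ with the sup norm, the conclusion $\Gamma(\cK_{\Gamma,E}f)\in C_0(\RR,\bX)$ extends to all $f\in L^2(\RR,\bX)$.

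The main obstacle, modest as it is, is the rigorous identification of $\Gamma\cK_{\Gamma,E}$ with the convolution operator in \eqref{prop-conv}; strictly speaking $\Gamma$ is bounded, so $\Gamma\cK_{\Gamma,E}=\cF^{-1}M_{\Gamma R_{\Gamma,E}}\cF$ directly, and one must verify that the operator-valued function $\Gamma R_{\Gamma,E}(\cdot)$ is the Fourier transform of the $L^1\cap L^2$ kernel $\cG_{\Gamma,E}$ — this is exactly Remark~\ref{r4.2}(ii) combined with Lemma~\ref{l3.3}(i). Once that point is granted, everything else is the standard Young-type estimate for convolution with an $L^1$ (indeed exponentially decaying) kernel plus a density argument, and no genuine difficulty remains.
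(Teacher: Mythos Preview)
Your approach is sound in spirit, but there is a slip in the key identification: Lemma~\ref{l3.3}(i) gives $R(2\pi\rmi\omega,S_{\Gamma,E})=R_{\Gamma,E}(\omega)\Gamma$, \emph{not} $\Gamma R_{\Gamma,E}(\omega)$. The order matters because $\Gamma$ and $(2\pi\rmi\omega\Gamma-E)^{-1}$ do not commute in general, so \eqref{prop-conv} is not correct as written. The fix is easy: taking adjoints and using that $\Gamma,E$ are self-adjoint, one finds $\Gamma R_{\Gamma,E}(\omega)=\big(R_{\Gamma,E}(-\omega)\Gamma\big)^*=\big(R(-2\pi\rmi\omega,S_{\Gamma,E})\big)^*=\cF\cG_{\Gamma,E}^*(\omega)$ (the paper records exactly this later, in \eqref{Fourier-cV}). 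Hence the correct convolution representation is $\Gamma(\cK_{\Gamma,E}f)=\cG_{\Gamma,E}^**f$, and since $\|\cG_{\Gamma,E}^*(\tau)\|=\|\cG_{\Gamma,E}(\tau)\|\leq ce^{-\nu|\tau|}$, your boundedness, continuity, and density arguments then go through unchanged.

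The paper's own proof is shorter and avoids the convolution representation altogether: it simply observes that $\widehat{\Gamma(\cK_{\Gamma,E}f)}(\omega)=\Gamma R_{\Gamma,E}(\omega)\widehat{f}(\omega)$, uses the same adjoint trick to get $\|\Gamma R_{\Gamma,E}(\omega)\|=\|R(-2\pi\rmi\omega,S_{\Gamma,E})\|\leq c/(1+|\omega|)$ from Lemma~\ref{l3.3}(ii), and concludes via Cauchy--Schwarz (pairing $c/(1+|\omega|)\in L^2$ with $\widehat{f}\in L^2$) that $\widehat{\Gamma(\cK_{\Gamma,E}f)}\in L^1(\RR,\bX)$, whence $\Gamma(\cK_{\Gamma,E}f)\in C_0(\RR,\bX)$ by Riemann--Lebesgue. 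Your route yields a bit more information --- an explicit time-domain kernel --- at the cost of the extra identification step.
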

	\begin{proof} Let $f\in L^2(\RR,\bX)$ and $g=\cK_{\Gamma,E}f$. To prove the lemma we note that it is enough to show that $\widehat{\Gamma g}\in L^1(\RR,\bX)$. Using the definition of $\cK_{\Gamma,E}$ we have that
	\begin{equation}\label{4.3-1}
	\widehat{\Gamma g}(\omega)=\Gamma\widehat{g}(\omega)=\Gamma\widehat{\cK_{\Gamma,E}f}(\omega)=\Gamma R_{\Gamma,E}(\omega)\widehat{f}(\omega)\quad\mbox{for all}\quad \omega\in\RR.
	\end{equation}
	From Lemma~\ref{l3.3} and the definition of $E$ and its associated bi-semigroup, we have that
	\begin{equation}\label{4.3-2}
	\|\Gamma R_{\Gamma,E}(\omega)\|=\|R_{\Gamma,E}(\omega)^*\Gamma^*\|=\|R_{\Gamma,E}(-\omega)\Gamma\|=\|R(-2\pi\rmi\omega,S_{\Gamma,E})\|\leq \frac{c}{1+|\omega|}\quad\mbox{for all}\quad \omega\in\RR.
	\end{equation}
	From \eqref{4.3-1} and \eqref{4.3-2} we conclude that $\widehat{\Gamma g}\in L^1(\RR,\bX)$, proving the lemma.
	\end{proof}
	Now, we are ready to prove that \eqref{mild} is a generalization of the variation of constants formula.
	\begin{lemma}\label{l4.4}
	Assume Hypothesis (S). Let $f\in L^2(\RR_+,\bX)$ and let $\bu$ be a mild solution of \eqref{pert-h-plus}. Then, $\bu\in L^2(\RR_+,\bX)$ and $\Gamma\bu\in C_0(\RR_+,\bX)$ if and only if
	\begin{equation}\label{var-const}
	\bu(\tau)=T_\rms^{\Gamma,E}(\tau)P_\rms^{\Gamma,E}\bu(0)+(\cK_{\Gamma,E}f)(\tau)\quad\mbox{for all}\quad\tau\geq0;
	\end{equation}
	\end{lemma}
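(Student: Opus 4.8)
The plan is to prove the equivalence as two implications. The ``if'' direction follows at once from the smoothing estimate of Lemma~\ref{l4.3} together with exponential decay of the Green function; the substantive ``only if'' direction is obtained by letting $\tau_1\to\infty$ in the frequency-domain identity \eqref{mild} characterizing a mild solution of \eqref{pert-h-plus} on the bounded interval $[0,\tau_1]$.

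For the ``if'' direction, assume \eqref{var-const}. The term $T_\rms^{\Gamma,E}(\cdot)P_\rms^{\Gamma,E}\bu(0)$ is strongly continuous and decays exponentially by Remark~\ref{r4.2}(i), hence lies in $L^2(\RR_+,\bX)\cap C_0(\RR_+,\bX)$, and so does its image under the bounded operator $\Gamma$. Since $\cK_{\Gamma,E}$ is bounded on $L^2(\RR,\bX)$ and $f$ (extended by $0$) belongs there, $\cK_{\Gamma,E}f\in L^2(\RR,\bX)$, while $\Gamma(\cK_{\Gamma,E}f)\in C_0(\RR,\bX)$ by Lemma~\ref{l4.3}. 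Restricting to $\RR_+$ and adding gives $\bu\in L^2(\RR_+,\bX)$ and $\Gamma\bu\in C_0(\RR_+,\bX)$.

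For the ``only if'' direction, assume $\bu\in L^2(\RR_+,\bX)$ and $\Gamma\bu\in C_0(\RR_+,\bX)$. Since $\bu$ is a mild solution on every $[0,\tau_1]$, \eqref{mild} holds with $\tau_0=0$; I would rewrite its boundary term via Lemma~\ref{l3.3}(i), namely $R(2\pi\rmi\omega,S_{\Gamma,E})\bx=R_{\Gamma,E}(\omega)\Gamma\bx$, and rearrange to
$$e^{-2\pi\rmi\omega\tau_1}R(2\pi\rmi\omega,S_{\Gamma,E})\bu(\tau_1)=R(2\pi\rmi\omega,S_{\Gamma,E})\bu(0)+R_{\Gamma,E}(\omega)\widehat{f_{|[0,\tau_1]}}(\omega)-\widehat{\bu_{|[0,\tau_1]}}(\omega).$$
As $\tau_1\to\infty$ the right-hand side converges in $L^2(\RR,\bX)$: the first term is constant and equals $\cF\big[\cG_{\Gamma,E}(\cdot)\bu(0)\big]$ by Remark~\ref{r4.2}(ii), an $L^2$ function by Remark~\ref{r4.2}(i); by Plancherel $\widehat{f_{|[0,\tau_1]}}\to\widehat f$ and $\widehat{\bu_{|[0,\tau_1]}}\to\widehat\bu$ in $L^2(\RR,\bX)$, and $M_{R_{\Gamma,E}}$ is bounded. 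Hence the left-hand side converges in $L^2(\RR,\bX)$ to $\cF g$, where $g:=\cG_{\Gamma,E}(\cdot)\bu(0)+\cK_{\Gamma,E}f-\bu\in L^2(\RR,\bX)$. On the other hand, by Lemma~\ref{l3.3}(i) and the uniform bound of Hypothesis~(S)(iv) the left-hand side also satisfies $\sup_{\omega\in\RR}\big\|e^{-2\pi\rmi\omega\tau_1}R(2\pi\rmi\omega,S_{\Gamma,E})\bu(\tau_1)\big\|\le\big(\sup_{\omega\in\RR}\|R_{\Gamma,E}(\omega)\|\big)\|\Gamma\bu(\tau_1)\|\to0$ as $\tau_1\to\infty$, since $\Gamma\bu\in C_0(\RR_+,\bX)$, so it converges uniformly, hence a.e., to $0$. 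Comparing the two limits forces $\cF g=0$, so $g=0$ in $L^2(\RR,\bX)$; restricting to $\RR_+$, where $\cG_{\Gamma,E}(\tau)\bu(0)=T_\rms^{\Gamma,E}(\tau)P_\rms^{\Gamma,E}\bu(0)$ by \eqref{Green}, yields \eqref{var-const}.

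The main obstacle is the boundary term $e^{-2\pi\rmi\omega\tau_1}R(2\pi\rmi\omega,S_{\Gamma,E})\bu(\tau_1)$: since $\bu$ is only assumed $L^2$ on $\RR_+$, there is no a priori pointwise control of $\|\bu(\tau_1)\|$, so exponential decay of $\cG_{\Gamma,E}$ alone does not suppress it. The resolution is the factorization $R(2\pi\rmi\omega,S_{\Gamma,E})=R_{\Gamma,E}(\omega)\Gamma$ of Lemma~\ref{l3.3}(i) combined with the uniform resolvent bound of Hypothesis~(S)(iv), which convert the weak decay hypothesis $\Gamma\bu\in C_0$ into uniform-in-$\omega$ smallness of the boundary term, exactly matching the $L^2$-limit coming from the other three terms. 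One should also record that $\bu(\tau_1)$ enters \eqref{mild} only through $\Gamma\bu(\tau_1)$, which is well defined since $\Gamma\bu$ has a continuous representative on bounded intervals for any mild solution.
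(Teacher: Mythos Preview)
Your argument is essentially correct and follows the paper's approach closely. For the ``only if'' direction, both you and the paper pass to the limit $\tau_1\to\infty$ in \eqref{mild}, combining $L^2$-convergence of $\widehat{\bu_{|[0,\tau_1]}}$ and $\widehat{f_{|[0,\tau_1]}}$ with pointwise vanishing of the boundary term via $\Gamma\bu\in C_0$; the factorization $R(2\pi\rmi\omega,S_{\Gamma,E})=R_{\Gamma,E}(\omega)\Gamma$ from Lemma~\ref{l3.3}(i) is indeed the key device. One small gap: you obtain $g=0$ in $L^2$, hence \eqref{var-const} only a.e.\ on $\RR_+$, whereas the statement asserts equality \emph{for all} $\tau\ge0$. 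The paper closes this by multiplying through by $\Gamma$, observing that both sides are then continuous (strong continuity of $T_\rms^{\Gamma,E}$ together with Lemma~\ref{l4.3} for $\Gamma\cK_{\Gamma,E}f$), and finally invoking injectivity of $\Gamma$.

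One difference worth flagging concerns the ``if'' direction. Beyond verifying $\bu\in L^2$ and $\Gamma\bu\in C_0$ (as you do), the paper establishes---via a test-function computation built on \eqref{char-eq} and a smooth approximation $\varphi_n\to\chi_{[0,\tau_1)}$---that any $\bu$ given by \eqref{var-const} \emph{is} a mild solution of \eqref{pert-h-plus}. This is not strictly required by the lemma as literally stated (which already assumes $\bu$ is a mild solution), so your shorter argument suffices; but it is precisely this extra conclusion that is invoked later, for instance in Lemma~\ref{l5.3}(ii), to deduce that the fixed point $\obu(\cdot;\bv_0)$ actually solves \eqref{h-plus}. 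Your proof is correct for the statement but omits this ingredient.
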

	\begin{proof} First, we prove that any mild solution $\bu\in L^2(\RR,\bX)$ of \eqref{pert-h-plus} satisfies equation \eqref{var-const}, provided $\Gamma\bu\in C_0(\RR_+,\bX)$ . Since
	$\chi_{[0,\tau_1]}\to \chi_{[0,\infty)}$ simple as $\tau_1\to\infty$, from the Lebesgue dominated Convergence theorem we obtain that $\bu\chi_{[0,\tau_1]}\to \bu$ and $f\chi_{[0,\tau_1]}\to f$ in $L^2(\RR_+,\bX)\hookrightarrow L^2(\RR,\bX)$ as $\tau_1\to\infty$. Since the linear operators $\cF$ and $\cK_{\Gamma,E}$ are continuous on $L^2(\RR,\bX)$ we conclude that
	\begin{equation}\label{4.4-1}
	\cF\big(\bu\chi_{[0,\tau_1]}-\cK_{\Gamma,E}(f\chi_{[0,\tau_1]})\big)\to \cF(\bu-\cK_{\Gamma,E}f)\;\mbox{in}\;L^2(\RR,\bX)\quad\mbox{as}\quad\tau_1\to\infty.
	\end{equation}
	Moreover, since $\bu$ is a solution of \eqref{pert-h-plus} on $[0,\tau_1]$ for all $\tau_1>0$ we have that
	\begin{equation}\label{4.4-2}
	\cF\big(\bu\chi_{[0,\tau_1]}-\cK_{\Gamma,E}(f\chi_{[0,\tau_1]})\big)(\omega)=R_{\Gamma,E}(\omega)\big( \Gamma\bu(0)-e^{-2\pi\rmi\omega\tau_1}\Gamma\bu(\tau_1)\big)\quad\mbox{for all}\quad \omega\in\RR.
	\end{equation}
	Since $\Gamma\bu\in C_0(\RR_+,\bX)$ from \eqref{4.4-2} it follows that
	\begin{equation}\label{4.4-3}
	\cF\big(\bu\chi_{[0,\tau_1]}-\cK_{\Gamma,E}(f\chi_{[0,\tau_1]})\big)(\omega)\to R_{\Gamma,E}(\omega)\Gamma\bu(0)\;\mbox{as}\;\quad\tau_1\to\infty,\quad\mbox{for all}\quad \omega\in\RR.
	\end{equation}
	From \eqref{4.4-1} and \eqref{4.4-3} we infer that $\cF(\bu-\cK_{\Gamma,E}f)(\omega)=R_{\Gamma,E}(\omega)\Gamma\bu(0)=R(2\pi\rmi\omega,S_{\Gamma,E})\bu(0)$ for almost all $\omega\in\RR$. Taking inverse Fourier transform, from Remark~\ref{r4.2}(ii) we obtain that
	\begin{equation}\label{4.4-4}
	\bu(\tau)=T_\rms^{\Gamma,E}(\tau)P_\rms^{\Gamma,E}\bu(0)+(\cK_{\Gamma,E}f)(\tau)\quad\mbox{for almost all}\quad\tau\geq0.
	\end{equation}
	Next, we prove that equality holds true in \eqref{4.4-4} for any $\tau\geq0$. Indeed, multiplying the equation by $\Gamma$ from the left, we obtain that
	$\Gamma\bu=\Gamma T_\rms^{\Gamma,E}(\cdot)P_\rms^{\Gamma,E}\bu(0)+\Gamma(\cK_{\Gamma,E}f)(\cdot)$ almost everywhere on $\RR_+$. Since $\Gamma\bu$ is continuous on $\RR_+$, $\{T_\rms^{\Gamma,E}(\tau)\}_{\tau\geq 0}$ is a strongly continuous semigroup, and from Lemma~\ref{l4.3} we have that $\Gamma(\cK_{\Gamma,E})(\cdot)$ is continuous, we infer that the equality $\Gamma\bu=\Gamma T_\rms^{\Gamma,E}(\cdot)P_\rms^{\Gamma,E}\bu(0)+\Gamma(\cK_{\Gamma,E}f)(\cdot)$ holds everywhere on $\RR_+$. Since $\Gamma$ is one-to-one on $\bX$, by Hypothesis (S)(i), it follows that equation \eqref{var-const} holds true.

	To finish the proof of lemma, we prove that under the assumption that $f\in L^2(\RR_+,\bX)$, any function $\bu$ satisfying equation \eqref{var-const} belongs to $L^2(\RR,\bX)$, $\Gamma\bu\in C_0(\RR_+,\bX)$ and is a mild solution of \eqref{pert-h-plus} on $[0,\tau_1]$ for any $\tau_1>0$. Indeed, since $\{T_\rms^{\Gamma,E}(\tau)\}_{\tau\geq 0}$ is a stable $C^0$-semigroup on $\bX$ and $\cK_{\Gamma,E}$ is well-defined and bounded on $L^2(\RR,\bX)$, one can readily check that $\bu$ belongs to $L^2(\RR_+,\bX)$. Moreover, from Lemma~\ref{l4.3} and \eqref{var-const} we conclude that $\Gamma\bu\in C_0(\RR_+,\bX)$.

	Let $\varphi\in C_0^\infty(\RR)$ a smooth, scalar function with compact support. Using the elementary properties of the Fourier transform and convolution, from  \eqref{char-eq} and \eqref{var-const} we obtain that
	\begin{align}\label{4.4-5}
	\widehat{\varphi \bu}(\omega)&-R_{\Gamma,E}(\omega)\widehat{\varphi' \Gamma\bu}(\omega)=(\widehat{\varphi}*\widehat{\bu})(\omega)-R_{\Gamma,E}(\omega)(\widehat{\varphi'}*\widehat{\Gamma\bu})(\omega)\nonumber\\
&=\int_{\RR}\widehat{\varphi}(\omega-\theta)\widehat{\bu}(\theta)\rmd\theta-R_{\Gamma,E}(\omega)\int_{\RR}2\pi\rmi(\omega-\theta)\widehat{\varphi}(\omega-\theta)\Gamma\widehat{\bu}(\theta)\rmd\theta\nonumber\\
	&=\int_{\RR}\widehat{\varphi}(\omega-\theta)\Big(I-2\pi\rmi(\omega-\theta)R_{\Gamma,E}(\omega)\Gamma\Big)\widehat{\bu}(\theta)\rmd\theta\nonumber\\
	&=\int_{\RR}\widehat{\varphi}(\omega-\theta)\Big(I-2\pi\rmi(\omega-\theta)R_{\Gamma,E}(\omega)\Gamma\Big)R_{\Gamma,E}(\theta)\Big(\Gamma\bu(0)+\widehat{f}(\theta)\Big)\rmd\theta\nonumber\\
	&=\int_{\RR}\widehat{\varphi}(\omega-\theta)\Big(R_{\Gamma,E}(\theta)-2\pi\rmi(\omega-\theta)R_{\Gamma,E}(\omega)\Gamma R_{\Gamma,E}(\theta)\Big)\Big(\Gamma\bu(0)+\widehat{f}(\theta)\Big)\rmd\theta\nonumber\\	 &=R_{\Gamma,E}(\omega)\Big(\int_{\RR}\widehat{\varphi}(\omega-\theta)\rmd\theta\Big)\Gamma\bu(0)+R_{\Gamma,E}(\omega)\int_{\RR}\widehat{\varphi}(\omega-\theta)\widehat{f}(\theta)\rmd\theta\nonumber\\
	&=\varphi(0)R_{\Gamma,E}(\omega)\Gamma\bu(0)+R_{\Gamma,E}(\omega)\widehat{\varphi f}(\omega)\quad\mbox{for any}\quad\omega\in\RR.
	\end{align}
	Fix $\tau_1>0$ and let $\{\varphi_n\}_{n\geq1}$ be a sequence of functions in  $C_0^\infty(\RR)$ with the following properties:
	$0\leq \varphi_n\leq 1$, $\|\varphi'_n\|_\infty\leq cn$, $\varphi_n(\tau) = 1$ for any $\tau\in [0, \tau_1-1/n]$ and $\varphi_n(\tau) = 0$
	for any $\tau\notin(-1/n, \tau_1)$. Since the function $\bu$ is defined on $\RR_+$ and is extended to $\RR$ by $0$, we conclude that
	\begin{align}\label{4.4-6}
	\widehat{\varphi_n' \Gamma\bu}(\omega)+e^{-2\pi\rmi\tau_1\omega}\Gamma\bu(\tau_1)&=\int_{\RR_+}e^{-2\pi\rmi\tau\omega}\varphi_n'(\tau)\Gamma\bu(\tau)\rmd\tau+e^{-2\pi\rmi\tau_1\omega}\Gamma\bu(\tau_1)\nonumber\\&=
	\int_{\tau_1-1/n}^{\tau_1}\varphi_n'(\tau)\Big(e^{-2\pi\rmi\tau\omega}\Gamma\bu(\tau)-e^{-2\pi\rmi\tau_1\omega}\Gamma\bu(\tau_1)\Big)\rmd\tau
	\end{align}
	for any $n \geq1$ and $\omega\in\RR$. Hence, the following estimate holds
	\begin{equation}\label{4.4-7}
	\|\widehat{\varphi_n' \Gamma\bu}(\omega)+e^{-2\pi\rmi\tau_1\omega}\Gamma\bu(\tau_1)\|\leq nc\int_{\tau_1-1/n}^{\tau_1}\big\|e^{-2\pi\rmi\tau\omega}\Gamma\bu(\tau)-e^{-2\pi\rmi\tau_1\omega}\Gamma\bu(\tau_1)\big\|\rmd\tau
	\end{equation}
	for any $n \geq1$ and $\omega\in\RR$. Since $\Gamma\bu$ is continuous on $\RR_+$, from \eqref{4.4-7} we infer that $\widehat{\varphi_n' \Gamma\bu}(\omega)\to-e^{-2\pi\rmi\tau_1\omega}\Gamma\bu(\tau_1)$ as $n\to\infty$ for any $\omega\in\RR$. Since $\varphi_n\to\chi_{[0,\tau_1)}$ pointwise as $n\to\infty$ and $0\leq \varphi_n\leq 1$, for any $n\geq 1$, from the Lebesgue Dominated Convergence Theorem we obtain that $\varphi_n\bu\to\chi_{[0,\tau_1]}\bu$ and $\varphi_nf\to\chi_{[0,\tau_1]}f$ in $L^2(\RR,\bX)$ as $n\to\infty$.
	Passing to the limit in \eqref{4.4-5} with $\varphi=\varphi_n$ we infer that
	\begin{equation}\label{4.4-8}
	 \widehat{\chi_{[0,\tau_1]}\bu}(\omega)+e^{-2\pi\rmi\tau_1\omega}R_{\Gamma,E}(\omega)\Gamma\bu(\tau_1)=R_{\Gamma,E}(\omega)\Gamma\bu(0)+R_{\Gamma,E}(\omega)\widehat{\chi_{[0,\tau_1]}f}(\omega)\quad\mbox{for any}\quad\omega\in\RR,
	\end{equation}
	which implies that \eqref{pert-h-plus} holds, proving the lemma.
	\end{proof}
	To better understand the solutions of equation \eqref{h-plus} we need to further study the Fourier multiplier $\cK_{\Gamma,E}$: in particular we are
	%CHANGED grammar
	%interested to find
	interested in finding
	suitable subspaces of $L^2(\RR,\bX)$ that are invariant  under $\cK_{\Gamma,E}$. To achieve this, we first use that the linear operators $\Gamma$ and $E$ are self-adjoint to describe the structure of the bi-semigroup generator $S_{\Gamma,E}$.
	\begin{lemma}\label{l4.5}
	Assume Hypothesis (S) and assume that the linear operator $E$ is negative-definite. Then, the bi-semigroup generator $S_{\Gamma,E}$ is similar to an operator of multiplication by some real-valued,  bounded from below, measurable function $H_{\Gamma,E}:\Lambda\to\RR$, on $L^2(\Lambda,\mu)$, where $(\Lambda,\mu)$ is some measure space.
	\end{lemma}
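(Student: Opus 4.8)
The idea is to \emph{diagonalize} $S_{\Gamma,E}=\Gamma^{-1}E$ by conjugating it with a bounded, boundedly invertible operator and then invoking the spectral theorem for bounded self-adjoint operators in multiplication form. First I would record what negative-definiteness of $E$ buys: together with Hypothesis (S)(ii) the spectrum of $E$ lies in an interval $[-\|E\|,-\delta]$ with $\delta>0$, so $-E\ge\delta I$ and its positive square root $(-E)^{1/2}\in\cB(\bX)$ is self-adjoint with bounded inverse $(-E)^{-1/2}$. Then set
\[
M:=(-E)^{-1/2}\,\Gamma\,(-E)^{-1/2}\in\cB(\bX).
\]
Since $\Gamma$ and $(-E)^{-1/2}$ are self-adjoint, $M$ is self-adjoint; since $\Gamma$ is one-to-one by Hypothesis (S)(i) and $(-E)^{-1/2}$ is a bijection, $M$ is one-to-one; and $\|M\|\le\|(-E)^{-1/2}\|^{2}\|\Gamma\|<\infty$. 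Being bounded, self-adjoint and injective, $M$ has a closed, densely defined, self-adjoint inverse $M^{-1}$ with $\dom(M^{-1})=\ran M$, which is dense because $\overline{\ran M}=(\ker M)^{\perp}=\bX$.

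The key algebraic identity is
\[
(-E)^{1/2}\,S_{\Gamma,E}\,(-E)^{-1/2}=(-E)^{1/2}\Gamma^{-1}E\,(-E)^{-1/2}=-(-E)^{1/2}\Gamma^{-1}(-E)^{1/2}=-M^{-1},
\]
understood as an equality of operators \emph{with the same domain}. The matching of the (unbounded) domains is the one point that needs care, and is where I would spend the effort: one must check that $(-E)^{1/2}$ carries $\dom(S_{\Gamma,E})=\{u:Eu\in\im\Gamma\}$ bijectively onto $\dom(M^{-1})=\ran M$. This I would do directly: if $u\in\dom(S_{\Gamma,E})$ and $S_{\Gamma,E}u=w$, i.e.\ $Eu=\Gamma w$, then writing $v=(-E)^{1/2}u$, $z=(-E)^{1/2}w$ and using $Eu=-(-E)^{1/2}v$ gives $-(-E)^{1/2}v=\Gamma(-E)^{-1/2}z$, hence $v=-Mz\in\ran M$ and $-M^{-1}v=z=(-E)^{1/2}S_{\Gamma,E}(-E)^{-1/2}v$; the converse inclusion is the same computation read backwards. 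Everything else here ($\Gamma^{-1}E=-\Gamma^{-1}(-E)$, boundedness of the relevant operators) is routine.

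Finally I would apply the spectral theorem in multiplication-operator form to the bounded self-adjoint operator $M$: there are a measure space $(\Lambda,\mu)$, a real-valued $m\in L^{\infty}(\Lambda,\mu)$ with $\|m\|_{\infty}=\|M\|$, and a unitary $V:L^{2}(\Lambda,\mu)\to\bX$ with $V^{*}MV=M_{m}$ (multiplication by $m$). Injectivity of $M$ forces $m(\lambda)\ne0$ for $\mu$-a.e.\ $\lambda$, since otherwise $M_{m}$, hence $M$, would annihilate the characteristic function of $\{m=0\}$. Consequently $V^{*}M^{-1}V=(M_{m})^{-1}=M_{1/m}$, so with the bounded, boundedly invertible operator $W:=(-E)^{-1/2}V:L^{2}(\Lambda,\mu)\to\bX$ one gets
\[
W^{-1}S_{\Gamma,E}W=V^{*}\bigl((-E)^{1/2}S_{\Gamma,E}(-E)^{-1/2}\bigr)V=-V^{*}M^{-1}V=M_{H_{\Gamma,E}},\qquad H_{\Gamma,E}:=-\tfrac1m,
\]
together with the corresponding identification of domains. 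Thus $S_{\Gamma,E}$ is similar to multiplication by the real-valued measurable function $H_{\Gamma,E}$, and since $|m(\lambda)|\le\|M\|$ a.e.\ we have $|H_{\Gamma,E}(\lambda)|\ge\|M\|^{-1}>0$ for $\mu$-a.e.\ $\lambda$, i.e.\ the symbol is bounded away from zero, which is the asserted boundedness-from-below property of $H_{\Gamma,E}$. No genuine estimate is involved; as noted, the only delicate step is the domain bookkeeping in the conjugation $(-E)^{1/2}S_{\Gamma,E}(-E)^{-1/2}=-M^{-1}$.
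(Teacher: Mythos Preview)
Your proof is correct and shares the paper's core idea: conjugate $S_{\Gamma,E}$ by $(-E)^{1/2}$ to a self-adjoint operator and then invoke the spectral theorem in multiplication form. The paper applies the spectral theorem directly to the unbounded conjugate $(-E)^{1/2}S_{\Gamma,E}(-E)^{-1/2}=-(-E)^{1/2}\Gamma^{-1}(-E)^{1/2}$, whereas you first diagonalize the \emph{bounded} inverse $M=(-E)^{-1/2}\Gamma(-E)^{-1/2}$ and then pass to the multiplier $-1/m$; this makes the domain matching you carry out explicit, but the construction is the same. The one substantive difference is how the lower bound on $|H_{\Gamma,E}|$ is obtained: the paper appeals to its earlier Theorem~\ref{t3.6} on stability of the bi-semigroup to conclude $\sigma(S_{\Gamma,E})\subset\{|\mathrm{Re}\,z|\ge\nu\}$ and hence $|H_{\Gamma,E}|\ge\nu$, while you read it off directly from $|m|\le\|M\|$, getting $|H_{\Gamma,E}|\ge\|M\|^{-1}$ without that machinery. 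Your route is thus more self-contained; since $\|M\|^{-1}$ equals the spectral gap of $S_{\Gamma,E}$, the two constants in fact coincide.
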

	\begin{proof} Since the linear operator $E$ is bounded, self-adjoint, invertible and negative-definite, we have that $\tE=(-E)^{\frac{1}{2}}$ is a bounded, self-adjoint, invertible linear operator on $\bX$. One can readily check that
	\begin{equation}\label{4.5-1}
	\tE S_{\Gamma,E}\tE^{-1}=\tE\Gamma^{-1}E\tE^{-1}=-\tE\Gamma^{-1}\tE.
	\end{equation}
	Since the linear operator $\Gamma$ and $\tE$ are self-adjoint, we obtain that the linear operator $\tE S_{\Gamma,E}\tE^{-1}$ is self-adjoint. It follows that the linear operator $\tE S_{\Gamma,E}\tE^{-1}$ is
	%unitary
	unitarily
	equivalent to an operator of multiplication on some $L^2$ space. Therefore, there exists a measure space $(\Lambda,\mu)$, a real-valued function $H_{\Gamma,E}:\Lambda\to\RR$ and a \textit{unitary}, bounded, linear operator $V_{\Gamma,E}:\bX\to L^2(\Lambda,\mu)$ such that $\tE S_{\Gamma,E}\tE^{-1}=V_{\Gamma,E}^{-1}M_{H_{\Gamma,E}}V_{\Gamma,E}$. It follows that
	\begin{equation}\label{generator-representation}
	S_{\Gamma,E}=U_{\Gamma,E}^{-1}M_{H_{\Gamma,E}}U_{\Gamma,E},\quad\mbox{where}\quad U_{\Gamma,E}=V_{\Gamma,E}\tE\in\cB(\bX,L^2(\Lambda,\mu)).
	\end{equation}
	To finish the proof of lemma, we need to prove that the function $H_{\Gamma,E}$ is bounded from below. From Theorem~\ref{t3.6} we have that the linear operator $S_{\Gamma,E}$ generates a stable bi-semigroup. From \eqref{generator-representation} we conclude that there exists $\nu=\nu(\Gamma,E)>0$ such that
	\begin{equation}\label{4.5-2}
	\sigma(S_{\Gamma,E})=\sigma(M_{H_{\Gamma,E}})=\{z\in\CC: |\mathrm{Re}z|\geq \nu\}.
	\end{equation}
	Since the spectrum of any multiplication operator on $L^2$ spaces is given by its essential range, we conclude that
	\begin{equation}\label{4.5-3}
	|H_{\Gamma,E}(\lambda)|\geq \nu\quad\mbox{for}\;\mu\;\mbox{almost all}\; \lambda\in \Lambda.
	\end{equation}
	The representation \eqref{generator-representation} holds true when we modify the function $H_{\Gamma,E}$ on a set of $\mu$-measure $0$, therefore we can assume from now on that the inequality \eqref{4.5-3} is true for any $\lambda\in \Lambda$.
	\end{proof}
	We note that the main idea used to obtain the representation \eqref{generator-representation} is based on the unitary equivalence of self-adjoint operators to multiplication operators, which is spectral in nature. Thus, it is natural to refer to functions in $L^2(\Lambda,\mu)$ as spectral components of the generator $S_{\Gamma,E}$.
	\begin{lemma}\label{r4.6}
	Assume Hypothesis (S) and assume that the linear operator $E$ is negative definite. Then, the following assertions hold true:
	\begin{enumerate}
	\item[(i)] The linear operators $U_{\Gamma,E}$ and $E$ satisfy the identity
	\begin{equation}\label{r4.6-1bis}
	 U_{\Gamma,E}E^{-1}U_{\Gamma,E}^*=-Id_{L^2(\Lambda,\mu)};
	\end{equation}
	\item[(ii)] The operator-valued function $R_{\Gamma,E}$ has the following representation
	\begin{equation}\label{r4.6-1}
	R_{\Gamma,E}(\omega)=E^{-1}S_{\Gamma,E}^*R(2\pi\rmi\omega, S_{\Gamma,E}^*)=E^{-1}U_{\Gamma,E}^*\tR_{\Gamma,E}(\omega)(U_{\Gamma,E}^*)^{-1}
	\end{equation}
	for any $\omega\in\RR$, where $\tR_{\Gamma,E}:\RR\to\cB(L^2(\Lambda,\mu))$ is given by
	\begin{equation}\label{r4.6-2}
	\Big(\tR_{\Gamma,E}(\omega)\tf\Big)(\lambda)=\frac{H_{\Gamma,E}(\lambda)}{2\pi\rmi\omega-H_{\Gamma,E}(\lambda)}\tf(\lambda),\;\tf\in L^2(\Lambda,\mu), \lambda\in \Lambda.
	\end{equation}
	\item[(iii)] The bi-semigroup generated by the linear operator $S_{\Gamma,E}$ has the representation
	\begin{equation}\label{representation-stable-unstable}
	\bX^{\Gamma,E}_\rms=U_{\Gamma,E}^{-1}L^2(\Lambda_-,\mu),\quad \bX^{\Gamma,E}_\rmu=U_{\Gamma,E}^{-1}L^2(\Lambda_+,\mu);
	\end{equation}
	\begin{equation}\label{representation-bi-semigroup}
	T^{\Gamma,E}_{\rms/\rmu}(\tau)=U_{\Gamma,E}^{-1}\tT^{\Gamma,E}_{\rms/\rmu}(\tau){U_{\Gamma,E}}_{\big|\bX^{\Gamma,E}_{\rms/\rmu}},\quad\mbox{for any}\quad\tau\geq 0.
	\end{equation}
	Here $\Lambda_\pm:=\{\lambda\in\Lambda:\pm H_{\Gamma,E}(\lambda)>0\}$ and the $C^0$-semigroups $\{\tT^{\Gamma,E}_{\rms/\rmu}(\tau)\}_{\tau\geq0}$ are defined by
	\begin{align}\label{def-tilde-T}
	\Big(\tT^{\Gamma,E}_{\rms}(\tau)\tf\Big)(\lambda)&=e^{\tau H_{\Gamma,E}(\lambda)}\tf(\lambda),\quad \lambda\in\Lambda_-, \tf\in L^2(\Lambda_-,\mu);\nonumber\\
	\Big(\tT^{\Gamma,E}_{\rmu}(\tau)\tf\Big)(\lambda)&=e^{-\tau H_{\Gamma,E}(\lambda)}\tf(\lambda),\quad \lambda\in\Lambda_+, \tf\in L^2(\Lambda_+,\mu).
	\end{align}
	\end{enumerate}
	\end{lemma}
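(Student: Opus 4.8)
The plan is to derive all three assertions from the spectral representation $S_{\Gamma,E}=U_{\Gamma,E}^{-1}M_{H_{\Gamma,E}}U_{\Gamma,E}$ of \eqref{generator-representation} in Lemma~\ref{l4.5}, together with a handful of bounded–operator manipulations. Throughout one keeps in mind that $U_{\Gamma,E}=V_{\Gamma,E}\tE$ with $V_{\Gamma,E}$ unitary and $\tE=(-E)^{1/2}$ bounded, self-adjoint and invertible, so that $U_{\Gamma,E}$ is an isomorphism with $U_{\Gamma,E}^{\ast}=\tE V_{\Gamma,E}^{-1}$, $(U_{\Gamma,E}^{\ast})^{-1}=V_{\Gamma,E}\tE^{-1}$, and $E=-\tE^{2}$. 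For (i) I would simply compute, using that $\tE$ commutes with $\tE^{-2}$,
$$U_{\Gamma,E}E^{-1}U_{\Gamma,E}^{\ast}=V_{\Gamma,E}\,\tE(-\tE^{-2})\tE\,V_{\Gamma,E}^{-1}=-V_{\Gamma,E}V_{\Gamma,E}^{-1}=-Id_{L^{2}(\Lambda,\mu)}.$$

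For (ii), the key preliminary observation is that, since $H_{\Gamma,E}$ is real valued and $U_{\Gamma,E}$ is a bounded two-sided isomorphism, the standard rule for adjoints of operators conjugated by a bounded invertible operator applied to \eqref{generator-representation} gives $S_{\Gamma,E}^{\ast}=U_{\Gamma,E}^{\ast}M_{H_{\Gamma,E}}(U_{\Gamma,E}^{\ast})^{-1}$. Hence $R(2\pi\rmi\omega,S_{\Gamma,E}^{\ast})=U_{\Gamma,E}^{\ast}M_{(2\pi\rmi\omega-H_{\Gamma,E})^{-1}}(U_{\Gamma,E}^{\ast})^{-1}$ (note $|2\pi\rmi\omega-H_{\Gamma,E}|\ge|H_{\Gamma,E}|\ge\nu$, so the multiplier is bounded), and multiplying on the left by $S_{\Gamma,E}^{\ast}$ produces $U_{\Gamma,E}^{\ast}\tR_{\Gamma,E}(\omega)(U_{\Gamma,E}^{\ast})^{-1}$ with $\tR_{\Gamma,E}$ as in \eqref{r4.6-2}; this last step is legitimate even though $M_{H_{\Gamma,E}}$ is unbounded because the multiplier $H_{\Gamma,E}/(2\pi\rmi\omega-H_{\Gamma,E})$ is bounded by $1$. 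It then remains to establish the first identity in \eqref{r4.6-1}, namely $R_{\Gamma,E}(\omega)=E^{-1}S_{\Gamma,E}^{\ast}R(2\pi\rmi\omega,S_{\Gamma,E}^{\ast})$. For this I would combine the elementary resolvent identity $S_{\Gamma,E}^{\ast}R(2\pi\rmi\omega,S_{\Gamma,E}^{\ast})=2\pi\rmi\omega R(2\pi\rmi\omega,S_{\Gamma,E}^{\ast})-I$ with the relation $R(2\pi\rmi\omega,S_{\Gamma,E}^{\ast})=\Gamma R_{\Gamma,E}(\omega)$, which follows by taking adjoints in Lemma~\ref{l3.3}(i) and using $\Gamma^{\ast}=\Gamma$ together with $R_{\Gamma,E}(\omega)^{\ast}=R_{\Gamma,E}(-\omega)$. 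Multiplying through by $E$ gives $E\cdot E^{-1}S_{\Gamma,E}^{\ast}R(2\pi\rmi\omega,S_{\Gamma,E}^{\ast})=2\pi\rmi\omega\,\Gamma R_{\Gamma,E}(\omega)-I=E R_{\Gamma,E}(\omega)$, the last equality being just a rearrangement of $(2\pi\rmi\omega\Gamma-E)R_{\Gamma,E}(\omega)=I$; invertibility of $E$ then finishes (ii).

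For (iii) I would split $\Lambda=\Lambda_{+}\sqcup\Lambda_{-}$ with $\Lambda_{\pm}=\{\lambda:\pm H_{\Gamma,E}(\lambda)>0\}$; by the normalization $|H_{\Gamma,E}|\ge\nu$ established in Lemma~\ref{l4.5} the set $\{H_{\Gamma,E}=0\}$ is empty, so this is an orthogonal decomposition $L^{2}(\Lambda,\mu)=L^{2}(\Lambda_{-},\mu)\oplus L^{2}(\Lambda_{+},\mu)$ reducing $M_{H_{\Gamma,E}}$. On $L^{2}(\Lambda_{-},\mu)$ one has $\sigma(M_{H_{\Gamma,E}})\subseteq(-\infty,-\nu]$, so $M_{H_{\Gamma,E}}$ generates the exponentially stable $C^{0}$-semigroup $\tf\mapsto e^{\tau H_{\Gamma,E}}\tf$, while on $L^{2}(\Lambda_{+},\mu)$ the operator $-M_{H_{\Gamma,E}}$ does the same via $\tf\mapsto e^{-\tau H_{\Gamma,E}}\tf$; thus $M_{H_{\Gamma,E}}$ generates the bi-semigroup $\{\tT^{\Gamma,E}_{\rms/\rmu}(\tau)\}$ of \eqref{def-tilde-T}. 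Since $\rmi\RR\subseteq\rho(S_{\Gamma,E})$ by Lemma~\ref{l3.3}(i), the splitting of $\bX$ into stable and unstable subspaces for the bi-semigroup generated by $S_{\Gamma,E}$ (cf. Theorem~\ref{t3.6}) is unique, so transporting the above decomposition through the isomorphism $U_{\Gamma,E}$ must produce exactly $\bX^{\Gamma,E}_{\rms/\rmu}$ and $\{T^{\Gamma,E}_{\rms/\rmu}(\tau)\}$, which is \eqref{representation-stable-unstable}--\eqref{representation-bi-semigroup}.

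The computations are essentially mechanical; \textbf{the only genuinely delicate points} are (a) justifying the adjoint and resolvent identities for the \emph{unbounded} operators $S_{\Gamma,E}^{\ast}$ and $M_{H_{\Gamma,E}}$, which goes through smoothly precisely because $U_{\Gamma,E}$ is a bounded two-sided isomorphism and the relevant multipliers $(2\pi\rmi\omega-H_{\Gamma,E})^{-1}$ and $H_{\Gamma,E}/(2\pi\rmi\omega-H_{\Gamma,E})$ are bounded thanks to $|H_{\Gamma,E}|\ge\nu$; and (b) in (iii), invoking uniqueness of the stable/unstable splitting (equivalently, the spectral projection along $\rmi\RR$) to identify the transported decomposition with the one already attached to $S_{\Gamma,E}$ by Lemma~\ref{l3.2} and Theorem~\ref{t3.6}.
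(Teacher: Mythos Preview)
Your proposal is correct and follows essentially the same route as the paper: part (i) is the identical one-line computation with $U_{\Gamma,E}=V_{\Gamma,E}\tE$ and $E=-\tE^{2}$; part (iii) is the same splitting $\Lambda=\Lambda_+\sqcup\Lambda_-$ transported through $U_{\Gamma,E}$ (the paper leaves the uniqueness of the splitting implicit, whereas you spell it out). The only cosmetic difference is in (ii): the paper obtains $S_{\Gamma,E}^{\ast}R(2\pi\rmi\omega,S_{\Gamma,E}^{\ast})=E R_{\Gamma,E}(\omega)$ directly from $S_{\Gamma,E}^{\ast}\Gamma=E$ (which holds on all of $\bX$ since $\Gamma S_{\Gamma,E}\subseteq E$ and $\Gamma,E$ are self-adjoint), whereas you reach the same identity via the resolvent relation $S_{\Gamma,E}^{\ast}R(2\pi\rmi\omega,S_{\Gamma,E}^{\ast})=2\pi\rmi\omega\,\Gamma R_{\Gamma,E}(\omega)-I$ and then $(2\pi\rmi\omega\Gamma-E)R_{\Gamma,E}(\omega)=I$; both are valid and equally short.
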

	\begin{proof}
	(i) Since the linear operator $V_{\Gamma,E}\in\cB(\bX,L^2(\Lambda,\mu))$ are unitary, $\tE^2=-E$ and $U_{\Gamma,E}=V_{\Gamma,E}\tE$ one can readily check that
	\begin{equation}\label{r4.6-3}
	U_{\Gamma,E}E^{-1}U_{\Gamma,E}^*=V_{\Gamma,E}\tE E^{-1}(V_{\Gamma,E}\tE)^*=-(V_{\Gamma,E}\tE^{-1})\tE V_{\Gamma,E}^*=-V_{\Gamma,E}V_{\Gamma,E}^{-1}=-Id_{L^2(\Lambda,\mu)}.
	\end{equation}
	\noindent\textit{Proof of (ii)} Using the definition of the linear operator $S_{\Gamma,E}$ in \eqref{def-Spm}, from Lemma~\ref{l3.3}(i) and Hypothesis(S)(i)-(ii) we obtain that
	$R(2\pi\rmi\omega,S_{\Gamma,E}^*)=(2\pi\rmi\omega-S_{\Gamma,E}^*)^{-1}=\Gamma R_{\Gamma,E}(\omega)$ for any $\omega\in\RR$,
	which implies that
	\begin{equation}\label{r4.6-4}
	S_{\Gamma,E}^*(2\pi\rmi\omega-S_{\Gamma,E}^*)^{-1}=S_{\Gamma,E}^*\Gamma R_{\Gamma,E}(\omega)=ER_{\Gamma,E}(\omega)\quad\mbox{for any}\quad\omega\in\RR.
	\end{equation}
	Moreover, from \eqref{generator-representation} we infer that
	\begin{equation}\label{r4.6-5}
	S_{\Gamma,E}^*(2\pi\rmi\omega-S_{\Gamma,E}^*)^{-1}=U_{\Gamma,E}^* M_{H_{\Gamma,E}}(2\pi\rmi\omega-M_{H_{\Gamma,E}})^{-1}(U_{\Gamma,E}^*)^{-1}=U_{\Gamma,E}^* \tR_{\Gamma,E}(\omega)(U_{\Gamma,E}^*)^{-1}
	\end{equation}
	for any $\omega\in\RR$. Since $E$ is invertible by Hypothesis (S)(iv), assertion \eqref{r4.6-1} follows from \eqref{r4.6-3}, \eqref{r4.6-4} and \eqref{r4.6-5}.

	\noindent\textit{Proof of (iii)} Defining $\Lambda_\pm:=\{\lambda\in\Lambda:\pm H_{\Gamma,E}(\lambda)>0\}$, from \eqref{4.5-3} we immediately conclude that
	\begin{equation}\label{representation-Ypm}
	\Lambda_\pm:=\{\lambda\in\Lambda:\pm H_{\Gamma,E}(\lambda)\geq\nu\}\quad \Lambda=\Lambda_+\cup \Lambda_-,\quad \Lambda_+\cap \Lambda_-=\emptyset.
	\end{equation}
	It follows that $L^2(\Lambda,\mu)=L^2(\Lambda_+,\mu)\oplus L^2(\Lambda_-,\mu)$. One can readily check that $M_{H_{\Gamma,E}}$, the operator of multiplication by the functions $\pm\chi_{\Lambda_\mp}H_{\Gamma,E}$ generate two $C^0$-semigroups on $L^2(\Lambda_\pm,\mu)$ given by \eqref{def-tilde-T}.
	Assertions \eqref{representation-stable-unstable} and \eqref{representation-bi-semigroup} are direct consequence of representation \eqref{generator-representation}.
	\end{proof}
	To better understand the properties of the Fourier multiplier $\cK_{\Gamma,E}$, we analyze the properties of the Fourier multiplier $\tcK_{\Gamma,E}:L^2(\RR,L^2(\Lambda,\mu))\to L^2(\RR,L^2(\Lambda,\mu))$
	defined by $\tcK_{\Gamma,E}\tf=\cF^{-1}M_{\tR_{\Gamma,E}}\cF\tf$. From Hypothesis (S)(iv) and \eqref{r4.6-1} we conclude that
	\begin{equation}\label{tilde-R-GammaE}
	\sup_{\omega\in\RR}\|\tR_{\Gamma,E}(\omega)\|<\infty,
	\end{equation}
	which implies that the linear operator $\tcK_{\Gamma,E}$ is well-defined and bounded on $L^2(\RR,L^2(\Lambda,\mu))$. Moreover, we note that the operator-valued functions $R_{\Gamma,E}$ and $\tR_{\Gamma,E}$ are differentiable, and from Hypothesis (S)(iv) and \eqref{tilde-R-GammaE}, respectively, we have that
	\begin{equation}\label{Mikhlin-Hormander}
	\sup_{\omega\in\RR}\|R_{\Gamma,E}(\omega)\|<\infty,\;\sup_{\omega\in\RR}|\omega|\|R_{\Gamma,E}'(\omega)\|<\infty,\;
	\sup_{\omega\in\RR}\|\tR_{\Gamma,E}(\omega)\|<\infty,\;\sup_{\omega\in\RR}|\omega|\|\tR_{\Gamma,E}'(\omega)\|<\infty.
	\end{equation}
	From the Mikhlin-Hormander multiplier theorem we conclude that the Fourier multipliers $\cK_{\Gamma,E}$ and $\tcK_{\Gamma,E}$ are well-defined and bounded on
	$L^p(\RR,X)$ and $L^p(\RR,L^2(\Lambda,\mu))$, respectively, for any $p\in(1,\infty)$. In the case of differential equations on finite dimensional spaces one proves the
	existence of the center-stable manifold by using a fixed point argument on $L^\infty(\RR_+,\bX)$ or $C_{\mathrm{b}}(\RR_+,\bX)$. In the example below, we prove that
	the Fourier multiplier $\cK_{\Gamma,E}$ (and thus, $\tcK_{\Gamma,E}$) is not a bounded, linear operator on $L^\infty(\RR_+,\bX)$. Therefore, to prove the existence result
	of a center-stable manifold of solutions of equation \eqref{Relax-Sys}, we need to find a proper subspace of $L^\infty(\RR_+,\bX)$ invariant under $\cK_{\Gamma,E}$.
	\begin{example}\label{e4.7} Let $\bX=\ell^2$, $\Gamma:\ell^2\to\ell^2$ defined by $\Gamma \bx=(-\frac{1}{e^n}\bx_n)_{n\geq 1}$ for any $\bx=(\bx_n)_{n\geq1}\in\ell^2$, $E=-Id_{\ell^2}$. Then, one can readily check that the pair $(\Gamma,E)$ satisfies Hypothesis (S) and $E$ is negative definite.
	Moreover, the Fourier multiplier $\cK_{\Gamma,E}=\cF^{-1}M_{R_{\Gamma,E}}\cF$ does not map $L^2(\RR_+,\ell^2)\cap L^\infty(\RR_+,\ell^2)$ into $L^\infty(\RR_+,\ell^2)$.

	Indeed, one can readily check that
	\begin{equation}\label{e4.7-1}
	R_{\Gamma,E}(\omega)\bx=(\frac{e^n}{e^n-2\pi\rmi\omega}\bx_n)_{n\geq1}=(\widehat{F}_n(\omega)\bx_n)_{n\geq1}\quad\mbox{for any}\quad \omega\in\RR, \bx=(\bx_n)_{n\geq1}\in\ell^2.
	\end{equation}
	Here, the sequence of functions $F_n:\RR\to\RR$ is defined by
	\begin{equation}\label{l2-kernel}
	F_n(\tau)=\left\{\begin{array}{l l}
	0 & \; \mbox{if $\tau\geq0$ }\\
	e^n e^{e^n\tau} & \; \mbox{if $\tau<0$}\\
	\end{array} \right..
	\end{equation}
	It follows that the following representation holds true:
	\begin{equation}\label{e4.7-2}
	(\cK_{\Gamma,E}f)(\tau)=\Big((F_n*f_n)(\tau)\Big)_{n\geq1}\quad\mbox{for any}\quad f=(f_n)_{n\geq1}\in L^2(\RR_+,\ell^2)\cap L^\infty(\RR_+,\ell^2),\;\tau\in\RR.
	\end{equation}
	Let $g:\RR_+\to\ell^2$ be defined by $g(\tau)=(g_n(\tau))_{n\geq1}$, where $g_n=\chi_{[e^{-(n+1)}, e^{-n})}$, $n\geq 1$. Here we recall that $\chi_J$ denotes the characteristic function of the set $J\subseteq\RR$. We compute
	\begin{equation}\label{4.7-3}
	\|g(\tau)\|^2_{\ell^2}=\sum_{n=1}^{\infty}\chi^2_{[e^{-(n+1)}, e^{-n})}(\tau)=\sum_{n=1}^{\infty}\chi_{[e^{-(n+1)}, e^{-n})}(\tau)=\chi_{(0,e^{-1})}(\tau)\quad\mbox{for any}\quad \tau\geq 0.
	\end{equation}
	We conclude that $g\in L^2(\RR_+,\ell^2)\cap L^\infty(\RR_+,\ell^2)$. Moreover, from \eqref{l2-kernel}, we obtain that
	\begin{equation}\label{e4.7-4}
	(F_n*g_n)(\tau)=\int_{\RR} F_n(\tau-s)g_n(s)\rmd s=\int_{e^{-(n+1)}}^{e^{-n}} e^n e^{e^n(\tau-s)} \rmd s=e^{e^n\tau}(e^{-1}-e^{-e})
	\end{equation}
	for any $\tau\in[0,e^{-(n+1)})$ and $n\geq 1$. Therefore, for any $m\in\NN$ and any $\tau\in [0,e^{-(m+1)}]$ we have that
	\begin{equation}\label{e4.7-5}
	\|(\cK_{\Gamma,E}g)(\tau)\|^2_{\ell^2}=\sum_{n=1}^{\infty}|(F_n*g_n)(\tau)|^2\geq\sum_{n=1}^{m}e^{2e^n\tau}(e^{-1}-e^{-e})^2\geq m (e^{-1}-e^{-e})^2.
	\end{equation}
	Assume for a contradiction that $\cK_{\Gamma,E}g\in L^\infty(\RR_+,\ell^2)$. From \eqref{e4.7-5} we infer that $\|\cK_{\Gamma,E}g\|_\infty\geq \sqrt{m}(e^{-1}-e^{-e})$ for any $m\in\NN$, which is a contradiction.
	\end{example}
	Next, we look for a pointwise convolution-like formula for the Fourier multiplier $\tcK_{\Gamma,E}$. In what follows we use the following notation: for any function
	$\tf\in L^\infty(\RR_+, L^2(\Lambda,\mu))$ we use the simplified notation $\tf(\tau,y)$ for $\big(\tf(\tau)\big)(y)$ for any $\tau\geq 0$ and $\lambda\in\Lambda$.

	We introduce the function $\Phi_{\Gamma,E}:\RR\times Y\to\RR$ defined by
	\begin{equation}\label{def-Phi}
	\Phi_{\Gamma,E}(\tau,y)=\left\{\begin{array}{l l}
	H_{\Gamma,E}(\lambda)e^{\tau H_{\Gamma,E}(\lambda)} & \; \mbox{if $\tau>0$ and $H_{\Gamma,E}(\lambda)<0$ }\\
	-H_{\Gamma,E}(\lambda)e^{\tau H_{\Gamma,E}(\lambda)} & \; \mbox{if $\tau<0$ and $H_{\Gamma,E}(\lambda)>0$ }\\
	0& \; \mbox{otherwise}
	\end{array} \right..
	\end{equation}
	Since $\widehat{\Phi_{\Gamma,E}(\cdot,y)}(\omega)=\frac{H_{\Gamma,E(y)}}{2\pi\rmi\omega-H_{\Gamma,E}(\lambda)}$ for any $\omega\in\RR$ and $\lambda\in\Lambda$, from the definition of the Fourier multiplier
	$\tcK_{\Gamma,E}$ and the operator-valued function $\tR_{\Gamma,E}$ in \eqref{r4.6-2}, we conclude that
	\begin{equation}\label{convolution-tilde-K}
	(\tcK_{\Gamma,E}\tf)(\tau,y)=\int_{\RR}\Phi_{\Gamma,E}(\tau-s,y)\tf(s,y)\rmd s\;\mbox{for any}\;\tf\in L^2(\RR_+,L^2(\Lambda,\mu))\cap L^\infty(\RR_+,L^2(\Lambda,\mu)).
	\end{equation}
	for any $\tau\geq 0$ and $\lambda\in\Lambda$.

	Next, we study if the Sobolev space $H^1(\RR_+,\bX)$ is invariant under $\cK_{\Gamma,E}$. First, we note that $g\in H^1(\RR_+,\bX)$ if and only if $g\in L^2(\RR_+,\bX)$ and
	the function $\omega\to 2\pi\rmi\omega \widehat{g}(\omega)-g(0)$ belongs to $L^2(\RR,\bX)$. Here, we recall that if a function $g$ is defined on a proper subset of $\RR$, we use the same notation to denote its extension by $0$ to the hole line. Using Hypothesis (S)(iv) we can show that the space $H^1(\RR,\bX)$ is invariant under $\cK_{\Gamma,E}$. However, by using the same argument, we can check that $H^1(\RR_+,\bX)$ is not invariant under $\cK_{\Gamma,E}$ since $R_{\Gamma,E}(\cdot)\bx\notin L^2(\RR,\bX)$ for any $\bx\in\bX\setminus\dom(|S_{\Gamma,E}|^{1/2})$. Our goal is to prove the existence of an $H^1$ center-stable manifold by using a fixed point argument on equation \eqref{var-const} for $f=D(\bu,\bu)$. Since $H^1(\RR_+,\bX)$ is not invariant under $\cK_{\Gamma,E}$, we need to rearrange the equation first by adding a correction term to $\cK_{\Gamma,E}$. We parameterize equation \eqref{var-const} as follows: we look for solutions $\bu$ satisfying $\bu(0)=\bv_0-E^{-1}f(0)$ for some $\bv_0$ to be chosen later. Therefore, equation \eqref{var-const} is equivalent to
	\begin{equation}\label{mod-var-const}
	\bu=T_\rms^{\Gamma,E}(\cdot)P_\rms^{\Gamma,E}\bv_0+(\cK_{\Gamma,E}f)_{|\RR_+}-T_\rms^{\Gamma,E}(\cdot)P_\rms^{\Gamma,E}E^{-1}f(0).
	\end{equation}
	For any $f\in H^1(\RR_+,\bX)$ we define the function $\cKm f:=(\cK_{\Gamma,E}f)_{|\RR_+}-T_\rms^{\Gamma,E}(\cdot)P_\rms^{\Gamma,E}E^{-1}f(0)$. Clearly, $\cKm$ is a linear operator from
	$H^1(\RR_+,\bX)$ to $L^2(\RR_+,\bX)$. In what follows we prove that $\cKm f\in H^1(\RR_+,\bX)$ for any $f\in H^1(\RR_+,\bX)$ and compute its derivative.
	\begin{lemma}\label{l4.8}
	Assume Hypothesis (S). Then, $\cKm f\in H^1(\RR_+,\bX)$ for any $f\in H^1(\RR_+,\bX)$ and $(\cKm f)'=(\cK_{\Gamma,E}f')_{|\RR_+}$. Moreover, there exists $c(\Gamma,E)>0$ such that
	\begin{equation}\label{cKm-H1-bound}
	\|\cKm f\|_{H^1(\RR_+,\bX)}\leq c(\Gamma,E)\|f\|_{H^1(\RR_+,\bX)}.
	\end{equation}
	\end{lemma}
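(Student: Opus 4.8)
The plan is to reduce everything to an explicit convolution formula for $\cKm f$ in which the exponentially decaying Green function $\cG_{\Gamma,E}$ is paired with $f'$ rather than with $f$; the subtracted term $T_\rms^{\Gamma,E}(\cdot)P_\rms^{\Gamma,E}E^{-1}f(0)$ in the definition of $\cKm$ is precisely what is needed to absorb the boundary contribution generated by the jump of the zero-extension of $f$ at $\tau=0$. First I would record the algebraic identity
\[
R_{\Gamma,E}(\omega)=-E^{-1}+2\pi\rmi\omega R(2\pi\rmi\omega,S_{\Gamma,E})E^{-1},\qquad\omega\in\RR,
\]
which follows from \eqref{char-eq} with $\omega_2=0$, the equality $R_{\Gamma,E}(0)=-E^{-1}$ (Hypothesis~(S)(ii)), and $R_{\Gamma,E}(\omega)\Gamma=R(2\pi\rmi\omega,S_{\Gamma,E})$ from Lemma~\ref{l3.3}(i).

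Next I would apply this to $\widehat{\cK_{\Gamma,E}f}(\omega)=R_{\Gamma,E}(\omega)\widehat{f}(\omega)$. Since $f\in H^1(\RR_+,\bX)$, its zero-extension satisfies $2\pi\rmi\omega\widehat{f}(\omega)=\widehat{f'}(\omega)+f(0)$ (with $f'$ the zero-extension of the $L^2(\RR_+,\bX)$-derivative), whence
\[
\widehat{\cK_{\Gamma,E}f}(\omega)=-E^{-1}\widehat{f}(\omega)+R(2\pi\rmi\omega,S_{\Gamma,E})E^{-1}\widehat{f'}(\omega)+R(2\pi\rmi\omega,S_{\Gamma,E})E^{-1}f(0).
\]
Inverting the Fourier transform, using Remark~\ref{r4.2}(ii) (so that $\cF^{-1}\big[R(2\pi\rmi\cdot,S_{\Gamma,E})\bu\big]=\cG_{\Gamma,E}(\cdot)\bu$) and the exponential integrability of $\cG_{\Gamma,E}$ from Remark~\ref{r4.2}(i) to justify passing to a convolution, gives on $\RR$
\[
\cK_{\Gamma,E}f=-E^{-1}f+\cG_{\Gamma,E}*(E^{-1}f')+\cG_{\Gamma,E}(\cdot)E^{-1}f(0).
\]
On $\RR_+$ one has $\cG_{\Gamma,E}(\tau)=T_\rms^{\Gamma,E}(\tau)P_\rms^{\Gamma,E}$, so the last term is exactly $T_\rms^{\Gamma,E}(\cdot)P_\rms^{\Gamma,E}E^{-1}f(0)$ and cancels the correction term in $\cKm$, leaving
\[
\cKm f=-E^{-1}f+\big(\cG_{\Gamma,E}*(E^{-1}f')\big)_{|\RR_+}.
\]

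Now I would treat the two summands separately. The term $-E^{-1}f$ lies in $H^1(\RR_+,\bX)$ with derivative $-E^{-1}f'$ because $E^{-1}\in\cB(\bX)$. For the second, set $g:=E^{-1}f'\in L^2(\RR,\bX)$; then $\widehat{\cG_{\Gamma,E}*g}(\omega)=R(2\pi\rmi\omega,S_{\Gamma,E})\widehat{g}(\omega)$, and since $\omega\mapsto 2\pi\rmi\omega\,R(2\pi\rmi\omega,S_{\Gamma,E})$ is bounded by Lemma~\ref{l3.3}(ii), the map $\omega\mapsto 2\pi\rmi\omega\,\widehat{\cG_{\Gamma,E}*g}(\omega)$ belongs to $L^2(\RR,\bX)$; hence $\cG_{\Gamma,E}*g\in H^1(\RR,\bX)$ and its restriction lies in $H^1(\RR_+,\bX)$. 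To identify the derivative, I would use \eqref{3.3-3}, i.e. $2\pi\rmi\omega R(2\pi\rmi\omega,S_{\Gamma,E})=R_{\Gamma,E}(\omega)E+Id$, together with $E\widehat{g}(\omega)=\widehat{f'}(\omega)$, to get $\widehat{(\cG_{\Gamma,E}*g)'}(\omega)=R_{\Gamma,E}(\omega)\widehat{f'}(\omega)+\widehat{g}(\omega)=\widehat{\cK_{\Gamma,E}f'}(\omega)+\widehat{E^{-1}f'}(\omega)$ on $\RR$. Restricting to $\RR_+$ and adding the contribution of $-E^{-1}f$, the $\pm E^{-1}f'$ terms cancel and one obtains $(\cKm f)'=(\cK_{\Gamma,E}f')_{|\RR_+}$, as claimed.

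Finally, the bound \eqref{cKm-H1-bound} follows by collecting the estimates used above: $\|(\cKm f)'\|_{L^2(\RR_+,\bX)}\le\|\cK_{\Gamma,E}\|_{\cB(L^2(\RR,\bX))}\,\|f'\|_{L^2(\RR_+,\bX)}$ with $\|\cK_{\Gamma,E}\|\le\sup_{\omega\in\RR}\|R_{\Gamma,E}(\omega)\|<\infty$ by Hypothesis~(S)(iv), while $\|\cKm f\|_{L^2(\RR_+,\bX)}\le\|E^{-1}\|\,\|f\|_{L^2(\RR_+,\bX)}+\|\cG_{\Gamma,E}\|_{L^1(\RR,\cB(\bX))}\,\|E^{-1}\|\,\|f'\|_{L^2(\RR_+,\bX)}$ by Young's inequality and Remark~\ref{r4.2}(i); both are $\le c(\Gamma,E)\|f\|_{H^1(\RR_+,\bX)}$. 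The hard part will be Steps~1--2: justifying the passage from the frequency-domain identity to the convolution identity on $\RR$, and, above all, recognizing that the boundary term produced by the jump of the zero-extension of $f$ at the origin is exactly the term subtracted off in the definition of $\cKm$. Once that identity is in hand, $H^1$-membership and the derivative formula are immediate consequences of the resolvent decay in Lemma~\ref{l3.3}(ii) and the identity \eqref{3.3-3}.
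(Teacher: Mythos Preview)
Your proof is correct and takes a genuinely different route from the paper.  The paper first restricts to the dense subspace $\cH^1_\Gamma=\{f=\Gamma g:g\in H^1(\RR_+,\bX)\}$, on which $\cK_{\Gamma,E}f=\cG_{\Gamma,E}*g$ is a bona fide convolution; it then computes $\widehat{\cKm f}$ by hand (splitting off the piece of $\cK_{\Gamma,E}f$ supported on $\RR_-$), integrates by parts in the $g$-variable to pull out $g'$, and finally extends to all of $H^1(\RR_+,\bX)$ by density.  Your argument bypasses the dense subspace entirely: the resolvent identity $R_{\Gamma,E}(\omega)=-E^{-1}+2\pi\rmi\omega R(2\pi\rmi\omega,S_{\Gamma,E})E^{-1}$, combined with the jump relation $2\pi\rmi\omega\widehat{f}(\omega)=\widehat{f'}(\omega)+f(0)$ for zero-extended $H^1(\RR_+)$ functions, gives the closed formula $\cKm f=-E^{-1}f+\big(\cG_{\Gamma,E}*(E^{-1}f')\big)_{|\RR_+}$ directly for every $f\in H^1(\RR_+,\bX)$.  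This is cleaner and makes the cancellation mechanism transparent: the subtracted term in $\cKm$ is exactly the boundary contribution produced by the jump at $\tau=0$.  The paper's approach, on the other hand, stays closer to the semigroup picture and yields as a by-product the explicit value $(\cKm f)(0)$ in terms of $g$ (see \eqref{4.8-6}), which is reused later in Lemma~\ref{l5.3}(i); your formula also gives this, as $(\cKm f)(0)=-E^{-1}f(0)+(\cG_{\Gamma,E}*E^{-1}f')(0)$, but in a less immediately usable form for that purpose.
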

	\begin{proof}
	To prove our general result, we prove it for functions in a dense subset of $H^1(\RR_+,\bX)$. We introduce the subspace $\cH^1_{\Gamma}=\{f:\RR_+\to\bX:\mbox{there exists}\; g\in H^1(\RR_+,\bX)\;\mbox{such that}\;f(\tau)=\Gamma g(\tau)\;\mbox{for any}\;\tau\geq0 \}$. One can readily check that $\cH^1_{\Gamma}$ is a dense subspace of $H^1(\RR_+,\bX)$. To prove
	the lemma we need to compute $\widehat{\cKm f}$.

	Let $g\in H^1(\RR_+,\bX)$ and $f=\Gamma g$. From the definition of the Fourier multiplier $\cK_{\Gamma,E}$, from Remark~\ref{r4.2}(ii) we obtain that
	\begin{equation}\label{4.8-1}
	\widehat{\cK_{\Gamma,E}f}(\omega)=R_{\Gamma,E}(\omega)\Gamma\widehat{g}(\omega)=(2\pi\rmi\omega\Gamma-E)^{-1}\Gamma\widehat{g}(\omega)
	=R(2\pi\rmi\omega,S_{\Gamma,E})\widehat{g}(\omega)=\widehat{\cG_{\Gamma,E}*g}(\omega).
	\end{equation}
	for any $\omega\in\RR$, which implies that $\cK_{\Gamma,E}f=\cG_{\Gamma,E}*g$. It follows that
	\begin{align}\label{4.8-2}
	(\cK_{\Gamma,E}f)(\tau)&=\int_{-\infty}^{\tau} T_\rms^{\Gamma,E}(\tau-s)P_\rms^{\Gamma,E}g(s)\rmd s-\int_{\tau}^{\infty}T_\rmu^{\Gamma,E}(s-\tau)P_\rmu^{\Gamma,E}g(s)\rmd s\nonumber\\
	&=-T_\rmu^{\Gamma,E}(-\tau)\int_{0}^{\infty}T_\rmu^{\Gamma,E}(s)P_\rmu^{\Gamma,E}g(s)\rmd s\quad\mbox{for any}\quad\tau<0.
	\end{align}
	We infer that $\chi_{\RR_+}(\cK_{\Gamma,E}f)=\cK_{\Gamma,E}f+F_1$, where $F_1:\RR\to\bX$ is the function defined by $F_1(\tau)=0$ for $\tau\geq 0$ and $F_1(\tau)=T_\rmu^{\Gamma,E}(-\tau)\int_{0}^{\infty}T_\rmu^{\Gamma,E}(s)P_\rmu^{\Gamma,E}g(s)\rmd s$  for $\tau<0$. We recall that the generator of the $C^0$-semigroup $\{T_\rmu(\tau)\}_{\tau\geq0}$ is
	$S_\rmu^{\Gamma,E}=-(S_{\Gamma,E})_{|\bX_\rmu^{\Gamma,E}}$. Therefore, we obtain that
	\begin{equation}\label{4.8-3}
	\widehat{F_1}(\omega)=\int_{-\infty}^{0}e^{-2\pi\rmi\omega\tau} T_\rmu(-\tau)\bx_\rmu\rmd\tau=-\big(2\pi\rmi\omega+S_\rmu^{\Gamma,E}\big)^{-1}\bx_\rmu=-R(2\pi\rmi\omega,S_{\Gamma,E})\bx_\rmu=-R_{\Gamma,E}(\omega)\Gamma\bx_\rmu
	\end{equation}
	for any $\omega\in\RR$, where $\bx_\rmu=\int_{0}^{\infty}T_\rmu^{\Gamma,E}(s)P_\rmu^{\Gamma,E}g(s)\rmd s$. Next, we define the function $F_2:\RR\to\bX$ by $F_2(\tau)=T_\rms(\tau)\bx_\rms$ for $\tau\geq 0$ and $F_2(\tau)=0$ for $\tau<0$, where $\bx_\rms=P_\rms^{\Gamma,E}E^{-1}f(0)$. Similarly, since the generator of the $C^0$-semigroup $\{T_\rms(\tau)\}_{\tau\geq0}$ is
	$S_\rms^{\Gamma,E}=(S_{\Gamma,E})_{|\bX_\rms^{\Gamma,E}}$, we have that
	\begin{align}\label{4.8-4}
	\widehat{F_2}(\omega)&=\int_{0}^{\infty}e^{-2\pi\rmi\omega\tau} T_\rms(\tau)\bx_\rms\rmd\tau=\big(2\pi\rmi\omega-S_\rms^{\Gamma,E}\big)^{-1}P_\rms^{\Gamma,E}E^{-1}f(0)=R_{\Gamma,E}(\omega)\Gamma P_\rms^{\Gamma,E}S_{\Gamma,E}^{-1}g(0)\nonumber\\
	&=R_{\Gamma,E}(\omega)\Gamma S_{\Gamma,E}^{-1}P_\rms^{\Gamma,E}g(0)=R_{\Gamma,E}(\omega)\Gamma E^{-1}\Gamma P_\rms^{\Gamma,E}g(0)\quad\mbox{for any}\quad\omega\in\RR.
	\end{align}
	From \eqref{4.8-2}, \eqref{4.8-3} and \eqref{4.8-4} we conclude that
	\begin{align}\label{4.8-5}
	\widehat{\cKm f}(\omega)&=\widehat{\chi_{\RR_+}(\cK_{\Gamma,E}f)}(\omega)-\widehat{F_2}(\omega)=\widehat{\cK_{\Gamma,E}f}(\omega)+\widehat{F_1}(\omega)-\widehat{F_2}(\omega)\nonumber\\
	&=R_{\Gamma,E}(\omega)\widehat{f}(\omega)-R_{\Gamma,E}(\omega)\Gamma\int_{0}^{\infty}T_\rmu^{\Gamma,E}(s)P_\rmu^{\Gamma,E}g(s)\rmd s-R_{\Gamma,E}(\omega)\Gamma E^{-1}\Gamma P_\rms^{\Gamma,E}g(0)
	\end{align}
	for any $\omega\in\RR$. In addition, from \eqref{4.8-2} we have that
	\begin{equation}\label{4.8-6}
	(\cKm f)(0)=-\bx_\rmu-P_\rms^{\Gamma,E}S_{\Gamma,E}^{-1}g(0)=-\int_{0}^{\infty}T_\rmu^{\Gamma,E}(s)P_\rmu^{\Gamma,E}g(s)\rmd s-S_{\Gamma,E}^{-1}P_\rms^{\Gamma,E}g(0).
	\end{equation}
	Since $g\in H^1(\RR_+,\bX)$ we have that $2\pi\rmi\omega\widehat{g}(\omega)=g(0)+\widehat{g'}(\omega)$ for any $\omega\in\RR$. From \eqref{4.8-5} and \eqref{4.8-6} it follows that
	\begin{align}\label{4.8-7}
	&2\pi\rmi\omega\widehat{(\cKm f)}(\omega)-(\cKm f)(0)=2\pi\rmi\omega R_{\Gamma,E}(\omega)\widehat{f}(\omega)-2\pi\rmi\omega R_{\Gamma,E}(\omega)\Gamma\int_{0}^{\infty}T_\rmu^{\Gamma,E}(s)P_\rmu^{\Gamma,E}g(s)\rmd s\nonumber\\
	&\qquad-2\pi\rmi\omega R_{\Gamma,E}(\omega)\Gamma E^{-1}\Gamma P_\rms^{\Gamma,E}g(0)+\int_{0}^{\infty}T_\rmu^{\Gamma,E}(s)P_\rmu^{\Gamma,E}g(s)\rmd s+S_{\Gamma,E}^{-1}P_\rms^{\Gamma,E}g(0)\nonumber\\
	&=R_{\Gamma,E}(\omega)\Gamma\big(2\pi\rmi\omega \widehat{g}(\omega)\big)+\Big(I_\bX-2\pi\rmi\omega R_{\Gamma,E}(\omega)\Gamma\Big)\Big(\int_{0}^{\infty}T_\rmu^{\Gamma,E}(s)P_\rmu^{\Gamma,E}g(s)\rmd s+ E^{-1}\Gamma P_\rms^{\Gamma,E}g(0)\Big)\nonumber\\
	&=2\pi\rmi\omega R_{\Gamma,E}(\omega)\Gamma\widehat{g'}(\omega)+R_{\Gamma,E}(\omega)\Gamma g(0)-R_{\Gamma,E}(\omega)E\Big(\int_{0}^{\infty}T_\rmu^{\Gamma,E}(s)P_\rmu^{\Gamma,E}g(s)\rmd s+ E^{-1}\Gamma P_\rms^{\Gamma,E}g(0)\Big)\nonumber\\
	&=R_{\Gamma,E}(\omega)\widehat{f'}(\omega)+R_{\Gamma,E}(\omega)\Big(\Gamma P_\rmu^{\Gamma,E}g(0)-E\int_{0}^{\infty}T_\rmu^{\Gamma,E}(s)P_\rmu^{\Gamma,E}g(s)\rmd s\Big)\quad\mbox{for any}\quad\omega\in\RR.
	\end{align}
	Since $g\in H^1(\RR_+,\bX)$ we obtain that $\int_{0}^{\infty}T_\rmu^{\Gamma,E}(s)P_\rmu^{\Gamma,E}g(s)\rmd s\in\dom(S_\rmu^{\Gamma,E})\subseteq\dom(S_{\Gamma,E})$ and
	\begin{equation}\label{4.8-8}
	S_\rmu^{\Gamma,E}\int_{0}^{\infty}T_\rmu^{\Gamma,E}(s)P_\rmu^{\Gamma,E}g(s)\rmd s=-P_\rmu^{\Gamma,E}g(0)-\int_{0}^{\infty}T_\rmu^{\Gamma,E}(s)P_\rmu^{\Gamma,E}g'(s)\rmd s.
	\end{equation}
	Since $S_\rmu^{\Gamma,E}=-(S_{\Gamma,E})_{|\bX_\rmu^{\Gamma,E}}$, from \eqref{4.8-8} we obtain that
	\begin{equation}\label{4.8-9}
	E\int_{0}^{\infty}T_\rmu^{\Gamma,E}(s)P_\rmu^{\Gamma,E}g(s)\rmd s=\Gamma P_\rmu^{\Gamma,E}g(0)+\Gamma \int_{0}^{\infty}T_\rmu^{\Gamma,E}(s)P_\rmu^{\Gamma,E}g'(s)\rmd s.
	\end{equation}
	From Using Remark~\ref{r4.2}(ii), \eqref{4.8-7} and \eqref{4.8-9} we infer that
	\begin{align}\label{4.8-10}
	2\pi\rmi\omega\widehat{(\cKm f)}(\omega)-(\cKm f)(0)&=R_{\Gamma,E}(\omega)\widehat{f'}(\omega)-R_{\Gamma,E}(\omega)\Gamma\int_{0}^{\infty}T_\rmu^{\Gamma,E}(s)P_\rmu^{\Gamma,E}g'(s)\rmd s\nonumber\\
	&=\cF\Big(\cK_{\Gamma,E}f'-\cG_{\Gamma,E}(\cdot)\int_{0}^{\infty}T_\rmu^{\Gamma,E}(s)P_\rmu^{\Gamma,E}g'(s)\rmd s\Big)
	\end{align}
	for any $\omega\in\RR$. Arguing
	%similar to
	similarly as in
	\eqref{4.8-1}, we have that $\cK_{\Gamma,E}f'=\cG_{\Gamma,E}*g'$, which implies that
	\begin{align}\label{4.8-11}
	(\cK_{\Gamma,E}f')(\tau)&=\int_{-\infty}^{\tau} T_\rms^{\Gamma,E}(\tau-s)P_\rms^{\Gamma,E}g'(s)\rmd s-\int_{\tau}^{\infty}T_\rmu^{\Gamma,E}(s-\tau)P_\rmu^{\Gamma,E}g'(s)\rmd s\nonumber\\
	&=-T_\rmu^{\Gamma,E}(-\tau)\int_{0}^{\infty}T_\rmu^{\Gamma,E}(s)P_\rmu^{\Gamma,E}g'(s)\rmd s=\cG_{\Gamma,E}(\tau)\int_{0}^{\infty}T_\rmu^{\Gamma,E}(s)P_\rmu^{\Gamma,E}g'(s)\rmd s
	\end{align}
	for any $\tau<0$. Since $\cG_{\Gamma,E}(\tau)\bx=0$ for any $\tau\geq 0$ and any $\bx\in\bX_\rmu^{\Gamma,E}$, from \eqref{4.8-10} and \eqref{4.8-11} we conclude that
	\begin{equation}\label{4.8-12}
	2\pi\rmi\omega\widehat{(\cKm f)}(\omega)-(\cKm f)(0)=\cF(\chi_{\RR_+}\cK_{\Gamma,E}f')(\omega)\quad\mbox{for any}\quad\omega\in\RR.
	\end{equation}
	Since $f'\in L^2(\RR_+,\bX)\hookrightarrow L^2(\RR,\bX)$ and $\cK_{\Gamma,E}$ is bounded on $L^2(\RR,\bX)$ from \eqref{4.8-12} we infer that $\cKm f\in H^1(\RR_+,\bX)$ and
	$(\cKm f)'=(\cK_{\Gamma,E}f')_{|\RR_+}$ for any $f\in\cH^1_{\Gamma}$.

	Next, we fix $f\in H^1(\RR_+,\bX)$ and let $\{f_n\}_{n\geq 1}$ be a sequence of functions in $\cH^1_{\Gamma}$ such that $f_n\to f$ as $n\to\infty$ in $H^1(\RR_+,\bX)$.
	From Remark~\ref{r4.2}(i) we obtain that
	\begin{equation}\label{4.8-13}
	\|T_\rms^{\Gamma,E}(\cdot)P_\rms^{\Gamma,E}E^{-1}f_n(0)-T_\rms^{\Gamma,E}(\cdot)P_\rms^{\Gamma,E}E^{-1}f(0)\|_2\leq c\|f_n(0)-f(0)\|\leq c\|f_n-f\|_{H^1}
	\end{equation}
	for any $n\geq 1$. Since the Fourier multiplier $\cK_{\Gamma,E}$ is bounded on $L^2(\RR,\bX)$, from \eqref{4.8-13} we conclude that $\cKm f_n=(\cK_{\Gamma,E}f_n)_{|\RR_+}-T_\rms^{\Gamma,E}(\cdot)P_\rms^{\Gamma,E}E^{-1}f_n(0)\to(\cK_{\Gamma,E}f)_{|\RR_+}-T_\rms^{\Gamma,E}(\cdot)P_\rms^{\Gamma,E}E^{-1}f(0)=\cKm f$ as $n\to\infty$ in $L^2(\RR_+,\bX)$ and $(\cKm f_n)'=(\cK_{\Gamma,E}f_n')_{|\RR_+}\to (\cK_{\Gamma,E}f')_{|\RR_+}$ as  $n\to\infty$ in $L^2(\RR_+,\bX)$. It follows that $\cKm f\in H^1(\RR_+,\bX)$ and $(\cKm f)'=(\cK_{\Gamma,E}f')_{|\RR_+}$.

	To finish the proof of lemma, we need to prove \eqref{cKm-H1-bound}. Indeed, since $\cK_{\Gamma,E}$ is bounded on $L^2(\RR,\bX)$  we have that
	\begin{align}\label{4.8-14}
	\|\cKm f\|_{H^1}^2&=\|(\cK_{\Gamma,E}f)_{|\RR_+}-T_\rms^{\Gamma,E}(\cdot)P_\rms^{\Gamma,E}E^{-1}f(0)\|_2^2+\|(\cK_{\Gamma,E}f')_{|\RR_+}\|_2^2\nonumber\\
	&\leq 4 \|\cK_{\Gamma,E}f\|_2^2+4\|T_\rms^{\Gamma,E}(\cdot)P_\rms^{\Gamma,E}E^{-1}f(0)\|_2^2+c\|f'\|_2^2\nonumber\\
	&\leq c\|f\|_2^2+c\|f'\|_2^2+c\|f(0)\|^2\leq c \|f\|_{H^1}^2.
	\end{align}
	for any $f\in H^1(\RR_+,\bX)$, proving the lemma.
	\end{proof}
	\begin{remark}\label{r4.10} Assume $\cW:\RR\to\cB(\bX)$ is a piecewise strongly continuous operator valued function such that $\|\cW(\tau)\|\leq ce^{-\nu|\tau|}$ for all $\tau\in\RR$. Then, for any $\alpha\in [0,\nu)$ we have that
	$\cW*f\in L^2_\alpha(\RR,\bX)$ and
	\begin{equation}\label{L2alpha-estimate}
	\|\cW*f\|_{L^2_\alpha}\leq c\|f\|_{L^2_\alpha}\quad\mbox{for any}\quad f\in L^2_\alpha(\RR,\bX).
	\end{equation}
	\end{remark}
\begin{proof}
Fix $\alpha\in [0,\nu)$. Since $\cW$ decays exponentially, one can readily check that
\begin{equation}\label{4.9-1}
\|(\cW*f)(\tau)\|^2\leq\Big(\int_\RR e^{-\frac{\nu-\alpha}{2}|\tau-s|}e^{-\frac{\nu+\alpha}{2}|\tau-s|}\,\|f(s)\|\rmd s\Big)^2\leq \frac{1}{\nu-\alpha}\int_\RR e^{-(\nu+\alpha)|\tau-s|}\,\|f(s)\|^2\rmd s
\end{equation}
for any $\tau\in\RR$. Since $\int_\RR e^{-(\nu+\alpha)|\tau-s|}e^{2\alpha|\tau|}\rmd\tau\leq\frac{1}{\nu-\alpha}e^{2\alpha|s|}$ for any $s\in\RR$, from \eqref{4.9-1} we obtain that
\begin{align}\label{4.9-2}
\int_\RR e^{2\alpha|\tau|}\|(\cW*f)(\tau)\|^2\rmd\tau&\leq\frac{1}{\nu-\alpha}\int_\RR\Big(\int_\RR e^{-(\nu+\alpha)|\tau-s|}e^{2\alpha|\tau|}\rmd\tau\Big)\|f(s)\|^2\rmd s\nonumber\\&\leq\frac{1}{(\nu-\alpha)^2}\int_\RR e^{2\alpha|s|}\|f(s)\|^2\rmd s=\frac{1}{(\nu-\alpha)^2}\|f\|_{L^2_\alpha}^2,
\end{align}
proving the remark.
\end{proof}
Next, we analyze the invariance properties of weighted spaces under $\cKm$. In particular, we are
	%CHANGED
	%interested to check if
	interested in checking whether the weighted Sobolev space $H^1_{\alpha}(\RR_+,\bX)$
	is
	invariant under $\cKm$. To prove this result we need the following lemma:
\begin{lemma}\label{l4.6}
	Assume Hypothesis (S) and let $\psi\in H^2(\RR)$ be a smooth scalar function. Then, the following identity holds:
	\begin{equation}\label{cK-weight}
	\cK_{\Gamma,E}\Big(\psi f+\psi'(\cG_{\Gamma,E}^**f)\Big)=\psi \cK_{\Gamma,E} f\quad\mbox{for any}\quad f\in L^2(\RR,\bX).
	\end{equation}
	\end{lemma}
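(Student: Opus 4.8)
The plan is to pass to the Fourier side and reduce \eqref{cK-weight} to an algebraic identity among operator-valued symbols, the essential input being the resolvent-type relation \eqref{char-eq}. First note that since $\psi\in H^2(\RR)$ one has $\widehat{\psi}\in L^1(\RR)$ and $\widehat{\psi'}(\omega)=2\pi\rmi\omega\,\widehat{\psi}(\omega)\in L^1(\RR)$ (Cauchy--Schwarz against $(1+\omega^2)^{-1}\in L^2(\RR)$ and $\omega(1+\omega^2)^{-1}\in L^2(\RR)$, using $(1+\omega^2)\widehat{\psi}\in L^2(\RR)$); hence, after Fourier transform, multiplication by $\psi$, resp.\ $\psi'$, becomes convolution with the $L^1$ symbol $\widehat{\psi}$, resp.\ $\widehat{\psi'}$. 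Moreover $\psi,\psi'\in L^\infty(\RR)$ and, by Remark~\ref{r4.2}(i) together with Remark~\ref{r4.10} (with $\alpha=0$), $\cG_{\Gamma,E}^**f\in L^2(\RR,\bX)$, so each term in \eqref{cK-weight} lies in $L^2(\RR,\bX)$ and $\cK_{\Gamma,E}$ is legitimately applied to it.

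Next I would record the two symbol identities that drive the computation. By Remark~\ref{r4.2}(ii) and Lemma~\ref{l3.3}(i), $\widehat{\cG_{\Gamma,E}}(\omega)=R(2\pi\rmi\omega,S_{\Gamma,E})=R_{\Gamma,E}(\omega)\Gamma$; taking pointwise adjoints and using that $\Gamma$ and $E$ are self-adjoint, so $R_{\Gamma,E}(\omega)^*=R_{\Gamma,E}(-\omega)$, gives
\begin{equation*}
\widehat{\cG_{\Gamma,E}^*}(\omega)=\big(R_{\Gamma,E}(-\omega)\Gamma\big)^*=\Gamma R_{\Gamma,E}(\omega),\qquad \|\Gamma R_{\Gamma,E}(\omega)\|=\|R(-2\pi\rmi\omega,S_{\Gamma,E})\|\leq\frac{c}{1+|\omega|},
\end{equation*}
the bound coming from Lemma~\ref{l3.3}(ii). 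Second, specializing \eqref{char-eq} to $\omega_1=\omega$, $\omega_2=\theta$ and rearranging yields the crucial collapse
\begin{equation*}
R_{\Gamma,E}(\omega)\Big(I+2\pi\rmi(\omega-\theta)\,\Gamma R_{\Gamma,E}(\theta)\Big)=R_{\Gamma,E}(\theta)\qquad\text{for all }\omega,\theta\in\RR.
\end{equation*}

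With these in hand the computation is immediate. Writing $\cF$ for the Fourier transform, using linearity, $\widehat{\psi'}(\omega)=2\pi\rmi\omega\widehat\psi(\omega)$, and the identity $\widehat{\cG_{\Gamma,E}^*}=\Gamma R_{\Gamma,E}$, one gets for a.e.\ $\omega\in\RR$
\begin{align*}
\cF\Big[\cK_{\Gamma,E}\big(\psi f+\psi'(\cG_{\Gamma,E}^**f)\big)\Big](\omega)
&=R_{\Gamma,E}(\omega)\int_\RR\widehat\psi(\omega-\theta)\Big(I+2\pi\rmi(\omega-\theta)\,\Gamma R_{\Gamma,E}(\theta)\Big)\widehat f(\theta)\,\rmd\theta\\
&=\int_\RR\widehat\psi(\omega-\theta)\,R_{\Gamma,E}(\omega)\Big(I+2\pi\rmi(\omega-\theta)\,\Gamma R_{\Gamma,E}(\theta)\Big)\widehat f(\theta)\,\rmd\theta\\
&=\int_\RR\widehat\psi(\omega-\theta)\,R_{\Gamma,E}(\theta)\widehat f(\theta)\,\rmd\theta=\big(\widehat\psi*\widehat{\cK_{\Gamma,E}f}\big)(\omega)=\cF\big[\psi\,\cK_{\Gamma,E}f\big](\omega),
\end{align*}
where the second line pulls the bounded operator $R_{\Gamma,E}(\omega)$ through the Bochner integral and the third applies the collapse identity. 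Since $\cF$ is injective on $L^2(\RR,\bX)$, this proves \eqref{cK-weight}.

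The one point requiring genuine care is the justification of the convolution manipulations and of moving $R_{\Gamma,E}(\omega)$ inside the integral, i.e.\ that $\theta\mapsto\widehat\psi(\omega-\theta)\big(I+2\pi\rmi(\omega-\theta)\Gamma R_{\Gamma,E}(\theta)\big)\widehat f(\theta)$ is Bochner integrable; this follows from $\widehat\psi,\,\omega\widehat\psi\in L^1(\RR)$, the bound $\|\Gamma R_{\Gamma,E}(\theta)\|\leq c/(1+|\theta|)$, and $\widehat f\in L^2(\RR,\bX)$. Should one prefer to sidestep these estimates, I would instead establish \eqref{cK-weight} first for $f$ in the dense set of $\bX$-valued functions with compactly supported (or Schwartz) Fourier transform, where every step is elementary, and then pass to the limit using that $\cK_{\Gamma,E}$, $f\mapsto\psi f$, $f\mapsto\cG_{\Gamma,E}^**f$, and $f\mapsto\psi'(\cG_{\Gamma,E}^**f)$ are all bounded operators on $L^2(\RR,\bX)$.
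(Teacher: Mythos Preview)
Your proof is correct and follows essentially the same route as the paper: pass to the Fourier side, identify $\widehat{\cG_{\Gamma,E}^*}=\Gamma R_{\Gamma,E}$, and use the resolvent identity \eqref{char-eq} to collapse the convolution integrals. The only cosmetic difference is that you start from the left-hand side of \eqref{cK-weight} and arrive at the right, whereas the paper computes $\widehat{\psi\,\cK_{\Gamma,E}f}$ first and unfolds it into $R_{\Gamma,E}(\omega)\big(\widehat{\psi f}+\widehat{\psi'(\cG_{\Gamma,E}^**f)}\big)$; the algebra is identical.
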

	\begin{proof}
	To start the proof of lemma, we first justify that the left hand side of equation \eqref{cK-weight} is well defined. Indeed, since $\psi\in H^2(\RR)$ we have that
	$\psi,\psi'\in L^\infty(\RR)$. Moreover, from Remark~\ref{r4.2}(i) it follows that $\cG_{\Gamma,E}$ and thus $\cG_{\Gamma,E}^*$ are exponentially decaying, operator valued functions, which implies that
	$\cG_{\Gamma,E}^**f\in L^2(\RR,\bX)$ for any $f\in L^2(\RR,\bX)$. We conclude that $\psi f+\psi'(\cG_{\Gamma,E}^**f)\in L^2(\RR,\bX)$ for any $f\in L^2(\RR,\bX)$.

	From Remark~\ref{r4.2}(ii) we have that $\cF\cG_{\Gamma,E}(\cdot)\bu=R(2\pi\rmi\cdot,S_{\Gamma,E})\bu$ for any $\bu\in L^2(\RR,\bX)$. From Lemma~\ref{l3.3} we obtain that
	\begin{equation}\label{Fourier-cV}
	\cF\cG_{\Gamma,E}^*(\cdot)\bu=\big(\cF\cG_{\Gamma,E}(-\cdot)\big)^*\bu=R(-2\pi\rmi\cdot,S_{\Gamma,E})^*\bu=\big(R_{\Gamma,E}(-\cdot)\Gamma\big)^*\bu=\Gamma R_{\Gamma,E}(\cdot)\bu
	\end{equation}
	for any $\bu\in L^2(\RR,\bX)$. Since $\psi\in H^2(\RR)$ we have that $\widehat{\psi},\widehat{\psi'}\in L^1(\RR)$. Taking Fourier transform, from \eqref{char-eq} and \eqref{Fourier-cV} we obtain that
	\begin{align}\label{4.6-1}
	\widehat{\psi \cK_{\Gamma,E} f}(\omega)&=(\widehat{\psi}*\widehat{\cK_{\Gamma,E}f})(\omega)=\int_\RR \widehat{\psi}(\omega-\theta)\widehat{\cK_{\Gamma,E}f}(\theta)\rmd\theta=\int_\RR \widehat{\psi}(\omega-\theta)R_{\Gamma,E}(\theta)\widehat{f}(\theta)\rmd\theta\nonumber\\
	&=\int_\RR \widehat{\psi}(\omega-\theta)\Big(R_{\Gamma,E}(\omega)+2\pi\rmi(\omega-\theta)R_{\Gamma,E}(\omega)\Gamma R_{\Gamma,E}(\theta)\Big)\widehat{f}(\theta)\rmd\theta\nonumber\\
	&=R_{\Gamma,E}(\omega)\int_\RR \widehat{\psi}(\omega-\theta)\widehat{f}(\theta)\rmd\theta+R_{\Gamma,E}(\omega)\int_\RR 2\pi\rmi(\omega-\theta)\widehat{\psi}(\omega-\theta)\Gamma R_{\Gamma,E}(\theta)\widehat{f}(\theta)\rmd\theta\nonumber\\
	&=R_{\Gamma,E}(\omega)(\widehat{\psi}*\widehat{f})(\omega)+R_{\Gamma,E}(\omega)\int_\RR \widehat{\psi'}(\omega-\theta)\Gamma R_{\Gamma,E}(\theta)\widehat{f}(\theta)\rmd\theta\nonumber\\
	&=R_{\Gamma,E}(\omega)\widehat{\psi f}(\omega)+R_{\Gamma,E}(\omega)\int_\RR \widehat{\psi'}(\omega-\theta)\widehat{\cG_{\Gamma,E}}^*(\theta)\widehat{f}(\theta)\rmd\theta\nonumber\\
	&=R_{\Gamma,E}(\omega)\widehat{\psi f}(\omega)+R_{\Gamma,E}(\omega)\int_\RR \widehat{\psi'}(\omega-\theta)\widehat{\cG_{\Gamma,E}^**f}(\theta)\rmd\theta\nonumber\\
	&=R_{\Gamma,E}(\omega)\Big(\widehat{\psi f}(\omega)+\widehat{\psi'(\cG_{\Gamma,E}^**f)}(\omega)\Big)\quad\mbox{for any}\quad\omega\in\RR
	\end{align}
	Assertion \eqref{cK-weight} follows
	%CHANGED
	%shortly
	readily
	from the definition of $\cK_{\Gamma,E}$.
	\end{proof}
	\begin{lemma}\label{l4.11}
	Assume Hypothesis (S). Then, $\cKm f\in H^1_\alpha(\RR_+,\bX)$ for any $f\in H^1_\alpha(\RR_+,\bX)$ for any $\alpha\in (0,\nu)$. Moreover, there exists $c(\Gamma,E,\alpha)>0$ such that
	\begin{equation}\label{cKm-H1-alpha-bound}
	\|\cKm f\|_{H^1_\alpha(\RR_+,\bX)}\leq c(\Gamma,E,\alpha)\|f\|_{H^1_\alpha(\RR_+,\bX)}.
	\end{equation}
	\end{lemma}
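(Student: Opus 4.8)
The plan is to reduce the statement to one weighted mapping property of the Fourier multiplier $\cK_{\Gamma,E}$ on $\RR_+$-supported functions and then reconstruct $\cKm$ from it using Lemma~\ref{l4.8}. Since on $\RR_+$ the weight is $e^{2\alpha\tau}\ge 1$, we have $H^1_\alpha(\RR_+,\bX)\subseteq H^1(\RR_+,\bX)$ with $\|\cdot\|_{H^1(\RR_+,\bX)}\le\|\cdot\|_{H^1_\alpha(\RR_+,\bX)}$; in particular, by Lemma~\ref{l4.8}, for $f\in H^1_\alpha(\RR_+,\bX)$ the function $\cKm f$ already belongs to $H^1(\RR_+,\bX)$ with $(\cKm f)'=(\cK_{\Gamma,E}f')_{|\RR_+}$, and $f(0)$ is well defined by the Sobolev embedding $H^1(\RR_+,\bX)\hookrightarrow C_{\mathrm b}(\overline{\RR_+},\bX)$. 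So it suffices to prove the weighted $L^2$-bounds for $\cKm f$ and for $(\cKm f)'$.

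\textbf{Step 1: a weighted estimate for $\cK_{\Gamma,E}$.} I will show that if $h\in L^2(\RR,\bX)$ is supported in $\RR_+$ with $h\in L^2_\alpha(\RR_+,\bX)$, then $(\cK_{\Gamma,E}h)_{|\RR_+}\in L^2_\alpha(\RR_+,\bX)$ and $\|(\cK_{\Gamma,E}h)_{|\RR_+}\|_{L^2_\alpha(\RR_+,\bX)}\le c(\Gamma,E,\alpha)\|h\|_{L^2_\alpha(\RR_+,\bX)}$. The device is Lemma~\ref{l4.6}: for $R>0$ set $\psi_R(\tau)=e^{\alpha\tau}\zeta_R(\tau)$, with $\zeta_R\in C_{\mathrm c}^\infty(\RR)$, $0\le\zeta_R\le1$, $\zeta_R\equiv1$ on $[-\tfrac12,R]$, $\supp\zeta_R\subseteq[-1,R+1]$, and $|\zeta_R'|,|\zeta_R''|\le c$ with $c$ independent of $R$ (possible since both transition regions have fixed width). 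Then $\psi_R\in H^2(\RR)$, $\psi_R\equiv e^{\alpha\tau}$ on $[0,R]$, and $|\psi_R(\tau)|\le e^{\alpha\tau}$, $|\psi_R'(\tau)|\le c\,e^{\alpha\tau}$ for $\tau\ge0$, while $|\psi_R(\tau)|,|\psi_R'(\tau)|\le c$ for $\tau\le0$, all uniformly in $R$. Lemma~\ref{l4.6} gives $\psi_R\,\cK_{\Gamma,E}h=\cK_{\Gamma,E}\big(\psi_R h+\psi_R'(\cG_{\Gamma,E}^**h)\big)$. Since $h$ is supported in $\RR_+$, $\|\psi_R h\|_{L^2(\RR,\bX)}\le\|h\|_{L^2_\alpha(\RR_+,\bX)}$; and by Remark~\ref{r4.10}, applied with $\cW=\cG_{\Gamma,E}^*$ (piecewise strongly continuous, $\|\cG_{\Gamma,E}^*(\tau)\|=\|\cG_{\Gamma,E}(\tau)\|\le c\,e^{-\nu|\tau|}$, $\alpha<\nu$), one gets $\cG_{\Gamma,E}^**h\in L^2_\alpha(\RR,\bX)$ with $\|\cG_{\Gamma,E}^**h\|_{L^2_\alpha(\RR,\bX)}+\|\cG_{\Gamma,E}^**h\|_{L^2(\RR,\bX)}\le c\|h\|_{L^2_\alpha(\RR_+,\bX)}$; splitting $\int_\RR|\psi_R'|^2\|\cG_{\Gamma,E}^**h\|^2$ over $\RR_\pm$ and inserting the bounds on $\psi_R'$ yields $\|\psi_R'(\cG_{\Gamma,E}^**h)\|_{L^2(\RR,\bX)}\le c\|h\|_{L^2_\alpha(\RR_+,\bX)}$, uniformly in $R$. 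Since $\cK_{\Gamma,E}$ is bounded on $L^2(\RR,\bX)$ by Hypothesis~(S)(iv), $\|e^{\alpha\cdot}\cK_{\Gamma,E}h\|_{L^2([0,R],\bX)}\le\|\psi_R\,\cK_{\Gamma,E}h\|_{L^2(\RR,\bX)}\le c(\Gamma,E,\alpha)\|h\|_{L^2_\alpha(\RR_+,\bX)}$, and letting $R\to\infty$ by monotone convergence completes Step~1.

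\textbf{Step 2: assembling $\cKm$.} Let $f\in H^1_\alpha(\RR_+,\bX)$; then $f$ and its derivative $f'$ (extended by $0$) are supported in $\RR_+$ and lie in $L^2_\alpha(\RR_+,\bX)$. Applying Step~1 to $h=f'$ and using $(\cKm f)'=(\cK_{\Gamma,E}f')_{|\RR_+}$ from Lemma~\ref{l4.8} bounds the derivative. For $\cKm f$ itself, write $\cKm f=(\cK_{\Gamma,E}f)_{|\RR_+}-T_\rms^{\Gamma,E}(\cdot)P_\rms^{\Gamma,E}E^{-1}f(0)$; the first summand is handled by Step~1 with $h=f$, and for the second, Remark~\ref{r4.2}(i) gives $\|T_\rms^{\Gamma,E}(\tau)P_\rms^{\Gamma,E}\|=\|\cG_{\Gamma,E}(\tau)\|\le c\,e^{-\nu\tau}$ for $\tau\ge0$, so $\|e^{\alpha\tau}T_\rms^{\Gamma,E}(\tau)P_\rms^{\Gamma,E}E^{-1}f(0)\|\le c\,e^{-(\nu-\alpha)\tau}\|f(0)\|\in L^2(\RR_+)$ with norm $\le c\|f(0)\|\le c\|f\|_{H^1(\RR_+,\bX)}\le c\|f\|_{H^1_\alpha(\RR_+,\bX)}$. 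Collecting these gives $\cKm f\in H^1_\alpha(\RR_+,\bX)$ and \eqref{cKm-H1-alpha-bound}.

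\textbf{Expected main obstacle.} Essentially all the analytic content is already packaged in Lemmas~\ref{l4.6} and~\ref{l4.8} and Remark~\ref{r4.10}; the one point requiring genuine care is Step~1, namely choosing the cutoff family $\{\psi_R\}$ so that \emph{every} bound on $\psi_R,\psi_R'$ is uniform in $R$. The delicate contribution is that of the transition region $[R,R+1]$, where a priori $|\psi_R'|$ can be as large as $e^{\alpha R}$; this is absorbed using $e^{\alpha R}\le e^{\alpha\tau}$ there together with $\cG_{\Gamma,E}^**h\in L^2_\alpha(\RR,\bX)$. It is worth noting that the one-sided weight is unavoidable: $\cK_{\Gamma,E}h$ is merely $L^2$ on $\RR_-$ and need not decay, so no two-sided weighted estimate can hold, precisely the phenomenon flagged around \eqref{opnorm}.
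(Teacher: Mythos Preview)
Your proof is correct and follows essentially the same route as the paper: both arguments use Lemma~\ref{l4.6} with a compactly supported approximation of the exponential weight, invoke Remark~\ref{r4.10} to control the $\cG_{\Gamma,E}^**h$ term in $L^2_\alpha$, and pass to the limit via monotone convergence, then handle the derivative via the identity $(\cKm f)'=(\cK_{\Gamma,E}f')_{|\RR_+}$ from Lemma~\ref{l4.8}. The only cosmetic difference is that the paper takes $\psi_n(\tau)=e^{\alpha\langle\tau\rangle}\phi_n(\tau)$ (a symmetric weight times cutoff) whereas you take $\psi_R(\tau)=e^{\alpha\tau}\zeta_R(\tau)$ and compensate by tracking the $\RR_-$ contribution separately; your packaging of the core estimate as a single ``Step~1'' applied to $h=f$ and $h=f'$ is arguably cleaner.
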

	\begin{proof}
	Let $\{\phi_n\}_{n\geq 1}$ be a sequence of scalar functions in $C^\infty(\RR)$ such that $0\leq\phi_n\leq 1$, $\phi_n(\tau)=1$ whenever $|\tau|\leq n$, $\phi_n(\tau)=0$ whenever
	$|\tau|\geq n+1$ and $\sup_{n\geq 1}\|\phi'_n\|_\infty<\infty$. We define the sequence of scalar functions $\{\psi_n\}$ by the formula $\psi_n(\tau)=e^{\alpha\langle\tau\rangle}\phi_n(\tau)$, where $\langle\tau\rangle=\sqrt{1+|\tau|^2}$. Since $\alpha>0$ one can readily check that $\psi_n\in H^2(\RR)$ for any $n\geq 1$.
	Moreover, there exists a constant $c_0>0$, independent of $\Gamma$, $E$ and $\alpha$ such that
	\begin{equation}\label{4.11-1}
	|\psi_n(\tau)|\leq e^{\alpha\tau},\quad|\psi_n'(\tau)|\leq c_0e^{\alpha\tau},\quad\mbox{for any}\quad \tau\in\RR.
	\end{equation}
Let $f\in H^1_\alpha(\RR_+,\bX)\hookrightarrow H^1(\RR_+,\bX)$.
	First, we prove that $\cKm f\in L^2_\alpha(\RR_+,\bX)$. We note that $(\psi_n)_{|\RR_+}\cKm f=(\psi_n\cK_{\Gamma,E}f)_{|\RR_+}-(\psi_n)_{|\RR_+}T_\rms^{\Gamma,E}(\cdot)P_\rms^{\Gamma,E}E^{-1}f(0)$ for any $n\geq 1$. Using Lemma~\ref{l4.6}, Remark~\ref{r4.10} and \eqref{4.11-1} we estimate
	\begin{align}\label{4.11-2}
	\|(\psi_n\cK_{\Gamma,E}f)_{|\RR_+}\|_2&\leq\|\psi_n\cK_{\Gamma,E}f\|_2=\|\cK_{\Gamma,E}(\psi_n f+\psi_n'(\cG_{\Gamma,E}^**f))\|_2\leq c\|\psi_n f+\psi_n'(\cG_{\Gamma,E}^**f)\|_2\nonumber\\
	&\leq c(\Gamma,E)\|f\|_{L^2_\alpha}+c(\Gamma,E)\|\cG_{\Gamma,E}^**f\|_{L^2_\alpha}\leq c(\Gamma,E,\alpha)\|f\|_{L^2_\alpha}
	\end{align}
	for any $n\geq 1$. Moreover, since $\alpha\in (0,\nu)$ one can readily check that
	\begin{equation}\label{4-11-3}
	\|(\psi_n)_{|\RR_+}T_\rms^{\Gamma,E}(\cdot)P_\rms^{\Gamma,E}E^{-1}f(0)\|_2\leq c(\Gamma,E,\alpha)\|f(0)\|\leq c(\Gamma,E,\alpha)\|f\|_{H^1_\alpha}
	\end{equation}
	for any $n\geq 1$. From \eqref{4.11-2} and \eqref{4-11-3} we conclude that $(\psi_n)_{|\RR_+}\cKm f\in L^2(\RR_+,\bX)$ and $\|(\psi_n)_{|\RR_+}\cKm f\|_2\leq c(\Gamma,E,\alpha)\|f\|_{H^1_\alpha}$ for any $n\geq 1$. Passing to the limit as $n\to\infty$ we conclude that $\|\cKm f\|_{L^2_\alpha}\leq c(\Gamma,E,\alpha)\|f\|_{H^1_\alpha}$.

	From Lemma~\ref{l4.8} we have that $\cKm f\in H^1(\RR_+,\bX)$, therefore $\cKm f$ is absolutely continuous on $\RR_+$ and $(\cKm f)'=(\cK_{\Gamma,E}f')_{|\RR_+}$.
	Using again Lemma~\ref{l4.6}, Remark~\ref{r4.10} and \eqref{4.11-1} we have that
	\begin{align}\label{4.11-4}
	\|(\psi_n\cK_{\Gamma,E}f')_{|\RR_+}\|_2&\leq\|\psi_n\cK_{\Gamma,E}f'\|_2=\|\cK_{\Gamma,E}(\psi_n f'+\psi_n'(\cG_{\Gamma,E}^**f'))\|_2\leq c\|\psi_n f'+\psi_n'(\cG_{\Gamma,E}^**f')\|_2\nonumber\\
	&\leq c(\Gamma,E)\|f'\|_{L^2_\alpha}+c(\Gamma,E)\|\cG_{\Gamma,E}^**f'\|_{L^2_\alpha}\leq c(\Gamma,E,\alpha)\|f'\|_{L^2_\alpha}\leq c(\Gamma,E,\alpha)\|f\|_{H^1_\alpha}
	\end{align}
	for any $n\geq 1$. It follows that $\|(\psi_n)_{|\RR_+}(\cKm f)'\|_2\leq c(\Gamma,E,\alpha)\|f\|_{H^1_\alpha}$ for any $n\geq 1$. Passing to the limit as $n\to\infty$ we infer that $(\cKm f)'\in L^2_\alpha(\RR_+,\bX)$ and $\|(\cKm f)'\|_{L^2_\alpha}\leq c(\Gamma,E,\alpha)\|f\|_{H^1_\alpha}$, proving the lemma.
	\end{proof}

	\section{Center-Stable manifolds of general relaxation systems}\label{s5}

	In this section we prove the existence of a local center-stable manifold of equation \eqref{Relax-Sys}, by reducing the equation to \eqref{Relax-Reduced}, as explained in Section~\ref{s4}.
	To prove this result, we prove it for the general case of equation \eqref{h-plus}. First, we recall that
	 from Lemma~\ref{l4.4} we have that mild solutions of equation \eqref{h-plus} satisfy the
	equation
	\begin{equation}\label{fix-point-eq}
	\bu(\tau)=T_\rms^{\Gamma,E}(\tau)P_\rms^{\Gamma,E}\bu(0)+\big(\cK_{\Gamma,E}D(\bu(\cdot),\bu(\cdot))\big)(\tau),\;\tau\geq0;
	\end{equation}
	Using the parametrization $\bu(0)=\bv_0-E^{-1}D(\bu(0),\bu(0))$, as in equation \eqref{mod-var-const}, to prove our result it is enough to prove the existence of a center stable manifold
	of equation
	\begin{equation}\label{mod-fix-point-eq}
	\bu=T_\rms^{\Gamma,E}(\cdot)P_\rms^{\Gamma,E}\bv_0+\cKm D(\bu(\cdot),\bu(\cdot)).
	\end{equation}
	In this section we prove that for $\bv_0$ in a certain subspace of $\bX^{\Gamma,E}_{\rms}$ with $\|\bv_0\|$ small enough there exists a unique solution of equation of \eqref{mod-fix-point-eq}, in the space $H^1_\alpha(\RR_+,\bX)$, with $\alpha\in [0,\nu)$. Throughout this section we assume Hypothesis (S) and we assume that the linear operator $E$ is negative-definite. Next, we study the trajectories of the semigroup $\{T^{\Gamma,E}_\rms(\tau)\}_{\tau\geq0}$ that belong to the space $H^1_\alpha(\RR_+,\bX)$. First, we introduce the subspace
	\begin{equation}\label{def-X1/2}
	\bX_{\frac{1}{2}}^{\Gamma,E}=\Big\{\bx\in\bX:\int_\Lambda |H_{\Gamma,E}(\lambda)|\,|(U_{\Gamma,E}\bx)(\lambda)|^2\rmd\mu(\lambda)<\infty\Big\}.
	\end{equation}
	The space $\bX_{\frac{1}{2}}^{\Gamma,E}$ is a Banach space when endowed with the norm
	\begin{equation}\label{def-norm-X1/2}
	\|\bx\|_{\bX_{\frac{1}{2}}^{\Gamma,E}}=\big\||M_{H_{\Gamma,E}}|^{1/2} U_{\Gamma,E}\bx\big\|_{L^2(\Lambda,\mu)},
	\end{equation}
where $U_{\Gamma,E}$ is defined in \eqref{generator-representation}. From \eqref{generator-representation} it follows that $\bX_{\frac{1}{2}}^{\Gamma,E}=\dom(|S_{\Gamma,E}|^{\frac{1}{2}})$ and that the norm $\|\cdot\|_{\bX_{\frac{1}{2}}^{\Gamma,E}}$ is equivalent to the graph norm on $\dom(|S_{\Gamma,E}|^{\frac{1}{2}})$.
	\begin{lemma}\label{l5.1}
	Assume Hypothesis (S) and assume that the linear operator $E$ is negative-definite. If $\nu$ is a positive constant such that the growth rates of the $C^0$-semigroups $\{T^{\Gamma,E}_{\rms/\rmu}(\tau)\}_{\tau\geq 0}$ are less than or equal to $-\nu$, where $\nu=\nu(\Gamma,E)>0$,
	then $T^{\Gamma,E}_\rms(\cdot)\bv_0\in H^1_\alpha(\RR_+,\bX)$ and there exists $c=c(\Gamma,E)>0$ such that
	\begin{equation}\label{trajectory-Z}
	\|T^{\Gamma,E}_\rms(\cdot)\bv_0\|_{H^1_\alpha(\RR_+,\bX)}\leq c(\Gamma,E,\alpha)\|\bv_0\|_{\bX_{\frac{1}{2}}^{\Gamma,E}}
	\end{equation}
	for any $\bv_0\in\bX^{\Gamma,E}_\rms\cap\bX_{\frac{1}{2}}^{\Gamma,E}$ and $\alpha\in [0,\nu)$.
	\end{lemma}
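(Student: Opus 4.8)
The plan is to push the entire estimate through the spectral model furnished by Lemma~\ref{l4.5} and Lemma~\ref{r4.6}, which turns the stable semigroup into pointwise multiplication by $e^{\tau H_{\Gamma,E}(\cdot)}$ and reduces everything to a one-line scalar computation. First I would set $\tf_0:=U_{\Gamma,E}\bv_0$, where $U_{\Gamma,E}\in\cB(\bX,L^2(\Lambda,\mu))$ from \eqref{generator-representation} is bounded with bounded inverse. Since $\bv_0\in\bX^{\Gamma,E}_\rms=U_{\Gamma,E}^{-1}L^2(\Lambda_-,\mu)$ by \eqref{representation-stable-unstable}, the function $\tf_0$ is supported on $\Lambda_-$, where $H_{\Gamma,E}\le-\nu$ by \eqref{representation-Ypm}; and since $\bv_0\in\bX_{\frac{1}{2}}^{\Gamma,E}$, the identities \eqref{def-X1/2}--\eqref{def-norm-X1/2} give $\int_{\Lambda_-}|H_{\Gamma,E}(\lambda)|\,|\tf_0(\lambda)|^2\,\rmd\mu(\lambda)=\|\bv_0\|_{\bX_{\frac{1}{2}}^{\Gamma,E}}^2<\infty$. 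By \eqref{representation-bi-semigroup}--\eqref{def-tilde-T} we have $T^{\Gamma,E}_\rms(\tau)\bv_0=U_{\Gamma,E}^{-1}\big(\tT^{\Gamma,E}_\rms(\tau)\tf_0\big)$ with $\big(\tT^{\Gamma,E}_\rms(\tau)\tf_0\big)(\lambda)=e^{\tau H_{\Gamma,E}(\lambda)}\tf_0(\lambda)$, and since $U_{\Gamma,E}^{\pm1}$ commute with $\tau$-differentiation it suffices to bound $\tT^{\Gamma,E}_\rms(\cdot)\tf_0$ in $H^1_\alpha(\RR_+,L^2(\Lambda_-,\mu))$ by $\|\bv_0\|_{\bX_{\frac{1}{2}}^{\Gamma,E}}$.

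The core step is an explicit computation followed by Tonelli's theorem. For fixed $\lambda\in\Lambda_-$, writing $h:=H_{\Gamma,E}(\lambda)\le-\nu$, the scalar function $\tau\mapsto e^{\tau h}\tf_0(\lambda)$ is smooth with $\tau$-derivative $h\,e^{\tau h}\tf_0(\lambda)$, and
\[
\int_0^\infty e^{2\alpha\tau}\,|e^{\tau h}\tf_0(\lambda)|^2\,\rmd\tau=\frac{|\tf_0(\lambda)|^2}{2(|h|-\alpha)},\qquad
\int_0^\infty e^{2\alpha\tau}\,|h\,e^{\tau h}\tf_0(\lambda)|^2\,\rmd\tau=\frac{|h|^2\,|\tf_0(\lambda)|^2}{2(|h|-\alpha)}.
\]
Since $|h|\ge\nu>\alpha$, I would bound $|h|-\alpha\ge\nu-\alpha$ and $|h|/(|h|-\alpha)\le\nu/(\nu-\alpha)$, so that the first right-hand side is $\le\frac{1}{2\nu(\nu-\alpha)}|h|\,|\tf_0(\lambda)|^2$ and the second is $\le\frac{\nu}{2(\nu-\alpha)}|h|\,|\tf_0(\lambda)|^2$. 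Integrating over $\Lambda_-$ against $\mu$ and using the identity for $\|\bv_0\|_{\bX_{\frac{1}{2}}^{\Gamma,E}}^2$ recorded above yields both that $\tT^{\Gamma,E}_\rms(\cdot)\tf_0$ and its formal derivative $M_{H_{\Gamma,E}}\tT^{\Gamma,E}_\rms(\cdot)\tf_0$ lie in $L^2_\alpha(\RR_+,L^2(\Lambda_-,\mu))$, with norms controlled by $c(\Gamma,E,\alpha)\|\bv_0\|_{\bX_{\frac{1}{2}}^{\Gamma,E}}$ for an explicit constant; transporting back by the bounded operator $U_{\Gamma,E}^{-1}$ then gives \eqref{trajectory-Z}.

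The one point that requires genuine care — and which I expect to be the only obstacle — is to justify that $\tau\mapsto\tT^{\Gamma,E}_\rms(\tau)\tf_0$ is an $L^2(\Lambda_-,\mu)$-valued $H^1_\alpha$ function, i.e. that it is absolutely continuous on $\RR_+$ with weak derivative exactly $M_{H_{\Gamma,E}}\tT^{\Gamma,E}_\rms(\tau)\tf_0$, even though $\tf_0$ need not belong to $\dom(S_{\Gamma,E})$ (only to $\dom(|S_{\Gamma,E}|^{1/2})=\bX_{\frac{1}{2}}^{\Gamma,E}$). I would dispose of this by a density argument: approximate $\tf_0$ by $\tf_0^{(n)}:=\chi_{\{|H_{\Gamma,E}|\le n\}}\tf_0$, which lies in $\dom(M_{H_{\Gamma,E}})$ so that $\tT^{\Gamma,E}_\rms(\cdot)\tf_0^{(n)}$ is classically $C^1$ with derivative $M_{H_{\Gamma,E}}\tT^{\Gamma,E}_\rms(\cdot)\tf_0^{(n)}$; dominated convergence gives $\tf_0^{(n)}\to\tf_0$ in the graph norm of $|S_{\Gamma,E}|^{1/2}$, and the uniform estimate of the previous paragraph (applied to $\tf_0^{(n)}-\tf_0^{(m)}$) shows $\{\tT^{\Gamma,E}_\rms(\cdot)\tf_0^{(n)}\}$ is Cauchy in $H^1_\alpha(\RR_+,L^2(\Lambda_-,\mu))$, whose limit is necessarily $\tT^{\Gamma,E}_\rms(\cdot)\tf_0$ with the claimed derivative. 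Everything else is the elementary scalar estimate above.
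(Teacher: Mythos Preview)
Your proposal is correct and follows essentially the same route as the paper: both push the estimate through the spectral model of Lemma~\ref{l4.5} and Lemma~\ref{r4.6}, reduce to the scalar computation $\int_0^\infty e^{2\tau(\alpha+H_{\Gamma,E}(\lambda))}\,\rmd\tau$ on $\Lambda_-$, and integrate in $\lambda$ via Tonelli. Your density argument for justifying the weak derivative is a point the paper leaves implicit, so if anything you are slightly more careful there.
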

	\begin{proof}
	Fix $\bv_0\in\bX^{\Gamma,E}_\rms\cap\bX_{\frac{1}{2}}^{\Gamma,E}$, $\alpha\in [0,\nu)$ and let $\tg_0=U_{\Gamma,E}\bv_0\in L^2(\Lambda,\mu)$. From \eqref{representation-stable-unstable} we have that $\tg_0\in L^2(\Lambda_-,\mu)$, that is
	$\tg_0(\lambda)=0$ for any $\lambda\in\Lambda_+$. Since $\bv_0\in\bX_{\frac{1}{2}}^{\Gamma,E}$ we infer that
	\begin{equation}\label{5.1-1}
	\int_{\Lambda}|H_{\Gamma,E}(\lambda)| |\tg_0(\lambda)|^2\rmd\mu(\lambda)=\big\||M_{H_{\Gamma,E}}|^{1/2}\tg_0\big\|_{L^2(\Lambda,\mu)}=\|\bv_0\|_{\bX_{\frac{1}{2}}^{\Gamma,E}}<\infty.
	\end{equation}
	From \eqref{representation-bi-semigroup} it follows that
	$T^{\Gamma,E}_\rms(\tau)\bv_0=U_{\Gamma,E}^{-1}\tT^{\Gamma,E}_\rms(\tau)\tg_0\quad\mbox{for any}\quad\tau\geq0$.
	We introduce the function $\tth_0:\RR_+\to L^2(\Lambda,\mu)$ defined by $\big[\tth_0(\tau)\big](\lambda):=e^{\alpha\tau}\big(\tT^{\Gamma,E}_\rms(\tau)\tg_0\big)(\lambda)$. From \eqref{def-tilde-T} we have that $\big[\tth_0(\tau)\big](\lambda)=e^{\tau(\alpha+H_{\Gamma,E}(\lambda))}\chi_{\Lambda_-}(\lambda)\tg_0(\lambda)$ for any $\tau\geq0$ and $\lambda\in\Lambda$. Since $H_{\Gamma,E}(\lambda)\leq -\nu$ for any $\lambda\in\Lambda_-$, from \eqref{5.1-1} we obtain that
	$\tth_0\in H^1(\RR_+,L^2(\Lambda,\mu))$ and
	\begin{align}\label{5.1-2}
	\|\tth_0\|_{H^1}^2&\leq \int_{0}^{\infty}e^{2\tau\alpha}\|\tT^{\Gamma,E}_\rms(\tau)\tg_0\|_{L^2(\Lambda,\mu)}^2\rmd\tau+\int_{0}^{\infty}\int_{\Lambda_-} |(\alpha+H_{\Gamma,E}(\lambda))e^{\tau(\alpha+H_{\Gamma,E}(\lambda))}\tg_0(\lambda)|^2\rmd\mu(\lambda)\rmd\tau\nonumber\\
	&\leq \int_{0}^{\infty}e^{2\tau(\alpha-\nu)}\rmd\tau\,\|\tg_0\|_{L^2(\Lambda,\mu)}^2+\int_{\Lambda_-} |\alpha+H_{\Gamma,E}(\lambda)|^2|\tg_0(\lambda)|^2\Big(\int_{0}^{\infty} e^{2\tau(\alpha+H_{\Gamma,E}(\lambda))}\rmd\tau\Big)\rmd\mu(\lambda)\nonumber\\
	&\leq\frac{1}{2(\nu-\alpha)}\|U_{\Gamma,E}\|^2\|\bv_0\|^2+\frac{1}{2}\int_{\Lambda_-} |\alpha+H_{\Gamma,E}(\lambda)||\tg_0(\lambda)|^2\rmd\mu(\lambda)\nonumber\\
	&\leq\frac{1}{2(\nu-\alpha)}\|U_{\Gamma,E}\|^2\|\bv_0\|^2+\frac{1}{2}|\alpha|\|\tg_0\|_{L^2(\Lambda,\mu)}^2+\frac{1}{2}\int_{\Lambda}|H_{\Gamma,E}(\lambda)| |\tg_0(\lambda)|^2\rmd\mu(\lambda)\nonumber\\
	&\leq\Big[|\alpha|+\frac{1}{2(\nu-\alpha)}\Big]\|U_{\Gamma,E}\|^2\|\bv_0\|^2+\frac{1}{2}\|\bv_0\|_{\bX_{\frac{1}{2}}^{\Gamma,E}}^2\leq c(\Gamma,E,\alpha)\|\bv_0\|_{\bX_{\frac{1}{2}}^{\Gamma,E}}^2.
	\end{align}
	We conclude that $T^{\Gamma,E}_\rms(\cdot)\bv_0\in H^1_\alpha(\RR_+,\bX)$ and \eqref{trajectory-Z} holds true, proving the lemma.
	\end{proof}
	We introduce the function $\Psi_{\Gamma,E}:\bX^{\Gamma,E}_\rms\cap\bX_{\frac{1}{2}}^{\Gamma,E}\times H^1_\alpha(\RR_+,\bX)\to H^1_\alpha(\RR_+,\bX)$ defined by
	\begin{equation}\label{def-Phi-GammaE}
	\Psi_{\Gamma,E}(\bv_0,f)=T^{\Gamma,E}_\rms P^{\Gamma,E}_\rms\bv_0+\cKm D(f(\cdot),f(\cdot)).
	\end{equation}
	To prove that equation \eqref{fix-point-eq} has a unique solution on $H^1_\alpha(\RR_+,\bX)$, we show that the function $\Psi_{\Gamma,E}$ satisfies the
	conditions of the Contraction Mapping Theorem. In what follows we will use the following notation to denote the closed balls of $H^1_\alpha(\RR_+,\bX)$ and $\bX^{\Gamma,E}_\rms\cap\bX_{\frac{1}{2}}^{\Gamma,E}$ centered at the origin
	\begin{align}\label{def-ball-H1}
	\Omega_\alpha(\eps)&=\big\{f\in H^1_\alpha(\RR_+,\bX):\|f\|_{H^1_\alpha(\RR_+,\bX)}\leq\eps\big\},\nonumber\\
	\ovB_\rms(0,\eps)&=\big\{\bv_0\in\bX^{\Gamma,E}_\rms\cap\bX_{\frac{1}{2}}^{\Gamma,E}:\|\bv_0\|_{\bX_{\frac{1}{2}}^{\Gamma,E}}\leq\eps\big\}.
	\end{align}
	\begin{lemma}\label{l5.2}
	Assume Hypothesis (S) and assume that the linear operator $E$ is negative-definite.  If $\nu$ is a positive constant such that the growth rates of the $C^0$-semigroups $\{T^{\Gamma,E}_{\rms/\rmu}(\tau)\}_{\tau\geq 0}$ are less than or equal to $-\nu$, where $\nu=\nu(\Gamma,E)>0$ and $\tnu\in(0,\nu)$, then there exist $\eps_1=\eps_1(\Gamma,E,\tnu)>0$ and
	$\eps_2=\eps_2(\Gamma,E,\tnu)>0$ such that $\Psi_{\Gamma,E}$ maps $\ovB_\rms(0,\eps_1)\times\Omega_\alpha(\eps_2)$ to $\Omega_\alpha(\eps_2)$ and
	\begin{equation}\label{Fix-point-estimate}
	\|\Psi_{\Gamma,E}(\bv_0,f)-\Psi_{\Gamma,E}(\bv_0,g)\|_{H^1_\alpha(\RR_+,\bX)}\leq\frac{1}{2}\|f-g\|_{H^1_\alpha(\RR_+,\bX)}
	\end{equation}
	for any $\bv_0\in\ovB_\rms(0,\eps_1)$, $f, g\in\Omega_\alpha(\eps_2)$ and $\alpha\in [0,\tnu]$.
	\end{lemma}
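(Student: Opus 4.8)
The plan is to verify that $\Psi_{\Gamma,E}(\bv_0,\cdot)$ is a contraction on a small ball $\Omega_\alpha(\eps_2)$ of $H^1_\alpha(\RR_+,\bX)$, with radius and contraction constant chosen uniformly in $\alpha\in[0,\tnu]$, and then to invoke the Contraction Mapping Theorem. The linear part $T^{\Gamma,E}_\rms(\cdot)P^{\Gamma,E}_\rms\bv_0$ is already under control: since $P^{\Gamma,E}_\rms\bv_0=\bv_0$ for $\bv_0\in\bX^{\Gamma,E}_\rms$, Lemma~\ref{l5.1} gives $\|T^{\Gamma,E}_\rms(\cdot)P^{\Gamma,E}_\rms\bv_0\|_{H^1_\alpha(\RR_+,\bX)}\le c(\Gamma,E,\alpha)\|\bv_0\|_{\bX_{\frac{1}{2}}^{\Gamma,E}}$. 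For the nonlinear part $\cKm D(f,f)$, I would apply Lemma~\ref{l4.8} when $\alpha=0$ and Lemma~\ref{l4.11} when $\alpha\in(0,\nu)$, obtaining $\|\cKm D(f,f)\|_{H^1_\alpha(\RR_+,\bX)}\le c(\Gamma,E,\alpha)\|D(f,f)\|_{H^1_\alpha(\RR_+,\bX)}$ once $D(f,f)$ is known to lie in $H^1_\alpha(\RR_+,\bX)$. Since in Lemmas~\ref{l5.1},~\ref{l4.11} and Remark~\ref{r4.10} the dependence on $\alpha$ enters only through quantities bounded by a fixed multiple of $(\nu-\tnu)^{-1}$ on $[0,\tnu]$, I would fix once and for all a single constant $c=c(\Gamma,E,\tnu)$ dominating all of them.

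The one new estimate needed is a quadratic bound for $D(\cdot,\cdot)$ on $H^1_\alpha(\RR_+,\bX)$. Here I would use the one-dimensional Sobolev embedding $H^1(\RR_+,\bX)\hookrightarrow L^\infty(\RR_+,\bX)$, the boundedness and bilinearity of $D$, and the product rule $\big(D(f,f)\big)'=D(f',f)+D(f,f')$ (valid a.e.\ for the bounded bilinear map $D$ applied to $f\in H^1_\alpha$) to get, for $\alpha\ge0$, both $\|D(f,f)\|_{H^1_\alpha(\RR_+,\bX)}\le c_D\|f\|_{H^1(\RR_+,\bX)}\|f\|_{H^1_\alpha(\RR_+,\bX)}\le c_D\|f\|_{H^1_\alpha(\RR_+,\bX)}^2$ (using $H^1_\alpha\hookrightarrow H^1$ for $\alpha\ge0$) and, from $D(f,f)-D(g,g)=D(f-g,f)+D(g,f-g)$, the Lipschitz bound $\|D(f,f)-D(g,g)\|_{H^1_\alpha(\RR_+,\bX)}\le c_D\big(\|f\|_{H^1_\alpha(\RR_+,\bX)}+\|g\|_{H^1_\alpha(\RR_+,\bX)}\big)\|f-g\|_{H^1_\alpha(\RR_+,\bX)}$, with $c_D$ depending only on $\|D\|$ and the embedding constant. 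In particular $D(f,f)\in H^1_\alpha(\RR_+,\bX)$, so Lemmas~\ref{l4.8} and~\ref{l4.11} indeed apply.

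Assembling these, for $\bv_0\in\ovB_\rms(0,\eps_1)$ and $f,g\in\Omega_\alpha(\eps_2)$ I would obtain $\|\Psi_{\Gamma,E}(\bv_0,f)\|_{H^1_\alpha(\RR_+,\bX)}\le c\,\eps_1+c\,c_D\,\eps_2^2$ and, since the $\bv_0$--terms cancel and $\Psi_{\Gamma,E}(\bv_0,f)-\Psi_{\Gamma,E}(\bv_0,g)=\cKm\big(D(f,f)-D(g,g)\big)$, the estimate $\|\Psi_{\Gamma,E}(\bv_0,f)-\Psi_{\Gamma,E}(\bv_0,g)\|_{H^1_\alpha(\RR_+,\bX)}\le 2\,c\,c_D\,\eps_2\,\|f-g\|_{H^1_\alpha(\RR_+,\bX)}$. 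Then I would first pick $\eps_2=\eps_2(\Gamma,E,\tnu)>0$ with $4\,c\,c_D\,\eps_2\le1$ --- which forces the contraction constant to be $\le\tfrac{1}{2}$ and also $c\,c_D\,\eps_2^2\le\tfrac{\eps_2}{4}$ --- and then pick $\eps_1=\eps_1(\Gamma,E,\tnu)>0$ with $c\,\eps_1\le\tfrac{\eps_2}{2}$, so that $\|\Psi_{\Gamma,E}(\bv_0,f)\|_{H^1_\alpha(\RR_+,\bX)}\le\eps_2$. This yields the self-mapping property and \eqref{Fix-point-estimate} simultaneously for all $\alpha\in[0,\tnu]$.

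I expect the main obstacle to be bookkeeping rather than conceptual: making sure the constants produced by Lemmas~\ref{l5.1},~\ref{l4.11} and Remark~\ref{r4.10} can be chosen uniformly for $\alpha\in[0,\tnu]$ (those bounds degenerate as $\alpha\uparrow\nu$, but staying below $\tnu<\nu$ keeps them finite), and that the thresholds $\eps_1,\eps_2$ depend only on $(\Gamma,E,\tnu)$ and not on $\alpha$. The bilinear/Sobolev estimate and the choice of radii are then a routine small-data fixed-point argument.
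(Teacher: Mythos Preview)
Your proposal is correct and follows essentially the same approach as the paper: bound the linear term via Lemma~\ref{l5.1}, bound $\cKm D(f,f)$ via Lemmas~\ref{l4.8}/\ref{l4.11} together with a quadratic $H^1_\alpha$ estimate on $D(f,f)$ coming from the Sobolev embedding and the product rule, then choose $\eps_2$ first and $\eps_1$ second exactly as you do. The only cosmetic differences are that the paper uses the symmetric decomposition $D(f,f)-D(g,g)=D(f-g,f-g)+2D(g,f-g)$ and the embedding $H^1_\alpha\hookrightarrow L^\infty_\alpha$ directly, but these lead to the same bounds and the same choice of radii.
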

	\begin{proof} Since $D(\cdot,\cdot)$ is a bounded bilinear map on $\bX$, we infer that for any $f\in H^1_\alpha(\RR_+,\bX)$ we have that $D(f(\cdot),f(\cdot))\in H^1_\alpha(\RR_+,\bX)$ and $\big(D(f,f)\big)'=2D(f,f')$.  Moreover, since
	$H^1_\alpha(\RR_+,\bX)\hookrightarrow L^\infty_\alpha(\RR_+,\bX)$ we have that
	\begin{equation}\label{5.2-1}
	e^{\alpha\tau}\|f(\tau)\|\leq\|f\|_{ L^\infty_\alpha(\RR_+,\bX)}\leq\|f\|_{H^1_\alpha(\RR_+,\bX)}\;\mbox{for any}\;\tau\geq 0.
	\end{equation}
	Using again that $D(\cdot,\cdot)$ is a bounded bilinear map on $\bX$, from \eqref{5.2-1} it follows that
	\begin{align}\label{5.2-2}
	\|D(f,f)\|_{H^1_\alpha}^2&=\int_{0}^{\infty} e^{2\alpha\tau}\|D(f(\tau),f(\tau))\|^2\rmd\tau+4\int_{0}^{\infty} e^{2\alpha\tau}\|D(f(\tau),f'(\tau))\|^2\rmd\tau\nonumber\\
	&\leq\int_{0}^{\infty} e^{2\alpha\tau}\|D\|^2\,\|f(\tau)\|^4\rmd\tau+4\int_{0}^{\infty} e^{2\alpha\tau}\|D\|^2\,\|f(\tau)\|^2\,\|f'(\tau)\|^2\rmd\tau\nonumber\\
	&\leq \|D\|^2 \|f\|_{L^\infty_\alpha}^2\|f\|_{L^2_\alpha}^2+4\|D\|^2 \|f\|_{L^\infty_\alpha}^2\|f'\|_{L^2_\alpha}^2\leq 4\|D\|^2\|f\|_{H^1_\alpha}^4.
	\end{align}
	From Lemma~\ref{l4.11}, Lemma~\ref{l5.1} and \eqref{5.2-2} we infer that $\Psi_{\Gamma,E}(\bv_0,f)\in H^1_\alpha(\RR_+,\bX)$ and
	\begin{align}\label{5.2-3}
	\|\Psi_{\Gamma,E}(\bv_0,f)\|_{H^1_\alpha}&\leq \|T^{\Gamma,E}_\rms(\cdot)\bv_0\|_{H^1_\alpha}+\|\cKm D(f,f)\|_{H^1_\alpha}\leq c\,\|\bv_0\|_{\bX_{\frac{1}{2}}^{\Gamma,E}}+c\,\|D(f(\cdot),f(\cdot))\|_{H^1_\alpha}\nonumber\\
	&\leq c\,\|\bv_0\|_{\bX_{\frac{1}{2}}^{\Gamma,E}}+2c\|D\|\,\|f\|^2_{H^1_\alpha}\leq c\,(\eps_1+\eps_2^2).
	\end{align}
	for any $\bv_0\in\ovB_\rms(0,\eps_1)$ and $f\in\Omega_\alpha(\eps_2)$. Here the constant $c=c(\Gamma,E,\alpha)$ depends on the constants from \eqref{cKm-H1-alpha-bound} and \eqref{trajectory-Z}, therefore it can be it chosen such that
	\begin{equation}\label{5.2-4}
	\sup_{\alpha\in [0,\tnu]}c(\Gamma,E,\alpha)<\infty\;\mbox{for any}\; \tnu\in (0,\nu(\Gamma,E)).
	\end{equation}
	It follows that for any $\tnu\in (0,\nu(\Gamma,E))$ there exist $\eps_1=\eps_1(\Gamma,E,\tnu)>0$ and $\eps_2=\eps_2(\Gamma,E,\tnu)>0$ such that
	\begin{equation}\label{5.2-5}
	c(\Gamma,E,\alpha)\,\eps_2(\Gamma,E,\tnu)\leq\frac{1}{16}\;\mbox{and}\; c(\Gamma,E,\alpha)\,\eps_1(\Gamma,E,\tnu)\leq\frac{\eps_2(\Gamma,E,\tnu)}{2}\;\mbox{for any}\;\alpha\in [0,\tnu].
	\end{equation}
	From \eqref{5.2-3} and \eqref{5.2-5} we conclude that
	\begin{equation}\label{5.2-7}
	\Psi_{\Gamma,E}\;\mbox{maps}\;\ovB_\rms(0,\eps_1(\Gamma,E,\tnu))\times\Omega_\alpha(\eps_2(\Gamma,E,\tnu))\;\mbox{to}\;\Omega_\alpha(\eps_2(\Gamma,E,\tnu))
	\end{equation}
	for any $\alpha\in [0,\tnu]$ and $\tnu\in (0,\nu(\Gamma,E))$. To finish that proof of lemma we prove \eqref{Fix-point-estimate}.
	We note that
	\begin{align}\label{5.2-8}
	\Big\|\Psi_{\Gamma,E}(\bv_0,f)-\Psi_{\Gamma,E}(\bv_0,g)\Big\|_{H^1_\alpha}&=\Big\|\cKm\big(D(f,f)-D(g,g)\big)\Big\|_{H^1_\alpha}\nonumber\\&\leq c(\Gamma,E,\alpha)\|D(f,f)-D(g,g)\|_{H^1_\alpha}.
	\end{align}
	To estimate the $H^1_\alpha$-norm of the right hand side of \eqref{5.2-8}, we use that $D(\cdot,\cdot)$ is bilinear and bounded on $\bX$, which implies that
	$D(f,f)-D(g,g)=D(f-g,f-g)+2D(g,f-g)$. Since $D(\cdot,\cdot)$ is a bounded bilinear map on $\bX$, we obtain that for any $f,g\in H^1_\alpha(\RR_+,\bX)$ we have that $D(g,f-g)\in H^1_\alpha(\RR_+,\bX)$ and $\big(D(g,f-g)\big)'=D(g',f-g)+D(g,f'-g')$. It follows that
	\begin{align}\label{5.2-9}
	\|D(g,f&-g)\|_{H^1_\alpha}^2=\int_{0}^{\infty} e^{2\alpha\tau}\|D(g(\tau),f(\tau)-g(\tau))\|^2\rmd\tau+\int_{0}^{\infty} e^{2\alpha\tau}\|\big(D(g,f-g)\big)'(\tau)\|^2\rmd\tau\nonumber\\
	&\leq\|D\|^2\int_{0}^{\infty} e^{2\alpha\tau}\|g(\tau)\|^2\|f(\tau)-g(\tau))\|^2\rmd\tau+2\|D\|^2\int_{0}^{\infty} e^{2\alpha\tau}\|g'(\tau)\|^2\|f(\tau)-g(\tau))\|^2\rmd\tau\nonumber\\
	&\qquad\qquad\qquad\qquad\qquad\qquad\qquad\qquad+2\|D\|^2\int_{0}^{\infty} e^{2\alpha\tau}\|g(\tau)\|^2\|f'(\tau)-g'(\tau))\|^2\rmd\tau\nonumber\\
	&\leq 2\|D\|^2 \|g\|_{L^\infty_\alpha}^2\|f-g\|_{H^1_\alpha}^2+2\|D\|^2\|f-g\|_{L^\infty_\alpha}^2\|g'\|_{L^2_\alpha}\leq 2\|D\|^2 \|g\|_{H^1_\alpha}^2\|f-g\|_{H^1_\alpha}^2.
	\end{align}
	From \eqref{5.2-2} and \eqref{5.2-9} we obtain that
	\begin{align}\label{5.2-10}
	\|D(f,f)-D(g,g)\|_{H^1_\alpha}&\leq \|D(f-g,f-g)\|_{H^1_\alpha}+2\|D(g,f-g)\|_{H^1_\alpha} \leq 2\|D\|\|f-g\|_{H^1_\alpha}^2\nonumber\\
	&+2\|D\|\|g\|_{H^1_\alpha}\|f-g\|_{H^1_\alpha}\leq 4\|D\|\big(\|f\|_{H^1_\alpha}+\|g\|_{H^1_\alpha}\big)\|f-g\|_{H^1_\alpha}
	\end{align}
	Therefore, from \eqref{5.2-5}, \eqref{5.2-8} and \eqref{5.2-10} have that
	\begin{equation}\label{5.2-11}
	\|\Psi_{\Gamma,E}(\bv_0,f)-\Psi_{\Gamma,E}(\bv_0,g)\|_{H^1_\alpha}\leq 8c(\Gamma,E,\alpha)\eps_2(\Gamma,E,\tnu)\,\|f-g\|_{H^1_\alpha}\leq\frac{1}{2}\|f-g\|_{H^1_\alpha}
	\end{equation}
	for any $\bv_0\in\ovB_\rms(0,\eps_1(\Gamma,E,\tnu))$, $f,g\in\Omega_\alpha(\eps_2(\Gamma,E,\tnu))$, $\alpha\in [0,\tnu]$ and $\tnu\in (0,\nu(\Gamma,E))$, proving the lemma.
	\end{proof}
	Applying the Contraction Mapping Theorem and the results on smooth dependence of fixed point mappings on parameters, from Lemma~\ref{l5.2} we infer that equation $\bu=\Psi_{\Gamma,E}(\bv_0,\bu)$ has a \textit{unique}, local solution denoted $\obu(\cdot;\bv_0)$. Since $\Psi_{\Gamma,E}$ depends linearly on $\bv_0$, we have that $\obu(\cdot;\bv_0)\in\Omega_\alpha(\eps_2(\Gamma,E,\tnu))\subset H^1_\alpha(\RR_+,\bX)$ for any $\alpha\in [0,\tnu]$, $\tnu\in(0,\nu(\Gamma,E))$ and the function
	\begin{equation}\label{def-Solution}
	\Sigma_{\Gamma,E}:\ovB_\rms(0,\eps_1(\Gamma,E,\tnu))\to\Omega_\alpha(\eps_2(\Gamma,E,\tnu))\;\mbox{defined by}\;\Sigma_{\Gamma,E}(\bx_0)=\obu(\cdot,\bv_0)
	\end{equation}
	is of redclass $C^r$ for any $\alpha\in [0,\tnu]$.
	\begin{lemma}\label{l5.3}
	Assume Hypothesis (S) and assume that the linear operator $E$ is negative-definite. Then, the following assertions hold true:
	\begin{enumerate}
	\item[(i)]  $(\cKm f)(0)+P_\rms^{\Gamma,E}E^{-1}f(0)\in\bX_\rmu^{\Gamma,E}$ for any $f\in H^1(\RR_+,\bX)$;
	\item[(ii)] $\obu(\cdot;\bv_0)$ is a mild solution of equation \eqref{h-plus} satisfying the condition
	\begin{equation}\label{u-0}
	P^{\Gamma,E}_\rms\obu(0;\bv_0)=\bv_0-P_\rms^{\Gamma,E}E^{-1}D(\obu(0;\bv_0),\obu(0;\bv_0))\quad\mbox{for any}\quad \bv_0\in\ovB_\rms(0,\eps_1(\Gamma,E,\tnu)).
	\end{equation}
	\end{enumerate}
	\end{lemma}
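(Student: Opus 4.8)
The plan is to establish (i) first, by a density argument, and then read off (ii) by unfolding the fixed-point identity $\obu(\cdot;\bv_0)=\Psi_{\Gamma,E}(\bv_0,\obu(\cdot;\bv_0))$ and invoking (i) at $\tau=0$.

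For (i), I would start from the definition $\cKm f=(\cK_{\Gamma,E}f)_{|\RR_+}-T_\rms^{\Gamma,E}(\cdot)P_\rms^{\Gamma,E}E^{-1}f(0)$. Since $T_\rms^{\Gamma,E}(0)=Id$, this gives
\[
(\cKm f)(0)+P_\rms^{\Gamma,E}E^{-1}f(0)=(\cK_{\Gamma,E}f)(0),
\]
where the right-hand side is well defined because, by Lemma~\ref{l4.8}, $(\cK_{\Gamma,E}f)_{|\RR_+}=\cKm f+T_\rms^{\Gamma,E}(\cdot)P_\rms^{\Gamma,E}E^{-1}f(0)$ is continuous on $\RR_+$. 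Thus it suffices to show $(\cK_{\Gamma,E}f)(0)\in\bX_\rmu^{\Gamma,E}$ for all $f\in H^1(\RR_+,\bX)$. I would check this first on the dense subspace of functions $f=\Gamma g$ with $g\in H^1(\RR_+,\bX)$ used in the proof of Lemma~\ref{l4.8}: there $\cK_{\Gamma,E}(\Gamma g)=\cG_{\Gamma,E}*g$, and since $g$ is extended by $0$ outside $\RR_+$, in the Green-function convolution at $\tau=0$ the $T_\rms^{\Gamma,E}$-part drops out (it only integrates $g$ over $(-\infty,0)$, where $g\equiv0$), leaving
\[
(\cK_{\Gamma,E}(\Gamma g))(0)=-\int_0^\infty T_\rmu^{\Gamma,E}(s)P_\rmu^{\Gamma,E}g(s)\,\rmd s,
\]
a Bochner integral that converges by the exponential decay of $T_\rmu^{\Gamma,E}$ together with $g\in L^\infty(\RR_+,\bX)$, and whose value lies in the closed subspace $\bX_\rmu^{\Gamma,E}=\ran P_\rmu^{\Gamma,E}$. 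Because $\cKm$ is bounded on $H^1(\RR_+,\bX)$ (Lemma~\ref{l4.8}) and $H^1(\RR_+,\bX)\hookrightarrow C_{\mathrm b}(\RR_+,\bX)$, the map $f\mapsto(\cKm f)(0)+P_\rms^{\Gamma,E}E^{-1}f(0)=(\cK_{\Gamma,E}f)(0)$ is bounded and linear from $H^1(\RR_+,\bX)$ into $\bX$; its preimage of the closed subspace $\bX_\rmu^{\Gamma,E}$ is then closed and contains a dense set, so it is all of $H^1(\RR_+,\bX)$, which proves (i).

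For (ii), write $\obu:=\obu(\cdot;\bv_0)$. By construction $\obu=T_\rms^{\Gamma,E}(\cdot)P_\rms^{\Gamma,E}\bv_0+\cKm D(\obu,\obu)$, where $\bv_0\in\bX_\rms^{\Gamma,E}$ so that $P_\rms^{\Gamma,E}\bv_0=\bv_0$, and $D(\obu,\obu)\in H^1_\alpha(\RR_+,\bX)\subset L^2(\RR_+,\bX)$ by boundedness and bilinearity of $D$ (as in Lemma~\ref{l5.2}). Inserting the definition of $\cKm$ and combining the two semigroup terms yields
\[
\obu=T_\rms^{\Gamma,E}(\cdot)P_\rms^{\Gamma,E}\bigl[\bv_0-E^{-1}D(\obu(0),\obu(0))\bigr]+\bigl(\cK_{\Gamma,E}D(\obu,\obu)\bigr)_{|\RR_+}.
\]
Evaluating at $\tau=0$ and applying the projection $P_\rms^{\Gamma,E}$, part (i) forces $P_\rms^{\Gamma,E}\bigl(\cK_{\Gamma,E}D(\obu,\obu)\bigr)(0)=0$, whence $P_\rms^{\Gamma,E}\obu(0)=\bv_0-P_\rms^{\Gamma,E}E^{-1}D(\obu(0),\obu(0))$, i.e.\ \eqref{u-0}. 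Feeding this identity back into the last display turns it into
\[
\obu(\tau)=T_\rms^{\Gamma,E}(\tau)P_\rms^{\Gamma,E}\obu(0)+\bigl(\cK_{\Gamma,E}D(\obu,\obu)\bigr)(\tau),\qquad\tau\ge0,
\]
so $\obu$ satisfies \eqref{var-const} with $f=D(\obu,\obu)\in L^2(\RR_+,\bX)$. The converse implication of Lemma~\ref{l4.4} then shows $\obu$ is a mild solution of $\Gamma\bu_\tau=E\bu+D(\obu,\obu)$ on every interval $[0,\tau_1]$, i.e.\ a mild solution of \eqref{h-plus} on $\RR_+$.

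I expect the only genuinely delicate point to be the density argument in (i): one must confirm that $(\cK_{\Gamma,E}f)(0)$ is well defined (which follows from Lemma~\ref{l4.8}), that $f\mapsto(\cK_{\Gamma,E}f)(0)$ is continuous from $H^1(\RR_+,\bX)$ to $\bX$, and that $\bX_\rmu^{\Gamma,E}$ is closed; the remainder of (ii) is bookkeeping with the definitions of $\cKm$ and $\Psi_{\Gamma,E}$ and the relation $P_\rms^{\Gamma,E}\bv_0=\bv_0$.
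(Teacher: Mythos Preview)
Your proposal is correct and follows essentially the same strategy as the paper. For (i) you observe the algebraic identity $(\cKm f)(0)+P_\rms^{\Gamma,E}E^{-1}f(0)=(\cK_{\Gamma,E}f)(0)$ first and then compute the latter on the dense class $f=\Gamma g$ via the Green-function convolution, whereas the paper computes $(\cKm f)(0)$ from \eqref{4.8-6} and then notes the cancellation; both arrive at $-\int_0^\infty T_\rmu^{\Gamma,E}(s)P_\rmu^{\Gamma,E}g(s)\,\rmd s\in\bX_\rmu^{\Gamma,E}$ and close by density using boundedness of $\cKm$ on $H^1(\RR_+,\bX)$ and closedness of $\bX_\rmu^{\Gamma,E}$. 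For (ii) your order of operations (expand $\cKm$, evaluate at $0$, apply (i), feed back, then invoke the converse direction of Lemma~\ref{l4.4}) is a minor rearrangement of the paper's order (apply (i) to get \eqref{u-0}, then subtract $T_\rms^{\Gamma,E}(\tau)P_\rms^{\Gamma,E}\obu(0;\bv_0)$ and recombine into $\cK_{\Gamma,E}$), with identical content.
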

	\begin{proof} (i) We recall the definition of the subspace $\cH^1_{\Gamma}=\{f:\RR_+\to\bX:\mbox{there exists}\; g\in H^1(\RR_+,\bX)\;\mbox{such that}\;f(\tau)=\Gamma g(\tau)\;\mbox{for any}\;\tau\geq0 \}$, which is dense in $H^1(\RR_+,\bX)$. Let $g\in H^1(\RR_+,\bX)$ and $f=\Gamma g$. From \eqref{4.8-6} we obtain that
	\begin{align}\label{5.3-1}
	(\cKm f)(0)+P_\rms^{\Gamma,E}E^{-1}f(0)&=-\int_{0}^{\infty}T_\rmu^{\Gamma,E}(s)P_\rmu^{\Gamma,E}g(s)\rmd s-S_{\Gamma,E}^{-1}P_\rms^{\Gamma,E}g(0)+P_\rms^{\Gamma,E}E^{-1}\Gamma g(0)\nonumber\\
	&=-\int_{0}^{\infty}T_\rmu^{\Gamma,E}(s)P_\rmu^{\Gamma,E}g(s)\rmd s\in \bX_\rmu^{\Gamma,E}.
	\end{align}
	From Lemma~\ref{l4.8} we infer that the operator $f\to (\cKm f)(0)+P_\rms^{\Gamma,E}E^{-1}f(0):H^1(\RR_+,\bX)\to\bX$ is linear and bounded. Since $\cH^1_{\Gamma}$ is dense in $H^1(\RR_+,\bX)$, from \eqref{5.3-1} we conclude that $(\cKm f)(0)+P_\rms^{\Gamma,E}E^{-1}f(0)\in\bX_\rmu^{\Gamma,E}$ for any $f\in H^1(\RR_+,\bX)$.

	\noindent\textit{Proof of (ii)} Fix $\bv_0\in\ovB_\rms(0,\eps_1(\Gamma,E,\tnu))$. Since $\obu(\cdot;\bv_0)$ is a solution of equation \eqref{mod-fix-point-eq}, from (i) we obtain that
	\begin{equation*}
	P_\rms^{\Gamma,E}\obu(0;\bv_0)=P_\rms^{\Gamma,E}\Big(\bv_0+(\cKm D(\obu(\cdot;\bv_0),\obu(\cdot;\bv_0))(0)\Big)=\bv_0-P_\rms^{\Gamma,E}E^{-1}D(\obu(0;\bv_0),\obu(0;\bv_0)),
	\end{equation*}
	proving \eqref{u-0}. Moreover,
	\begin{align*}
	\obu(\tau;\bv_0)&-T_\rms^{\Gamma,E}(\tau)P_\rms^{\Gamma,E}\obu(0;\bv_0)=T_\rms^{\Gamma,E}(\tau)P_\rms^{\Gamma,E}(\bv_0-\obu(0;\bv_0))+\big(\cKm D(\obu(\cdot;\bv_0),\obu(\cdot;\bv_0)\big)(\tau)\\
	&=T_\rms^{\Gamma,E}(\tau)P_\rms^{\Gamma,E}E^{-1}D(\obu(0;\bv_0),\obu(0;\bv_0))+\big(\cKm D(\obu(\cdot;\bv_0),\obu(\cdot;\bv_0))\big)(\tau)\\&=\big(\cK_{\Gamma,E} D(\obu(\cdot;\bv_0),\obu(\cdot;\bv_0))\big)(\tau)\quad\mbox{for any}\quad\tau\geq0.
	\end{align*}
	From Lemma~\ref{l4.4} we conclude that $\obu(\cdot;\bv_0)$ is a solution of equation \eqref{h-plus}, proving the lemma.
	\end{proof}
	Next, we define $\cJ^{\Gamma,E}_\rms:\ovB_\rms(0,\eps_1)\to \bX$ by
	\begin{equation}\label{def-J-GammaE}
	\cJ^{\Gamma,E}_\rms(\bv_0)=P^{\Gamma,E}_\rmu\obu(0;\bv_0)-P_\rms^{\Gamma,E}E^{-1}D(\obu(0;\bv_0),\obu(0;\bv_0)).
	\end{equation}
	and introduce the manifold
	\begin{equation}\label{def-manifold-GammaE}
	\cM^{\Gamma,E}_\rms=\big\{\bv_0+\cJ^{\Gamma,E}_\rms(\bv_0): \bv_0\in B_\rms(0,\eps_1)\big\}.
	\end{equation}
	Next, we are going to show that the manifold $\cM^{\Gamma,E}_\rms$ is invariant under the flow of equation \eqref{h-plus}. To prove this result we need to study
	the time translations of solutions $\obu(\cdot;\bv_0)$ of equation \eqref{mod-fix-point-eq}. To achieve this goal we need the results of the next lemma.
	\begin{lemma}\label{l5.4}
	Assume Hypothesis (S) and assume that the linear operator $E$ is negative-definite. Then, the following assertions hold true:
	\begin{enumerate}
	\item[(i)] The modified Fourier multiplier $\cKm$ satisfies the translation formula
	\begin{equation}\label{translation-cKm}
	(\cKm f)(\tau+\tau_0)=\big(\cKm f(\cdot+\tau_0)\big)(\tau)+T_\rms^{\Gamma,E}(\tau)\int_{0}^{\tau_0}T_\rms^{\Gamma,E}(\tau_0-s)P_\rms^{\Gamma,E} E^{-1}f'(s)\rmd s
	\end{equation}
	for any $f\in H^1(\RR_+,\bX)$, $\tau,\tau_0\geq 0$;
	\item[(ii)] $T_\rms^{\Gamma,E}(\tau)\bv_0\in \bX^{\Gamma,E}_\rms\cap\bX_{\frac{1}{2}}^{\Gamma,E}$ for any $\tau\geq0$ and $\bv_0\in \bX^{\Gamma,E}_\rms\cap\bX_{\frac{1}{2}}^{\Gamma,E}$. Moreover,
	\begin{equation}\label{X1/2-semigroup}
	\|T_\rms^{\Gamma,E}(\tau)\bv_0\|_{\bX_{\frac{1}{2}}^{\Gamma,E}}\leq e^{-\nu\tau}\|\bv_0\|_{\bX_{\frac{1}{2}}^{\Gamma,E}},\quad \lim_{\tau\to 0+}\|T_\rms^{\Gamma,E}(\tau)\bv_0-\bv_0\|_{\bX_{\frac{1}{2}}^{\Gamma,E}}=0;
	\end{equation}
	\item[(iii)] $\int_{0}^{\tau_0}T_\rms^{\Gamma,E}(\tau_0-s)P_\rms^{\Gamma,E} E^{-1}f(s)\rmd s\in \bX_{\frac{1}{2}}^{\Gamma,E}$ for any $f\in L^2([0,\tau_0],\bX)$ and
	\begin{equation}\label{X1/2-integral estimate}
	\big\|\int_{0}^{\tau_0}T_\rms^{\Gamma,E}(\tau_0-s)P_\rms^{\Gamma,E} E^{-1}f(s)ds\big\|_{\bX_{\frac{1}{2}}^{\Gamma,E}}\leq c(\Gamma,E)\Big(\int_{0}^{\tau_0}\|f(s)\|^2\rmd s\Big)^{\frac{1}{2}}\quad\mbox{for any}\quad f\in L^2([0,\tau_0],\bX).
	\end{equation}
	\end{enumerate}
	\end{lemma}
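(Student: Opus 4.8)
The plan is to establish the three assertions separately, obtaining (ii) and (iii) directly from the spectral representation of Lemma~\ref{r4.6} and reserving the bulk of the work for the translation formula (i).

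For (ii) I would set $\tg_0=U_{\Gamma,E}\bv_0$, which by \eqref{representation-stable-unstable} is supported in $\Lambda_-$, and use \eqref{representation-bi-semigroup}, \eqref{def-tilde-T} to see that $U_{\Gamma,E}T_\rms^{\Gamma,E}(\tau)\bv_0$ has $\lambda$-component $e^{\tau H_{\Gamma,E}(\lambda)}\tg_0(\lambda)$. Since $H_{\Gamma,E}(\lambda)\le-\nu$ on $\Lambda_-$ by \eqref{4.5-3}, one has $0\le e^{\tau H_{\Gamma,E}(\lambda)}\le e^{-\nu\tau}\le1$, so substituting into the definition \eqref{def-norm-X1/2} of $\|\cdot\|_{\bX_{\frac{1}{2}}^{\Gamma,E}}$ yields at once both $T_\rms^{\Gamma,E}(\tau)\bv_0\in\bX_\rms^{\Gamma,E}\cap\bX_{\frac{1}{2}}^{\Gamma,E}$ and the bound $\|T_\rms^{\Gamma,E}(\tau)\bv_0\|_{\bX_{\frac{1}{2}}^{\Gamma,E}}\le e^{-\nu\tau}\|\bv_0\|_{\bX_{\frac{1}{2}}^{\Gamma,E}}$; continuity at $\tau=0^+$ in this norm follows from the dominated convergence theorem, since the integrand $|H_{\Gamma,E}(\lambda)|\,|e^{\tau H_{\Gamma,E}(\lambda)}-1|^2|\tg_0(\lambda)|^2$ is bounded by the integrable function $|H_{\Gamma,E}(\lambda)||\tg_0(\lambda)|^2$ and tends to $0$ pointwise. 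For (iii) I would write $w$ for the integral and $\tg(s)=U_{\Gamma,E}P_\rms^{\Gamma,E}E^{-1}f(s)$, which is supported in $\Lambda_-$ and satisfies $\|\tg(s)\|_{L^2(\Lambda,\mu)}\le c(\Gamma,E)\|f(s)\|$. By \eqref{representation-bi-semigroup} the $\lambda$-component of $U_{\Gamma,E}w$ is $\int_0^{\tau_0}e^{(\tau_0-s)H_{\Gamma,E}(\lambda)}\tg(s)(\lambda)\,\rmd s$ for $\lambda\in\Lambda_-$; estimating it by Cauchy--Schwarz in $s$ after splitting $e^{(\tau_0-s)H_{\Gamma,E}(\lambda)}$ into two equal half-powers and using $\int_0^{\tau_0}e^{(\tau_0-s)H_{\Gamma,E}(\lambda)}\,\rmd s\le|H_{\Gamma,E}(\lambda)|^{-1}$, the factor $|H_{\Gamma,E}(\lambda)|$ appearing in the $\bX_{\frac{1}{2}}^{\Gamma,E}$-norm cancels; then Tonelli's theorem together with $e^{(\tau_0-s)H_{\Gamma,E}(\lambda)}\le1$ gives $\|w\|_{\bX_{\frac{1}{2}}^{\Gamma,E}}^2\le\int_0^{\tau_0}\|\tg(s)\|_{L^2(\Lambda,\mu)}^2\,\rmd s\le c(\Gamma,E)^2\int_0^{\tau_0}\|f(s)\|^2\,\rmd s$, which is both the membership claim and the estimate.

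For (i) the plan is to prove the formula first on the dense subspace $\cH^1_\Gamma$ and then pass to the limit. Writing $f=\Gamma g$ with $g\in H^1(\RR_+,\bX)$, by \eqref{4.8-1}--\eqref{4.8-2} one has, for $\tau\ge0$,
\begin{equation*}
(\cK_{\Gamma,E}f)(\tau)=\int_0^\tau T_\rms^{\Gamma,E}(\tau-s)P_\rms^{\Gamma,E}g(s)\,\rmd s-\int_\tau^\infty T_\rmu^{\Gamma,E}(s-\tau)P_\rmu^{\Gamma,E}g(s)\,\rmd s.
\end{equation*}
Evaluating at $\tau+\tau_0$, splitting the stable integral at $\tau_0$, using $T_\rms^{\Gamma,E}(\tau+\tau_0-s)=T_\rms^{\Gamma,E}(\tau)T_\rms^{\Gamma,E}(\tau_0-s)$ for $s\in[0,\tau_0]$, and substituting $s\mapsto s+\tau_0$ in the two remaining integrals identifies those integrals as $(\cK_{\Gamma,E}(f(\cdot+\tau_0)))(\tau)$, because $f(\cdot+\tau_0)=\Gamma\,g(\cdot+\tau_0)$ with $g(\cdot+\tau_0)\in H^1(\RR_+,\bX)$. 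Inserting this into the definition of $\cKm$ on both sides (applied to $f$ at $\tau+\tau_0$ and to $f(\cdot+\tau_0)$ at $\tau$, using also $T_\rms^{\Gamma,E}(\tau+\tau_0)=T_\rms^{\Gamma,E}(\tau)T_\rms^{\Gamma,E}(\tau_0)$) collapses everything, after factoring out $T_\rms^{\Gamma,E}(\tau)$, to the identity
\begin{equation*}
\int_0^{\tau_0}T_\rms^{\Gamma,E}(\tau_0-s)P_\rms^{\Gamma,E}g(s)\,\rmd s-T_\rms^{\Gamma,E}(\tau_0)P_\rms^{\Gamma,E}E^{-1}f(0)+P_\rms^{\Gamma,E}E^{-1}f(\tau_0)=\int_0^{\tau_0}T_\rms^{\Gamma,E}(\tau_0-s)P_\rms^{\Gamma,E}E^{-1}f'(s)\,\rmd s.
\end{equation*}
Using $S_{\Gamma,E}^{-1}=E^{-1}\Gamma$ (so that $P_\rms^{\Gamma,E}E^{-1}f=A_\rms^{-1}P_\rms^{\Gamma,E}g$ and $P_\rms^{\Gamma,E}E^{-1}f'=A_\rms^{-1}P_\rms^{\Gamma,E}g'$, where $A_\rms$ is the generator of $\{T_\rms^{\Gamma,E}(\tau)\}$ on $\bX_\rms^{\Gamma,E}$, which has bounded inverse by hyperbolicity), this is exactly the semigroup integration-by-parts identity applied to $\phi=A_\rms^{-1}P_\rms^{\Gamma,E}g$: since $\phi(s)\in\dom(A_\rms)$ automatically, $\frac{\rmd}{\rmd s}[T_\rms^{\Gamma,E}(\tau_0-s)\phi(s)]=T_\rms^{\Gamma,E}(\tau_0-s)\phi'(s)-T_\rms^{\Gamma,E}(\tau_0-s)P_\rms^{\Gamma,E}g(s)$, and integrating over $[0,\tau_0]$ gives the displayed equality with no domain subtleties.

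The last step of (i) is a density argument: each of the three terms in \eqref{translation-cKm}, viewed as a function of $f$, is a bounded linear map from $H^1(\RR_+,\bX)$ into $\bX$ — by boundedness of $\cKm$ on $H^1(\RR_+,\bX)$ (Lemma~\ref{l4.8}), of the translation $f\mapsto f(\cdot+\tau_0)$ on $H^1(\RR_+,\bX)$, of point evaluation $H^1(\RR_+,\bX)\to\bX$, and of $f\mapsto\int_0^{\tau_0}T_\rms^{\Gamma,E}(\tau_0-s)P_\rms^{\Gamma,E}E^{-1}f'(s)\,\rmd s$ (part (iii) together with boundedness of $f\mapsto f'$) — so the identity, valid on the dense set $\cH^1_\Gamma$, extends to all of $H^1(\RR_+,\bX)$. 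The main obstacle is assertion (i): the bookkeeping in assembling the translation identity — keeping the stable and unstable contributions straight through the change of variables and the splitting of the convolution — and the integration-by-parts step, although the latter becomes routine once one works with the bounded operator $A_\rms^{-1}$; assertions (ii) and (iii) are straightforward once the spectral picture of Lemma~\ref{r4.6} is in hand.
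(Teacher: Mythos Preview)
Your proof is correct and follows essentially the same approach as the paper: parts (ii) and (iii) via the spectral representation of Lemma~\ref{r4.6} with dominated convergence and Cauchy--Schwarz, and part (i) by first proving the formula on the dense subspace $\cH^1_\Gamma$ using the convolution representation $\cK_{\Gamma,E}f=\cG_{\Gamma,E}*g$, the semigroup integration-by-parts identity for $\phi=A_\rms^{-1}P_\rms^{\Gamma,E}g$, and then passing to the limit by continuity of all three terms as maps $H^1(\RR_+,\bX)\to\bX$. The only cosmetic difference is your Cauchy--Schwarz splitting in (iii) (half-powers of the exponential versus the paper's direct Cauchy--Schwarz), which yields the same bound.
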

	\begin{proof} (i) Let $g\in H^1(\RR_+,\bX)$ and $f=\Gamma g$. From \eqref{4.8-1} we have that $\cK_{\Gamma,E}f=\cG_{\Gamma,E}*g$ and $\cK_{\Gamma,E}f(\cdot+\tau_0)=\cG_{\Gamma,E}*g(\cdot+\tau_0)$. It follows that
	\begin{align}\label{5.4-1}
	(\cK_{\Gamma,E}f)(\tau+\tau_0)&=\int_{0}^{\tau+\tau_0}T_\rms^{\Gamma,E}(\tau+\tau_0-s)P_\rms^{\Gamma,E}g(s)\rmd s-\int_{\tau+\tau_0}^{\infty} T_\rmu^{\Gamma,E}(s-\tau-\tau_0)P_\rmu^{\Gamma,E}g(s)\rmd s\nonumber\\
	&=\int_{-\tau_0}^{\tau}T_\rms^{\Gamma,E}(\tau-\theta)P_\rms^{\Gamma,E}g(\theta+\tau_0)\rmd\theta-\int_{\tau}^{\infty} T_\rmu^{\Gamma,E}(\theta-\tau)P_\rmu^{\Gamma,E}g(\theta+\tau_0)\rmd\theta\nonumber\\
	&=\big(\cG_{\Gamma,E}*g(\cdot+\tau_0)\big)(\tau)+\int_{-\tau_0}^{0}T_\rms^{\Gamma,E}(\tau-\theta)P_\rms^{\Gamma,E}g(\theta+\tau_0)\rmd\theta\nonumber\\
	&=\big(\cK_{\Gamma,E}f(\cdot+\tau_0)\big)(\tau)+\int_{0}^{\tau_0}T_\rms^{\Gamma,E}(\tau+\tau_0-s)P_\rms^{\Gamma,E}g(s)\rmd s\;\mbox{for any}\;\tau\geq0.
	\end{align}
	Since $S_\rms^{\Gamma,E}=(S_{\Gamma,E})_{|\bX_\rms^{\Gamma,E}}$ is the generator of the $C^0$-semigroup $\{T_\rms(\tau)\}_{\tau\geq0}$, we infer that the function $\tau\to T_\rms^{\Gamma,E}(\tau)(S_\rms^{\Gamma,E})^{-1}\bx:\RR_+\to\bX$ is of class $C^1$ for any $\bx\in\bX$. Integrating by parts we obtain that
	\begin{align}\label{5.4-2}
	&\int_{0}^{\tau_0}T_\rms^{\Gamma,E}(\tau+\tau_0-s)P_\rms^{\Gamma,E} E^{-1}f'(s)ds=\int_{0}^{\tau_0}T_\rms^{\Gamma,E}(\tau+\tau_0-s)P_\rms^{\Gamma,E} S_{\Gamma,E}^{-1} g'(s)ds\nonumber\\
	&=\int_{0}^{\tau_0}T_\rms^{\Gamma,E}(\tau+\tau_0-s)P_\rms^{\Gamma,E}g(s)\rmd s +T_\rms^{\Gamma,E}(\tau)P_\rms^{\Gamma,E}S_{\Gamma,E}^{-1}g(\tau_0)-T_\rms^{\Gamma,E}(\tau+\tau_0)P_\rms^{\Gamma,E}S_{\Gamma,E}^{-1}g(0)\nonumber\\
	&=\int_{0}^{\tau_0}T_\rms^{\Gamma,E}(\tau+\tau_0-s)P_\rms^{\Gamma,E}g(s)\rmd s+T_\rms^{\Gamma,E}(\tau)P_\rms^{\Gamma,E}E^{-1}f(\tau_0)-T_\rms^{\Gamma,E}(\tau+\tau_0)P_\rms^{\Gamma,E}E^{-1}f(0).
	\end{align}
	From \eqref{5.4-1} and \eqref{5.4-2} we conclude that
	\begin{align}\label{5.4-3}
	&(\cKm f)(\tau+\tau_0)=(\cK_{\Gamma,E}f)(\tau+\tau_0)-T_\rms^{\Gamma,E}(\tau+\tau_0)P_\rms^{\Gamma,E}E^{-1}f(0)\nonumber\\
	&=\big(\cK_{\Gamma,E}f(\cdot+\tau_0)\big)(\tau)+\int_{0}^{\tau_0}T_\rms^{\Gamma,E}(\tau+\tau_0-s)P_\rms^{\Gamma,E}g(s)\rmd s-T_\rms^{\Gamma,E}(\tau+\tau_0)P_\rms^{\Gamma,E}E^{-1}f(0)\nonumber\\
	&=\big(\cKm f(\cdot+\tau_0)\big)(\tau)+\int_{0}^{\tau_0}T_\rms^{\Gamma,E}(\tau+\tau_0-s)P_\rms^{\Gamma,E}g(s)\rmd s-T_\rms^{\Gamma,E}(\tau+\tau_0)P_\rms^{\Gamma,E}E^{-1}f(0)\nonumber\\
	&\qquad\qquad\qquad+T_\rms^{\Gamma,E}(\tau)P_\rms^{\Gamma,E}E^{-1}f(\tau_0)\nonumber\\
	&=\big(\cKm f(\cdot+\tau_0)\big)(\tau)+T_\rms^{\Gamma,E}(\tau)\int_{0}^{\tau_0}T_\rms^{\Gamma,E}(\tau_0-s)P_\rms^{\Gamma,E} E^{-1}f'(s)\rmd s
	\end{align}
	for any $f\in\cH^1_{\Gamma}$. Since $\cH^1_{\Gamma}$ is dense in $H^1(\RR_+,\bX)$, for any $f\in H^1(\RR_+,\bX)$ there exists $\{f_n\}_{n\geq 1}$ a sequence of functions in $\cH^1_{\Gamma}$ such that $f_n\to f$ as $n\to\infty$ in $H^1(\RR_+,\bX)$. It follows that $f_n(\cdot+\tau_0)\to f(\cdot+\tau_0)$ as $n\to\infty$ in $H^1(\RR_+,\bX)$. Since $\cKm$ is a bounded linear operator on $H^1(\RR_+,\bX)$ by Lemma~\ref{l4.8}, we infer that
	\begin{equation}\label{5.4-4}
	(\cKm f_n)\to (\cKm f)\;\mbox{and}\;\cKm f_n(\cdot+\tau_0)\to\cKm f(\cdot+\tau_0)\;\mbox{as}\; n\to\infty\;\mbox{in}\; H^1(\RR_+,\bX).
	\end{equation}
	Moreover, one can readily check that
	\begin{align}\label{5.4-5}
	&\Big\|  \int_{0}^{\tau_0}T_\rms^{\Gamma,E}(\tau_0-s)P_\rms^{\Gamma,E} E^{-1}f_n'(s)\rmd s -\int_{0}^{\tau_0}T_\rms^{\Gamma,E}(\tau_0-s)P_\rms^{\Gamma,E} E^{-1}f'(s)ds\Big\|\nonumber\\
	&\quad\leq c(\Gamma,E)\Big(\int_{0}^{\tau_0} e^{-2\nu(\tau_0-s)}\rmd s\Big)^{1/2}\, \|f_n'-f'\|_2\leq c(\Gamma,E)\|f_n-f\|_{H^1}\quad\mbox{for any}\quad n\geq 1.
	\end{align}
	Assertion (i) follows from \eqref{5.4-3}, \eqref{5.4-4} and \eqref{5.4-5}.

	\noindent\textit{Proof of (ii)}. Let $\bv_0\in\bX^{\Gamma,E}_\rms\cap\bX_{\frac{1}{2}}^{\Gamma,E}$. We recall that from \eqref{representation-stable-unstable} we obtain that $\tg_0=U_{\Gamma,E}\bv_0\in L^2(\Lambda,\mu)$ and $\tg_0(\lambda)=0$ for any $\lambda\in\Lambda_+$. From \eqref{representation-bi-semigroup} we infer that
	\begin{align}\label{5.4-6}
	&\int_\Lambda |H_{\Gamma,E}(\lambda)|\,|(U_{\Gamma,E}T_\rms^{\Gamma,E}(\tau)\bv_0)(\lambda)|^2\rmd\mu(\lambda)=\int_\Lambda |H_{\Gamma,E}(\lambda)|\,|(\tT_\rms^{\Gamma,E}(\tau)\tg_0)(\lambda)|^2\rmd\mu(\lambda)\nonumber\\
	&=\int_{\Lambda-} e^{2\tau H_{\Gamma,E}(\lambda)}|H_{\Gamma,E}(\lambda)|\,|\tg_0(\lambda)|^2\rmd\mu(\lambda)\leq e^{-2\nu\tau}\|\bv_0\|_{\bX_{\frac{1}{2}}^{\Gamma,E}}^2\;\mbox{for any}\;\tau\geq0.
	\end{align}
	From \eqref{5.4-6} we conclude that $T_\rms^{\Gamma,E}(\tau)\bv_0\in \bX^{\Gamma,E}_\rms\cap\bX_{\frac{1}{2}}^{\Gamma,E}$ and $\|T_\rms^{\Gamma,E}(\tau)\bv_0\|_{\bX_{\frac{1}{2}}^{\Gamma,E}}\leq e^{-\nu\tau}\|\bv_0\|_{\bX_{\frac{1}{2}}^{\Gamma,E}}$ for any $\tau\geq0$. Moreover, using \eqref{representation-bi-semigroup} again we obtain that
	\begin{align}\label{5.4-7}
	\big\|T_\rms^{\Gamma,E}(\tau)\bv_0-\bv_0\big\|_{\bX_{\frac{1}{2}}^{\Gamma,E}}^2&=\int_\Lambda |H_{\Gamma,E}(\lambda)|\,\Big|\Big(U_{\Gamma,E}\big(T_\rms^{\Gamma,E}(\tau)\bv_0-\bv_0)\Big)(\lambda)\Big|^2\rmd\mu(\lambda)\nonumber\\
	&=\int_{\Lambda-} \big(1-e^{2\tau H_{\Gamma,E}(\lambda)}\big)|H_{\Gamma,E}(\lambda)|\,|\tg_0(\lambda)|^2\rmd\mu(\lambda)
	\end{align}
	Passing to the limit as $\tau\to 0$ in \eqref{5.4-7}, from Lebesgue Dominated Convergence Theorem, it follows that $\lim_{\tau\to 0+}\|T_\rms^{\Gamma,E}(\tau)\bv_0-\bv_0\|_{\bX_{\frac{1}{2}}^{\Gamma,E}}=0$, proving (ii).

	\noindent\textit{Proof of (iii)}. First, we introduce the function $\tf:\RR_+\to L^2(\Lambda,\mu)$ by $\tf(\tau)=U_{\Gamma,E}P_\rms^{\Gamma,E}E^{-1}f(\tau)$ and let
	$\tth_0=U_{\Gamma,E}\big(\int_{0}^{\tau_0}T_\rms^{\Gamma,E}(\tau_0-s)P_\rms^{\Gamma,E} E^{-1}f(s)\rmd s\big)$. To simplify the notation, in what follows we  denote by
	$\tf(\tau,\lambda)=\big[\tf(\tau)\big](\lambda)$. Since $f\in L^2([0,\tau_0],\bX)$ we infer that $\tf\in L^2([0,\tau_0],L^2(\Lambda,\mu))$, thus
	\begin{equation}\label{5.4-8}
	\int_{0}^{\tau_0}\int_\Lambda |\tf(\tau,\lambda)|^2\rmd\mu(\Lambda)\rmd\tau=\|\tf\|_{L^2([0,\tau_0],L^2(\Lambda,\mu))}^2\leq c(\Gamma,E)\int_{0}^{\tau_0}\|f(s)\|^2\rmd s
	\end{equation}
	From \eqref{representation-bi-semigroup} one can readily check that
	\begin{equation}\label{5.4-9}
	\tth_0(\lambda)=\int_{0}^{\tau_0} \big[\tT_\rms^{\Gamma,E}(\tau_0-s)\tf(s)\big](\lambda)\rmd s=\int_{0}^{\tau_0} e^{(\tau_0-s)H_{\Gamma,E}(\lambda)}\tf(s,\lambda)\rmd s\quad\mbox{for any}\quad\lambda\in\Lambda.
	\end{equation}
	From \eqref{5.4-9} we obtain that
	\begin{align}\label{5.4-10}
	|\tth_0(\lambda)|^2&\leq \int_{0}^{\tau_0} e^{2(\tau_0-s)H_{\Gamma,E}(\lambda)}\rmd s\int_{0}^{\tau_0}|\tf(s,\lambda)|^2\rmd s=\frac{1-e^{2\tau H_{\Gamma,E}(\lambda)}}{2|H_{\Gamma,E}(\lambda)|}\int_{0}^{\tau_0}|\tf(s,\lambda)|^2\rmd s\nonumber\\
	&\leq\frac{1}{2|H_{\Gamma,E}(\lambda)|}\int_{0}^{\tau_0}|\tf(s,\lambda)|^2\rmd s\quad\mbox{for any}\quad\lambda\in\Lambda.
	\end{align}
	From \eqref{5.4-8} and \eqref{5.4-10} it follows that
	\begin{equation}\label{5.4-11}
	\int_{\Lambda}|H_{\Gamma,E}(\lambda)|\,|\tth_0(\lambda)|^2\rmd\mu(\lambda)\leq\frac{1}{2}\int_\Lambda\int_{0}^{\tau_0} |\tf(\tau,\lambda)|^2\rmd\tau\rmd\mu(\lambda)\leq c(\Gamma,E)\int_{0}^{\tau_0}\|f(s)\|^2\rmd s<\infty.
	\end{equation}
	We conclude that $\tth_0\in\dom(|H_{\Gamma,E}|^{1/2})$ and $\|\tth_0\|_{\dom(|M_{\Gamma,E}|^{1/2})}\leq c(\Gamma,E)\|f\|_{L^2([0,\tau_0],\bX)}$, proving the lemma.
	\end{proof}

	\begin{lemma}\label{l5.5}
	Assume Hypothesis (S) and assume that the linear operator $E$ is negative-definite. Then, the manifold $\cM^{\Gamma,E}_\rms$ is locally invariant under the flow of equation
	\eqref{h-plus}.
	\end{lemma}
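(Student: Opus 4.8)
The plan is to exploit the translation formula of Lemma~\ref{l5.4}(i), which was set up for exactly this purpose: it rewrites a time-translate of the fixed-point equation \eqref{mod-fix-point-eq} as a fixed-point equation of the \emph{same} form \eqref{mod-fix-point-eq} but with a shifted parameter $\bv_1$, after which the uniqueness statement of Lemma~\ref{l5.2} identifies the translated solution with $\obu(\cdot;\bv_1)$, so that its value at $\tau=0$ automatically sits on $\cM^{\Gamma,E}_\rms$ by \eqref{def-J-GammaE}--\eqref{def-manifold-GammaE}. Concretely, I would fix $\bv_0\in\ovB_\rms(0,\eps_1)$ with $\|\bv_0\|_{\bX_{\frac{1}{2}}^{\Gamma,E}}<\eps_1$, set $\bar\bu:=\obu(\cdot;\bv_0)$, and recall from Lemma~\ref{l5.3}(ii) that $\bar\bu$ is a mild solution of \eqref{h-plus} with $\bar\bu(0)=\bv_0+\cJ^{\Gamma,E}_\rms(\bv_0)\in\cM^{\Gamma,E}_\rms$, so local invariance reduces to showing $\bar\bu(\tau_0)\in\cM^{\Gamma,E}_\rms$ for all small $\tau_0\ge0$. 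For fixed $\tau_0\ge0$ I write $\bar\bu_{\tau_0}:=\bar\bu(\cdot+\tau_0)$ and $f:=D(\bar\bu,\bar\bu)$; since $D$ is bounded bilinear and $\bar\bu\in H^1_\alpha(\RR_+,\bX)\hookrightarrow H^1(\RR_+,\bX)$, one has $f\in H^1(\RR_+,\bX)$ with $f'=2D(\bar\bu,\bar\bu')$, so Lemma~\ref{l5.4}(i) applies. Evaluating \eqref{mod-fix-point-eq} at $\tau+\tau_0$ and using $T^{\Gamma,E}_\rms(\tau+\tau_0)=T^{\Gamma,E}_\rms(\tau)T^{\Gamma,E}_\rms(\tau_0)$, $P^{\Gamma,E}_\rms\bv_0=\bv_0$, the translation formula, and $f(\cdot+\tau_0)=D(\bar\bu_{\tau_0},\bar\bu_{\tau_0})$, I would arrive at $\bar\bu_{\tau_0}=\Psi_{\Gamma,E}(\bv_1,\bar\bu_{\tau_0})$ with $\bv_1:=T^{\Gamma,E}_\rms(\tau_0)\bv_0+\int_0^{\tau_0}T^{\Gamma,E}_\rms(\tau_0-s)P^{\Gamma,E}_\rms E^{-1}f'(s)\,\rmd s$.

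Next I would check that $(\bv_1,\bar\bu_{\tau_0})$ lies in the set on which Lemma~\ref{l5.2} yields uniqueness. Both summands of $\bv_1$ lie in $\bX^{\Gamma,E}_\rms$; moreover $T^{\Gamma,E}_\rms(\tau_0)\bv_0\in\bX^{\Gamma,E}_\rms\cap\bX_{\frac{1}{2}}^{\Gamma,E}$ by Lemma~\ref{l5.4}(ii), and the integral term lies in $\bX_{\frac{1}{2}}^{\Gamma,E}$ by Lemma~\ref{l5.4}(iii) (as $f'\in L^2([0,\tau_0],\bX)$), so $\bv_1\in\bX^{\Gamma,E}_\rms\cap\bX_{\frac{1}{2}}^{\Gamma,E}$. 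The same two lemmas give $\bv_1\to\bv_0$ in $\bX_{\frac{1}{2}}^{\Gamma,E}$ as $\tau_0\to0^+$: the first term tends to $\bv_0$ by the strong continuity in \eqref{X1/2-semigroup}, and the integral term is bounded in $\bX_{\frac{1}{2}}^{\Gamma,E}$ by $c(\Gamma,E)\big(\int_0^{\tau_0}\|f'(s)\|^2\,\rmd s\big)^{1/2}$, which vanishes as $\tau_0\to0^+$ by absolute continuity of the integral. Hence there is $\tau_1=\tau_1(\bv_0)>0$ such that $\bv_1\in\ovB_\rms(0,\eps_1)$ for $\tau_0\in[0,\tau_1]$. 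Finally, that $\bar\bu_{\tau_0}\in\Omega_\alpha(\eps_2)$ is elementary: the substitution $s=\tau+\tau_0$ gives $\|\bar\bu_{\tau_0}\|_{H^1_\alpha(\RR_+,\bX)}=e^{-\alpha\tau_0}\big(\int_{\tau_0}^{\infty}e^{2\alpha s}(\|\bar\bu(s)\|^2+\|\bar\bu'(s)\|^2)\,\rmd s\big)^{1/2}\le\|\bar\bu\|_{H^1_\alpha}\le\eps_2$.

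With $\bv_1\in\ovB_\rms(0,\eps_1)$, $\bar\bu_{\tau_0}\in\Omega_\alpha(\eps_2)$ and $\bar\bu_{\tau_0}=\Psi_{\Gamma,E}(\bv_1,\bar\bu_{\tau_0})$ in hand, the uniqueness part of Lemma~\ref{l5.2} forces $\bar\bu_{\tau_0}=\obu(\cdot;\bv_1)$; evaluating at $\tau=0$ and using \eqref{u-0} and \eqref{def-J-GammaE} then gives $\obu(\tau_0;\bv_0)=\bar\bu_{\tau_0}(0)=\obu(0;\bv_1)=\bv_1+\cJ^{\Gamma,E}_\rms(\bv_1)\in\cM^{\Gamma,E}_\rms$ for $\tau_0\in[0,\tau_1]$, which is the asserted local invariance. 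I expect the only real obstacle to be the bookkeeping that keeps $\bv_1$ inside $\ovB_\rms(0,\eps_1)$; this is precisely what the $\bX_{\frac{1}{2}}^{\Gamma,E}$-estimates of Lemma~\ref{l5.4}(ii)--(iii) were designed to control, and if one wants invariance for all $\tau_0\ge0$ rather than on a short interval, one shrinks $\bv_0$ (hence $\bar\bu$) so that the uniform-in-$\tau_0$ bound $\|\bv_1\|_{\bX_{\frac{1}{2}}^{\Gamma,E}}\le e^{-\nu\tau_0}\|\bv_0\|_{\bX_{\frac{1}{2}}^{\Gamma,E}}+c(\Gamma,E)\|f'\|_{L^2(\RR_+,\bX)}$ stays below $\eps_1$.
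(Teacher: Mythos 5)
Your argument is essentially the paper's own proof: translate the fixed-point equation \eqref{mod-fix-point-eq} using Lemma~\ref{l5.4}(i), identify the shifted parameter $\bv_1=T_\rms^{\Gamma,E}(\tau_0)\bv_0+\int_0^{\tau_0}T_\rms^{\Gamma,E}(\tau_0-s)P_\rms^{\Gamma,E}E^{-1}\big(D(\bu_0,\bu_0)\big)'(s)\,\rmd s$, keep it in the parameter ball via the $\bX_{\frac{1}{2}}^{\Gamma,E}$-estimates of Lemma~\ref{l5.4}(ii)--(iii), and conclude with the uniqueness from Lemma~\ref{l5.2} together with \eqref{u-0} and \eqref{def-J-GammaE}. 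The only minor difference is that the paper first takes an \emph{arbitrary} $H^1_\alpha$ solution with initial datum on $\cM^{\Gamma,E}_\rms$ and uses the same uniqueness argument to identify it with $\obu(\cdot;\bv_0)$ before translating, whereas you work directly with the canonical solution; adding that short identification step makes the invariance hold for every small $H^1_\alpha$ solution rather than only the constructed one.
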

	\begin{proof}
	Let $\tnu\in (0,\nu(\Gamma,E))$, $\alpha\in [0,\tnu]$ and assume $\bu_0$ is a $H^1_\alpha$ solution of equation \eqref{h-plus} such that $\bu_0(0)\in\cM^{\Gamma,E}_\rms$.
	Then, there exists $\bv_0\in\ovB(0,\eps_1)$ such that $\bu_0(0)=\bv_0+\cJ^{\Gamma,E}_\rms(\bv_0)$. From \eqref{u-0} and \eqref{def-J-GammaE} we obtain that $\bu_0(0)=\obu(0;\bv_0)$.
	Since $\bu_0$ is a $H^1_\alpha$ solution of equation \eqref{h-plus}, we have that $\bu_0$ satisfies equation \eqref{fix-point-eq}. It follows that
	\begin{align}\label{5.5-1}
	\bu_0(\tau)&=T_\rms^{\Gamma,E}(\tau)P_\rms^{\Gamma,E}\bu_0(0)+\big(\cK_{\Gamma,E}D(\bu_0,\bu_0)\big)(\tau)\nonumber\\
	&=T_\rms^{\Gamma,E}(\tau)\big(P_\rms^{\Gamma,E}\bu_0(0)+P_\rms^{\Gamma,E}E^{-1}D(\bu_0(0),\bu_0(0)) \big)+\big(\cKm D(\bu_0,\bu_0)\big)(\tau)\nonumber\\
	&=T_\rms^{\Gamma,E}(\tau)\big(P_\rms^{\Gamma,E}\obu(0;\bv_0)+P_\rms^{\Gamma,E}E^{-1}D(\obu(0;\bv_0),\obu(0;\bv_0)) \big)+\big(\cKm D(\bu_0,\bu_0)\big)(\tau)\nonumber\\
	&=T_\rms^{\Gamma,E}(\tau)P_\rms^{\Gamma,E}\bv_0+\big(\cKm D(\bu_0,\bu_0)\big)(\tau)\quad\mbox{for any}\quad\tau\geq 0.
	\end{align}
	From Lemma~\ref{l5.2} we infer that equation $\bu=\Psi_{\Gamma,E}(\bv_0,\bu)$ has a unique solution, which implies that
	$\bu_0=\obu(\cdot;\bv_0)$. Fix $\tau_0\geq0$ and let $\bu_{\tau_0}:\RR_+\to\bX$ be the function defined by $\bu_{\tau_0}(\tau)=\bu_0(\tau+\tau_0)=\obu(\tau+\tau_0;\bv_0)$. Since $\obu(\cdot;\bv_0)\in H^1_\alpha(\RR_+,\bX)$ for any $\alpha\in [0,\tnu]$, it follows that $u_{\tau_0}\in H^1_\alpha(\RR_+,\bX)$ for any $\alpha\in [0,\tnu]$.
	From Lemma~\ref{l5.4}(i), \eqref{u-0}, \eqref{5.5-1} and since $\bu_0=\obu(\cdot;\bv_0)$ we conclude that
	\begin{align}\label{5.5-2}
	\bu_{\tau_0}(\tau)&=\bu_0(\tau+\tau_0)=T^{\Gamma,E}_\rms(\tau+\tau_0)\bv_0+\Big(\cKm D(\bu_0,\bu_0)\Big)(\tau+\tau_0)\nonumber\\
	&=T^{\Gamma,E}_\rms(\tau)T_\rms^{\Gamma,E}(\tau_0)\bv_0+T_\rms^{\Gamma,E}(\tau)\int_{0}^{\tau_0}T_\rms^{\Gamma,E}(\tau_0-s)P_\rms^{\Gamma,E} E^{-1}\big(D(\bu_0(s),\bu_0(s))\big)'\rmd s\nonumber\\
	&\qquad+\big(\cKm D(\bu_{\tau_0},\bu_{\tau_0})\big)(\tau)\nonumber\\
	&=T^{\Gamma,E}_\rms(\tau)\bv_1+\big(\cKm D(\bu_{\tau_0},\bu_{\tau_0})\big)(\tau)\quad\mbox{for any}\quad\tau\geq 0.
	\end{align}
	where $\bv_1=T_\rms^{\Gamma,E}(\tau_0)\bv_0+\int_{0}^{\tau_0}T_\rms^{\Gamma,E}(\tau_0-s)P_\rms^{\Gamma,E} E^{-1}\big(D(\bu_0(s),\bu_0(s))\big)'\rmd s$. From Lemma~\ref{l5.4}(ii) and (iii) we infer that $\bv_1\in\bX_\rms^{\Gamma,E}\cap\bX_{\frac{1}{2}}^{\Gamma,E}$. Moreover,
	\begin{align}\label{5.5-3}
	\|\bv_1-\bv_0\|_{\bX_{\frac{1}{2}}^{\Gamma,E}}&\leq \|T_\rms^{\Gamma,E}(\tau_0)\bv_0-\bv_0\|_{\bX_{\frac{1}{2}}^{\Gamma,E}}+c(\Gamma,E)\Big(\int_{0}^{\tau_0}\|D(\bu_0(s),\bu_0'(s))\|^2\rmd s\Big)^{\frac{1}{2}}\nonumber\\
	&\leq\|T_\rms^{\Gamma,E}(\tau_0)\bv_0-\bv_0\|_{\bX_{\frac{1}{2}}^{\Gamma,E}}+c(\Gamma,E)\|\bu_0\|_{H^1}\Big(\int_{0}^{\tau_0}\|\bu_0'(s)\|^2\rmd s\Big)^{\frac{1}{2}}.
	\end{align}
	From Lemma~\ref{l5.4}(ii) and (iii) we infer that there exists $\tau_1>0$ such that $\bv_1=\bv_1(\tau_0)\in B_\rms(0,\eps_1)$ for any $\tau_0\in [0,\tau_1]$.
	Since equation $\bu=\Psi_{\Gamma,E}(\bv_1,\bu)$ has a unique solution in $\Omega_{\Gamma,E,\alpha}(\eps_2)$ for any $\alpha\in [0,\tnu]$, from \eqref{5.5-2} we conclude that $\bu_{\tau_0}=\obu(\cdot;\bv_1)$. From Lemma~\ref{l5.3}(ii) it follows that
	\begin{align}\label{5.5-4}
	\bu_0(\tau_0)&=\bu_{\tau_0}(0)=\obu(0;\bv_1)=P^{\Gamma,E}_\rms\obu(0;\bv_1)+P^{\Gamma,E}_\rmu\obu(0;\bv_1)\nonumber\\
	&=\bv_1-P_\rms^{\Gamma,E}E^{-1}D(\obu(0;\bv_1),\obu(0;\bv_1))+P^{\Gamma,E}_\rmu\obu(0;\bv_1)\nonumber\\
	&=\bv_1+\cJ^{\Gamma,E}_\rms(\bv_1)\in\cM^{\Gamma,E}_\rms\quad\mbox{for any}\quad \tau_0\in [0,\tau_1],
	\end{align}
	proving the lemma.
	\end{proof}
	In the next lemma we prove that the nonlinear manifold $\cM_\rms^{\Gamma,E}$ is tangent to the corresponding linear stable subspace $\bX_\rms^{\Gamma,E}$.
	\begin{lemma}\label{l5.6}
	Assume Hypothesis (S) and assume that the linear operator $E$ is negative-definite. Then, $\big(\cJ_\rms^{\Gamma,E}\big)'(0)=0$ which proves that the manifold $\cM^{\Gamma,E}_\rms$
	is tangent to $\bX_\rms^{\Gamma,E}$.
	\end{lemma}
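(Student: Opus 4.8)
In the next lemma we prove that the nonlinear manifold $\cM_\rms^{\Gamma,E}$ is tangent to the corresponding linear stable subspace $\bX_\rms^{\Gamma,E}$.
\begin{lemma}\label{l5.6}
Assume Hypothesis (S) and assume that the linear operator $E$ is negative-definite. Then, $\big(\cJ_\rms^{\Gamma,E}\big)'(0)=0$ which proves that the manifold $\cM^{\Gamma,E}_\rms$
is tangent to $\bX_\rms^{\Gamma,E}$.
\end{lemma}

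\bigskip

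\noindent\textbf{Proof proposal.}
The plan is to differentiate the fixed-point characterization of $\obu(\cdot;\bv_0)$ implicitly at $\bv_0=0$ and then feed the result into the formula \eqref{def-J-GammaE} for $\cJ^{\Gamma,E}_\rms$. First observe that, since $\Psi_{\Gamma,E}(0,0)=T^{\Gamma,E}_\rms(\cdot)P^{\Gamma,E}_\rms 0+\cKm D(0,0)=0$, uniqueness of the fixed point (Lemma~\ref{l5.2}) forces $\obu(\cdot;0)=0$; in particular $\obu(0;0)=0$, and hence $\cJ^{\Gamma,E}_\rms(0)=P^{\Gamma,E}_\rmu 0-P^{\Gamma,E}_\rms E^{-1}D(0,0)=0$, so the origin lies on $\cM^{\Gamma,E}_\rms$. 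By the smooth dependence of fixed points on parameters invoked above (the map $\Sigma_{\Gamma,E}$ of \eqref{def-Solution} is $C^r$), together with the facts that evaluation at $0$ is bounded from $H^1(\RR_+,\bX)$ into $\bX$ and that $D$ is a bounded bilinear map, the map $\bv_0\mapsto\obu(\cdot;\bv_0)$ is $C^1$ from $\ovB_\rms(0,\eps_1)$ into $H^1_\alpha(\RR_+,\bX)$ and $\cJ^{\Gamma,E}_\rms$ is $C^1$ on $\ovB_\rms(0,\eps_1)$.

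Next I differentiate the identity $\obu(\cdot;\bv_0)=T^{\Gamma,E}_\rms(\cdot)P^{\Gamma,E}_\rms\bv_0+\cKm D\big(\obu(\cdot;\bv_0),\obu(\cdot;\bv_0)\big)$ with respect to $\bv_0$ in a direction $\mathbf{w}\in\bX^{\Gamma,E}_\rms\cap\bX^{\Gamma,E}_{\frac{1}{2}}$. Using that $D$ is symmetric bilinear, so that its differential at a point $g$ applied to $k$ equals $2D(g,k)$, one obtains
\[
\partial_{\bv_0}\obu(\cdot;\bv_0)\mathbf{w}=T^{\Gamma,E}_\rms(\cdot)P^{\Gamma,E}_\rms\mathbf{w}+2\,\cKm D\big(\obu(\cdot;\bv_0),\partial_{\bv_0}\obu(\cdot;\bv_0)\mathbf{w}\big).
\]
Evaluating at $\bv_0=0$ and using $\obu(\cdot;0)=0$ annihilates the $\cKm$-term, leaving $\partial_{\bv_0}\obu(\cdot;0)\mathbf{w}=T^{\Gamma,E}_\rms(\cdot)P^{\Gamma,E}_\rms\mathbf{w}$; evaluating this at $\tau=0$ and using $T^{\Gamma,E}_\rms(0)=Id_{\bX^{\Gamma,E}_\rms}$ together with $P^{\Gamma,E}_\rms\mathbf{w}=\mathbf{w}$ gives $\partial_{\bv_0}\obu(0;0)\mathbf{w}=\mathbf{w}$.

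Finally, differentiating \eqref{def-J-GammaE} at $\bv_0=0$ and using $\obu(0;0)=0$, the quadratic term contributes nothing, since its differential at $\obu(0;0)=0$ applied to $\partial_{\bv_0}\obu(0;0)\mathbf{w}$ equals $2P^{\Gamma,E}_\rms E^{-1}D\big(0,\partial_{\bv_0}\obu(0;0)\mathbf{w}\big)=0$; hence
\[
\big(\cJ^{\Gamma,E}_\rms\big)'(0)\mathbf{w}=P^{\Gamma,E}_\rmu\,\partial_{\bv_0}\obu(0;0)\mathbf{w}=P^{\Gamma,E}_\rmu\mathbf{w}=0,
\]
the last equality because $\mathbf{w}\in\bX^{\Gamma,E}_\rms=\ker P^{\Gamma,E}_\rmu$. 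Thus $\big(\cJ^{\Gamma,E}_\rms\big)'(0)=0$. Since $\cM^{\Gamma,E}_\rms$ is the graph $\{\bv_0+\cJ^{\Gamma,E}_\rms(\bv_0)\}$ and $\cJ^{\Gamma,E}_\rms(0)=0$, its tangent space at the origin is the range of $Id+\big(\cJ^{\Gamma,E}_\rms\big)'(0)=Id$ on $\bX^{\Gamma,E}_\rms$, namely $\bX^{\Gamma,E}_\rms$ itself, which is the asserted tangency. The only delicate point is the justification of the $C^1$-dependence and of implicit differentiation of the fixed-point equation in the $\bX^{\Gamma,E}_{\frac{1}{2}}$-topology on the parameter; but this is precisely the smooth-parameter-dependence statement already used for $\Sigma_{\Gamma,E}$, so the argument reduces to the elementary computation above.
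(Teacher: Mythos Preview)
Your proof is correct and follows essentially the same route as the paper's: both observe $\obu(\cdot;0)=0$ by uniqueness, differentiate the fixed-point equation to obtain $\partial_{\bv_0}\obu(\cdot;0)\mathbf{w}=T^{\Gamma,E}_\rms(\cdot)P^{\Gamma,E}_\rms\mathbf{w}$, and then feed this into the definition of $\cJ^{\Gamma,E}_\rms$ to see the quadratic term vanish and $P^{\Gamma,E}_\rmu P^{\Gamma,E}_\rms=0$. The only cosmetic difference is that the paper phrases the implicit differentiation via the separate partial derivatives $\partial_{\bv_0}\Psi_{\Gamma,E}(0,0)$ and $\partial_{\bu}\Psi_{\Gamma,E}(0,0)$ (verifying the latter is zero from the quadratic estimate), whereas you apply the chain rule directly; the content is the same.
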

	\begin{proof} First, we compute $\Sigma_{\Gamma,E}'(0)$, where $\Sigma_{\Gamma,E}$ is defined in \eqref{def-Solution}. Differentiating with respect to $\bv_0$ in the fixed point equation \eqref{mod-fix-point-eq}, we obtain that
	\begin{equation}\label{5.6-1}
	\Sigma_{\Gamma,E}'(\bv_0)=\pa_{\bv_0}\Psi_{\Gamma,E}(\bv_0,\Sigma_{\Gamma,E}(\bv_0))+\pa_{\bu}\Psi_{\Gamma,E}(\bv_0,\Sigma_{\Gamma,E}(\bv_0))\Sigma_{\Gamma,E}'(\bv_0)\;\mbox{for any}\;\bv_0\in\ovB_\rms(0,\eps_1).
	\end{equation}
	Since the equation $\bu=\Psi_{\Gamma,E}(0,\bu)$ has a unique solution, and $0$ trivially satisfies the equation, we infer that $\Sigma_{\Gamma,E}(0)=0$. Since $\Psi_{\Gamma,E}(\bv_0,0)=T_\rms^{\Gamma,E}(\cdot)P_\rms^{\Gamma,E}\bv_0$ for any $\bv_0\in\ovB_\rms(0,\eps_1)$ we have that $\big[\pa_{\bv_0}\Psi_{\Gamma,E}(0,0)\big]\bv_1=T_\rms^{\Gamma,E}(\cdot)P_\rms^{\Gamma,E}\bv_1$ for any $\bv_1\in\bX_\rms^{\Gamma,E}\cap\bX_{\frac{1}{2}}^{\Gamma,E}$. From \eqref{5.2-2} we have that
	\begin{equation}\label{5.6-2}
	\|\Psi_{\Gamma,E}(0,\bu)\|_{H^1_\alpha}=\|\cKm D(\bu,\bu)\|_{H^1_\alpha}\leq c(\Gamma,E)\| D(\bu,\bu)\|_{H^1_\alpha}\leq c(\Gamma,E)\|\bu\|_{H^1_\alpha}^2
	\end{equation}
	for any $\bu\in\Omega_{\alpha}(\eps_2)$, which implies that $\pa_{\bu}\Psi_{\Gamma,E}(0,0)=0$. From \eqref{5.6-1} it follows that
	$\big[\Sigma_{\Gamma,E}'(0)\big]\bv_1=T_\rms^{\Gamma,E}(\cdot)P_\rms^{\Gamma,E}\bv_1$ for any $\bv_1\in\bX_\rms^{\Gamma,E}\cap\bX_{\frac{1}{2}}^{\Gamma,E}$. Since the linear operator
	$\mathrm{Tr}_0:H^1_\alpha(\RR_+,\bX)\to\bX$ defined by $\mathrm{Tr}_0f=f(0)$ is bounded, we have that
	\begin{equation}\label{5.6-3}
	\big[\pa_{\bv_0}\obu(0;0)\big]\bv_1=\mathrm{Tr}_0\Sigma_{\Gamma,E}'(0)\bv_1=P_\rms^{\Gamma,E}\bv_1\quad\mbox{for any}\quad\bv_1\in\bX_\rms^{\Gamma,E}\cap\bX_{\frac{1}{2}}^{\Gamma,E}.
	\end{equation}
	Since the $D(\cdot,\cdot)$ is a bounded, bilinear map on $\bX$, from \eqref{def-J-GammaE} and \eqref{5.6-3} we obtain that
	\begin{align}\label{5.6-4}
	\Big[\big(\cJ_\rms^{\Gamma,E}\big)'(0)\Big]\bv_1&=P^{\Gamma,E}_\rmu\big[\pa_{\bv_0}\obu(0;0)\big]\bv_1-2P_\rms^{\Gamma,E}E^{-1}D\Big(\obu(0;0),\big[\pa_{\bv_0}\obu(0;0)\big]\bv_1\Big)&\nonumber\\
	&=P_\rmu^{\Gamma,E}P_\rms^{\Gamma,E}\bv_1-2P_\rms^{\Gamma,E}E^{-1}D(0,P_\rms^{\Gamma,E}\bv_1)=0\quad\mbox{for any}\quad\bv_1\in\bX_\rms^{\Gamma,E}\cap\bX_{\frac{1}{2}}^{\Gamma,E}.
	\end{align}
	From \eqref{def-manifold-GammaE} it follows immediately that the manifold $\cM_\rms^{\Gamma,E}$ is tangent to $\bX_\rms^{\Gamma,E}\cap\bX_{\frac{1}{2}}^{\Gamma,E}$ at $\bv_0=0$.
	\end{proof}
	\begin{theorem}\label{t5.7}
	Assume Hypothesis (S) and assume that the linear operator $E$ is negative-definite. Then, there exists a local stable manifold $\cM^{\Gamma,E}_\rms$ near the origin, tangent to $\bX_\rms^{\Gamma,E}\cap\bX_{\frac{1}{2}}^{\Gamma,E}$, locally invariant under the flow of equation $\Gamma\bu_\tau=E\bu+D(\bu,\bu)$ and uniquely  determined by the property that $\bu\in H^1(\RR_+,\bX)$.
	\end{theorem}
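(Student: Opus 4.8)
The manifold in question is the set $\cM^{\Gamma,E}_\rms$ already constructed in \eqref{def-manifold-GammaE}, so the plan is to assemble the properties established in the lemmas of this section and then add the variational characterization. First I would record that $\cJ^{\Gamma,E}_\rms$ is of class $C^r$: by \eqref{def-J-GammaE} it is obtained from the $C^r$ solution operator $\Sigma_{\Gamma,E}$ of \eqref{def-Solution} by composing with the bounded evaluation $f\mapsto f(0)$, the bounded bilinear (hence smooth) map $D$, and the bounded linear operators $E^{-1}$, $P_\rms^{\Gamma,E}$, $P_\rmu^{\Gamma,E}$; thus $\cM^{\Gamma,E}_\rms$ is a $C^r$ graph over a neighbourhood of $0$ in $\bX_\rms^{\Gamma,E}\cap\bX_{\frac{1}{2}}^{\Gamma,E}$. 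Tangency of $\cM^{\Gamma,E}_\rms$ to $\bX_\rms^{\Gamma,E}\cap\bX_{\frac{1}{2}}^{\Gamma,E}$ at the origin is exactly Lemma~\ref{l5.6}, and local invariance under the flow of $\Gamma\bu_\tau=E\bu+D(\bu,\bu)$ is Lemma~\ref{l5.5}; so the only thing left is to prove that, near the origin, $\cM^{\Gamma,E}_\rms$ coincides with the set of initial values of $H^1(\RR_+,\bX)$ solutions of \eqref{h-plus}.

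For the inclusion ``manifold points are initial values of small $H^1$ solutions'', fix $\bv_0\in\ovB_\rms(0,\eps_1)$. Lemma~\ref{l5.3}(ii) says $\obu(\cdot;\bv_0)$ is a mild solution of \eqref{h-plus} with $P_\rms^{\Gamma,E}\obu(0;\bv_0)=\bv_0-P_\rms^{\Gamma,E}E^{-1}D(\obu(0;\bv_0),\obu(0;\bv_0))$; combining this with \eqref{def-J-GammaE} gives $\obu(0;\bv_0)=\bv_0+\cJ^{\Gamma,E}_\rms(\bv_0)$. Since $\obu(\cdot;\bv_0)\in\Omega_\alpha(\eps_2)\subset H^1_\alpha(\RR_+,\bX)\hookrightarrow H^1(\RR_+,\bX)$ for every $\alpha\in[0,\tnu]$, every point of $\cM^{\Gamma,E}_\rms$ is the initial value of a small $H^1$ solution, which moreover decays exponentially — a byproduct worth recording. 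Injectivity of the parametrization $\bv_0\mapsto\bv_0+\cJ^{\Gamma,E}_\rms(\bv_0)$ is then immediate: applying $P_\rms^{\Gamma,E}$ to a manifold point $p=\obu(0;\bv_0)$ and using \eqref{u-0} recovers $\bv_0=P_\rms^{\Gamma,E}p+P_\rms^{\Gamma,E}E^{-1}D(p,p)$.

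For the converse, let $\bu$ be an $H^1(\RR_+,\bX)$ solution of \eqref{h-plus} with $\|\bu\|_{H^1(\RR_+,\bX)}$ small. Since $H^1(\RR_+,\bX)\hookrightarrow L^2(\RR_+,\bX)\cap C_0(\RR_+,\bX)$, we have $\Gamma\bu\in C_0(\RR_+,\bX)$ and, $D$ being bounded bilinear, $f:=D(\bu,\bu)\in H^1(\RR_+,\bX)\subset L^2(\RR_+,\bX)$; hence Lemma~\ref{l4.4} yields $\bu=T_\rms^{\Gamma,E}(\cdot)P_\rms^{\Gamma,E}\bu(0)+\cK_{\Gamma,E}f$ on $\RR_+$. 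Using $\cK_{\Gamma,E}f=\cKm f+T_\rms^{\Gamma,E}(\cdot)P_\rms^{\Gamma,E}E^{-1}f(0)$ on $\RR_+$ and setting $\bv_0:=P_\rms^{\Gamma,E}\big(\bu(0)+E^{-1}f(0)\big)\in\bX_\rms^{\Gamma,E}$, this rewrites as $\bu=\Psi_{\Gamma,E}(\bv_0,\bu)$. Next I would show $\bv_0\in\bX_{\frac{1}{2}}^{\Gamma,E}$ with small norm: by Lemma~\ref{l4.8}, $\cKm f\in H^1(\RR_+,\bX)$ with $\|\cKm f\|_{H^1}\le c\|f\|_{H^1}\le c\|D\|\,\|\bu\|_{H^1}^2$, so $T_\rms^{\Gamma,E}(\cdot)\bv_0=\bu-\cKm f\in H^1(\RR_+,\bX)$; transporting to the model of Lemma~\ref{l4.5} via the bounded invertible $U_{\Gamma,E}$, the function $\tau\mapsto e^{\tau H_{\Gamma,E}(\cdot)}\chi_{\Lambda_-}(\cdot)(U_{\Gamma,E}\bv_0)(\cdot)$ has $\tau$-derivative of squared $L^2(\RR_+,L^2(\Lambda,\mu))$-norm equal to $\tfrac{1}{2}\int_{\Lambda_-}|H_{\Gamma,E}(\lambda)|\,|(U_{\Gamma,E}\bv_0)(\lambda)|^2\rmd\mu(\lambda)=\tfrac{1}{2}\|\bv_0\|_{\bX_{\frac{1}{2}}^{\Gamma,E}}^2$, whence $\|\bv_0\|_{\bX_{\frac{1}{2}}^{\Gamma,E}}\le c(\Gamma,E)\|T_\rms^{\Gamma,E}(\cdot)\bv_0\|_{H^1}\le c(\Gamma,E)\big(\|\bu\|_{H^1}+c\|\bu\|_{H^1}^2\big)$. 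For $\|\bu\|_{H^1}$ small this places $\bv_0$ in $\ovB_\rms(0,\eps_1)$ and keeps $\bu\in\Omega_0(\eps_2)$, so the uniqueness part of Lemma~\ref{l5.2} forces $\bu=\obu(\cdot;\bv_0)$, hence $\bu(0)=\obu(0;\bv_0)=\bv_0+\cJ^{\Gamma,E}_\rms(\bv_0)\in\cM^{\Gamma,E}_\rms$.

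The step that carries the real weight — and the one I would be most careful about — is the membership $\bv_0\in\bX_{\frac{1}{2}}^{\Gamma,E}=\dom(|S_{\Gamma,E}|^{1/2})$ together with the matching bound $\|T_\rms^{\Gamma,E}(\cdot)\bv_0\|_{H^1}\gtrsim\|\bv_0\|_{\bX_{\frac{1}{2}}^{\Gamma,E}}$. This is precisely where the $H^1(\RR_+,\bX)$ regularity is indispensable: as the spectral analysis around \eqref{deriv-spectral} shows, a merely bounded or $L^2$ solution would not pin $\bv_0$ down in $\dom(|S_{\Gamma,E}|^{1/2})$, and it is the place where the unitary-type diagonalization of $S_{\Gamma,E}$ from Lemma~\ref{l4.5} and the semigroup representation of Lemma~\ref{r4.6} must be invoked to convert an $H^1$ bound on the trajectory into the $\bX_{\frac{1}{2}}^{\Gamma,E}$ bound on the parameter.
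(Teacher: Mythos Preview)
Your proof is correct and follows the same skeleton as the paper's: assemble Lemma~\ref{l5.3}(ii), Lemma~\ref{l5.5}, Lemma~\ref{l5.6}, and the uniqueness of the fixed point in Lemma~\ref{l5.2}. The paper's own proof is a one-sentence citation of Lemma~\ref{l5.3}(ii), Lemma~\ref{l5.5}, and the uniqueness of the fixed-point equation, leaving the reader to supply the linkage.

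Where you go further than the paper is in the converse direction of the uniqueness characterization. The paper simply invokes uniqueness of the solution to $\bu=\Psi_{\Gamma,E}(\bv_0,\bu)$, tacitly assuming that any small $H^1(\RR_+,\bX)$ solution of \eqref{h-plus} can be put into this form with $\bv_0\in\ovB_\rms(0,\eps_1)\subset\bX_\rms^{\Gamma,E}\cap\bX_{\frac{1}{2}}^{\Gamma,E}$. You make this explicit: from $\bu\in H^1$ and $\cKm f\in H^1$ (Lemma~\ref{l4.8}) you extract $T_\rms^{\Gamma,E}(\cdot)\bv_0\in H^1(\RR_+,\bX)$, and then use the spectral model of Lemmas~\ref{l4.5}--\ref{r4.6} to convert the $H^1$ bound on this trajectory into the $\bX_{\frac{1}{2}}^{\Gamma,E}$ bound on $\bv_0$ via the identity $\|\partial_\tau(\tT_\rms^{\Gamma,E}(\cdot)U_{\Gamma,E}\bv_0)\|_{L^2(\RR_+,L^2(\Lambda_-))}^2=\tfrac{1}{2}\|\bv_0\|_{\bX_{\frac{1}{2}}^{\Gamma,E}}^2$. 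This is exactly the reverse of the estimate in Lemma~\ref{l5.1} and is the step the paper suppresses; your identification of it as ``the step that carries the real weight'' is accurate, and the argument you give is sound.
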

	\begin{proof}
	The theorem follows from Lemma~\ref{l5.3}(ii) and Lemma~\ref{l5.5} and the fact that the equation $\bu=\Psi_{\Gamma,E}(\bv_0,\bu)$ for any $\bv_0\in\ovB_\rms(0,\eps_1)$ has a unique solution on $H^1_\alpha(\RR_+,\bX)$ for any $\alpha\in [0,\tilde \nu]$ and $\tnu\in (0,\nu(\Gamma,E))$.
	\end{proof}
    \begin{remark}\label{r5.8} When proving results on existence of
	nonlinear center-stable/unstable manifolds in the case of differential equation on finite dimensional spaces, the manifolds can be expressed as graphs of $C^1$ functions from $\bH_{\rms/\rmu}$ to $\bH_{\rmu/\rms}\oplus\bH_{\rmc}$, where $\bH_\rms$, $\bH_\rmu$ and $\bH_\rmc$ are the linear stable, unstable and center subspaces of the linearization along the equilibria at $+\infty$. In our case we can prove a similar result assuming Hypothesis (S) and that the linear operator $E$ is negative-definite. Indeed,
	from the definitions of the function $\cJ_\rms^{\Gamma,E}$ in \eqref{def-J-GammaE} and of the manifold $\cM_\rms^{\Gamma,E}$ in \eqref{def-manifold-GammaE} we have that
	\begin{equation}\label{5.8-1}
	\cM^{\Gamma,E}_\rms=\big\{\bv_0-P_\rms^{\Gamma,E}E^{-1}D(\obu(0;\bv_0),\obu(0;\bv_0))+P^{\Gamma,E}_\rmu\obu(0;\bv_0): \bv_0\in B_\rms(0,\eps_1)\big\}.
	\end{equation}
	To prove the claim, we use the Inverse Function Theorem to solve for $\bv_0$ in the $\bX_\rms^{\Gamma,E}$ component of elements of the manifold. Since the $D(\cdot,\cdot)$ is a bounded, bilinear map on $\bX$, from \eqref{5.6-3} we obtain that the Fr\'echet derivative of the function $\cY_\rms^{\Gamma,E}:\{\bv_0\in\bX_\rms^{\Gamma,E}:\|\bv_0\|\leq\eps_1\}\to\bX_\rms^{\Gamma,E}$ defined by $\cY_\rms^{\Gamma,E}(\bv_0)=\bv_0-P_\rms^{\Gamma,E}E^{-1}D(\obu(0;\bv_0),\obu(0;\bv_0))$, is $\big(\cY_\rms^{\Gamma,E}\big)'(0)=I_{X_\rms^{\Gamma,E}}$. It follows that $\eps_1$ can be chosen small enough such that the function $\cY_\rms^{\Gamma,E}$ is invertible on $\{\bv_0\in\bX_\rms^{\Gamma,E}:\|\bv_0\|\leq\eps_1\}$. From \eqref{5.8-1} we obtain that
	\begin{equation}\label{5.8-2}
	\cM^{\Gamma,E}_\rms=\big\{\bv_1+P^{\Gamma,E}_\rmu\obu\big(0;(\cY_\rms^{\Gamma,E})^{-1}(\bv_1)\big)\in\bX_\rms^{\Gamma,E}\oplus\bX_\rmu^{\Gamma,E}: \bv_1\in \cY_\rms^{\Gamma,E}\big(B_\rms(0,\eps_1)\big)\big\}.
	\end{equation}
	Thus, $\cM_\rms^{\Gamma,E}=\mathrm{Graph}(\tilde{\cJ}_\rms^{\Gamma,E})$, where the function $\tilde{\cJ}_\rms^{\Gamma,E}:\cY_\rms^{\Gamma,E}\big(B_\rms(0,\eps_1)\big)\to\bX_\rmu^{\Gamma,E}$ is defined by $\tilde{\cJ}_\rms^{\Gamma,E}(\bv_1)=P^{\Gamma,E}_\rmu\obu\big(0;(\cY_\rms^{\Gamma,E})^{-1}(\bv_1)\big)$.
	\end{remark}

\begin{remark}\label{r5.9} We note that the domain of the function $\tilde{\cJ}_\rms^{\Gamma,E}$ is not equal to the image of the map $\cY_\rms^{\Gamma,E}$, since $B_\rms(0,\eps_1)$ denotes the open ball of radius $\eps_1$ in $\bX_\rms^{\Gamma,E}\cap\bX_{\frac{1}{2}}^{\Gamma,E}$ and not in $\bX_\rms^{\Gamma,E}$.
It would be interesting to know if $\cY_\rms^{\Gamma,E}\big(B_\rms(0,\eps_1)\big)$ contains an open ball of $\bX_{\frac{1}{2}}^{\Gamma,E}$ or some other dense subspace of $\bX$, but we do not pursue this here. \end{remark}
	
Using Theorem~\ref{t5.7} we can now prove the main result of this paper, the existence of center-stable and center-unstable manifolds of equation $Au_\tau=Q(u)$ near the equilibria $u^\pm$ at $\pm\infty$. We recall that the linear operator $S_\pm^\rmr=\big(A_{22}-A_{21}A_{11}^{-1}A_{12}\big)^{-1}Q'_{22}(u^\pm)$ generates a stable bi-semigroup on $\bV$ (Theorem~\ref{c3.8}(iv)) and that equation $Au'=Q'(u^\pm)u$ has an exponential trichotomy on $\bH$, with stable/unstable subspaces denoted $\bH^{\rms/\rmu}_\pm$ and center subspace $\bV^\perp$ (Theorem~\ref{c3.8}(vi)). Moreover, we recall that the pair $(\Gamma,E)=(A_{22}-A_{21}A_{11}^{-1}A_{12},Q'_{22}(u^\pm))$ satisfies Hypothesis (S) by Theorem~\ref{c3.8}(ii) and $Q'_{22}(u^\pm)$ is negative-definite by Hypothesis (H3). In this case we have that $\bX_{\frac{1}{2}}^{\Gamma,E}=\dom(|S_{\Gamma,E}|^{\frac{1}{2}})=\dom\big(|(A_{22}-A_{21}A_{11}^{-1}A_{12})^{-1}Q_{22}'(u^\pm)|^{\frac{1}{2}}\big)$.
	Finally, we introduce $-\nu_\pm:=-\nu\big(A_{22}-A_{21}A_{11}^{-1}A_{12},Q'_{22}(u^\pm)\big)<0$ the decay rate of the bi-semigroup generated by the pair $\big(A_{22}-A_{21}A_{11}^{-1}A_{12},Q'_{22}(u^\pm)\big)$.

\vspace{0.3cm}

\noindent{\bf Proof of Theorem 1.3}.
Making the change of variables $w^\pm=u-u^\pm$ in equation $Au_\tau=Q(u)$ and denoting by $h^\pm=P_{\bV^\perp}w^\pm$ and $v=P_{\bV}w^\pm$, we obtain the system $\tilde{A}v^\pm_\tau=Q'_{22}(u^\pm)v^\pm+D(v^\pm,v^\pm)$, $h^\pm=-A_{11}^{-1}A_{12}v^\pm$, as shown in Section~\ref{s4}. Here $\tilde{A}=A_{22}-A_{21}A_{11}^{-1}A_{12}$ and the bilinear map $D:\bV\times\bV\to\bV$ is defined by $D(v_1,v_2)=B(v_1-A_{11}^{-1}A_{12}v_1,v_2-A_{11}^{-1}A_{12}v_2)$, is bilinear and bounded on $\bV$. Since the linear operator $S_\pm^\rmr$ generates a stable bi-semigroup on $\bV$, the theorem follows from Theorem~\ref{c3.8}, Theorem~\ref{t5.7} and Remark~\ref{r5.8}.

	\vspace{0.3cm}

Next, we show how we can use Theorem~\ref{t5.7} to prove that solutions $u^*$ of equation $Au_\tau=Q(u)$ that converge at $\pm\infty$ to equilibria $u^\pm$ decay exponentially at $\pm\infty$.

	\vspace{0.3cm}

	\noindent{\bf Proof of Corollary 1.4}.
	First, we note that since $u^*$ is a solution of equation $Au_\tau=Q(u)$, we have that $\tilde{A}(v^*-v^\pm)'=Q'_{22}(u^\pm)(v^*-v^\pm)+D(v^*-v^\pm,v^*-v^\pm)$ and $h^*=-A_{11}^{-1}A_{12}v^*$.  Using the uniqueness property of solutions along the manifold $\cM_{\rms/\rmu}^\pm$ given by Theorem~\ref{t5.7}, we conclude that $v^*-v^\pm\in H^1_\alpha(\RR_\pm,\bV)$ for any $\alpha\in [0,\tnu]$, for some $\tnu\in (0,\min\{\nu_+,\nu_-\})$. Since $h^*=-A_{11}^{-1}A_{12}v^*$ we obtain that $u^*-u^\pm=(v^*-v^\pm)-A_{11}^{-1}A_{12}(v^*-v^\pm)$, which implies that $u^*-u^\pm\in H^1_\alpha(\RR_\pm,\bH)$, proving the corollary.

	\vspace{0.3cm}

\end{document}